\newtheorem{question}{Question}[section]
\newtheorem{lemma}[question]{Lemma}
\newtheorem{theorem}[question]{Theorem}
\newtheorem{conjecture}[question]{Conjecture}
\newcommand{\leqnomode}{\tagsleft@true}
\newcommand{\reqnomode}{\tagsleft@false}
\def\dd{\hbox{-}}
\DeclareMathOperator{\tw}{tw}
\DeclareMathOperator{\width}{width}
\DeclareMathOperator{\Hub}{Hub}
\DeclareMathOperator{\hdim}{hdim}
\DeclareMathOperator{\adh}{adh}
\DeclareMathOperator{\Core}{Core}
\DeclareMathOperator{\soff}{end}
\DeclareMathOperator{\Conn}{Conn}
\newcounter{tbox}
\newcommand{\sta}[1]{\vspace*{0.3cm} \refstepcounter{tbox}
  \noindent{ \parbox{\textwidth}{(\thetbox) \emph{#1}}}\vspace*{0.3cm}}
\newcommand{\mylongtitle}[1]{%
  \ifodd\value{page}%
    \protect\parbox{0.97\linewidth}{#1}\hfill%
  \else%
    \hfill\protect\parbox{0.97\linewidth}{#1}%
  \fi%
}
\newcommand{\otherlabel}[2]{\protected@edef\@currentlabel{#2}\label{#1}}
\mathchardef\mh="2D
\title[Induced subgraphs and tree decompositions XV.]{Induced subgraphs and tree decompositions\\
XV. Even-hole-free graphs with bounded clique number have logarithmic treewidth}
\author{Maria Chudnovsky$^{\dagger}$}
\author{Peter Gartland$^{\ddagger}$}
\author{Sepehr Hajebi $^{\mathsection}$}
\author{Daniel Lokshtanov$^{\ddagger}$}
\author{Sophie Spirkl$^{\mathsection \parallel}$}
\thanks{$^{\ddagger}$ Department of Computer Science, University of California Santa Barbara, Santa Barbara, CA, USA. Supported by NSF grant CCF-2008838.}
\thanks{$^{\mathsection}$Department of Combinatorics and Optimization, University of Waterloo, Waterloo, Ontario, Canada}
\thanks{$^{\dagger}$Princeton University, Princeton, NJ, USA.  Supported by NSF-EPSRC Grant DMS-2120644 and by AFOSR grant FA9550-22-1-0083.}
\thanks{$^{\parallel}$ We acknowledge the support of the Natural Sciences and Engineering Research Council of Canada (NSERC), [funding reference number RGPIN-2020-03912].
Cette recherche a \'et\'e financ\'ee par le Conseil de recherches en sciences naturelles et en g\'enie du Canada (CRSNG), [num\'ero de r\'ef\'erence RGPIN-2020-03912]. This project was funded in part by the Government of Ontario. This research was completed while Spirkl was an Alfred P. Sloan fellow.}
\date {\today}
\begin{document}
\raggedbottom

\begin{abstract}
  We prove that for every integer $t\geq 1$ there exists an integer $c_t\geq 1$ such
  that
  every $n$-vertex even-hole-free graph with no clique of size $t$ 
has treewidth at most
 $c_t\log{n}$. This resolves a conjecture of Sintiari and Trotignon, who also proved that the logarithmic bound is asymptotically best possible. 
It follows that several \textsf{NP}-hard problems such as \textsc{Stable Set}, \textsc{Vertex Cover}, \textsc{Dominating Set} and \textsc{Coloring} admit polynomial-time algorithms on this class of graphs. As a consequence, for every positive integer $r$, $r$-{\sc Coloring} can be solved in polynomial time on even-hole-free graphs without any assumptions on clique size.

As part of the proof,  we show that there is an integer $d$ such that
every even-hole-free graph has a balanced separator which is contained in the (closed) neighborhood of at most $d$ vertices. This is of independent interest; for instance, it implies the existence of efficient approximation algorithms for certain \textsf{NP}-hard problems while restricted to the class of all even-hole-free graphs.
\end{abstract}

\maketitle

\section{Introduction} \label{intro}
All graphs in this paper are finite and simple.  For a graph $G$, we denote by $V(G)$ the vertex set of $G$, and by $E(G)$ the edge set of $G$.
For graphs $H,G$, we say that $H$ is an
{\em induced subgraph} of $G$ if $V(H) \subseteq V(G)$, and $x,y \in V(H)$ are adjacent in 
$H$ if and only if $xy \in E(G)$.
A {\em hole} in a graph is an induced cycle with at least four vertices. The
{\em length} of a hole is the number of vertices in it. A hole is {\em even} it it has even length, and {\em odd} otherwise.  
The class of even-hole-free graphs has attracted much attention in the past
due to its somewhat tractable, yet very rich structure (see the survey \cite{Kristina}).
In addition to stuctural results, 
much effort was put into designing efficient algorithms for
even-hole-free graphs (to solve problems that are NP-hard in general).
This is discussed in  \cite{Kristina}, while
\cite{AAKST, CdSHV, CGP, Le}
provide examples of more recent work. Nevertheless, 
many open questions remain. Among them is the complexity of several algorithmic problems: \textsc{Stable Set}, \textsc{Vertex Cover}, \textsc{Dominating Set}, $k$-\textsc{Coloring} and \textsc{Coloring}.

For a graph $G = (V(G),E(G))$, a \emph{tree decomposition} $(T, \chi)$ of $G$ consists of a tree $T$ and a map $\chi: V(T) \to 2^{V(G)}$ with the following properties: 
\begin{enumerate}[(i)]
\itemsep -.2em
    \item For every vertex $v \in V(G)$, there exists $t \in V(T)$ such that $v \in \chi(t)$. 
    
    \item For every edge $v_1v_2 \in E(G)$, there exists $t \in V(T)$ such that $v_1, v_2 \in \chi(t)$.
    
    \item For every vertex $v \in V(G)$, the subgraph of $T$ induced by $\{t \in V(T) \mid v \in \chi(t)\}$ is connected.
\end{enumerate}
 
For each $t\in V(T)$, we refer to $\chi(t)$ as a \textit{bag of} $(T, \chi)$.  The \emph{width} of a tree decomposition $(T, \chi)$, denoted by $\width(T, \chi)$, is $\max_{t \in V(T)} |\chi(t)|-1$. The \emph{treewidth} of $G$, denoted by $\tw(G)$, is the minimum width of a tree decomposition of $G$. 

Treewidth, first introduced by Robertson and Seymour in the context of  graph minors, is an extensively studied graph parameter, mostly due to the fact that graphs of bounded treewidth exhibit interesting structural
\cite{RS-GMXVI} and algorithmic \cite{Bodlaender1988DynamicTreewidth} properties.
It is easy to see that large complete graphs and large complete bipartite graphs have large treewidth,
but there are others (in particular a subdivided $k \times k$-wall, which is a planar graph with maximum degree three, and which we will not define  here).
A theorem of  \cite{RS-GMXVI}  characterizes precisely excluding which graphs as minors (and in fact as subgraphs) results in a class of  bounded treewidth. 

Bringing tree decompositions into the world of induced subgraphs is a relatively recent endeavor.
It began in \cite{ST}, where the authors observed yet another evidence of the  structural complexity of the class of even-hole-free graphs: the fact that there exist
even-hole-free graphs of arbitrarily large 
treewidth  (even when large complete subgraphs are excluded).  Closer examination of their  constructions  led the authors of \cite{ST} to make the following two conjectures (the {\em diamond} is the unique simple graph with four vertices and five edges):

\begin{conjecture}[Sintiari and Trotignon \cite{ST}]
\label{diamond}
For every integer $t\geq 1$, there exists a constant $c_t$ such that every
even-hole-free graph $G$ with no induced diamond and no clique of size $t$ satisfies
$\tw(G)\leq c_t$. 
\end{conjecture}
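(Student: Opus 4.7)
The plan is to prove the conjecture by a two-stage reduction: first to a theta-free subclass via parity, then to a class admitting a bounded-size structural decomposition, from which constant treewidth follows.

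First, I would observe that every even-hole-free graph is \emph{theta}-free, where a theta is three internally vertex-disjoint induced paths joining two fixed endpoints (each of length at least two). Indeed, by pigeonhole at least two of the three path-lengths share the same parity, and the union of those two paths is an induced even cycle. Thus $G$ may be assumed to be (even hole, theta, diamond, $K_t$)-free. Diamond-freeness forces $N(v)$ to be $P_3$-free and hence a disjoint union of cliques, each of size at most $t-2$ by $K_t$-freeness; in particular, every two triangles meet in at most one vertex. This sharply restricts how induced paths and cycles can interact locally at any vertex.

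Second, I would establish a decomposition statement asserting that every such graph is either small (of size bounded in $t$) or admits a cutset of size bounded in $t$---a clique cutset of size at most $t-1$, a star cutset $N[v]$ whose deletion leaves components of bounded size, or a 2-join along which to split. Each decomposition step contributes only $O(t)$ to the treewidth of the combined tree decomposition, and the recursion terminates because every piece is strictly simpler than its parent. The base case consists of indecomposable graphs, which should be forced by theta-freeness and diamond-freeness to be essentially tree-like and hence of bounded treewidth. Combining these ingredients yields $\tw(G) \leq c_t$.

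The main obstacle is the decomposition theorem itself: showing that every indecomposable (even hole, theta, diamond, $K_t$)-free graph has bounded size or bounded treewidth. A plausible attack is to fix a longest induced path $P$ in $G$ and analyze the attachments of $V(G) \setminus V(P)$ to $P$: theta-freeness prevents an outside vertex from being anchored to three well-separated intervals of $P$, while even-hole-freeness constrains the parities of attachment points. Converting these local constraints into a global cutset/structure dichotomy, and doing so without letting cutsets blow up along the recursion, is the central technical challenge, and it is where the quantitative dependence of $c_t$ on $t$ is produced. Crucially, unlike the main theorem of the paper, here balanced separators alone would yield only a logarithmic treewidth bound; to reach the conjectured constant bound one needs a genuinely ``clean'' decomposition whose cutsets do not accumulate through the recursion.
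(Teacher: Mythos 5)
The paper does not prove this statement: Conjecture~\ref{diamond} is stated here only as background and attributed to~\cite{TWXI} (``Conjecture~\ref{diamond} was resolved in~\cite{TWXI}''), so there is no proof in this paper to compare yours against. That aside, your proposal has a genuine gap, which you yourself acknowledge: the entire content of the result is the decomposition theorem (that every indecomposable graph in the class has bounded size or treewidth, and that the decompositions used do not accumulate), and you only describe ``a plausible attack'' on it rather than giving an argument. The preliminary observations you make---that even-hole-free graphs are theta-free by a parity pigeonhole, and that diamond-freeness makes every neighborhood a disjoint union of cliques of size at most $t-2$---are correct, standard, and easy; they are not where the difficulty lies.

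Two of your assertions would fail as stated even if granted a decomposition. First, the claim that ``each decomposition step contributes only $O(t)$ to the treewidth'' is unjustified for star cutsets: $N[v]$ in a diamond-free $K_t$-free graph is a union of cliques each of size at most $t-2$, but the \emph{number} of such cliques (hence $\deg(v)$) is unbounded, so a star cutset need not have bounded size, and even bounded-size cutsets accumulate along a recursion unless they interact laminarly. Second, the claim that indecomposable members of the class are ``essentially tree-like'' is asserted without any argument; this is precisely the structural theorem that needs to be proved, and your sketch about analyzing attachments to a longest induced path does not indicate how to extract a cutset that avoids the accumulation problem. As you note in your last sentence, constant (as opposed to logarithmic) treewidth requires a ``clean'' decomposition that does not let cutsets pile up; identifying and proving such a decomposition is exactly the content of~\cite{TWXI}, and nothing in your proposal substitutes for it.
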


\begin{conjecture}[Sintiari and Trotignon \cite{ST}]
\label{logconj}
For every integer $t\geq 1$, there exists a constant $C_t$ such that every
even-hole-free graph $G$ with no clique of size $t$ satisfies
$\tw(G)\leq C_t \log |V(G)|$. 
\end{conjecture}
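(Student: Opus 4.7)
The argument naturally splits into two phases. First, we establish the balanced-separator result announced in the abstract: every even-hole-free graph $G$ admits a balanced separator $S$ contained in $N_G[\{v_1, \ldots, v_d\}]$ for some absolute constant $d$ and vertices $v_1, \ldots, v_d \in V(G)$. No assumption on $\omega(G)$ is used in this step. Second, we combine this separator theorem with the $K_t$-free hypothesis to obtain the $O(\log n)$ treewidth bound via an induction on $t$ and a divide-and-conquer scheme over subgraphs of $G$.

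For the first phase, I would argue by contradiction: suppose $G$ admits no balanced $d$-hub-separator for an appropriately chosen $d$. Then for every $d$-tuple of vertices of $G$ there is a large connected component disjoint from their union of closed neighborhoods. Iteratively exploiting this, one would build increasingly elaborate combinatorial structures specific to even-hole-free graphs: long induced paths with controlled attachments, nested "layerings" from a root vertex, and systems of three pairwise far-apart vertices. A careful case analysis, employing tools such as a three-in-a-tree type result and Ramsey-style pigeonholing on the attachments of these paths, should eventually either produce an induced cycle of even length (contradiction) or collapse the assumed non-existence of a separator into an unavoidable conflicting configuration.

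For the second phase, I would induct on $t$. The base case $t \leq 2$ is immediate. For the inductive step, fix an even-hole-free $G$ with $\omega(G) < t$ on $n$ vertices, and let $C_{t-1}$ be the constant from the induction hypothesis. Applying the separator theorem, fix $v_1, \ldots, v_d$ so that $S = N_G[\{v_1, \ldots, v_d\}]$ is a balanced separator of $G$. The key observation is that for each $i$, $G[N_G(v_i)]$ is even-hole-free with $\omega \leq t-2$; by induction it has treewidth at most $C_{t-1} \log n$, and therefore $\tw(G[S]) \leq d \cdot C_{t-1} \log n + d$. One would then construct the tree decomposition of $G$ by placing a suitable tree decomposition of $G[S]$ at the top and attaching, below each of its bags, the recursive decompositions of the components of $G - S$ that it dominates, noting that each such component has at most $n/2$ vertices.

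The main obstacle I foresee is that the naive version of this recursion produces only $\tw(G) \leq O(\log^2 n)$ rather than the required $O(\log n)$, since the separator's treewidth is already $\Theta(\log n)$ and recursion depth is $\Theta(\log n)$. To recover the sharp bound, I expect one needs to be careful about \emph{which} vertices of the separator enter the bags of the eventual decomposition: rather than taking the full separator $S$ into the top part, one should commit only the constant-size hub $\{v_1, \ldots, v_d\}$ to the topmost bags, and allow the other vertices of $S$ to be absorbed into the recursive decompositions of the components they actually meet. Making this work likely requires a boundaried strengthening of the statement---producing, for each induced subgraph $G'$ and each small prescribed set $B \subseteq V(G')$ forced into the root bag, a decomposition of width $O(|B| + \log|V(G')|)$---so that the recursion composes cleanly and the hub vertices accumulated along a root-to-leaf path contribute only $O(d \log n)$ in total to any single bag. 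Organizing this accounting, while simultaneously handling the interaction of the hubs across recursion levels and the $K_t$-free structural constraints, is where the delicate combinatorial work sits.
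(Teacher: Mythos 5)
Your outline is genuinely different from the paper's route, and the difference is not cosmetic: the paper's central technical ingredient is a theorem you never invoke. Specifically, the bulk of the paper is devoted to proving that for $G \in \mathcal{C}_t$ there is no $c_t\log n$-banana between any two non-adjacent vertices (Theorem~\ref{banana}), and it is precisely this fact --- combined with properties of $3L$-lean tree decompositions in Theorem~\ref{bettersep} --- that converts a balanced separator of the form $N[X]$ with $|X|\leq d$ into a balanced separator of actual cardinality $O(\log n)$, modulo one clique. Once the separator has small cardinality (not merely small domination number), Robertson--Seymour's Lemma~\ref{lemma:bs-to-tw} immediately gives $\tw(G)\leq O(\log n)$. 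Your plan skips this step entirely and instead tries to wire together tree decompositions of the (unboundedly large) separator and of the components by an induction on $t$.

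The gap in your Phase~2 is real and I do not believe the ``boundaried strengthening'' you gesture at repairs it. The problem is not the accumulation of the hub vertices $\{v_1,\ldots,v_d\}$ along a root-to-leaf path --- that indeed contributes only $O(d\log n)$. The problem is the non-hub vertices of the separator $S=N[\{v_1,\ldots,v_d\}]$. For each component $D$ of $G\setminus S$ the attachment set $N(D)$ is a subset of $S$ of potentially unbounded size; the edges between $D$ and $N(D)\setminus\{v_1,\ldots,v_d\}$ must be covered by some bag, and a vertex $u\in S\setminus\{v_1,\ldots,v_d\}$ may have neighbors in many different components $D$. If $u$ is ``absorbed'' into the recursive decompositions of all such components, the set of bags containing $u$ is disconnected unless you add a connecting bag that contains $u$ together with all the other such separator vertices, and you are back to a bag of unbounded size. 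Recursing on $D\cup N(D)$ instead does not help either, since $|D\cup N(D)|$ need not drop by a constant factor. Your correct observation that $G[N(v_i)]$ has smaller clique number and hence (by induction on $t$) logarithmic treewidth shows $\tw(G[S])=O(\log n)$, but a balanced separator of small treewidth is not the same as one of small size, and the standard Robertson--Seymour argument needs the latter; a further idea is needed to bridge this, which is exactly what the banana theorem supplies in the paper. For Phase~1 your sketch (contradiction, three-in-a-tree, Ramsey on attachments) is far more speculative than what the paper does --- it starts from a path $P$ with $N[P]$ a balanced separator (Lemma~\ref{lem:gyarfas}) and runs a sliding-window argument using explicit wheel- and pyramid-based star cutsets --- but at the level of a blind outline this is a plausible direction and I would not call it wrong, only unsubstantiated.
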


Conjecture~\ref{diamond} was resolved in \cite{TWXI}.
Here we prove Conjecture~\ref{logconj}, thus closing the first line of research on induced subgraphs and tree decompositions.

We remark that the construction of \cite{ST} shows that the logarithmic bound of Conjecture~\ref{logconj} is asymptotically best possible. Furthermore, our main result implies that many algorithmic problems that are NP-hard in general (among them 
that  \textsc{Stable Set}, \textsc{Vertex Cover}, \textsc{Dominating Set} and \textsc{Coloring}) admit polynomial-time algorithms in the class of even-hole-free graphs with bounded clique number. As a consequence, for every positive integer $r$, $r$-{\sc Coloring} can be solved in polynomial time on even-hole-free graphs without any assumptions on clique size.

Before we proceed, we introduce some notation and definitions.
Let $G = (V(G),E(G))$ be a graph. For a set $X \subseteq V(G)$, we denote by $G[X]$ the subgraph of $G$ induced by $X$. For $X \subseteq V(G)$, we denote by $G \setminus X$ the subgraph induced by $V(G) \setminus X$. In this paper, we use induced subgraphs and their vertex sets interchangeably.

For graphs $G$ and $H$, we say that $G$ {\em contains} $H$ if some induced subgraph of $G$ is isomorphic to $H$. For a family $\mathcal{H}$ of graphs, $G$
{\em contains}
$\mathcal{H}$ if $G$ contains a member of $\mathcal{H}$, and we say that 
$G$ is {\em $\mathcal{H}$-free} if $G$ does not contain $\mathcal{H}$.
A {\em clique} in a graph is a set of pairwise adjacent vertices, and
a {\em stable set} is a set of vertices no two of which are adjacent.
Let $v \in V(G)$. The \emph{open neighborhood of $v$}, denoted by $N(v)$, is the set of all vertices in $V(G)$ adjacent to $v$. The \emph{closed neighborhood of $v$}, denoted by $N[v]$, is $N(v) \cup \{v\}$. Let $X \subseteq V(G)$. The \emph{open neighborhood of $X$}, denoted by $N(X)$, is the set of all vertices in $V(G) \setminus X$ with at least one neighbor in $X$. The \emph{closed neighborhood of $X$}, denoted by $N[X]$, is $N(X) \cup X$. If $H$ is an induced subgraph of $G$ and $X \subseteq V(G)$, then $N_H(X)=N(X) \cap H$ and $N_H[X]=N_H(X) \cup (X \cap H)$. Let $Y \subseteq V(G)$ be disjoint from $X$. We say $X$ is \textit{complete} to $Y$ if all possible edges with an end in $X$ and an end in $Y$ are present in $G$, and $X$ is \emph{anticomplete}
to $Y$ if there are no edges between $X$ and $Y$.

Given a graph $G$, a {\em path in $G$} is an induced subgraph of $G$ that is a path. If $P$ is a path in $G$, we write $P = p_1 \dd \cdots \dd p_k$ to mean that $V(P) = \{p_1, \dots, p_k\}$, and $p_i$ is adjacent to $p_j$ if and only if $|i-j| = 1$. We call the vertices $p_1$ and $p_k$ the \emph{ends of $P$}, and say that $P$ is a path \emph{from $p_1$ to $p_k$}. The \emph{interior of $P$}, denoted by $P^*$, is the set $V(P) \setminus \{p_1, p_k\}$. The \emph{length} of a path $P$ is the number of edges in $P$.  We denote by $C_k$ a cycle with $k$ vertices.

Next, we describe a few types of graphs that we will need (see Figure \ref{fig:forbidden_isgs}).
\begin{figure}[t!]
\begin{center}
\includegraphics[scale=0.7]{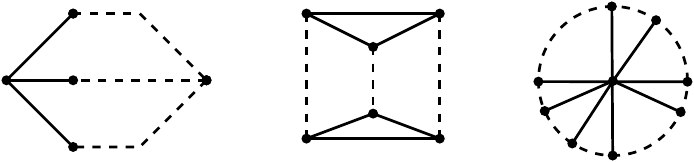}
\end{center}
\caption{Theta, prism and an even wheel. Dashed lines represent paths of length at least one.}
\label{fig:forbidden_isgs}
\end{figure}

A \emph{theta} is a graph consisting of three internally vertex-disjoint paths $P_1 = a \dd \cdots \dd b$, $P_2 = a \dd \cdots \dd b$, and
$P_3 = a \dd \cdots \dd b$, each of length at least 2, such that $P_1^*, P_2^*, P_3^*$ are pairwise anticomplete. In this case we call $a$ and $b$ the
{\em ends} of the theta.

A \emph{prism} is a graph consisting of three vertex-disjoint paths
$P_1 = a_1 \dd \cdots \dd b_1$, $P_2 = a_2 \dd \cdots \dd b_2$, and $P_3 = a_3 \dd \cdots \dd b_3$, each of
length at least 1, such that $a_1a_2a_3$ and $b_1b_2b_3$ are triangles, and no edges exist between the paths except those of the two triangles.

A \emph{pyramid} is a graph consisting of three  paths
$P_1 = a \dd \cdots \dd b_1$, $P_2 = a \dd \cdots \dd b_2$, and $P_3 = a \dd \cdots \dd b_3$ such that $P_1 \setminus \{a\}, P_2 \setminus \{a\}, P_3 \setminus \{a\}$ are pairwise disjoint, at least two of the three paths $P_1, P_2, P_3$ have length at least two, $b_1b_2b_3$ is triangle (called the \emph{base} of the pyramid), and no edges exist between $P_1 \setminus \{a\}, P_2 \setminus \{a\}, P_3 \setminus \{a\}$ except those of the triangle $b_1b_2b_3$. The vertex $a$ is called the \emph{apex} of the pyramid. 

A \emph{wheel} $(H, x)$ in $G$ is a pair where $H$ is a hole and $x$ is a vertex with at least three neighbors in $H$.
A wheel $(H,x)$ is {\em even} if $x$ has an even number of neighbors on $H$.

Let $\mathcal{C}$ be the
class of ($C_4$, theta, prism, even wheel)-free graphs (sometimes called ``$C_4$-free odd-signable'' graphs). For every integer $t \geq 1$, let
$\mathcal{C}_t$ be the class of all graphs in $\mathcal{C}$ with no clique of size $t$. It is easy to see that every even-hole-free graph is in $\mathcal{C}$.

The reader may be familiar with \cite{TWX} where a special case of Conjecture
~\ref{logconj} was proved;
moreover, only one Lemma of \cite{TWX} uses the fact that the set-up there is
not the most general case. At the time, the authors of \cite{TWX}  thought that the
full proof of Conjecture
~\ref{logconj} would follow the general outline of \cite{TWX}, fixing the one missing lemma. That is not what happened. The proof in the present paper takes a
different path: while it  relies  on  insights and a general understanding of the class of even-hole-free graphs gained in \cite{TWX}, and uses several theorems proved there,  a significant
detour is needed.

The first part of the paper is not concerned with treewidth at all. Instead,
we focus on the following question: given two non-adjacent vertices in
a graph in $\mathcal{C}$ of bounded clique number, how many internally
vertex-disjoint paths can there be between them? Given that if instead of
``internally vertex-disjoint'' we say ``with pairwise anticomplete interiors'', then the answer is ``two'', this is somewhat related to the recent work on
the induced Menger theorem \cite{GKL, HNST, NSS}.
Let $k\geq 1$ be an integer and let $a,b \in V(G)$. We say that
$ab$ is a {\em $k$-banana} if
$a$ is non-adjacent to $b$, and there exist $k$ pairwise internally-vertex-disjoint paths from $a$ to $b$. Note that if $ab$ is a $k$-banana in $G$, then
$ab$ is an $l$-banana in $G$ for every $l \leq k$.
We prove:

\begin{theorem}
\label{banana}
For every integer $t\geq 1$, there exists a constant $c_t$ such that 
if $G \in \mathcal{C}_{t}$, then $G$ contains no $c_t \log |V(G)|$-banana.
\end{theorem}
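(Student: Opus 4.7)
The plan is to prove Theorem~\ref{banana} by induction on $|V(G)|$, aiming for a recurrence of the shape: if $G\in\mathcal{C}_t$ carries an $ab$-banana with $k$ paths, then some induced subgraph $G'\subseteq G$ with $|V(G')|\le|V(G)|/2$ still carries a banana of at least $k-f(t)$ paths between suitably chosen endpoints, where $f$ depends only on~$t$. Iterating such a ``halve the vertices, lose only $O_t(1)$ paths'' step yields $k\le c_t\log|V(G)|$ for an appropriate $c_t$.

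Let $P_1,\dots,P_k$ be the internally vertex-disjoint paths realizing an $ab$-banana in $G$. I would first extract the direct constraints imposed by the forbidden subgraphs of $\mathcal{C}$: theta-freeness means no three of the $P_i$'s can have pairwise anticomplete interiors (else $P_i\cup P_j\cup P_l$ is a theta with ends $a,b$), so the auxiliary graph on $\{P_1,\dots,P_k\}$ whose edges record ``interiors touch'' has independence number at most~$2$; $C_4$-freeness forces all but at most one $P_i$ to have length at least~$3$; even-wheel-freeness restricts the attachments of any outside vertex to a hole $P_i\cup P_j$ formed by an interior-anticomplete pair; and $K_t$-freeness bounds the cliques inside the sets $\{u_i\}\subseteq N(a)$ and $\{v_i\}\subseteq N(b)$ of first/last internal vertices of the paths, enabling Ramsey-type reasoning on these sets. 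Collectively these tools should produce a dichotomy on path lengths.

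The inductive step then proceeds by locating a vertex $v$ whose closed neighborhood is a ``balanced and economical'' separator of the banana: $N[v]$ should meet only $O_t(1)$ of the $P_i$'s, while $V(G)\setminus N[v]$ has size at most $|V(G)|/2$ and still carries a banana of size $k-O_t(1)$ between suitably rerouted endpoints $a',b'$. I anticipate splitting into two regimes. In the \emph{short-path regime}, where most $P_i$ have length bounded in terms of $t$, the support of the banana is itself small, and I expect to derive a direct contradiction by Ramsey-type arguments on $\{u_i\},\{v_i\}$: the cross-edges between short paths, further constrained by $C_4$-, prism-, and even-wheel-freeness, should force either a clique of size $t$ or a forbidden induced subgraph. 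In the \emph{long-path regime}, $v$ is chosen as a median vertex of a long $P_i$; the main obstacle, which I expect to dominate the technical work, is ensuring simultaneously that $N[v]$ meets only boundedly many other paths (so that the sub-banana survives) and that $|V(G)\setminus N[v]|\le|V(G)|/2$ (so the recursion genuinely contracts). Controlling the number of paths met by $N[v]$ requires precise information about how a single vertex can attach to holes in graphs of $\mathcal{C}_t$, and this is exactly where I would invoke the structural lemmas from~\cite{TWX}.
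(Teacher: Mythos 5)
Your plan diverges substantially from the paper's proof, and the divergence exposes a genuine gap that I don't think you can close without importing essentially all of the paper's machinery.

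The crux of your proposal is the ``halve the vertices, lose only $O_t(1)$ paths'' step: finding a vertex $v$ such that $N[v]$ simultaneously (a) separates $G$ into pieces each of at most half the vertices and (b) meets only $O_t(1)$ of the banana paths, so that the banana survives almost intact in one piece. You acknowledge (b) as the main obstacle, but (a) is just as problematic: there is no reason a single vertex $v$ should exist whose closed neighborhood is a balanced separator. The paper does prove (Theorem~\ref{balancedsep}) that there is a set $Y$ of \emph{bounded size} such that $N[Y]$ is a balanced separator, but this is itself one of the two main results of the paper and takes Sections~\ref{sec:connectifier} and~\ref{sec:domsep} to establish; moreover, even granting it, there is no control over how many of the $P_i$ such an $N[Y]$ intersects --- a priori $N[Y]$ could meet all $k$ of them, and your proposal offers no mechanism to prevent this. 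Your ``short-path regime'' escape route also cannot work on its own: the Sintiari--Trotignon layered wheel construction shows that graphs in $\mathcal{C}_t$ really can have $\Theta(\log n)$-bananas, so a Ramsey argument showing that many short paths force a forbidden structure would have to fail for all but a bounded number of paths, which is exactly what needs proving.

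What the paper actually does is quite different: it inducts not on $|V(G)|$ but on the \emph{hub-dimension} $\hdim(G,ab)$ --- the number of parts in a degeneracy-driven partition $S_1,\dots,S_p$ of the hubs into stable sets, which is $O_t(\log n)$ by Theorem~\ref{logncollections} --- and shows (Theorem~\ref{diminduction_banana}) that the maximum banana size grows by only an additive $O_t(1)$ each time $\hdim$ increases by one. Concretely, one step consists of: locating via Theorem~\ref{basket} two bags $\chi(t_1),\chi(t_2)$ of an $m$-lean tree decomposition that hit every $ab$-path's interior (this is where your correct observation about three pairwise-anticomplete interiors forming a theta is used, via chordality of the subtree intersection graph); building a ``safe torso'' $\beta$ from those bags plus the $\Conn$-sets of Theorem~\ref{connectors}; simultaneously decomposing $\beta$ along the star separations of all hubs in $S_1$ to obtain the central bag $\beta^A(S_1)$, in which $S_1$ contains no hubs; applying Menger in $\beta^A(S_1)$; and lifting the resulting small $ab$-separator back to $\beta$ and then to $G$ via Theorems~\ref{smallsepinbeta} and~\ref{smallseping}. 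The key quantitative control --- that lifting costs only $O_t(1)$ extra vertices even though the separator itself may have size $\Theta(\log n)$ --- comes from Theorem~\ref{bound}, which caps the size of a stable set of non-hubs inside a minimal separator; this in turn rests on the connectifier analysis of Section~\ref{sec:connectifier}. None of this is guessed at in your proposal, and the step you flag as the ``main obstacle'' is precisely the one that requires this entire apparatus.
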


The next step in the proof of Conjecture~\ref{logconj} is the following.
Let $c \in [0, 1]$ and let $w$ be a normal weight function on $G$, that is, $w : V(G) \rightarrow \mathbb{R}_{\geq 0}$ satisfies $\sum_{v \in V(G)} w(v) = 1$. A set $X \subseteq V(G)$ is a {\em $(w,c)$-balanced separator} if
$w(D) \leq  c$ for every component $D$ of $G \setminus X$. The set $X$ is a {\em $w$-balanced separator} if $X$ is a $(w,\frac{1}{2})$-balanced separator.
We show:

\begin{theorem}
  \label{balancedsep}
There is an integer  $d$ with the following property.
Let $G \in \mathcal {C}$ and let $w$ be a normal weight function on $G$.
Then there exists $Y \subseteq V(G)$ such that
\begin{itemize}
\item $|Y| \leq d$, and
\item $N[Y]$ is a $w$-balanced separator in $G$.
\end{itemize}
\end{theorem}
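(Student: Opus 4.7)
\textbf{Proof proposal for Theorem~\ref{balancedsep}.}

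The plan is to produce $Y$ by analyzing an inclusion-minimal $w$-balanced separator $X$ of $G$ and showing that $X$ itself must already be covered by $N[Y]$ for some absolute constant-size set $Y$; once this is done, $N[Y]$ is a balanced separator because it contains one. A standard opening move based on the minimality of $X$ shows that each $x \in X$ has a neighbor in at least two components of $G \setminus X$ of weight bounded below by a constant, and a pigeonhole refinement lets us pass to two distinguished ``heavy'' components $A, B$ with $w(A), w(B) \geq \alpha$ such that every $x \in X$ has a neighbor in both $A$ and $B$.

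Next I would set up a contradiction. Suppose the conclusion fails for every $d$. Greedily choose $y_1, y_2, \ldots$ maximizing the number of uncovered vertices of $X$ captured by each $N[y_i]$, and let $S \subseteq X$ be a large set of survivors no two of which share a dominator; using that $G$ is $C_4$-free, two vertices of $X$ with a common neighbor are forced to collapse into the same $N[y_i]$, so $S$ ends up a stable set in $G$, and its size can be made arbitrarily large. For each pair $x_i, x_j \in S$ the connectivity of $A$ lets us pick an induced path $Q^A_{ij}$ from $x_i$ to $x_j$ with interior in $A$; similarly $Q^B_{ij}$ with interior in $B$, each of length at least $3$ thanks to $C_4$-freeness. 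Then $H_{ij} = Q^A_{ij} \cup Q^B_{ij}$ is a hole.

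The core of the argument, and the step I expect to be hardest, is to derive a forbidden configuration of $\mathcal{C}$ from this setup once $|S|$ exceeds an absolute Ramsey-type threshold. I would fix the hole $H_{12}$ and analyze how the vertices $x_3, x_4, \ldots$ and their chosen neighbors attach to $H_{12}$: each $x_k$ has neighbors in $A$ and in $B$, giving access to $H_{12}$ through its $A$-arc and $B$-arc. After cleaning (shortening paths, iterating Ramsey to make attachment patterns uniform and adjacencies between the $x_k$'s trivial), one of three things should happen. Either (i) $x_k$ has two or more non-adjacent neighbors on $H_{12}$, and an appropriate subpath yields a theta or an even wheel depending on parity; or (ii) the paths $Q^A_{1k}, Q^A_{2k}, Q^A_{ij}$ can be combined to produce three internally disjoint paths between two common endpoints with anticomplete interiors (a theta), or sharing triangles at both ends (a prism); or (iii) parity forces an even wheel centered at $x_k$ on $H_{ij}$ for suitable $i, j$. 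In every case the structure obtained lies in $G$, contradicting $G \in \mathcal{C}$.

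The technical heart is therefore the parity and anticompleteness bookkeeping that turns ``many non-adjacent $x_i$'s with neighbors in both $A$ and $B$'' into one of the three forbidden three-path-configurations; this is where I expect to lean on the odd-signable viewpoint and on ideas from \cite{TWX} alluded to in the introduction. If this cleaning step succeeds with a Ramsey-type bound depending only on the forbidden configurations of $\mathcal{C}$, then the implicit $d$ is an absolute constant, as required.
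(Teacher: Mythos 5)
Your proposal takes a genuinely different route from the paper, but it has two serious gaps, either of which seems fatal as stated.

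\textbf{The opening move is incorrect.} You claim that an inclusion-minimal $w$-balanced separator $X$ has the property that every $x\in X$ has a neighbor in two heavy components of $G\setminus X$, and that a pigeonhole refinement produces two components $A,B$ (each of weight bounded below) in which \emph{every} vertex of $X$ has a neighbor. Neither claim holds. Minimality of a balanced separator only says: for each $x\in X$, some component $D_x$ of $G\setminus(X\setminus\{x\})$ has $w(D_x)>\tfrac12$. The set $D_x\setminus\{x\}$ is a union of components of $G\setminus X$, but this union could consist of a \emph{single} component $C$ with $w(C)\le \tfrac12 < w(C)+w(x)$ (this is exactly what happens when $w(x)$ itself is large), in which case $x$ has a neighbor in only one side. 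Even when $x$ genuinely merges two or more components, those components differ from vertex to vertex; there is no single pair $A,B$ in which every vertex of $X$ has neighbors. This is the property of a minimal $u$-$v$ separator, not of a minimal balanced separator. The paper sidesteps this entirely: it starts from Lemma~\ref{lem:gyarfas} (Gy\'arf\'as path), obtaining a path $P$ with $N[P]$ balanced, so that the ``separator'' has connected structure by construction.

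\textbf{The Ramsey step is the whole theorem, and the stated ingredients do not suffice.} You propose that once $|S|$ exceeds a Ramsey bound, combining the paths $Q^A_{ij},Q^B_{ij}$ forces a theta, prism, or even wheel. This is essentially the content of the paper's Theorem~\ref{bound}, but note that Theorem~\ref{bound} carries three extra hypotheses you do not have: the ambient graph lies in $\mathcal{C}_t$ (bounded clique number), the stable set is disjoint from $\Hub(G)$, and there is a banana-type connectivity hypothesis. The no-hubs assumption is not a luxury --- in $\mathcal{C}$ a vertex $x_k$ with many neighbors on the hole $H_{12}$ is allowed to be the center of a proper odd wheel, and the parity case analysis collapses if the $x_k$'s can be hubs. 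Without a clique bound you cannot run the Ramsey argument against $K_m$, and indeed Theorem~\ref{balancedsep} as stated holds for all of $\mathcal{C}$, so any proof of it cannot invoke a clique bound. The paper handles this by shifting from ``derive a contradiction'' to ``construct a separator'': it proves that $\mathcal{C}$ is $4$-amicable (Theorem~\ref{separatepath}, which feeds on the wheel-cutset Theorem~\ref{lemma:proper_wheel_forcer} and the pyramid-cutset Theorem~\ref{apexnbrs}), and then slides a window along the Gy\'arf\'as path, at each position producing a bounded-size set whose closed neighborhood splits the path (Theorem~\ref{balancedsep_3}). Your proposal contains no analogue of that constructive step, and the contradiction you aim for appears to be unreachable with the hypotheses available.

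In short: the approach is different from the paper (contradiction from a minimal separator vs.\ constructive sliding-window along a dominating path), but the opening premise fails and the key Ramsey-type argument would need to be built essentially from scratch, apparently requiring ingredients (no hubs, bounded clique) that are unavailable in $\mathcal{C}$.
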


We then use Theorem~\ref{banana} and Theorem~\ref{balancedsep} to prove our main result:

\begin{theorem}
\label{main}
For every integer $t\geq 1$, there exists a constant $c_t$ such that every
 $G \in \mathcal{C}_{t}$ satisfies
$\tw(G)\leq c_t \log |V(G)|$. 
\end{theorem}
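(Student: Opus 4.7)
The plan is to show that every induced subgraph $H$ of $G$ admits, for every normal weight $w$, a $w$-balanced separator of size $O(\log n)$ (where $n=|V(G)|$); the classical result that such separators---uniformly over all induced subgraphs---bound treewidth then yields $\tw(G)=O(\log n)$.

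Fix an induced subgraph $H$ and a normal weight $w$ on $V(H)$. Since $\mathcal{C}_t$ is closed under taking induced subgraphs, both Theorem~\ref{balancedsep} and Theorem~\ref{banana} apply to $H$. Apply Theorem~\ref{balancedsep} to obtain $Y\subseteq V(H)$ with $|Y|\leq d$ and $N_H[Y]$ a $w$-balanced separator. The set $N_H[Y]$ can be arbitrarily large, so it must be trimmed. Cluster the components of $H\setminus N_H[Y]$ greedily into two disjoint unions $\mathcal{A}$ and $\mathcal{B}$ of weights at most $1/2$, and include $Y$ (at cost $d=O(1)$) in the candidate separator. What remains is to produce a small $\mathcal{A}$--$\mathcal{B}$ vertex separator in $H\setminus Y$.

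By Menger's theorem, the minimum $\mathcal{A}$--$\mathcal{B}$ vertex cut in $H\setminus Y$ equals the maximum number $f$ of pairwise internally disjoint $\mathcal{A}$--$\mathcal{B}$ paths in $H\setminus Y$. We show $f=O(\log n)$ using Theorem~\ref{banana}. Each such path uses a vertex of $N(Y)\setminus Y$; by pigeonhole some $y_0\in Y$ has at least $f/d$ paths touching $N(y_0)$. A careful rerouting---exploiting the connectedness of the individual components inside $\mathcal{A}$ and $\mathcal{B}$, and passing through $y_0$ itself (which is available in $H$, though not in $H\setminus Y$)---converts these paths into roughly $f/d$ internally disjoint paths between a single non-adjacent pair $(a^*,b^*)\in \mathcal{A}\times \mathcal{B}$. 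Theorem~\ref{banana} then forces $f/d\leq c_t\log n$, whence $f=O(\log n)$.

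Concatenating $Y$ with the resulting $O(\log n)$-sized Menger cut yields a $w$-balanced separator of $H$ of size $O(\log n)$. Since the argument is uniform over $H$ and $w$, the classical implication from balanced separators to treewidth delivers $\tw(G)=O(\log n)$, as desired. The main obstacle is the rerouting step: aggregating $f$ internally disjoint $\mathcal{A}$--$\mathcal{B}$ paths with potentially distinct endpoints into a banana of comparable size between a single fixed non-adjacent pair $(a^*,b^*)$, despite $\mathcal{A}$ and $\mathcal{B}$ each possibly consisting of many components. This requires delicate bookkeeping to maintain internal disjointness while concatenating sub-paths through $y_0$ and across the components; choosing $\mathcal{A},\mathcal{B}$ to be single components (whenever feasible) or re-balancing the clustering before applying Menger is likely necessary to make the aggregation go through.
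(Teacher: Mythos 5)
Your approach is genuinely different from the paper's, but it has gaps that I do not see how to close. The paper proves Theorem~\ref{main} via Theorem~\ref{bettersep}: it takes a $3L$-atomic (hence lean and tight) tree decomposition, picks a center bag $\chi(t_0)$, shows that the vertices hard to cut off from $\chi(t_0)$ by a set of size $<3L$ form a clique $K$ (this uses both the banana bound and the leanness of the decomposition via Theorem~\ref{smallsepnonsep}), and then, for each vertex $v$ of a small dominating set $X$ of a balanced separator, adds a minimum-size set $\Delta(v)$ separating $v$ from $\chi(t_0)$. The union $K\cup\bigcup_{v\in X}\Delta(v)$ is then shown to be $w$-balanced by routing every large component to one side of the center bag, where the center property of $(T,\chi)$ applies. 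In $\mathcal{C}_t$ the clique $K$ has size $\le t$, which is how the final bound is assembled. None of this machinery appears in your argument.

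The main gap in your proposal is the grouping-plus-Menger step. After you partition the components of $H\setminus N_H[Y]$ into two groups $\mathcal{A},\mathcal{B}$ and delete $Y$ plus a minimum $\mathcal{A}$--$\mathcal{B}$ cut $S$ in $H\setminus Y$, the components of $H\setminus(Y\cup S)$ are \emph{not} $\mathcal{A}$ and $\mathcal{B}$. They can absorb vertices of $N[Y]\setminus(Y\cup S)$, and any components of $H\setminus N_H[Y]$ that you had to leave out of the partition. In fact a valid two-group partition need not exist at all: if the component weights are, say, $0.4$, $0.4$, $0.2$, no partition into two unions each of weight at most $1/2$ exists, so some mass is necessarily left over and can reattach to either side in $H\setminus(Y\cup S)$, pushing a resulting component above $1/2$. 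So the object you produce is not a $w$-balanced separator.

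The aggregation step, which you flagged yourself, is also a genuine obstacle rather than just bookkeeping. The $f$ internally disjoint $\mathcal{A}$--$\mathcal{B}$ paths have up to $2f$ distinct endpoints spread across many components. Passing through $y_0\in Y$ does not help directly: if several rerouted paths pass through $y_0$, they are no longer internally vertex-disjoint, so you cannot invoke Theorem~\ref{banana} with $y_0$ as an interior vertex; and if $y_0$ is an endpoint, you still need a second common endpoint and a way to reach it from many different components without reusing vertices. This is precisely the kind of structural control that the paper obtains from the lean tree decomposition (namely, routing everything through the center bag and exploiting the bounded adhesion), and which is missing from a bare Menger argument.
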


\subsection{Proof outline and organization}

We only include a few  definitions in this section; all the necessary definitions appear in later parts of the paper.
Our first goal is to prove Theorem~\ref{banana}. Let $a,b \in V(G)$
be non-adjacent.
Recall that a {\em separation} of  a graph  $G$ is a triple $(X,Y,Z)$ of pairwise disjoint
  subsets of $G$ with $X \cup Y \cup Z = G$ such that $X$ is anticomplete to
  $Z$.
Similarly to \cite{TWI}, we use the fact that graphs in $\mathcal{C}_t$
admit a natural family of separations that correspond to special vertices of
the graph called
``hubs'' and are discussed in Section~\ref{cutsets}.
Unfortunately, the interactions between these separations may be complex, and,
similarly to  \cite{TWIII}, we use degeneracy to partition the set
of all hubs other than $a,b$
(which yields a partition of all the natural separations)
of an $n$-vertex  graph $G$ in $\mathcal{C}_t$  into collections $S_1, \dots, S_p$, where each $S_i$ is ``non-crossing''
(this property is captured in Lemma \ref{looselylaminar}), $p \leq C(t) \log n$ (where $C(t)$ only depends on $t$ and works for all $G \in \mathcal{C}_t$)
and each vertex of  $S_i$ has at most $d$ (where $d$ depends on  $t$)
neighbors in
$\bigcup_{j=i}^p S_j$. We prove a strengthening of Theorem~\ref{banana}, which
asserts that the size of the largest $ab$-banana is bounded by a linear function of $p$.

We  observe that a result of \cite{TWX} implies that there  exists a
an integer  $k$ (that depends on $t$) such
that if $G \in \mathcal{C}_t$ has no  hubs
other than $a,b$, then $ab$ is not a $k$-banana in $G$.
More precisely, the result of \cite{TWX} states that if $a$ and $b$
are joined by $k$ internally-vertex-disjoint paths $P_1, \dots, P_k$, then for at least one $i \in \{1, \dots, k\}$, the neighbor of
$a$ in $P_i$ is a hub. 

We now proceed as follows. Let $m=2d+k$ (where $d$ comes from the degeneracy partition and $k$ from the previous paragraph);
suppose that $ab$ is a $(4m+2)(m-1)$-banana in $G$ and let $\mathcal{P}$ be the set
of all paths of $G$ with ends $a,b$.

Let $S_1, \dots, S_p$ denote the partition of hubs as decribed above. We proceed by induction on $p$ and describe a process below that finds an induced subgraph $H$ of $G$ in which: 
\begin{itemize}
    \item Vertices in $S_1$ are no longer hubs; 
    \item If there are not many internally disjoint $ab$-paths in $H$, we can ``lift'' this to $G$.
\end{itemize}

We first consider a
so-called $m$-lean tree decomposition $(T, \chi)$ of $G$ (discussed in Section \ref{linkedandlean}). By examining the intersection graphs of subtrees of a tree we deduce that there exist   two (not necessarily distinct) vertices
$t_1,t_2 \in V(T)$
such that for every $ab$ path $P$, the interior of $P$
meets $\chi(t_1) \cup \chi (t_2)$. We also argue that $a,b \in \chi(t_1) \cup \chi(t_2)$. A vertex $u$ of $S_1$ is {\em bad} if $u$ has large
degree  (at least $D = 2m(m-1)$)
in the torso of $\chi(t_1)$, or
$u$ has large degree in 
  the torso of $\chi(t_2)$, or $u$ is adjacent to both $a$ and $b$.
  We show that there are at most three bad vertices in $S_1$.

We would like to bound the size of the largest
$ab$-banana in the union of the two torsos.
Unfortunately, the torso of $\chi(t_i)$ may not be a graph in $\mathcal{C}_t$.
Instead, we find an induced subgraph of $G$, which we call $\beta$, that
consists of $\chi(t_1) \cup \chi(t_2)$ together with a collection of disjoint vertex sets
$\Conn(t_i,t)$ for $t \in V(T) \setminus \{t_1,t_2\}$ and $i \in \{1,2\}$ except vertices $t$ on the $t_1\dd t_2$-path in $T$, where each $\Conn(t_i,t)$ ``remembers'' 
 the component of $G \setminus (\chi(t_1) \cup \chi(t_2))$ that
 meets $\chi(t)$.  Importantly, no vertex of
 $\beta \setminus (\chi(t_1) \cup \chi(t_2))$ is a hub of $\beta$, 
 and all but at most three  vertices  of $S_1$ have bounded degree in
 the union of the torso of $\chi(t_1)$ and the torso of $\chi(t_2)$.

We next  decompose
$\beta$, simultaneously, by all the separations corresponding to the hubs in $S_1$ that are not bad, and delete the set of (at most three) bad vertices of $S_1$. We denote the resulting graph  by $\beta^A(S_1)$ and call it 
    the ``central bag'' for $S_1$. The parameter $p$
  is smaller for $\beta^A(S_1)$ than it is for $G$, and so we can use induction
  to obtain a bound on the largest size of an $ab$-banana in $\beta^A(S_1)$.
  Since our goal is to bound the size of an $ab$-banana in $G$
  by a linear function of $p$, we now need to show that the
  size of the largest $ab$-banana does not grow by more than an additive constant when we move from $\beta^A(S_1)$ to $G$.
  
  We start with  a smallest subset $X$    of $\beta^A(S_1)$ that separates $a$ from $b$ in $\beta^A(S_1)$ (and whose size is bounded by Menger's theorem)
  and show how to transform
  in into a set  $Y$ separating $a$ from $b$ in $\beta$, while increasing the size of its intersection with $\chi(t_1) \cup \chi(t_2)$ by at most
an additive constant, and ensuring  
  the number of
  sets $\Conn(t_i,t)$ that $Y$ meets is bounded by a constant.
    Then we repeat a similar procedure to obtain a set $Z$ of vertices
  separating
  $a$ from $b$ in $G$, making sure that the increase in size is 
  again bounded by  an additive constant.

  Let us now discuss how we obtain the bound on the growth of the separator.
  In the first step, to obtain $Y$, we add to $X$   
  the neighbor sets of the vertices of $S_1  \cap X$.
  Since while constructing $\beta^A(S_1)$ we have deleted all the bad vertices of $S_1$, the number of vertices of $\chi(t_1) \cup \chi(t_2)$  added  to $X$ is at most
  $2|S_1 \cap X|D$, and $Y \setminus X$ meets at most
  $2|S_1 \cap X|D$ of the sets $\Conn(t_i,t)$.
  Note that the bound on the size of $X$ that we have depends on $p$,
  which may be close to $\log |V(G)|$, so another argument is
  needed to obtain a constant bound on $|S_1 \cap X|$ and on the
  number of sets $\Conn(t_i,t)$ that meet $Y$.
This is a consequence of 
    Theorem \ref{bound}, because no vertex of $S_1$ is a hub in
    $\beta^A(S_1)$  (this is proved in Theorem \ref{A_centralbag}),
    and no vertex of $\Conn(t_i,t)$ is a hub in $\beta$.
    (The proof of Theorem~\ref{bound} analyzes the structure of minimal
    separators in graphs in $\mathcal{C}$ using tools developed in Section~\ref{sec:connectifier}.)

    In the second growing step, we start with the set $Y$ obtained in
    the previous paragraph.
    Then we add to $Y$ the following subsets (here we describe the cases
    when $t_1 \neq t_2$; if $t_1=t_2$ the argument is similar).
    \begin{enumerate}
    \item $\chi(t_1) \cap \chi(t_1')$ where $t_1'$ is the unique
      neighbor of $t_1$ in the path in $T$ from $t_1$ to $t_2$.
 \item $\chi(t_2) \cap \chi(t_2')$ where $t_2'$ is the unique
   neighbor of $t_2$ in the path in $T$ from $t_1$ to $t_2$.
 \item $\chi(t_1) \cap \chi(t)$ for every $t \in N(t_1) \setminus \{t_1'\}$
   such that $Y \cap \Conn(t_1,t) \neq \emptyset$.
   \item  $\chi(t_2) \cap \chi(t)$ for every $t \in N(t_2) \setminus \{t_2'\}$
   such that $Y \cap \Conn(t_2,t) \neq \emptyset$.
 \item The set of all bad vertices of $S_1$. 
    \end{enumerate}
    One of the properties
    of $m$-lean tree decompositions is that the size of each adhesion (intersection of ``neighboring'' bags)
    is less than $m$.
    The number of adhesions added to $Y$
    is again bounded since the number of distinct sets
    $\Conn(t_i,t)$ that meet $Y$ is bounded.
        This completes the proof of Theorem~\ref{banana}.

        The next key ingredient in the proof of Theorem~\ref{main} is  Theorem~\ref{balancedsep}, asserting that         there is an integer $C$
    such that for every graph $G \in \mathcal{C}$ and every normal weight function
    $w$ on $G$, there is a $w$-balanced separator $X$ in $G$  such that
    $X$ is contained in the union of the neighborhoods of at most $C$ vertices of $G$. To prove that, we first prove a decomposition theorem, similar to the
    one giving us the natural separations associated with hubs, but this time
    the separations come from pyramids in $G$. We then use the two kinds of
    separations (the kind that come from hubs and the kind that come from pyramids) as follows.

    For $X \subseteq V(G)$ we say that a set $Y \subseteq V(G)$ {\em dominates} $X$
    if $X \subseteq N[Y]$.
    By Lemma \ref{lem:gyarfas} (from  \cite{QPTAS}) there is a path $P=p_1 \dd \cdots \dd p_k$
    in $G$ such that
    $w(D) \leq \frac 1 2$ for every component $D$ of $G \setminus N[P]$.
    We now use a ``sliding window'' argument: we divide $P$ into subpaths, each with the property that its neighborhood can be dominated by a small, but not too small, set of vertices. Using the decomposition theorems above, we find a bounded-size set $X(W)$ such that, except for our window $W$, the path $P$ is disjoint from $N[X(W)]$, and $N[X(W)]$ separates the subpath of $P$ before our current window from that after our current window. This means (unless $N[X]$ is a balanced separator) that the big component of $G \setminus N[X]$ does not contain either the subpath of $P$ before our window, or the subpath after our window. By looking at the point in the path where this answer changes from ``before'' to ''after'' (and showing that such a point exists), and by combining the separators for the two windows before and after this point as well as small dominating sets for the neighborhood of those two windows, we are able to find a $w$-balanced separator with bounded domination number. 
     This completes the proof of Theorem~\ref{balancedsep}. We remark
     that Theorem~\ref{balancedsep} applies to all graphs in $\mathcal{C}$ and does not assume a bound on the clique number. 

   The next step in the proof of Theorem~\ref{main} is to prove
     Theorem~\ref{bettersep}, asserting that for every integer $L$,
     if a graph  $G$ contains no $L$-banana, then for
     every normal weight function $w$ on $G$, if $G$ has a $w$-balanced separator $N[X]$, then $G$ has an $w$-balanced separator
     $Y$ and a clique $T \subseteq X$ such that 
     $|Y \setminus T| \leq 3|X|L$. 
          This step uses $3L$-lean
          tree decompositions of $G$  and works for all graphs $G$.

     Now Theorem~\ref{bettersep} and Theorem~\ref{banana} together
     imply that for every $t$, there exists an integer  $q$,
     depending on $t$, such that for every $G \in \mathcal{C}_t$,
     $G$ has a balanced separator of size at most $q \log |V(G)|+t$;
     that is Theorem~\ref{smallsep}.
     By Lemma~\ref{lemma:bs-to-tw} this immediately implies the desired bound
     on the treewidth of $G$.

    The paper is organized as follows. In Section~\ref{linkedandlean},
    we discuss lean tree decompositions and their properties,
    along with other classical results in graph theory. For some of them
    we prove variations tailored specifically to our needs.
        In Section~\ref{cutsets} we summarize 
        results guaranteeing the existence of separations associated with hubs.
        We also establish a stronger version of Theorem~\ref{banana} for the case when the set of hubs in $G$ is very restricted.
  In Section~\ref{sec:centralbag_banana} we discuss the construction of
  the graphs $\beta$ and $\beta^A(S_1)$, and how to use
$ab$-separators there to obtains $ab$-separators in $G$.
  In Section~\ref{sec:connectifier}  we analyze the structure of minimal separators in graphs of $\mathcal{C}$.
  In Section~\ref{boundnonhubs}  we use the results of Section~\ref{sec:connectifier}
  to obtain a bound on the size of a stable set of non-hubs in
  an $ab$-separator of smallest size.
  Section~\ref{sec:bananaproof} puts together the results of all the previous sections
  to prove Theorem \ref{banana}.
  The goal of  Section~\ref{sec:domsep} is to prove  Theorem~\ref{balancedsep}.
  We start with Lemmas~\ref{lemma:proper_wheel_forcer} and \ref{apexnbrs}
  to establish the existence of certain decompositions in 
  graphs of $\mathcal{C}$, and then proceed with the sliding window argument.
  Section~\ref{sec:smallsep} is devoted to the proof of Theorem~\ref{smallsep}.
The proof of Theorem~\ref{main} is completed in Section~\ref{sec:proof}.  
  Finally, Section~\ref{algsec}
  discusses algorithmic consequences of 
  Theorem~\ref{balancedsep} and Theorem~\ref{main}.

\section{Special tree decompositions and connectivity} \label{linkedandlean}

In this section we have collected several results from the literature that we
need; for some of them we have also proved our own versions, tailored
specifically to our needs. Along the way we also introduce some notation.

\subsection{Connectivity}
We start with a result on connectivity. For two vertices $u,v \in G$ and a set $X \subseteq G \setminus \{u,v\}$
we say that $X$ {\em separates} $u$ from $v$ if $P^* \cap X \neq \emptyset$
for every path $P$ of $G$ with ends $u$ and $v$. The following is a well-known variant of a classical theorem due to Menger \cite{Menger}: 

\begin{theorem}[Menger \cite{Menger}]\label{Menger_vertex}
   Let $k\geq 1$ be an integer, let $G$ be a graph and let $u,v \in G$ be distinct and non-adjacent. Then either there exists a set $M\subseteq G\setminus \{u,v\}$ with $|M|<k$ such that $M$ separates $u$ and $v$ in $G$, or $uv$ is a $k$-banana in $G$.
\end{theorem}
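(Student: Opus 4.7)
The plan is to prove this classical result by induction on $|E(G)|$, assuming $G$ admits no $u$-$v$ separator of size less than $k$ and producing $k$ pairwise internally-vertex-disjoint paths from $u$ to $v$. The case $k=1$ is immediate (it says $u$ and $v$ lie in the same component), so assume $k\geq 2$.

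First I would dispose of the case where $u$ and $v$ share a common neighbor $w$. Any $u$-$v$ separator in $G\setminus\{w\}$ becomes one in $G$ after adjoining $w$, so every separator in $G\setminus\{w\}$ has size at least $k-1$. The graph $G\setminus\{w\}$ has strictly fewer edges than $G$, so the induction hypothesis provides $k-1$ internally disjoint $u$-$v$ paths in $G\setminus\{w\}$, to which I append the length-two path $u \dd w \dd v$. This yields $k$ paths in $G$, as required.

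In the remaining case, I fix a minimum $u$-$v$ separator $M$ (of size at least $k$) and denote by $A$ the vertex set of the component of $u$ in $G\setminus M$ and by $B$ the union of the other components, so $v\in B$ and every vertex of $M$ has a neighbor in both $A$ and $B$ by minimality of $M$. Form two auxiliary graphs: $G_A$, obtained from $G[A\cup M]$ by adding a new vertex $v'$ adjacent to every vertex of $M$; and $G_B$, defined symmetrically with a new vertex $u'$ in place of $A$. A small separator of $u$ from $v'$ in $G_A$ would also separate $u$ from $v$ in $G$ (since every $u$-$v$ path in $G$ must cross $M$, and the $A$-side of such a path lives in $G_A$), so the minimum $u$-$v'$ separator in $G_A$ has size at least $k$; analogously for $G_B$. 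Using that $u,v$ have no common neighbor and that $M$ has at least $k\geq 2$ vertices, one checks the strict edge-count bounds $|E(G_A)|,|E(G_B)|<|E(G)|$, after treating separately the extreme configurations $A=\{u\}$ or $B=\{v\}$ (in which $u$ or $v$ is adjacent to every vertex of a minimum separator, and the $k$ paths of length two through $M$ do the job directly). Induction then produces $|M|$ internally disjoint $u$-$v'$ paths in $G_A$ and $|M|$ internally disjoint $u'$-$v$ paths in $G_B$; since each such path meets $M$ in a unique vertex, I splice the $A$-halves from the first family to the $B$-halves from the second family through the common vertex of $M$, obtaining $|M|\geq k$ internally disjoint $u$-$v$ paths in $G$.

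The main obstacle — in fact the only real technicality — is verifying the strict edge-count reduction $|E(G_A)|,|E(G_B)|<|E(G)|$ driving the induction, which uses both the no-common-neighbor reduction and the direct handling of the degenerate sides; the rest of the argument is bookkeeping in the concatenation step.
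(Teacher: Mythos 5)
The paper states this theorem with a citation only and gives no proof of its own, so your argument has to be judged on its own terms. Your overall scheme (dispose of common neighbors, then split along a minimum separator $M$ into $G_A$ and $G_B$, recurse, and splice the $|M|$ path-halves through $M$) is a standard and essentially correct approach, and the non-degenerate part of the bookkeeping is fine. The gap is precisely in the step you flag but wave off: the degenerate configurations $A=\{u\}$ (equivalently $N(u)=M$) and $B=\{v\}$ (equivalently $N(v)=M$). Your proposed fix --- that ``the $k$ paths of length two through $M$ do the job directly'' --- is not available: a length-two path $u\dd m\dd v$ requires $m$ to be adjacent to both $u$ and $v$, i.e.\ $m$ would be a common neighbor, and you have already reduced to the case where $u$ and $v$ have none. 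Nor can the degenerate case be avoided by re-choosing $M$. Concretely, let $u$ be adjacent to $a_1,a_2$, let $v$ be adjacent to $b_1,b_2$, and make $\{a_1,a_2,b_1,b_2\}$ a clique; then $u,v$ have no common neighbor, the only minimum $u$-$v$ separators are $N(u)=\{a_1,a_2\}$ and $N(v)=\{b_1,b_2\}$, and for either choice one of $G_A,G_B$ is isomorphic to $G$ itself, so the edge count does not drop on that side and the recursion does not terminate.

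Closing this case requires an argument of a different flavor from the rest of your proof. One standard route: when every minimum separator is contained in $N(u)$ or in $N(v)$, take a shortest $u$-$v$ path $u\dd x\dd y\dd\cdots\dd v$ and delete the edge $xy$. If the minimum separator size does not drop, recurse on $G-xy$. If it drops, then $G-xy$ has a separator $M'$ with $|M'|=k-1$, so both $M'\cup\{x\}$ and $M'\cup\{y\}$ are minimum separators of $G$; the first must lie in $N(u)$ (it cannot lie in $N(v)$ since $x\in N(u)$ and there is no common neighbor), the second must lie in $N(v)$ (it cannot lie in $N(u)$ since $y\notin N(u)$ by minimality of the path), hence $M'\subseteq N(u)\cap N(v)=\emptyset$ and $k=1$, which is your base case. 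Without some such step your induction is incomplete.
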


\subsection{Lean tree decompositions}

We adopt notation from \cite{TWX}: For a tree $T$ and vertices $t,t' \in V(T)$,
we denote by $tTt'$ the unique path in $T$ from $t$ to $t'$.
Let $(T, \chi)$ be a tree decomposition of a graph $G$.
For every $uv \in E(T)$, the {\em adhesion at $uv$}, denoted by $\adh(u,v)$, is the set $\chi(u) \cap \chi (v)$. For $u,v \in T$ such that $uv \not \in E(T)$
(in particular, if $u=v$), we set $\adh(u,v)=\emptyset$.
We define $\adh(T, \chi)=\max_{uv \in E(T)} |\adh(u,v)|$. For every $x\in V(T)$,  the \textit{torso at $x$}, denoted by $\hat{\chi}(x)$, is the graph obtained from the bag $\chi(x)$ by, for each $y\in N_T(x)$, adding an edge between every two non-adjacent vertices $u,v\in \adh(x,y)$.

In the proof of Theorem \ref{banana} and Theorem \ref{main}, we will use
a special kind of a tree decomposition that we present next.
Let $k>0$ be an integer. A tree decomposition $(T, \chi)$ is called {\em $k$-lean} if the following hold: 
\begin{itemize}
    \item $\adh(T,\chi) < k$; and 
    \item for all $t,t' \in V(T)$ and sets $Z \subseteq \chi(t)$ 
and $Z' \subseteq \chi(t')$ with $|Z|=|Z'| \leq k$, either $G$ contains $|Z|$ disjoint paths from $Z$ to $Z'$, or some edge $ss'$ of $tTt'$ satisfies $|\adh(s,s')| <|Z|$.
\end{itemize}

For a tree $T$ and an edge $tt'$ of $T$, we  denote by
$T_{t \rightarrow t'}$ the
 component of $T \setminus t$ containing $t'$. Let
 $G_{t \rightarrow t'}=G[\bigcup_{v \in T_{t \rightarrow t'}} \chi(v)]$.
    A tree decomposition
$(T,\chi)$ is {\em tight} if for every edge $tt' \in E(T)$  there is a component
$D$ of $G_{t \rightarrow t'} \setminus \chi(t)$ such that
$\chi(t) \cap \chi(t') \subseteq N(D)$ (and therefore $\chi(t) \cap \chi(t') = N(D)$). 

   Next, we need a definition from  \cite{BD}. Given a tree decomposition $(T, \chi)$ of an $n$-vertex graph $G$, its \emph{fatness} is the vector $(a_n, \dots, a_0)$ where $a_i$ denotes the number of bags of $T$ of size $i$. A tree decomposition $(T, \chi)$ of $G$ is \emph{$k$-atomic} if $\adh(T, \chi) < k$ and the fatness of $(T, \chi)$ is lexicographically minimum among all tree decompositions of $G$ with adhesion less than $k$.
 
It was observed in \cite{kblockpaper} that 
\cite{BD} contains a proof of the following:

\begin{theorem} [Bellenbaum and Diestel \cite{BD}, see 
 Carmesin, Diestel, Hamann, Hundertmark \cite{kblockpaper}, see also Wei{\ss}auer \cite{Weissauer}]
  \label{atomictolean}
  Every $k$-atomic tree decomposition is $k$-lean.
\end{theorem}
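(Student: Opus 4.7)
My plan is to prove the contrapositive: if a tree decomposition $(T, \chi)$ with $\adh(T, \chi) < k$ fails to be $k$-lean, then I will build another tree decomposition of $G$ with adhesion $< k$ but lexicographically smaller fatness, showing $(T, \chi)$ cannot be $k$-atomic. Since $k$-atomicity already enforces $\adh(T,\chi) < k$, the only failure to address is the linkage condition: there exist $t, t' \in V(T)$ and $Z \subseteq \chi(t)$, $Z' \subseteq \chi(t')$ with $r := |Z| = |Z'| \leq k$ such that $G$ does not contain $r$ pairwise disjoint $Z$--$Z'$ paths, yet every edge $ss'$ on the path $tTt' = t_0 t_1 \cdots t_\ell$ satisfies $|\adh(s,s')| \geq r$. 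I would fix such a witness minimising first $r$ and then $\ell$, and then apply Theorem~\ref{Menger_vertex} to produce a $Z$--$Z'$ separator $S \subseteq V(G)$ with $|S| \leq r - 1 < k$.

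Let $A$ be the union of the components of $G \setminus S$ meeting $Z$, and set $B := V(G) \setminus (A \cup S)$, so $\{A, S, B\}$ partitions $V(G)$, $Z \subseteq A$, $Z' \setminus S \subseteq B$, and $A$ is anticomplete to $B$. The modification I would perform is to ``cut'' the path $t_0 \cdots t_\ell$ using $S$: replace each $\chi(t_i)$ by (at most) two new bags $\chi_A(t_i) := \chi(t_i) \cap (A \cup S)$ and $\chi_B(t_i) := \chi(t_i) \cap (B \cup S)$, depending on whether $\chi(t_i)$ has vertices on both sides of $S$. The replacements are linked into two parallel rails joined together at one end through a bridge bag carrying $S$, and each subtree of $T$ branching off the path is reattached to whichever rail contains its vertex set modulo $S$ (anticompleteness of $A$ and $B$ guarantees no such branch straddles both sides). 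A routine verification shows the three axioms of a tree decomposition hold for the resulting $(T^\ast, \chi^\ast)$. The only adhesions changed are those between adjacent rail bags, which are contained in $S$ and hence have size $\leq r - 1 < k$, so $\adh(T^\ast, \chi^\ast) < k$.

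To finish I would verify that $(T^\ast, \chi^\ast)$ has strictly smaller fatness than $(T, \chi)$. Let $M := \max_{0 \leq i \leq \ell} |\chi(t_i)|$. For any bag $\chi(t_i)$ with both $\chi(t_i) \cap A$ and $\chi(t_i) \cap B$ non-empty, the two replacements have size strictly less than $|\chi(t_i)|$; for bags contained on one side of $S$ only, the single replacement equals the original bag, so no bag size ever grows. It therefore suffices to exhibit a bag of size $M$ on the path that straddles $S$. Since $\chi(t_0)$ contains $Z \subseteq A$ while $\chi(t_\ell)$ contains the non-empty set $Z' \setminus S \subseteq B$ (which is non-empty because $|Z'| = r > |S|$), there is a smallest index $j$ with $\chi(t_j) \cap B \neq \emptyset$; the adhesion $\chi(t_{j-1}) \cap \chi(t_j)$ has size $> |S|$ and lies in $\chi(t_{j-1}) \subseteq A \cup S$, so it contributes a vertex of $A$ to $\chi(t_j)$, making $\chi(t_j)$ straddling. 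The main obstacle I expect is forcing some straddling index to realise the maximum $M$: if every max-size path-bag lay strictly on one side of $S$, I would argue that the witness $(Z, Z', t, t')$ could have been replaced by a shorter-path witness by truncating to the first bag with $\chi(t_i) \cap \chi(t_{i+1}) \supseteq$ some new set of size $r$ on the $B$-side, contradicting the minimality of $\ell$; alternatively, a more delicate rearrangement in the style of Bellenbaum--Diestel provides the reduction directly. Together these steps yield the required lex-reduction of fatness, contradicting the $k$-atomicity of $(T, \chi)$.
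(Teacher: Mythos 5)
The paper only cites this result (to Bellenbaum--Diestel, Carmesin et al., and Wei{\ss}auer) and gives no proof of its own, so there is nothing in-text to compare against; I will evaluate your sketch on its own terms. Your strategy---take a minimal witness to non-leanness, obtain a small $Z$--$Z'$ separator $S$ by Menger, split the decomposition into an $A\cup S$ side and a $B\cup S$ side, and argue the fatness drops lexicographically---is indeed the Bellenbaum--Diestel strategy. But two steps as written do not go through.

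First, the parenthetical claim that ``anticompleteness of $A$ and $B$ guarantees no such branch straddles both sides'' is false, and the reattachment step depends on it. The vertex set of a branch $T_u$ off a path node $t_i$ is, beyond $\adh(t_i,u)$, a union of components of $G\setminus\chi(t_i)$; such a component can perfectly well contain vertices of $A$, of $B$, and of $S$ simultaneously, since the $A$--$B$ paths inside it cross $S$ at vertices of $S$ that need not lie in $\chi(t_i)$. So you cannot in general hang a branch on a single rail. The standard remedy is to split the \emph{whole} tree into two copies with bags $\chi(v)\cap(A\cup S)$ and $\chi(v)\cap(B\cup S)$, but then you still need to guarantee that for each $v\in S$ the set of nodes carrying $v$ is connected in the new tree; a single bridge bag equal to $S$ attached at one end of the rails only achieves this if $S\subseteq\chi(t')$ (or $\chi(t)$), which is not something you know. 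Bellenbaum--Diestel resolve this with an additional re-routing of $S$ along the Menger paths guaranteed by the $\geq r$ adhesions along $tTt'$; that step is missing here.

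Second, the fatness comparison is over-engineered and, as you acknowledge, left unfinished. You require a \emph{maximum-size} path bag to straddle $S$, and this is what forces the hand-waving about a ``shorter-path witness'' or a ``more delicate rearrangement.'' In fact you need much less: once you know \emph{some} path bag straddles (which your index-$j$ argument establishes, modulo the case $j=0$, which arises because you wrote $Z\subseteq A$ when in general $Z\subseteq A\cup S$ and $\chi(t_0)$ may already meet $B$---in that case $\chi(t_0)$ itself straddles), let $s^\ast$ be the largest size of any straddling bag. At positions $>s^\ast$ no bag changes (unstraddled bags survive intact, the bridge bag and the ``other-rail'' shadows of unstraddled bags have size at most $|S|<r\leq s^\ast$), while at position $s^\ast$ the count strictly drops because each straddling bag of size $s^\ast$ is replaced by two strictly smaller pieces and no larger bag can split into a piece of size $s^\ast$. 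So the lexicographic decrease is immediate once one straddling bag is exhibited, and the minimality-of-$\ell$ argument is unnecessary.
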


We also need:
\begin{theorem} [Wei{\ss}auer \cite{Weissauer}]
  \label{atomictotight}
  Every $k$-atomic tree decomposition is tight. 
\end{theorem}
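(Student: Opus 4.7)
The plan is to prove the contrapositive: given a tree decomposition $(T, \chi)$ of $G$ with $\adh(T,\chi) < k$ that is not tight, I will construct a new tree decomposition $(T', \chi')$ of $G$ with $\adh(T', \chi') < k$ and strictly smaller fatness, contradicting $k$-atomicity.

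Choose an edge $tt' \in E(T)$ where tightness fails, set $A = \chi(t) \cap \chi(t')$, and let $D_1, \dots, D_m$ be the components of $G_{t \to t'} \setminus \chi(t)$. Putting $A_i = N(D_i) \cap A$, the failure of tightness gives $A_i \subsetneq A$ for every $i$. A preliminary cleanup step handles any $v \in A$ with no neighbor in $\bigcup_i D_i$: such a $v$ can be deleted from every bag in $T_{t \to t'}$ without disturbing validity of the decomposition, which strictly shrinks $|\chi(t')|$ and already contradicts minimality; so after the cleanup we may assume $A = \bigcup_i A_i$.

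The core construction splits the subtree $T_{t \to t'}$ along the components. For each $i$, let $T^i$ be the minimal subtree of $T_{t \to t'}$ containing $t'$ together with every $s$ satisfying $\chi(s) \cap D_i \neq \emptyset$. Build $T'$ by removing the nodes of $T_{t \to t'}$ from $T$ and attaching $m$ vertex-disjoint relabelled copies of the $T^i$ to $t$, each joined through a new copy $t'_i$ of $t'$; for each node $s_i$ in the $i$-th copy, set $\chi'(s_i) = \chi(s) \cap (D_i \cup A_i)$. Straightforward checks show that $(T', \chi')$ is a valid tree decomposition of $G$ (edges with an end in $D_i$ have both ends in $D_i \cup A_i$, edges inside $A$ stay covered by $\chi(t)$, and for each $v \in A_i$ the bags of $T^i$ containing $v$ form a subtree through $t'_i$, which is joined to $\chi(t)$ in $T'$), and that $\adh(T',\chi') < k$, since the new adhesion $\chi'(t) \cap \chi'(t'_i)$ equals $A_i$ (of size $< |A| < k$) while every other adhesion sits inside an old one.

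The main obstacle is the fatness comparison. Since $\chi'(s_i) \subseteq \chi(s)$, no bag ever grows. Call $s \in T_{t \to t'}$ \emph{spanning} if $\chi(s) \not\subseteq D_i \cup A_i$ for every $i$, so that every new bag from $s$ is strictly smaller than $\chi(s)$; otherwise call $s$ \emph{non-spanning}, in which case $\chi(s) \subseteq D_{i_0} \cup A_{i_0}$ for a unique $i_0$ and $s$ contributes one new bag of size $|\chi(s)|$ in branch $i_0$, together with auxiliary bags of size at most $|A_{i_0} \cap A_i| < |A|$ in any other branch that $s$ enters via the minimal subtree construction. The node $t'$ is always spanning because $A \subseteq \chi(t')$ while $A \not\subseteq A_i$ for any $i$. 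Setting $N' = \max\{|\chi(s)| : s \text{ spanning}\} \geq |\chi(t')| \geq |A|$, one checks that $a_j$ is preserved for every $j > N'$ (the only contributions come from non-spanning $s$ with $|\chi(s)| = j$, each producing exactly one bag of size $j$ plus auxiliary bags of size $< |A| \leq N' < j$), while $a_{N'}$ strictly decreases because every spanning $s$ with $|\chi(s)| = N'$ contributes $1$ to the old count but $0$ to the new count at $N'$, and no other source produces new bags of that size. This yields the required strict lexicographic decrease of the fatness vector, contradicting $k$-atomicity and completing the proof.
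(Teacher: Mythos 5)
Your proof is correct. Since the paper cites Wei{\ss}auer for this result and does not reproduce the argument, I can only compare against the standard literature proof, but what you have written is essentially the Bellenbaum--Diestel/Wei{\ss}auer subtree-splitting argument, carried out carefully.

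Two small remarks. First, the cleanup step needs the observation (which you use implicitly) that if one tree decomposition is obtained from another by shrinking bags on the same tree, with at least one strict shrinkage, then the fatness strictly decreases lexicographically; this is easy but is a genuine step, since a shrinking bag moves mass from $a_j$ to $a_{j-1}$ and one must argue the first nonzero difference (at the largest affected size) is a decrease. Second, your claim that a non-spanning $s$ satisfies $\chi(s)\subseteq D_{i_0}\cup A_{i_0}$ ``for a unique $i_0$'' is not literally true: if $\chi(s)\subseteq A_{i_0}\cap A_{i_1}$ for two indices, both work. However, any such $s$ has $|\chi(s)|\le|A_{i_0}\cap A_{i_1}|<|A|\le N'$, so this case can only produce bags of size strictly below $N'$ and cannot disturb the count $a_j$ for $j\ge N'$. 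Your subsequent fatness comparison uses uniqueness only in the regime $j\ge N'$, where it does hold, so the argument survives; but the phrasing could be tightened to ``for some $i_0$, which is unique once $|\chi(s)|\ge|A|$.'' Apart from these cosmetic points, the construction, the verification that $(T',\chi')$ is a valid tree decomposition of adhesion less than $k$, and the lexicographic comparison of fatness vectors (preservation above $N'$, strict drop at $N'$ coming from $t'$ being spanning) are all sound.
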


Using ideas similar to those of Wei\ss auer \cite{Weissauer} and using Theorems \ref{Menger_vertex} and \ref{atomictolean}, we prove: 

\begin{theorem}
\label{smallsepnonsep}
  Let $G$ be a graph, let $k \geq 1$ and let $(T, \chi)$ be
  $3k$-atomic tree decomposition of $G$. Let $t_0 \in V(T)$ and let 
  $u,v \in G$. Assume that $N[u]$ is not separated from $\chi(t_0) \setminus u$
  by a set of size less than $3k$, and that $N[v]$ is not separated from $\chi(t_0) \setminus v$
  by a set of size less than $3k$. Then $u$ is not separated from $v$ by a set of
  size less than $k$, and consequently $uv$ is a $k$-banana. 
\end{theorem}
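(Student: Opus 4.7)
The plan is to argue by contradiction. Suppose there exists $X \subseteq V(G) \setminus \{u,v\}$ with $|X| < k$ that separates $u$ from $v$ in $G$; I will deduce $|X| \geq k$, a contradiction. The ``consequently'' clause then follows at once from Theorem~\ref{Menger_vertex} (assuming, as the statement implicitly does, that $u$ and $v$ are non-adjacent). Two tools drive the argument: by Theorem~\ref{atomictolean}, the $3k$-atomic decomposition $(T,\chi)$ is in fact $3k$-lean; and the hypothesis on $u$, combined with the standard set form of Menger's theorem, yields $3k$ pairwise vertex-disjoint paths $P_1, \dots, P_{3k}$ from $N[u]$ to $\chi(t_0) \setminus u$, and similarly $3k$ such paths witnessing the hypothesis on $v$.

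Let $C_u$ denote the component of $G \setminus X$ containing $u$, and write $P_i$ as running from $a_i \in N[u]$ to $b_i \in \chi(t_0) \setminus u$. Since the $P_i$ are pairwise vertex-disjoint and $|X| < k$, at least $3k - |X| > 2k$ of them avoid $X$ entirely. For each surviving path, either $a_i = u$, in which case $P_i \subseteq G \setminus X$ already witnesses $b_i \in C_u$; or $a_i \in N(u)$, and then $a_i \notin X$ (since $P_i$ misses $X$) and $u \notin X$, so prepending the edge $u a_i$ to $P_i$ produces a $u$-to-$b_i$ walk in $G \setminus X$, again forcing $b_i \in C_u$. Since the $b_i$ are distinct, this gives $|C_u \cap \chi(t_0)| \geq 3k - |X| > 2k$. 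The symmetric argument applied to $v$ yields $|C_v \cap \chi(t_0)| > 2k$, where $C_v$ is the component of $G \setminus X$ containing $v$.

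Because $C_u$ and $C_v$ are distinct components of $G \setminus X$, the sets $C_u \cap \chi(t_0)$ and $C_v \cap \chi(t_0)$ are disjoint, so I can pick $Z \subseteq C_u \cap \chi(t_0)$ and $Z' \subseteq C_v \cap \chi(t_0)$ with $|Z| = |Z'| = k$. Now apply the $3k$-lean property at $t = t' = t_0$: the path $tTt'$ is the single vertex $t_0$ and carries no edges, so the adhesion alternative is vacuous, and $G$ must contain $k$ pairwise vertex-disjoint paths from $Z$ to $Z'$. Every such path starts in $C_u$ and ends in $C_v$, so it must hit $X$; vertex-disjointness forces $|X| \geq k$, the contradiction we sought.

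The only place calling for any care is the passage in the second paragraph from the set-separation hypothesis on $N[u]$ to a component-membership statement about $u$ itself, which is handled by the harmless observation that $u \notin X$ (allowing us to prepend the edge $u a_i$). Otherwise the argument is just pigeonhole followed by a single application of leanness, and I do not anticipate any real obstacle.
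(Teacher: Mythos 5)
Your proof is correct, and it takes a mildly but genuinely different route from the paper's. Both proofs reduce to the $3k$-lean property applied at $t=t'=t_0$, and both begin by invoking the hypothesis (via Menger) to produce $3k$ paths witnessing the connectivity of $u$ (resp. $v$) to $\chi(t_0)$. The paper applies leanness directly to the two full $3k$-element landing sets $Z_u, Z_v \subseteq \chi(t_0)$, obtaining $3k$ pairwise disjoint $Z_u$--$Z_v$ paths, and then runs a three-family pigeonhole (over $\mathcal{P}_u$, $\mathcal{Q}$, $\mathcal{P}_v$) to exhibit a single concatenated chain $u \dd P_u \dd z_u \dd Q \dd z_v \dd P_v \dd v$ disjoint from $X$, contradicting separation. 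You instead localize first: using $|X|<k$ and the pairwise-disjoint $N[u]$-fan, you show that more than $2k$ of the landing points lie in the component $C_u$ of $G\setminus X$, and symmetrically for $C_v$; you then pick disjoint $k$-subsets $Z\subseteq C_u\cap\chi(t_0)$, $Z'\subseteq C_v\cap\chi(t_0)$, apply leanness to these smaller sets, and observe that every one of the resulting $k$ disjoint $Z$--$Z'$ paths must cross $X$ (because it goes between distinct components of $G\setminus X$), forcing $|X|\geq k$. The two arguments are dual in flavor: the paper concludes by exhibiting a surviving $u$--$v$ path, while you conclude by counting forced crossings of $X$. Your version arguably has one fewer moving part at the end, since it does not need to assemble a full chain; and using the set form of Menger on $N[u]$ (rather than the $u$-rooted fan version) directly matches the way the hypothesis is phrased, at the small cost of the brief case split on whether $a_i=u$ or $a_i\in N(u)$. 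Both are sound.
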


\begin{proof}
  By Theorems \ref{atomictolean} and \ref{atomictotight}, we have that $(T,\chi)$ is  tight and $3k$-lean.
  Suppose that $u$ is separated from $v$ by a set of size less than $k$. 
By Theorem~\ref{Menger_vertex}, there exists a set $\mathcal{P}_u$ of pairwise vertex-disjoint (except $u$) paths, each with one end $u$ and the other end in $\chi(t_0) \setminus u$, and such that
 $|\mathcal{P}_u|=3k$. Let $Z_u$ be the set of the ends of members of $\mathcal{P}_u$ in $\chi(t_0)$. Similarly, 
there exists a collection  $\mathcal{P}_v$ of pairwise vertex-disjoint (except $v$) paths, each with one end $v$ and the other end in $\chi(t_0) \setminus v$, and such that
$|\mathcal{P}_v|=3k$. Let $Z_v$ be the set of the ends of members of $\mathcal{P}_v$ in $\chi(t_0)$. Since $(T,\chi)$ is $3k$-lean,
there is a collection $\mathcal{Q}$ of pairwise vertex-disjoint paths
from $Z_u$ to $Z_v$. 
Let $X$ be a set with  $|X| <k$ separating $u$ from $v$.
Then,  for every $u$-$v$ path $P$  in $G$ we have
$P^* \cap X \neq \emptyset$.
But since $|Z_u|=|Z_v|=|\mathcal{Q}|=3k$,
there is a path $Q \in \mathcal{Q}$ with ends $z_u \in Z_u$ and $z_v \in Z_v$, a path
$P_u \in \mathcal{P}_u$ from $u$ to $z_u$, and a path $P_v \in \mathcal{P}_v$
from $v$ to $Z_v$ such that $X \cap (Q \cup (P_u \setminus u)  \cup (P_v\setminus v))=\emptyset$, contrary to the fact that $u \dd P_u \dd z_u \dd Q \dd z_v \dd P_v \dd v$ is a path from $u$ to $v$ in $G$.
This proves the first statement of the theorem. The second statement
follows immediately by Theorem~\ref{Menger_vertex}.
\end{proof}

We finish this subsection with  a theorem about tight tree decompositions in theta-free graphs that was proved in \cite{TWX}. Note that by Theorem \ref{atomictotight}, the following result applies in particular to $k$-atomic tree decompositions for every $k$.

 A \emph{cutset} $C \subseteq V(G)$ of $G$ is a (possibly empty) set of vertices such that $G \setminus C$ is disconnected. A {\em clique cutset} is a cutset that is a clique.
\begin{theorem}[Abrishami, Alecu, Chudnovsky, Hajebi, Spirkl \cite{TWX}]
  \label{connectedbranches}
          Let $G$ be a theta-free graph and assume that $G$ does not admit a clique
  cutset.
  Let $(T, \chi)$ be a tight tree decomposition of $G$.
  Then for every  edge $t_1t_2$ of $T$ the graph
  $G_{t_1\rightarrow t_2} \setminus \chi(t_1)$ is connected and
   $N(G_{t_1\rightarrow t_2} \setminus \chi(t_1))=\chi(t_1) \cap \chi(t_2)$.
  Moreover, if $t_0,t_1,t_2 \in V(T)$ and $t_1,t_2 \in N_T(t_0)$, then
  $\chi(t_0) \cap \chi(t_1) \neq \chi(t_0) \cap \chi(t_2)$.
\end{theorem}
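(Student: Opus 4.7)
The plan is to prove both assertions by contradiction, in each case exhibiting a theta in $G$.

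For the first assertion, assume $G_{t_1\rightarrow t_2} \setminus \chi(t_1)$ has, in addition to the tight component $D$ (which satisfies $N(D) = \chi(t_1) \cap \chi(t_2)$ by the definition of tightness), a second component $D'$. The subtree axiom yields $N(D') \subseteq \chi(t_1) \cap \chi(t_2)$: any escape from $G_{t_1 \to t_2} \setminus \chi(t_1)$ can only go through $\chi(t_1)$, and any vertex of $\chi(t_1)$ adjacent to $D' \subseteq G_{t_1 \to t_2}$ has a subtree meeting both $t_1$ and $T_{t_1 \to t_2}$, hence contains $t_2$. Now $N(D')$ is a cutset separating $D'$ from $D$; since $G$ has no clique cutset, $N(D')$ is not a clique, so we may pick non-adjacent $u, v \in N(D')$. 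Shortest $u$-$v$ paths through $D$ and through $D'$ furnish induced paths $P_1, P_2$ of length at least two, whose interiors lie in the distinct (and therefore anticomplete) components $D$ and $D'$.

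For the third path I apply tightness in the reverse direction of the edge: the tight component $E$ of $G_{t_2 \rightarrow t_1} \setminus \chi(t_2)$ satisfies $\chi(t_1) \cap \chi(t_2) \subseteq N(E)$, so $u, v \in N(E)$ and in particular $E \neq \emptyset$. Because $V(T) = T_{t_1 \to t_2} \cup T_{t_2 \to t_1}$ is a disjoint union, no bag contains both a vertex of $E$ and one of $D \cup D'$, so $E$ is anticomplete to $D \cup D'$. A shortest $u$-$v$ path through $E$ then gives an induced $P_3$ of length at least two whose interior is anticomplete to $P_1^* \cup P_2^*$, and $P_1, P_2, P_3$ form a theta, a contradiction. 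Once $G_{t_1 \to t_2} \setminus \chi(t_1)$ is connected, the equality $N(G_{t_1 \to t_2} \setminus \chi(t_1)) = \chi(t_1) \cap \chi(t_2)$ follows from tightness (for $\supseteq$) and the same subtree reasoning (for $\subseteq$).

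For the second assertion, suppose for contradiction that $\chi(t_0) \cap \chi(t_1) = \chi(t_0) \cap \chi(t_2) = S$ for distinct $t_1, t_2 \in N_T(t_0)$. Form a new tree $T'$ from $T$ by removing the edge $t_0 t_2$ and inserting the edge $t_1 t_2$, keeping all bags. Any vertex whose $T$-subtree crosses $t_0$ into $T_{t_0 \to t_2}$ must lie in $\chi(t_0) \cap \chi(t_2) = S \subseteq \chi(t_1)$, so its subtree is still connected in $T'$ (it now passes through $t_1$); hence $(T', \chi)$ is a valid tree decomposition. The unique tight components $D$ of $G_{t_0 \to t_1} \setminus \chi(t_0)$ and $D'$ of $G_{t_0 \to t_2} \setminus \chi(t_0)$, guaranteed by the first assertion with neighborhoods equal to $S$, serve as tight witnesses at every edge of $(T', \chi)$, so $(T', \chi)$ is tight. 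But now $G'_{t_0 \to t_1} \setminus \chi(t_0)$ is the disjoint union of the anticomplete sets $D$ and $D'$, hence disconnected --- contradicting the first assertion applied to $(T', \chi)$.

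The main obstacle I expect will be the tightness verification for $(T', \chi)$, and specifically the edge $t_0 t_1$ in the direction $t_1 \to t_0$: the relevant subgraph is $(\chi(t_0) \setminus S) \cup \bigcup_{i \geq 3}(G_{t_0 \to t_i} \setminus S)$, and one needs a component with $S$ contained in its $G$-neighborhood. In most cases the tight components of the remaining branches $G_{t_0 \to t_i}$ for $i \geq 3$ deliver this at once; the residual case, in which $t_0$ has no further $T$-neighbors and $\chi(t_0) = S$, makes $\chi(t_0) \subseteq \chi(t_1)$ a redundant bag, which I would handle separately by first suppressing $t_0$ to obtain a strictly smaller tight tree decomposition before re-running the argument.
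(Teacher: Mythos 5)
The paper cites this result from \cite{TWX} without proof, so there is no in-paper argument to compare against; I can only evaluate the proposal on its own terms. Your handling of the first assertion is correct: the two putative components $D, D'$ of $G_{t_1\to t_2}\setminus\chi(t_1)$ and the tight component $E$ on the far side are pairwise anticomplete because no bag meets two of them, $N(D')$ is a cutset and hence not a clique, and any non-adjacent $u,v\in N(D')\subseteq\chi(t_1)\cap\chi(t_2)\subseteq N(D)\cap N(E)$ yields three $u$--$v$ paths with anticomplete interiors, a theta.

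For the second assertion, however, the ``residual case'' you defer at the end --- $N_T(t_0)=\{t_1,t_2\}$ and $\chi(t_0)=S$ --- is not a corner case but the \emph{only} situation that can arise when the conclusion fails, and there the argument cannot be closed, because the assertion is in fact false for general tight tree decompositions as stated. To see both claims at once: suppose $\chi(t_0)\cap\chi(t_1)=\chi(t_0)\cap\chi(t_2)=S$ and apply your first assertion to $(T,\chi)$ itself. Then $D_2:=G_{t_0\to t_2}\setminus\chi(t_0)$ and $E:=G_{t_1\to t_0}\setminus\chi(t_1)$ are each connected with neighbourhood exactly $S$. A short check gives $D_2\subseteq E$ and $S\cap E=\emptyset$, so $D_2$ has no neighbour in $E\setminus D_2$; connectedness of $E$ forces $D_2=E$, whence $\chi(t_0)\setminus S\subseteq E\setminus D_2=\emptyset$ (so $\chi(t_0)=S\subseteq\chi(t_1)$) and any third branch at $t_0$ would put a nonempty set into $E\setminus D_2$, so $\deg_T(t_0)=2$. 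In exactly this configuration the side $G_{t_1\to t_0}$ computed in $T'$ minus $\chi(t_1)$ is empty, so $(T',\chi)$ is never tight where you need it, and suppressing $t_0$ destroys the very triple whose inequality you are trying to prove. A concrete counterexample: $G=C_5$ with $v_1,\ldots,v_5$ in cyclic order, $T=a\dd c\dd b$, $\chi(a)=\{v_1,v_2,v_3\}$, $\chi(c)=\{v_1,v_3\}$, $\chi(b)=\{v_1,v_3,v_4,v_5\}$ is theta-free with no clique cutset and $(T,\chi)$ is tight, yet $\chi(c)\cap\chi(a)=\chi(c)\cap\chi(b)$. The statement needs an extra non-redundancy hypothesis (no bag contained in a neighbouring bag), which is automatic for the $k$-atomic decompositions this paper actually applies it to (merging a contained bag into its neighbour strictly decreases fatness without enlarging any adhesion, by the subtree property). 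Under that hypothesis, the computation above ($\chi(t_0)\subseteq\chi(t_1)$) already furnishes the contradiction directly --- the tree surgery is unnecessary.
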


\subsection{Catching a banana}
In this subsection
we discuss another important feature of tree decompositions that is needed in the proof of Theorem~\ref{banana}.

\begin{theorem}
  \label{basket}
  Let $G$ be a theta-free graph  and  let $a,b \in V(G)$. Let $(T, \chi)$ be a
  tree decomposition of $G$. Then there exist $t_1,t_2 \in V(T)$ (not
  necessarily distinct) such that
  for every path $P$ of $G$ with ends $a,b$ we have that
  $(\chi(t_1) \cup \chi(t_2)) \cap P^* \neq \emptyset.$ 
  Moreover, if $D$ is a component of $G \setminus (\chi(t_1) \cup \chi(t_2))$,
  then $|N(D) \cap \{a,b\}| \leq 1$.
\end{theorem}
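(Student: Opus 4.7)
The plan is to deduce the first conclusion from a standard Helly/chordal argument on subtrees of $T$, with theta-freeness entering through one structural claim. Assume $a$ and $b$ are non-adjacent in $G$ (otherwise the statement should be read as ranging over $ab$-paths $P$ with $P^* \neq \emptyset$, and what follows is unchanged). For each such $P$, set $T_P := \{t \in V(T) : \chi(t) \cap P^* \neq \emptyset\}$; as $P^*$ is a connected induced subpath of $G$ whose consecutive vertices must share a bag, $T_P$ is a nonempty connected subtree of $T$.

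The key claim is that $\{T_P\}_P$ contains no three pairwise disjoint members. Suppose $T_{P_1}, T_{P_2}, T_{P_3}$ were pairwise disjoint. Then for $i \neq j$ we have $P_i^* \cap P_j^* = \emptyset$ (a shared vertex $v$ would place the subtree $T_v$ into both $T_{P_i}$ and $T_{P_j}$), and $P_i^*$ is anticomplete to $P_j^*$ in $G$ (an edge would force a shared bag). Each $P_i$ is an induced path of length at least $2$ from $a$ to $b$, so the only neighbor of $a$ in $V(P_j) \setminus \{a\}$ is the first vertex of $P_j^*$, and symmetrically for $b$. Combined with $a,b$ non-adjacent, this makes $G[\{a,b\} \cup P_1^* \cup P_2^* \cup P_3^*]$ precisely a theta, contradicting the hypothesis on $G$.

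Given the claim, the intersection graph of $\{T_P\}_P$ is chordal (intersection graphs of subtrees of a tree are exactly the chordal graphs) and has independence number at most $2$. By the perfection of chordal graphs, its clique cover number is also at most $2$. A clique in this intersection graph is a pairwise-intersecting subfamily, which by Helly's theorem for subtrees of a tree has a common vertex of $T$. Thus $\{T_P\}_P$ can be partitioned into at most two subfamilies, each sharing a common vertex $t_1$ or $t_2$, and these $t_1, t_2 \in V(T)$ are the required vertices.

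For the moreover clause, suppose some component $D$ of $G \setminus (\chi(t_1) \cup \chi(t_2))$ satisfies $\{a,b\} \subseteq N(D)$, and pick $u \in D \cap N(a)$ and $v \in D \cap N(b)$. Since $D$ is connected, $a$ and $b$ lie in a common component of $G[\{a,b\} \cup D]$, and a shortest $ab$-path $P$ in $G[\{a,b\} \cup D]$ is induced in $G$ with $P^* \subseteq D$. Then $P^* \cap (\chi(t_1) \cup \chi(t_2)) = \emptyset$, contradicting the first conclusion. The main technical input is the chordal/Helly step of paragraph three, which is entirely classical; theta-freeness enters only through the structural claim in paragraph two.
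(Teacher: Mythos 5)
Your proof is correct and follows essentially the same route as the paper's: associate to each $ab$-path $P$ the subtree $T_P$ of $T$ induced by the bags meeting $P^*$, observe that the intersection graph of these subtrees is chordal, use theta-freeness to rule out a stable set of size three, cover by two cliques (via perfection of chordal graphs / $\alpha = \bar\chi$), and extract $t_1, t_2$ via the Helly property for subtrees of a tree. The ``moreover'' clause is handled the same way. The only cosmetic difference is that you explicitly flag the degenerate case where $a$ and $b$ are adjacent (which the paper leaves implicit, as in its applications $a,b$ are non-adjacent); this is a reasonable clarification but not a change in method.
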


\begin{proof}
  Let $\mathcal{P}$ be the set of all paths of $G$ with ends $a,b$. For every
  $P \in \mathcal{P}$ let
  $T(P)=\{t \in T\ \; : \; \chi(t) \cap P^* \neq \emptyset\}  $.
  Since $P^*$ is a connected subgraph of $G$, it follows that $T[T(P)]$
  is a subtree of $T$.

  Let $H$ be a graph with vertex set $\mathcal{P}$ and such that
  $P_1P_2 \in E(H)$ if and only if $T(P_1) \cap T(P_2) \neq \emptyset$.
  Then, by the main result of \cite{subtrees}, $H$ is a chordal graph.

  \sta{If $P_1, P_2 \in H$ are non-adjacent in $H$, then
    $P_1^*$, $P_2^*$ are disjoint and anticomplete to each other in $G$.
  \label{disjoint}}

  Suppose that there exists   $v \in P_i^* \cap P_j^*$. Then
  for every $t \in T$ such that $v \in \chi(T)$, we have $t \in T(P_1) \cap T(P_2)$, contrary to the fact that $P_1,P_2$ are non-adjacent in $H$.
  Next suppose that there is an edge $v_1v_2$ of $G$ such that
  $v_i \in P_i^*$. 
  Let $t \in T$ be such that $v_1,v_2 \in \chi(t)$. Then $t \in T(P_1) \cap T(P_2)$, again contrary to the fact  that $P_1$ is non-adjacent to $P_2$ in
  $H$. This proves~\eqref{disjoint}. 
  
  \sta{$H$ has no stable set of size three. \label{stable}}

  Suppose $P_1,P_2,P_3$ is a stable set in $H$. By \eqref{disjoint}
  the sets $P_1^*, P_2^*, P_3^*$ are pairwise disjoint and anticomplete to
  each other in $G$. But now $P_1 \cup P_2 \cup P_3$ is a theta with ends $a,b$
  in $G$, a contradiction. This proves~\eqref{stable}.
  \\
  \\
  Since $H$ is chordal, it follows from \eqref{stable} and a result from \cite{Golumbic}  that there exist
  cliques $K_1,K_2$ of $H$ such that $V(H)=K_1 \cup K_2$. Again by  \cite{Golumbic} we deduce that   for each $i \in \{1,2\}$, there exists  $t_i \in K_i$ such that
  $t_i \in T(P)$ for every $P \in K_i$. Consequently,
  $(\chi(t_1) \cup \chi(t_2)) \cap P^* \neq \emptyset$ for every
  $P \in \mathcal{P}$, and the first statement of the theorem holds.

  To prove the second statement, suppose $a,b \in N(D)$ for some component
  $D$ of $G \setminus (\chi(t_1) \cup \chi(t_2))$. Then
  $D \cup \{a,b\}$ is connected, and so there is a path $P$
  from $a$ to $b$ with $P^* \subseteq D$. But now
  $P^* \cap (\chi(t_1) \cup \chi(t_2))=\emptyset$, a contradiction.
  This completes the proof.
  \end{proof}
  
\subsection{Centers of tree decompositions.}

We finish this section with a well-known theorem about tree decompositions. Recall that for a graph $G$, a function  $w: V(G) \rightarrow [0,1]$ is a {\em normal weight function} on $G$ if $w(V(G))=1$, where we use the notation $w(X)$ to mean $\sum_{x \in X} w(x)$ for a set $X$ of vertices. 

Let $G$ be a  graph and let $(T, \chi)$ be a tree decomposition of $G$.
Let $w: V(G) \rightarrow [0,1]$ be a normal weight function on $G$.
A vertex $t_0$ of $T$ is a  {\em center} of $(T, \chi)$ if for every
$t' \in N_T(t_0)$ we have
$w(G_{t_0 \rightarrow t'} \setminus \chi(t_0)) \leq \frac{1}{2}$. 

The following is well-known; a proof can be found in \cite{TWX}, for example. 
\begin{theorem}
  \label{center}
  Let $(T,\chi)$ be a tree decomposition of a graph $G$. Then $(T, \chi)$ has a center.
\end{theorem}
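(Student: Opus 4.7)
The plan is to orient the edges of $T$ to point toward the ``heavy'' side and then find a sink of this orientation. Concretely, for each edge $tt'\in E(T)$, I orient $tt'$ as $t\to t'$ precisely when $w(G_{t\to t'}\setminus \chi(t))>\tfrac12$, and leave $tt'$ unoriented otherwise. The theorem will then follow once I show that this orientation is well defined (i.e., no edge is oriented in both directions) and that it admits a sink, i.e., a vertex with no outgoing edge, since such a sink is exactly a center of $(T,\chi)$.

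For well-definedness I need that, for each $tt'\in E(T)$, at most one of $w(G_{t\to t'}\setminus \chi(t))$ and $w(G_{t'\to t}\setminus \chi(t'))$ exceeds $\tfrac12$. The key point is that the two sets $V(G_{t\to t'})\setminus \chi(t)$ and $V(G_{t'\to t})\setminus \chi(t')$ are disjoint: any $v$ in the first set lies in some bag on the $t'$-side of $T\setminus tt'$ but not in $\chi(t)$, while any $v$ in the second lies in some bag on the $t$-side but not in $\chi(t')$. If $v$ belonged to both, then by the connectedness of $\{s\in V(T):v\in\chi(s)\}$ it would have to lie in both $\chi(t)$ and $\chi(t')$, a contradiction. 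Since $w$ is a normal weight function, the two weights sum to at most $1$, so they cannot both exceed $\tfrac12$.

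To find a sink I walk along outgoing edges. Start at any $t_0\in V(T)$; if $t_0$ is not a center, there exists $t_1\in N_T(t_0)$ with $t_0\to t_1$, and I move to $t_1$. By the well-definedness argument above, the edge $t_0t_1$ is not oriented $t_1\to t_0$, so the next step (if any) takes me to some $t_2\in N_T(t_1)\setminus\{t_0\}$. Iterating, I produce a path $t_0,t_1,t_2,\ldots$ in $T$ that never revisits an edge in the reverse direction, hence never revisits a vertex at all (since $T$ is a tree). Because $G$ is finite and we may take $T$ finite, this walk must terminate, and the terminal vertex has no outgoing edge, i.e., is a center.

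No step here is really an obstacle; the only subtle point is the disjointness argument for the two ``sides'' of an edge, which is a direct consequence of property~(iii) in the definition of a tree decomposition. Everything else is a routine application of finiteness of $T$ together with the walk-along-outgoing-edges argument.
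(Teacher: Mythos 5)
Your proof is correct. The paper itself does not give a proof of this lemma; it simply notes the result is well-known and points to \cite{TWX}, so there is no in-paper argument to compare against. Your argument is the standard one: orient each tree edge toward the side of weight $>\tfrac12$, use the disjointness of $G_{t\to t'}\setminus\chi(t)$ and $G_{t'\to t}\setminus\chi(t')$ (a direct consequence of property~(iii) of tree decompositions) to see the orientation is consistent, and follow out-edges to a sink. One small remark: what you actually establish at each step is that the walk never immediately backtracks (the edge just traversed is not an out-edge at the new endpoint), and in a finite tree that already forces the walk to be a simple path, which is exactly what you need; the phrasing ``never revisits an edge in the reverse direction'' is a slight overstatement of what the local step gives you, but the conclusion is unaffected.
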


\section{Wheels and star cutsets} \label{cutsets}

Recall that a \emph{wheel} in $G$ is a pair $(H, x)$ such that $H$ is  a hole
and $x$ is a vertex that has at least three neighbors in $H$. Wheels play an
important role in the study of even-hole-free and odd-signable graphs. Graphs in this classes that contain no wheels are much easier to handle; on the other hand, the presence of a wheel forces the graph to admit a certain kind of
decomposition.

A \emph{sector} of $(H,x)$ is a path $P$ of $H$ whose ends are distinct and adjacent to $x$, and such that $x$ is anticomplete to $P^*$. A sector $P$ is a \emph{long sector} if $P^*$ is non-empty.  We now define several types of wheels that we will need. 

A wheel $(H, x)$
is a \emph{universal wheel} if $x$ is complete to $H$. A wheel $(H, x)$ is a \emph{twin wheel} if $N(x) \cap H$ induces a path of length two.
%If $(H, x)$ is a twin wheel and $x_1 \dd x_2 \dd x_3$ is the path of length two induced by $N(x) \cap H$, we say $x_2$ is the \emph{clone of $x$ in $H$}. Note that if $(H, x)$ is a twin wheel and $x_2$ is the clone of $x$ in $H$, then $((H \setminus x_2) \cup x, x_2)$ is also a twin wheel.
A wheel $(H, x)$ is a \emph{short pyramid} if $|N(x) \cap H| = 3$ and $x$ has exactly two adjacent neighbors in $H$. A wheel is \emph{proper} if it is not a twin wheel or a short pyramid. We say that $x \in V(G)$ is a {\em hub} if there exists $H$ such that 
$(H, x)$ is a proper wheel in $G$.
 We denote by $\Hub(G)$ the set of all hubs of $G$.

We need the following result, which was observed in \cite{TWI}:

\begin{theorem}[Abrishami, Chudnovsky, Vu\v{s}kovi\'c \cite{TWI}]\label{wheelstarcutset}
  Let $G \in \mathcal{C}$   and let $(H,v)$ be a proper  wheel in $G$.
  Then there is no component  $D$ of $G \setminus N[v]$
such that $H \subseteq N[D]$. 
\end{theorem}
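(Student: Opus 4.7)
The plan is to suppose for contradiction that there exists a component $D$ of $G\setminus N[v]$ with $V(H)\subseteq N[D]$, and then extract one of the forbidden induced subgraphs of $\mathcal{C}$ (a $C_4$, theta, prism, or even wheel) from $G$.

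I would first establish three elementary facts. \emph{(i)} For every long sector $S_i$ of $(H,v)$, $S_i^*\subseteq D$: indeed $S_i^*$ is connected, disjoint from $N[v]$, contained in $N[D]$, and the components of $G\setminus N[v]$ are mutually anticomplete, so $S_i^*$ must lie in the unique component that meets $N[D]$. \emph{(ii)} Every $h\in N(v)\cap V(H)$ has a neighbor in $D$, since $h\in N[D]\setminus D$. \emph{(iii)} Any two non-adjacent $h,h'\in N(v)\cap V(H)$ share no common neighbor in $D$, for otherwise $v\dd h\dd d\dd h'\dd v$ would be an induced $C_4$. Consequently, $|S_i^*|\ge 2$ whenever $S_i$ is long, and every $d\in D$ has at most two neighbors in $N(v)\cap V(H)$, which (if there are two) must be consecutive on $H$.

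Next I would invoke the properness hypothesis. Since $G$ is even-wheel-free, $k:=|N(v)\cap V(H)|$ is odd and at least $3$; combined with $(H,v)$ being neither a twin wheel nor a short pyramid, this yields either $k\ge 5$ or $k=3$ with the three neighbors pairwise non-adjacent on $H$. In either case I can choose two non-adjacent $h,h'\in N(v)\cap V(H)$ and, using (ii), (iii), and the connectivity of $D$, a shortest induced path $P$ in $G[D\cup\{h,h'\}]$ from $h$ to $h'$. Fact (iii) implies $|V(P)|\ge 4$, so $H':=v\dd h\dd P\dd h'\dd v$ is an induced hole in $G$.

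To complete the proof, I would take a third neighbor $h''\in N(v)\cap V(H)$ and analyze its adjacencies with $V(H')$. Since $h''\sim v\in V(H')$ and every vertex of $D$ (in particular every vertex of $P^*$) has at most two neighbors in $N(v)\cap V(H)$ lying consecutively on $H$, the adjacency pattern between $h''$ and $V(H')$ is severely restricted. The goal is to show that, for a suitable choice of $(h,h',h'',P)$, either $|N(h'')\cap V(H')|$ is even and at least $4$ (yielding an even wheel $(H',h'')$), or a common Steiner vertex in $D$ together with three internally disjoint paths through $h,h',h''$ to $v$ gives a theta in $G$. The main obstacle is exactly this case analysis---verifying pairwise anticompleteness of interiors for the theta construction, or the correct parity for the even wheel---which relies crucially on the minimality of $P$ and the rigid adjacency constraints imposed by (iii).
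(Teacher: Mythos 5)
This paper does not prove Theorem~\ref{wheelstarcutset}: it is cited verbatim from \cite{TWI}, and the more explicit decomposition behind it, Theorem~\ref{lemma:proper_wheel_forcer}, is itself cited from \cite{Addario-Berry2008BisimplicialGraphs} and \cite{daSilva2013Decomposition2-joins}. So there is no in-paper proof to compare against. Taking your attempt on its own terms, the preliminaries are fine: facts (i)--(iii), the observation that a $D$-vertex sees at most two spokes and only if they are adjacent, and the construction of the hole $H'=v\dd h\dd P\dd h'\dd v$ are all correct.

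The gap is in the case analysis you defer, and it is not merely bookkeeping --- there are configurations in which no forbidden structure appears at all. Suppose $h,h',h''$ are pairwise non-adjacent and $N(h'')\cap V(H')=\{v,p_i,p_j\}$ with $p_i,p_j\in P^*$, $p_i\ne p_1$, $p_j\ne p_l$, and $j\ge i+3$ (so no $C_4$ arises). Then $(H',h'')$ is a wheel with three pairwise non-adjacent spokes, i.e.\ a \emph{proper} wheel, and $G[V(H')\cup\{h''\}]$ contains no $C_4$, theta, prism, or even wheel. Proper wheels are not forbidden in $\mathcal{C}$; their centers are precisely the ``hubs'' this paper is built around. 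Likewise $N(h'')\cap V(H')=\{v,p_i,p_{i+1}\}$ with $p_ip_{i+1}\in E(G)$ yields a short pyramid, also allowed. So a ``suitable choice of $(h,h',h'',P)$'' forcing a theta or even wheel is not guaranteed. You should also be careful with the sentence claiming the spoke--$D$ adjacency constraint ``severely restricts'' $h''$'s adjacency to $H'$: that constraint bounds how many spokes a fixed vertex of $P^*$ sees, not how many vertices of $P^*$ a fixed spoke sees, and it places no bound on $|N(h'')\cap P^*|$. A correct proof needs a global argument over all of $N(v)\cap H$ at once --- e.g.\ a minimal connected attachment in $D$ and a connectifier-type classification as in Section~\ref{sec:connectifier}, or the sector-by-sector analysis underlying Theorem~\ref{lemma:proper_wheel_forcer} --- rather than one extra spoke and one path.
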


We will revisit this result in Section~\ref{sec:domsep}.
A {\em star cutset} in a graph $G$ is a cutset $S\subseteq V(G)$ such that  either $S=\emptyset$ or for some $x\in S$, $S\subseteq N[x]$.
A large portion of this paper is devoted to dealing with hubs and star cutsets arising from them in graphs in $\mathcal{C}$, but in the remainder of this section we focus on the case when $\Hub(G)$ is very restricted.
To do that, we use a result from \cite{TWX}:

\begin{theorem}[Abrishami, Alecu, Chudnovsky, Hajebi, Spirkl \cite{TWX}]
    \label{bananatohub}
  For every  integer $t \geq 1$ there exists an integer $k=k(t)$ such that
  if $G \in \mathcal{C}_t$ and $xy$ is a $k$-banana in $G$, then
      $N(x) \cap \Hub(G) \neq \emptyset$ and $N(y) \cap \Hub(G) \neq \emptyset$.
  \end{theorem}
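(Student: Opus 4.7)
The plan is to argue by contradiction and, by symmetry in $x$ and $y$, to focus on $x$: assume $G\in\mathcal{C}_t$, $xy$ is a $k$-banana witnessed by internally vertex-disjoint induced paths $P_1\ll P_k$ (which exist by Theorem~\ref{Menger_vertex}), and that no vertex of $N(x)$ is a hub; from this I will derive a bound on $k$ depending only on $t$. Let $a_i$ be the neighbor of $x$ on $P_i$, so the $a_i$ are pairwise distinct.

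First, I would carry out two clean-up reductions. Paths of length $2$ contribute a middle vertex $z_i$ adjacent to both $x$ and $y$; if $z_i,z_j$ were non-adjacent then $x\dd z_i\dd y\dd z_j\dd x$ would be a $C_4$, so the $z_i$'s together with $x$ form a clique, bounded by $t-1$, and at most $t-2$ paths have length $2$; discard these. Next, since $G$ has no $K_t$, Ramsey's theorem applied to $G[\{a_1\ll a_k\}]$ yields, for $k$ large enough in terms of $t$ and any target $m$, a stable set of size $m$ among the $a_i$; relabel them $a_1\ll a_m$. By $C_4$-freeness again, each non-adjacent pair $a_i,a_j$ has no common neighbor besides $x$, so in particular no $a_l$ is adjacent to the second vertex of $P_i$ or $P_j$ whenever $\{i,j,l\}$ lies inside our stable set.

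The next step exploits theta-, prism- and even-wheel-freeness on triples. Theta-freeness forbids any three $P_i^*,P_j^*,P_l^*$ from being pairwise anticomplete (else $P_i\cup P_j\cup P_l$ is a theta with ends $x,y$), so the auxiliary graph on the surviving paths whose edges record ``interiors share at least one $G$-edge'' has independence number at most $2$; Ramsey then provides a large subfamily, again relabeled $P_1\ll P_m$ after shrinking $m$, such that every pair $(P_i,P_j)$ has a cross-edge between interiors. For each such pair, the hole $H_{ij}=P_i\cup P_j$ passes through both $x$ and $y$, and one can analyze how any $a_l$ attaches to $H_{ij}$: by the no-hub assumption, if $a_l$ has at least three neighbors on $H_{ij}$ then $(H_{ij},a_l)$ must be a twin wheel or a short pyramid (since even wheels are forbidden in $\mathcal{C}$), which pins down $N(a_l)\cap H_{ij}$ to a very rigid three-vertex configuration containing $x$.

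Finally, I would combine the cross-edges with the rigidity of non-proper wheels to force a contradiction. The cross-edge between $P_i^*$ and $P_j^*$ together with theta-freeness applied to further triples should, after enough iterations of Ramsey and careful case analysis against prism- and $C_4$-freeness, produce a vertex attached to some $H_{ij}$ in a configuration that is neither a twin wheel, a short pyramid, nor an even wheel, i.e.\ a proper wheel; then for $m$ sufficiently large the center of this proper wheel should be forced into $N(x)$, contradicting the hypothesis. The hard part will be this last localization step: tracking the combinatorial interactions between many paths under the four simultaneous exclusions $C_4$, theta, prism, and even wheel, and showing that the resulting proper wheel must be centered at a neighbor of $x$ rather than deeper inside the paths. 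I expect $k(t)$ to emerge as an iterated Ramsey-type bound in $t$.
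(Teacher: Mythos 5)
The statement you are asked to prove is not proved in this paper at all: it is Theorem~\ref{bananatohub}, imported verbatim from reference~\cite{TWX} and used as a black box. So there is no ``paper's own proof'' to compare against here, and the review has to assess your sketch on its own terms.

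Your plan has a concrete internal contradiction at the point where you pass from the auxiliary-graph argument to the wheel analysis. You correctly observe that three paths with pairwise anticomplete interiors yield a theta, so the auxiliary graph whose edges record ``there is an edge between $P_i^*$ and $P_j^*$'' has independence number at most~$2$, and Ramsey gives a large subfamily in which \emph{every pair} of paths has such a cross-edge. But then, in the very next step, you write ``the hole $H_{ij}=P_i\cup P_j$ passes through both $x$ and $y$.'' This is false precisely because of the cross-edge you just arranged: the only cycle in $P_i\cup P_j$ through both $x$ and $y$ is $x\dd P_i\dd y\dd P_j\dd x$, and the cross-edge is a chord of it, so $P_i\cup P_j$ contains no hole through both $x$ and $y$. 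Any hole inside $P_i\cup P_j$ misses at least one of $x,y$, which destroys the subsequent analysis of how $a_l$ attaches to $H_{ij}$ (the whole point of that analysis was to localize a wheel center near $x$, which requires the hole to pass through $x$). So the step as written cannot even get started.

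Beyond this, the final paragraph is not a proof but an acknowledgment that you do not yet know how to carry out the key step: forcing a \emph{proper} wheel centered at a neighbor of $x$ rather than somewhere deep inside the paths. That localization is exactly what makes the statement hard, and the machinery developed in~\cite{TWX} to handle it (connectifiers, alignments, and the associated case analysis against $C_4$/theta/prism/even-wheel) is substantial and does not reduce to iterated Ramsey applied to holes $P_i\cup P_j$; indeed the present paper's Sections~\ref{sec:connectifier}--\ref{boundnonhubs} show the flavor of what is required in a closely related situation. Minor further issues worth flagging: ``no common neighbor besides $x$'' for a non-adjacent pair $a_i,a_j$ is not literally what $C_4$-freeness gives you (a common neighbor adjacent to $x$ is allowed), although your intended consequence (no $a_l$ adjacent to the second vertex of a surviving path) is correct after the length-$2$ cleanup; and you should record explicitly that all surviving paths have length at least~$3$ before invoking theta-freeness, so that the three-path union really is a theta.

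In summary: the overall strategy (symmetrize, discard short paths, Ramsey down to a structured subfamily, then analyze wheels) is a reasonable first instinct, but the pivot from ``pairwise cross-edges'' to ``hole through $x$ and $y$'' is incorrect, and the last, crucial localization step is left entirely open. The theorem is genuinely a heavy result from~\cite{TWX} and cannot be reconstructed by this route without repairing the cross-edge step and supplying the missing analysis.
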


We immediately deduce:

\begin{theorem}
\label{banana_emptyhub}
For every integer $t\geq 1$, there exists a constant $k=k(t)$ with the following
property.
Let $G \in \mathcal{C}_{t}$ and let  $a,b \in V(G)$ be non-adjacent.
Assume that $Hub(G) \subseteq \{a,b\}$.
Then $ab$ is not a $k$-banana in $G$.
In particular, if $Hub(G) = \emptyset$, there is no $k$-banana in $G$.
\end{theorem}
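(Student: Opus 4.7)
The plan is to derive Theorem~\ref{banana_emptyhub} as an immediate corollary of Theorem~\ref{bananatohub}; indeed, the sentence preceding the statement already announces ``We immediately deduce''. I would simply take $k=k(t)$ to be the constant supplied by Theorem~\ref{bananatohub}.

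For the main assertion, I would argue by contradiction. Suppose $G \in \mathcal{C}_t$, that $a,b \in V(G)$ are non-adjacent with $\Hub(G) \subseteq \{a,b\}$, and that $ab$ is a $k$-banana in $G$. Applying Theorem~\ref{bananatohub} to the pair $(a,b)$ gives $N(a) \cap \Hub(G) \neq \emptyset$. Since $a \notin N(a)$ and $\Hub(G) \subseteq \{a,b\}$, the only possibility is $b \in N(a)$, so $a$ and $b$ are adjacent -- contradicting our hypothesis. (One could symmetrically use $N(b)\cap\Hub(G)\neq\emptyset$; either direction suffices.)

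For the ``in particular'' clause, recall that the definition of a $k$-banana requires its two endpoints to be non-adjacent. Hence if $\Hub(G)=\emptyset$ and $xy$ were a $k$-banana in $G$, then $x,y$ would be non-adjacent and would satisfy $\Hub(G)\subseteq\{x,y\}$ trivially, so the main statement applied to $(x,y)$ would yield a contradiction. Since this is a direct two-line deduction from Theorem~\ref{bananatohub}, there is no genuine obstacle; the only subtle point is to notice that although Theorem~\ref{bananatohub} merely supplies the existence of a hub neighbour at each endpoint, the restriction $\Hub(G)\subseteq\{a,b\}$ converts this into the adjacency of $a$ and $b$, which is explicitly forbidden by the banana definition.
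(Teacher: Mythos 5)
Your deduction is correct and matches what the paper intends: the paper gives no explicit proof and simply says ``we immediately deduce'' from Theorem~\ref{bananatohub}, and your argument---that $N(a)\cap\Hub(G)\neq\emptyset$ together with $\Hub(G)\subseteq\{a,b\}$ and $a\notin N(a)$ forces $b\in N(a)$, contradicting non-adjacency---is the intended two-line derivation. The handling of the ``in particular'' clause via the non-adjacency built into the definition of a $k$-banana is also exactly right.
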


We will also use the following:
 \begin{theorem} [Abrishami, Alecu, Chudnovsky, Hajebi, Spirkl \cite{TWX}]
   \label{noblocksmalltw_Ct}
  For every  integer $t\geq 1$, there exists an integer $\gamma=\gamma(t)$ such that every $G \in \mathcal{C}_t$ with $\Hub(G)=\emptyset$ satisfies $\tw(G) \leq \gamma-1$.
\end{theorem}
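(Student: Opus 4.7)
The plan is to bound the treewidth of $G$ by establishing a bounded-size balanced-separator property for every induced subgraph of $G$; this translates to bounded treewidth via the standard equivalence invoked as Lemma~\ref{lemma:bs-to-tw} later in the paper. Note that hub-freeness is hereditary: a wheel $(H,v)$ in an induced subgraph $G'\subseteq G$ is also a wheel of the same type in $G$, so $G'\in \mathcal{C}_t$ with $\Hub(G')\subseteq \Hub(G)=\emptyset$.

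By Theorem~\ref{banana_emptyhub} applied with $\Hub(G)=\emptyset$, there exists a constant $k=k(t)$ such that $G$ contains no $k$-banana, and therefore by Theorem~\ref{Menger_vertex} every pair of non-adjacent vertices of $G$ admits a separator of size less than $k$. I would then consider a $k$-atomic tree decomposition $(T,\chi)$ of $G$, which exists since the trivial one-bag decomposition has adhesion $0<k$; by Theorems~\ref{atomictolean} and~\ref{atomictotight}, $(T,\chi)$ is $k$-lean and tight with adhesion strictly less than $k$. Picking a center $t_0$ of $(T,\chi)$ with respect to a normal weight function $w$ via Theorem~\ref{center}, every component of $G\setminus \chi(t_0)$ is contained in some $G_{t_0\to t'}\setminus\chi(t_0)$ of $w$-weight at most $\tfrac{1}{2}$, so $\chi(t_0)$ is itself a $w$-balanced separator of $G$.

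The core of the proof is to bound $|\chi(t_0)|$ by a function of $t$ alone. Suppose $|\chi(t_0)|$ is much larger than the Ramsey number $R(t,m)$ for a parameter $m$ to be chosen large. Since $G$ is $K_t$-free, $\chi(t_0)$ then contains a stable set $I$ of size at least $m$. After reducing to the case when $G$ admits no clique cutset (clique cutsets preserve treewidth under clique-sum decomposition), Theorem~\ref{connectedbranches} applies, so each branch component $G_{t_0\to t'}\setminus\chi(t_0)$ is connected and attaches to $\chi(t_0)$ exactly through the adhesion $\chi(t_0)\cap\chi(t')$, which has size less than $k$. Using $k$-leanness to find connector paths between pairs of vertices of $I$ through these branches, one should show that a sufficiently large $I$ forces either an induced theta on a triple of vertices of $I$ or a proper wheel centered at some vertex of the ambient graph; the former is forbidden by $G\in\mathcal{C}$ and the latter by the hub-free hypothesis, yielding the contradiction that bounds $|\chi(t_0)|$.

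The main obstacle is this Ramsey-plus-connectivity step. The delicate part is that the connector paths produced by $k$-leanness must be simultaneously internally vertex-disjoint, induced (chord-free), and free of unwanted adjacencies to $I$; the potential chords must be eliminated using $C_4$-freeness, prism-freeness, and even-wheel-freeness. Moreover, one must verify that any resulting wheel is actually proper, rather than twin, short pyramid, or universal, since only proper wheels are excluded by $\Hub(G)=\emptyset$. Carrying out this case analysis cleanly --- presumably by iteratively refining $I$ to suppress each bad configuration in turn while keeping the stable set large --- is the heart of the proof.
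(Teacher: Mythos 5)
You identify the right high-level ingredients (Theorem~\ref{banana_emptyhub} for banana-freeness, a $k$-atomic/lean decomposition via Theorems~\ref{atomictolean} and~\ref{atomictotight}, a center bag as candidate balanced separator), but the proof is not complete: the crucial step --- bounding $|\chi(t_0)|$ by a function of $t$ alone --- is only sketched, and you explicitly flag it as ``the heart of the proof.'' That is precisely what is missing, and it is where all the content of the theorem lives.

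The gap is substantive rather than cosmetic. Banana-freeness plus Theorem~\ref{smallsepnonsep} does show that the ``$t_0$-friendly'' vertices of $\chi(t_0)$ form a clique and hence number fewer than $t$; but a bag of a $k$-lean decomposition can contain arbitrarily many non-friendly vertices, and nothing in your sketch bounds those. Your Ramsey step gives a large stable set $I \subseteq \chi(t_0)$, but $k$-leanness only furnishes vertex-disjoint linkages between $k$-subsets of bags, not $k$ internally disjoint paths between two specified vertices of $I$, and offers no control on chords among those paths or on stray adjacencies of their interiors to $I$ --- exactly the obstacles you list. Extracting an induced theta or a provably \emph{proper} wheel from such a linkage is a genuine structural argument (in the spirit of the connectifier machinery of Section~\ref{sec:connectifier} and Theorem~\ref{bound}, which itself requires the careful setup of two components $D_1,D_2$ with $N(D_1)=N(D_2)=Y$), and it is nontrivial even to rule out that a wheel one produces is twin, universal, or a short pyramid, none of which are excluded by $\Hub(G)=\emptyset$. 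As submitted, the proposal is a plausible roadmap with a hole at the decisive step, not a proof.
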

    
\section{Stable   sets  of safe hubs} \label{sec:centralbag_banana}

As we discussed in Section~\ref{intro}, in the course of the proof of
Theorem~\ref{banana}, we will repeatedly   decompose the graph by star cutsets
arising from a stable set of appropriately chosen hubs (using Theorem~\ref{wheelstarcutset}).
In this section, we prepare the tools for handling one such step: a stable set of safe hubs.

Throughout this section, we fix the following: Let $t, d \in \mathbb{N}$ and let $G \in \mathcal{C}_t$ be a graph with $|V(G)|=n$. 
Let $m=k+2d$ where $k = k(t)$ is as in Theorem \ref{bananatohub}.
Let $a, b \in V(G)$ such that $ab$ is a $(4m+2)(m-1)$-banana in $G$, and let $\mathcal{P}$ be the set
of all paths in $G$ with ends $a, b$.
Let $(T,\chi)$ be an $m$-atomic
tree decomposition of $G$. 
By Theorems \ref{atomictolean} and \ref{atomictotight}, we have that $(T,\chi)$ is  tight and $m$-lean.
By Theorem \ref{basket}, there exist $t_1,t_2 \in T$ such that
$P^* \cap (\chi(t_1) \cup \chi(t_2)) \neq \emptyset$ for every $P \in \mathcal{P}$.
We observe:

\begin{lemma}
  \label{abinbasket}
        { We have $a,b \in \chi(t_1) \cup \chi(t_2)$.}
\end{lemma}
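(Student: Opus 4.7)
I would argue by contradiction and show $a \in V_{12} := \chi(t_1) \cup \chi(t_2)$; the case for $b$ would follow by the symmetric argument. Suppose $a \notin V_{12}$, let $D$ be the component of $G \setminus V_{12}$ containing $a$, and for each $v \in V(G)$ write $T_v = \{t \in V(T): v \in \chi(t)\}$, a vertex set which induces a subtree of $T$ by axiom (iii) of a tree decomposition. Since $D$ is connected in $G$ and since adjacent vertices share a bag, a standard induction shows that $T' := \bigcup_{v \in D} T_v$ also induces a subtree of $T$. Moreover, $T' \cap \{t_1, t_2\} = \emptyset$, because any $v \in D$ avoids $V_{12} \supseteq \chi(t_i)$ and so $t_i \notin T_v$.

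For each $i \in \{1, 2\}$, since $t_i \notin T'$ and $T$ is a tree, the component of $T \setminus T'$ containing $t_i$ is attached to $T'$ by a single edge $t_*^i s_*^i$, with $t_*^i \in T'$ and $s_*^i \notin T'$. Define $A_i = \chi(t_*^i) \cap \chi(s_*^i)$; since $(T, \chi)$ is $m$-atomic and hence has adhesion less than $m$, we have $|A_i| \leq m - 1$.

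The heart of the proof is to pick $2m - 1$ pairwise internally disjoint $ab$-paths $P_1, \ldots, P_{2m-1}$ (available because $ab$ is a $(4m+2)(m-1)$-banana) and show that, for each $j$, the first vertex $a_j$ of $P_j^*$ that lies in $V_{12}$ (traversing $P_j$ from $a$) belongs to $A_1 \cup A_2$. These $a_j$ are pairwise distinct since the paths are internally disjoint, and each $a_j$ lies in $N_G(D) \subseteq V_{12}$ because $a \in D$ and $V_{12}$ separates $D$ from the rest of $G$. To prove membership in some $A_i$, pick $v_j \in D$ adjacent to $a_j$ in $G$ and a common bag $\chi(t_j^+)$ of the edge $v_j a_j$, so that $t_j^+ \in T_{v_j} \subseteq T'$ and $t_j^+ \in T_{a_j}$. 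If $a_j \in \chi(t_i)$, then $t_i \in T_{a_j}$ as well, and the subtree induced by $T_{a_j}$ must contain the entire $(t_j^+, t_i)$-path in $T$. This path leaves $T'$ through the unique ``exit'' edge of the component of $T \setminus T'$ containing $t_i$, which is precisely $t_*^i s_*^i$; so both $t_*^i$ and $s_*^i$ lie in $T_{a_j}$, and $a_j \in \chi(t_*^i) \cap \chi(s_*^i) = A_i$.

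Combining, $\{a_1, \ldots, a_{2m-1}\} \subseteq A_1 \cup A_2$ and $|A_1 \cup A_2| \leq 2(m-1) = 2m - 2$, a contradiction. The main obstacle I anticipate is the ``unique exit edge'' property of a subtree of a tree, which I would establish by observing that each component of $T \setminus T'$ is attached to $T'$ by exactly one edge, as otherwise $T$ would contain a cycle. Once that is in hand, the rest is a short counting argument built on the adhesion bound from $m$-leanness.
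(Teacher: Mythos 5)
Your proof is correct and follows essentially the same approach as the paper's: both place $a$ in a component $D$ of $G \setminus (\chi(t_1) \cup \chi(t_2))$, observe that $N(D)$ is contained in the union of at most two adhesions (hence has size at most $2(m-1)$), and derive a contradiction with the banana size since every $ab$-path's interior must meet $N(D)$. The only difference is that the paper states without proof that $N(D)$ lies in the union of two adhesions, whereas you spell out the subtree argument (using the "exit edge" of the component of $T \setminus T'$ containing each $t_i$) that justifies this containment.
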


\begin{proof}
  Suppose that $a \not \in \chi(t_1) \cup \chi(t_2)$.
  Let $D$ be the component of $G \setminus (\chi(t_1) \cup \chi(t_2))$ such that
  $a \in D$. Then $N(D)$ is contained in the union of at most two adhesions of
  $(T,\chi)$, at most one for each of $\chi(t_1)$ and $\chi(t_2)$.
  Since  $(T,\chi)$ is $m$-lean, it follows that  $|N(D)|\leq 2(m-1)$.
  Since $P^* \cap (\chi(t_1) \cup \chi(t_2)) \neq \emptyset$ for every $P \in \mathcal{P}$, it follows that $P^* \cap N(D) \neq \emptyset$ for every
  $P \in \mathcal{P}$. But then $\mathcal{P}$ contains at most $|N(D)|$
  pairwise internally vertex-disjoint paths, contrary to the fact that
  $ab$ is a $(4m+2)(m-1)$-banana in $G$.
  \end{proof}

We say that
    a vertex $v$ is {\em $d$-safe} if $|N(v) \cap \Hub(G)| \leq d$.
The goal of the next two lemmas is to classify $d$-safe vertices into
``good ones'' and ``bad ones,'' and show that the bad ones are rare.
Let $t_0 \in V(T)$. A vertex $v \in V(G)$ is {\em $t_0$-cooperative}
if either $v \not\in \chi(t_0)$, or
$\deg_{\hat\chi(t_0)}(v) < 2m(m-1)$. 
It was shown in \cite{TWX} that
$t_0$-cooperative vertices have the
following important property (note that \cite{TWX} assumed that $t_0$ is a center of $(T, \chi)$, but this was not used in the proof of the following lemma):

    \begin{lemma}[Abrishami, Alecu, Chudnovsky, Hajebi, Spirkl \cite{TWX}]
      \label{noncoop}
      Let $t_0 \in V(T)$. If $u, v \in \chi(t_0)$ are $d$-safe and not $t_0$-cooperative, then $u$ is adjacent to $v$.
    \end{lemma}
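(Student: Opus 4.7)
The plan is to assume for contradiction that $u$ is non-adjacent to $v$, use the non-cooperativeness of $u$ and $v$ together with the $m$-leanness and tightness of $(T, \chi)$ to produce an $m$-banana between $u$ and $v$ in $G$, and then contradict Theorem~\ref{bananatohub} by passing to the induced subgraph of $G$ obtained by deleting the hub-neighborhoods of $u$ and $v$. The choice $m = k + 2d$ is calibrated so that this deletion, which kills at most $2d$ paths, still leaves a $k$-banana.

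For the banana, by Theorem~\ref{Menger_vertex} it suffices to show that no set $S \subseteq V(G) \setminus \{u,v\}$ with $|S| < m$ separates $u$ from $v$ in $G$. Suppose such $S$ exists, and let $C_u, C_v$ denote the components of $G \setminus S$ containing $u, v$. I would count how many torso-neighbors of $u$ in $\hat\chi(t_0)$ must lie in $C_u$. Each real torso-neighbor is in $N_G(u) \subseteq C_u \cup S$. For each purely virtual torso-neighbor $w$ coming from an adhesion $A = \chi(t_0) \cap \chi(t')$, the tight component $D$ supplied by Theorem~\ref{atomictotight} is connected with $N(D) = A$, so $u$ and $w$ both have neighbors in $D$ and there is a $uw$-path through $D$; whenever $w \notin S$ and $D \cap S = \emptyset$, this places $w$ in $C_u$. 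Because the tight components at $t_0$ associated with distinct neighbors of $t_0$ in $T$ are pairwise disjoint, $S$ intersects at most $|S| \leq m-1$ of them, and each such adhesion (of size at most $m-1$) contributes at most $m-2$ virtual neighbors of $u$. Hence $|N_{\hat\chi(t_0)}(u) \setminus C_u| \leq |S| + |S|(m-2) \leq m(m-1)$, leaving at least $2m(m-1) - m(m-1) = m(m-1) \geq m$ torso-neighbors of $u$ in $C_u$; symmetrically for $v$ in $C_v$. Applying the $m$-lean property with $s = s' = t_0$ (the path $t_0 T t_0$ has no edges, so the adhesion alternative is vacuous) to $m$-subsets $Z \subseteq C_u$ and $Z' \subseteq C_v$ of these torso-neighborhoods yields $m$ pairwise vertex-disjoint $Z$-$Z'$ paths in $G$, each of which must traverse $S$, contradicting $|S| < m$.

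For the second step, let $H_u = N_G(u) \cap \Hub(G)$ and $H_v = N_G(v) \cap \Hub(G)$, each of size at most $d$ by $d$-safety; neither set contains $u$ or $v$ since $u$ is non-adjacent to $v$. Set $G' = G \setminus (H_u \cup H_v)$; as an induced subgraph of $G$ the graph $G'$ lies in $\mathcal{C}_t$, and deleting at most $2d$ internal vertices from the $m$-banana leaves at least $m - 2d = k$ pairwise internally vertex-disjoint $uv$-paths in $G'$. So $uv$ is a $k$-banana in $G'$, and Theorem~\ref{bananatohub} forces $N_{G'}(u) \cap \Hub(G') \neq \emptyset$. But every proper wheel in $G'$ is also a proper wheel in $G$, so $\Hub(G') \subseteq \Hub(G) \cap V(G')$, and therefore $N_{G'}(u) \cap \Hub(G') \subseteq (N_G(u) \cap \Hub(G)) \setminus H_u = \emptyset$, the desired contradiction.

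The main obstacle will be the counting in the second paragraph: tracking the contribution of purely virtual torso-neighbors through the pairwise disjoint tight components, and matching the $m(m-1)$ worst-case loss to a separator of size less than $m$ against the $2m(m-1)$ degree threshold that is built into the definition of non-cooperativeness so that $m$ torso-neighbors on each side survive for the lean property.
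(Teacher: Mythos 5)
Your proof is correct. The paper does not reproduce a proof of this lemma (it cites \cite{TWX}), so there is no in-text argument to compare against; but your route uses exactly the machinery the paper develops nearby---$m$-atomicity yielding $m$-leanness and tightness, Menger via Theorem~\ref{Menger_vertex}, and the safety threshold calibrated to cancel against Theorem~\ref{bananatohub}---and Theorem~\ref{smallsepnonsep} in the paper is essentially a sibling of your first step, proved by the same kind of flow argument through a lean decomposition.

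On the details of your counting: your bound $|S|+|S|(m-2)=|S|(m-1)\le(m-1)^2$ is in fact tighter than the stated $m(m-1)$, but the looser form is fine since $2m(m-1)-m(m-1)=m(m-1)\ge m$ as soon as $m\ge 2$, which is guaranteed because $k\ge 3$ (as the paper notes in the proof of Theorem~\ref{bound}) and $m=k+2d$. The key structural facts you invoke are all available: tightness gives, for each $t'\in N_T(t_0)$, a connected $D_{t'}\subseteq G_{t_0\to t'}\setminus\chi(t_0)$ with $N(D_{t'})=\adh(t_0,t')$, and these are pairwise disjoint because they live in disjoint subtrees; $m$-leanness bounds each $|\adh(t_0,t')|$ by $m-1$, so an adhesion containing $u$ holds at most $m-2$ virtual torso-neighbors of $u$; and with $t=t'=t_0$ the $m$-lean alternative about adhesions along $t_0Tt_0$ is vacuous, so the $m$ disjoint $Z$-$Z'$ paths are guaranteed. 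Each such path has one end in $C_u$ and the other in $C_v$, so it must meet $S$; vertex-disjointness forces $|S|\ge m$, the desired contradiction. Finally, the deletion step is clean: $u,v\notin H_u\cup H_v$ precisely because $uv\notin E(G)$, deleting at most $2d$ vertices preserves at least $k$ internally-disjoint $uv$-paths, and since $G'$ is an induced subgraph of $G$, every proper wheel in $G'$ is a proper wheel in $G$, so $\Hub(G')\subseteq\Hub(G)$ and Theorem~\ref{bananatohub} produces a hub in $N_{G'}(u)$, which $H_u$-deletion has forbidden.
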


    A vertex $v \in G$ is $ab$-cooperative if there exists a
    component $D$ of $G \setminus N[v]$ such that $a,b \in N[D]$. We show:

    \begin{lemma}
      \label{abnoncoop}
      If $v \in G$  is  $d$-safe and not $ab$-cooperative, then
      $v$ is adjacent to both $a$ and $b$. In particular, the set of
      vertices that are not $ab$-cooperative is a clique.
    \end{lemma}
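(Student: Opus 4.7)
I will argue the first statement by contradiction: assume $v \notin \{a,b\}$ is $d$-safe, not $ab$-cooperative, and not adjacent to $a$ (the case $v \not\sim b$ is symmetric). The plan is to convert the $(4m+2)(m-1)$-banana $ab$ into a nearly-as-large $av$-banana, and then iteratively harvest hub-neighbors of $v$ via Theorem~\ref{bananatohub} until their count exceeds $d$, contradicting $d$-safety.

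The first key observation is that every $ab$-path $P \in \mathcal{P}$ has an interior vertex in $N[v]$. Indeed, if $P^* \cap N[v] = \emptyset$, then $P^*$ lies in a single component $D$ of $G \setminus N[v]$; since $v \neq a$ and $v \not\sim a$ we have $a \notin N[v]$, and because $a$'s neighbor on $P$ is in $D$, also $a \in D$. Likewise $b \in N[D]$ regardless of whether $b \in N[v]$, so $v$ would be $ab$-cooperative, a contradiction. Applying this to $N := (4m+2)(m-1)$ internally disjoint $ab$-paths $P_1,\dots,P_N \in \mathcal{P}$, let $r_i$ be the first interior vertex of $P_i$ (starting from $a$) in $N[v]$. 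The $r_i$'s are distinct (as $P_i^* \cap P_j^* = \emptyset$), so $r_i = v$ for at most one $i$, and for the remaining $\geq N-1$ indices $r_i \in N(v)$. For each such $i$, let $Q_i$ be the path obtained from the subpath of $P_i$ from $a$ to $r_i$ by appending the edge $r_iv$; the minimality of $r_i$ together with $v \not\sim a$ keeps $v$ out of this subpath, so $Q_i$ is simple. Since each $Q_i$'s interior is contained in $P_i^*$, the $Q_i$'s are internally disjoint, yielding an $av$-banana of size $\geq N-1$.

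The main step is iterative hub extraction. Applying Theorem~\ref{bananatohub} to the $av$-banana in $G \in \mathcal{C}_t$ produces $h_1 \in N(v) \cap \Hub(G)$; since $h_1 \notin \{a,v\}$ it is internal to the banana, and because the $Q_i$'s are internally disjoint it lies on at most one of them, so the $av$-banana survives in $G \setminus h_1$ with size $\geq N-2$. The key point enabling iteration is that any proper wheel in $G \setminus h_1$ is still a proper wheel in $G$: the hole avoids $h_1$, so the apex's neighborhood on the hole—and hence the twin-wheel and short-pyramid conditions—is unchanged; thus $\Hub(G \setminus h_1) \subseteq \Hub(G)$, and Theorem~\ref{bananatohub} applied in $G \setminus h_1 \in \mathcal{C}_t$ yields $h_2 \in N(v) \cap \Hub(G) \setminus \{h_1\}$. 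Iterating while the remaining banana has size at least $k = k(t)$ produces at least $N-k$ distinct hub-neighbors of $v$. But $v$ being $d$-safe forces $N-k \leq d$, i.e., $(4m+2)(m-1) \leq d + k = m - d$, which fails since $m = k + 2d \geq 3$ gives $(4m+2)(m-1) \geq 28 > m \geq d+k$, a contradiction. Hence $v \sim a$, and symmetrically $v \sim b$. The clique statement then follows: for distinct such $v_1, v_2$ (outside $\{a,b\}$), both are adjacent to $a$ and to $b$, so if $v_1 \not\sim v_2$, then $\{v_1, a, v_2, b\}$ would induce a $C_4$, contradicting $G \in \mathcal{C}$.
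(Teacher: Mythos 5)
Your proof is correct and follows the same core strategy as the paper: derive from the failure of $ab$-cooperation that $v$ has a neighbor in the interior of every $ab$-path, package this into a large $a$-$v$ banana, and then contradict $d$-safety via Theorem~\ref{bananatohub}, using the fact that $\Hub$ is monotone under passing to induced subgraphs. The only difference is in the endgame: the paper starts from just $k+d$ paths and finishes with a one-shot pigeonhole (discard the at most $d$ of the paths $Q_i$ that hit $N(v)\cap\Hub(G)$, then apply Theorem~\ref{bananatohub} once to the union $H=Q_1\cup\dots\cup Q_k$), whereas you iteratively peel hub-neighbors of $v$ off of $G$ and reapply the theorem; both are valid, and your version just makes the monotonicity of $\Hub$ slightly more explicit.
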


    \begin{proof}
      Suppose $v$ is non-adjacent to $a$ and $v$ is not $ab$-cooperative.
      Since $ab$ is a  $(4m+2)(m-1)$-banana and $(4m+2)(m-1)\geq k+d$,
      we can choose $P_1, \dots, P_{k+d} \in \mathcal{P}$ such that
      the sets $P_1^*, \dots, P_{k+d}^*$ are pairwise vertex-disjoint.
      Since $v$ is not $ab$-cooperative, it follows that $v$ has a
      neighbors in each of $P_1^*, \dots, P_{k+d}^*$, and
      so there exist pairwise vertex-disjoint paths
      $Q_1, \dots Q_{k+d}$, each  with ends $a,v$, and such that
      $Q_i^* \subseteq P_i^*$ for every $i \in \{1, \dots, k+d\}$.
      Since $v$ is $d$-safe, we may assume (by renumbering the paths
      $Q_1, \dots, Q_{k+d}$ if necessary), that
      $N(v) \cap \Hub(G)  \cap Q_i = \emptyset$ for $i \in \{1, \dots, k\}$.
      But now we get a contradiction applying Theorem~\ref{bananatohub}
      to the graph $H=Q_1 \cup \dots \cup Q_k$ and the $k$-banana
      $av$ in $H$. This proves the first statement of \ref{abnoncoop}.
      Since $G$ is $C_4$-free, the second statement follows.
      \end{proof}

     In view of Theorem \ref{basket}, it would be convenient if we could work with the graph
   $\hat \chi(t_1) \cup \hat \chi(t_2)$, but unfortunately this graph may not be in $\mathcal{C}_t$, or even $\mathcal{C}$.
    In \cite{TWX}, a tool was designed
  to  construct a safe alternative to torsos by adding a set of vertices for each adhesion, rather than turning the adhesion into a clique: 
   
  \begin{theorem} [Abrishami, Alecu, Chudnovsky, Hajebi, Spirkl \cite{TWX}]
    \label{connectors}
    Let $t_0 \in T$.
Assume that $G$ does not admit a clique cutset. For every $t \in N_T(t_0)$,
        there exists 
         $\Conn(t_0,t) \subseteq G_{t_0 \rightarrow t}$ such that
        \begin{enumerate}
        \item  $\adh(t, t_0) = \chi(t) \cap \chi(t_0)  \subseteq \Conn(t_0,t)$.
          \label{C-1}
                \item $\Conn(t_0,t) \setminus \chi(t_0)$ is connected and
                  $N(\Conn(t_0,t) \setminus \chi(t_0))=\chi(t_0) \cap \chi(t)$.
\label{C-2}
% \item $\Conn(t) \setminus \chi(t_0) \neq \emptyset$.
\item No vertex of $\Conn(t_0,t) \setminus \chi(t_0)$ is a hub in the graph
  $(G \setminus G_{t_0 \rightarrow t})  \cup \Conn(t_0,t)$.
\label{C-3}
%\item For every $v \in \chi(t_0) \cap \chi(t)$,
%  $v$ has fewer than  $m$ neighbors in $\Conn(t) \setminus \chi(t_0)$.
%\label{C-4}
%\item For every $v \in \Conn(t) \setminus \chi(t_0)$, $v$ has
%  at most $m$ neighbors in the graph  $(G \setminus G_{t_0 \rightarrow t})  \cup Co%nn(t)$.
%  \label{C-5}
        \end{enumerate}
  \end{theorem}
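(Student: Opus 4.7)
The plan is to construct $\Conn(t_0,t)$ via an inclusion-minimality argument and then derive property~(\ref{C-3}) using Theorem~\ref{wheelstarcutset}. Because $G$ is theta-free, has no clique cutset, and $(T,\chi)$ is tight, Theorem~\ref{connectedbranches} guarantees that $G_{t_0\rightarrow t}\setminus\chi(t_0)$ is connected with $N_G(G_{t_0\rightarrow t}\setminus\chi(t_0))=\chi(t_0)\cap\chi(t)$. Let $\mathcal{K}$ be the non-empty family of subsets $K\subseteq G_{t_0\rightarrow t}\setminus\chi(t_0)$ such that $G[K]$ is connected and $N_G(K)=\chi(t_0)\cap\chi(t)$, and choose $K\in\mathcal{K}$ minimum in cardinality. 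Set $\Conn(t_0,t)=K\cup(\chi(t_0)\cap\chi(t))$. Properties~(\ref{C-1}) and~(\ref{C-2}) then hold directly from the construction.

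For property~(\ref{C-3}), suppose for a contradiction that some $v\in K$ is a hub in $H:=(G\setminus G_{t_0\rightarrow t})\cup\Conn(t_0,t)$, witnessed by a proper wheel $(C,v)$. Since $V(H)\subseteq V(G)$ and $H$ inherits all edges of $G$ on $V(H)$, the graph $H$ is an induced subgraph of $G$ and hence lies in $\mathcal{C}$. Theorem~\ref{wheelstarcutset} then applies in $H$: no component $D$ of $H\setminus N_H[v]$ satisfies $C\subseteq N_H[D]$. The aim is to use this to exhibit $K'\subsetneq K$ with $K'\in\mathcal{K}$, contradicting the minimal choice of $K$; the candidate $K'$ will be obtained by deleting a carefully chosen component $L_0$ of $K\setminus N[v]$.

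The intuition is that each component $L$ of $K\setminus N[v]$ satisfies $N_G(L)\setminus L\subseteq N[v]\cup(\chi(t_0)\cap\chi(t))$ and is contained in a unique component of $H\setminus N_H[v]$. If every such $L$ were indispensable -- either for reaching some vertex of $\chi(t_0)\cap\chi(t)$ from the rest of $K$, or for keeping $G[K]$ connected -- one would be able to assemble the relevant components of $H\setminus N_H[v]$, together with the edges passing through $N[v]\cap K$, into a single component $D$ with $C\subseteq N_H[D]$, contradicting Theorem~\ref{wheelstarcutset}. The hardest step, which I expect to be the main obstacle, is the bookkeeping for this reassembly argument: one must show that connectivity of $G[K\setminus L_0]$ can always be preserved by rerouting through $N[v]\cap K$ and through the sectors of $(C,v)$ that survive in $H\setminus N_H[v]$, and that every vertex of $\chi(t_0)\cap\chi(t)$ retains a neighbor in $K\setminus L_0$. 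The fact that $(C,v)$ is proper -- rather than a twin wheel or short pyramid -- is precisely what supplies the combinatorial slack needed to carry out this rerouting and close the contradiction.
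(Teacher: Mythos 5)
The core of the theorem—item~(\ref{C-3})—is not actually proved in your proposal. You set up the minimality framework correctly (pick $K\subseteq G_{t_0\rightarrow t}\setminus\chi(t_0)$ minimum with $G[K]$ connected and $N_G(K)=\chi(t_0)\cap\chi(t)$, which is available by Theorem~\ref{connectedbranches}), and items~(\ref{C-1}) and~(\ref{C-2}) do follow from this choice. You also correctly observe that $H=(G\setminus G_{t_0\rightarrow t})\cup\Conn(t_0,t)$ is an induced subgraph of $G$ (since no vertex of $K$ can share a bag with a vertex outside $G_{t_0\rightarrow t}$), so Theorem~\ref{wheelstarcutset} applies inside $H$. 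But the remainder of the argument is a sketch of intentions rather than a proof: you never specify which component $L_0$ of $K\setminus N[v]$ is to be deleted, never verify that $K\setminus L_0$ remains connected, and never verify that $N_G(K\setminus L_0)=\chi(t_0)\cap\chi(t)$ still holds. You explicitly flag this step as ``the main obstacle'' and defer it to ``bookkeeping'' and ``intuition,'' which is precisely the part that constitutes the mathematical content of the claim.

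There is also a structural mismatch you would need to resolve: Theorem~\ref{wheelstarcutset} speaks of components of $H\setminus N_H[v]$, while your discard candidates are components of $K\setminus N[v]$. A component of $H\setminus N_H[v]$ typically straddles $K$, the adhesion $\chi(t_0)\cap\chi(t)$, and $G\setminus G_{t_0\rightarrow t}$, so the assertion ``every component of $K\setminus N[v]$ is indispensable'' does not directly assemble into a single component $D$ of $H\setminus N_H[v]$ with $C\subseteq N_H[D]$. You would need to argue carefully about how these components hook together through the adhesion and through $N[v]\cap K$, and you would also need to handle the degenerate case $K\subseteq N[v]$ (no component to delete). None of this is done; the claim that properness of $(C,v)$ ``supplies the combinatorial slack'' is asserted without any supporting argument. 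As it stands, this is a plausible-looking strategy with its central lemma unproved, not a proof.
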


 Similarly to the construction in \cite{TWX}, we now define a graph that is a safe alternative to
  $\hat \chi(t_1) \cup \hat \chi(t_2)$. Let $S'$ be a stable set of hubs of $G$ with $S' \cap \{a,b\}=\emptyset$, and assume that every $s \in S'$ is $d$-safe. Let $S_{bad}$ denote the set of all vertices in $S'$ that are common neighbors of $a$ and $b$, or are not $t_1$-cooperative, or are not $t_2$-cooperative.
  Since $S'$ is stable, 
Lemma \ref{noncoop} implies that $|S_{bad}| \leq 3$.
  By Lemma \ref{abnoncoop}, every vertex in $S' \setminus S_{bad}$ is $ab$-cooperative.
  If $t_1 \neq t_2$, for  $i \in \{1,2\}$, let $t_i'$ be the neighbor of $t_i$ in the
  (unique) path in $T$ from $t_1$ to $t_2$.
  If $t_1=t_2$, set $t_1'=t_2'=t_1$.
  Let $S=S' \setminus S_{bad}$ and set  $$\beta(S')=\left(\chi(t_1) \cup \chi(t_2) \cup \left(\bigcup_{i\in \{1,2\}}\bigcup_{t \in N_T(t_i) \setminus \{t_i'\}}\Conn(t_i,t)\right)\right) \setminus S_{bad} .$$
  Write $\beta=\beta(S')$. We fix $S', S, S_{bad}$ and $\beta$ throughout this section. 
  It follows that for every $i \in \{1,2\}$ and for every 
  $t \in N_T(t_i) \setminus \{t_i'\}$, we have that $\beta \subseteq (G \setminus G_{t_i \rightarrow t}) \cup \Conn(t_i,t)$. 
  Let $X \subseteq \beta$.
For $i\in \{1,2\}$,   we define  $\delta_i(X)$
    to be the set of
    all vertices 
    $t \in N_T(t_i) \setminus \{t_i'\}$
    such that  $X \cap (G_{t_i \rightarrow t} \setminus (\chi(t_1) \cup \chi(t_2))) \neq \emptyset$;  let
    $\delta(X)=\delta_1(X) \cup \delta_2(X)$.
    Write
    $$\Delta(X)=\bigcup_{t \in \delta_1(X)}\adh(t_1,t)
    \cup \bigcup_{t \in \delta_2(X)}\adh(t_2,t).$$
    
  Next we summarize several key properties of $\beta$.  
  \begin{lemma}
    \label{smallC}
    Suppose that $G$ does not admit a clique cutset and let
    $s \in S \cap \Hub(\beta)$. Then the following hold. 
    \begin{enumerate}
      \item \label{smallC-1} $s \in \chi(t_1) \cup \chi(t_2)$.
\item \label{smallC-2}     For $i \in \{1,2\}$, we have $|N_{\hat{\chi}(t_i)}(s)| <  2m(m-1)$.
\item   \label{smallC-3} $|N_{\chi(t_1) \cup \chi(t_2)}(s)| < 4m(m-1)$.
\item \label{smallC-4} There is a component $B(s)$ of $\beta \setminus N[s]$
  such that $a,b \in N[B(s)]$.
\end{enumerate}
  \end{lemma}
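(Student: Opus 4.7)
My plan is to dispose of (1)--(3) with routine bookkeeping and then attack (4), which is the heart of the lemma. For (1), I would first verify that $\beta \subseteq (G \setminus G_{t_i \to t}) \cup \Conn(t_i, t)$ for every $i \in \{1,2\}$ and $t \in N_T(t_i) \setminus \{t_i'\}$---a short tree-decomposition calculation showing that the other pieces of $\beta$ (namely $\chi(t_{3-i})$ and the other connectors $\Conn(t_j, t') \setminus \chi(t_j)$) meet $G_{t_i \to t}$ only inside $\adh(t_i, t) \subseteq \Conn(t_i, t)$. Then Theorem~\ref{connectors}(\ref{C-3}) forbids any vertex of $\Conn(t_i, t) \setminus \chi(t_i)$ from being a hub in $\beta$, and $s \in \Hub(\beta)$ forces $s \in \chi(t_1) \cup \chi(t_2)$. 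For (2), I would unpack definitions: because $s \in S' \setminus S_{bad}$, the vertex $s$ is $t_i$-cooperative for $i=1,2$, which is exactly $|N_{\hat\chi(t_i)}(s)| < 2m(m-1)$ (trivially when $s \notin \chi(t_i)$). For (3), I would combine (2) with a tree-decomposition argument: when $s \in \chi(t_{3-i}) \setminus \chi(t_i)$, any $G$-neighbor of $s$ inside $\chi(t_i)$ must lie in the adhesion $\adh(t_i, t_i')$ (by tracing the $T$-subtrees of $s$ and its neighbor across the $t_1 t_2$-path), which has size $< m$ by $m$-leanness; summing over $i$ gives $|N_{\chi(t_1) \cup \chi(t_2)}(s)| < 4m(m-1)$.

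For (4), my plan is to argue by contradiction: assume no component $B$ of $\beta \setminus N_\beta[s]$ satisfies $a, b \in N_\beta[B]$, equivalently $s$ is not $ab$-cooperative in $\beta$. By Lemma~\ref{abnoncoop} applied in $G$, $s$ is $ab$-cooperative in $G$, giving a component $D$ of $G \setminus N_G[s]$ with $a, b \in N_G[D]$; since $s \notin S_{bad}$, at most one of $a, b$ lies in $N_G[s]$. I would then mimic the proof of Lemma~\ref{abnoncoop} directly inside $\beta$: given $k + d$ internally disjoint $a$-$b$ paths in $\beta$, the failure of $ab$-cooperativity in $\beta$ forces $s$ to have a $\beta$-neighbor in the interior of each (otherwise that interior would lie in a single component $B$ of $\beta \setminus N_\beta[s]$ with $a, b \in N_\beta[B]$). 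This produces $k + d$ internally disjoint $a$-$s$ paths in $\beta$; using $d$-safety of $s$ in $\beta$ (inherited from $G$ since $\Hub(\beta) \subseteq \Hub(G)$), at most $d$ of them contain a $\beta$-hub-neighbor of $s$, leaving $k$ paths whose union $H \subseteq \beta$ gives a $k$-banana $as$ in $H$ with $N_H(s) \cap \Hub(H) = \emptyset$---contradicting Theorem~\ref{bananatohub}, and yielding $s \in S_{bad}$.

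The hard part of the argument is producing those $k + d$ internally disjoint $a$-$b$ paths inside $\beta$. My plan is to start from the $(4m+2)(m-1)$ internally disjoint $a$-$b$ paths supplied in $G$, discard the at most three that visit $S_{bad}$, and reroute each remaining excursion into $G \setminus \beta$: excursions into a side branch $G_{t_i \to t} \setminus \chi(t_i)$ with $t \neq t_i'$ are replaced by paths through $\Conn(t_i, t) \setminus \chi(t_i) \subseteq \beta$ (which is connected with boundary $\adh(t_i, t)$ by Theorem~\ref{connectors}(\ref{C-2})), while excursions into the ``middle'' region between $t_1$ and $t_2$ in $T$ are bypassed using the endpoint adhesions $\adh(t_1, t_1')$ and $\adh(t_2, t_2')$, which sit inside $\chi(t_1) \cup \chi(t_2) \subseteq \beta$. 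The technical subtlety I anticipate is preserving both internal disjointness and avoidance of $N_\beta[s]$, especially in the degenerate case $s \in \adh(t_i, t)$ where $s$ may have many neighbors inside $\Conn(t_i, t) \setminus \chi(t_i)$; I would handle this using $m$-leanness to cap the number of reroutings through each adhesion (of size $< m$), so that the generous slack of $(4m+2)(m-1)$ paths leaves at least $k + d$ survivors after discarding unavoidable collisions.
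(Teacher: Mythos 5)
Your treatment of parts (1)--(3) matches the paper in spirit, and your fleshed-out argument for (3) is actually more careful than the paper's one-line remark (which silently glosses over the case $s\in\chi(t_{3-i})\setminus\chi(t_i)$, where one must separately argue that $N_G(s)\cap\chi(t_i)\subseteq\adh(t_i,t_i')$, exactly as you do). Part (4), however, follows a genuinely different route from the paper, and it is here that there is a real gap.

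The paper's proof of (4) does \emph{not} try to reroute $a$-$b$ paths into $\beta$. Instead it builds, directly from $s$, a small set $\hat\Delta(s)=N_{\chi(t_1)\cup\chi(t_2)}[s]\cup\adh(t_1,t_1')\cup\adh(t_2,t_2')\cup\Delta(N_\beta[s])$, shows $|\hat\Delta(s)|<(4m+2)(m-1)$ using (2) and (3) (the key point is that $s$ has a neighbour in $\Conn(t_i,t)\setminus\chi(t_i)$ only when $s\in\adh(t_i,t)$, so $\Delta(N_\beta[s])\subseteq N_{\hat\chi(t_1)}[s]\cup N_{\hat\chi(t_2)}[s]$), and then proves that $\hat\Delta(s)$ meets the interior of every $a$-$b$ path of $G$, contradicting the banana hypothesis outright. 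The crucial feature is that the bound on $|\hat\Delta(s)|$ comes from $s$ being $t_i$-cooperative: only a bounded number of connectors are relevant because $s$ itself only ``sees'' a bounded number.

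Your rerouting plan loses exactly this control. You propose to take the $(4m+2)(m-1)$ internally disjoint $a$-$b$ paths in $G$, replace each excursion into a branch $G_{t_i\to t}\setminus\chi(t_i)$ by a detour through $\Conn(t_i,t)\setminus\chi(t_i)$, and discard colliding paths, asserting that $m$-leanness caps the reroutings ``through each adhesion'' so that the slack of $(4m+2)(m-1)$ paths leaves $k+d$ survivors. But $m$-leanness only bounds the number of paths crossing any \emph{fixed} adhesion (at most $\lfloor(m-1)/2\rfloor$, since each crossing consumes two vertices of $\adh(t_i,t)$ and the interiors are disjoint). It does \emph{not} bound the number of distinct adhesions $\adh(t_i,t)$ that the family of paths collectively uses: a single path can make arbitrarily many excursions, and the union $\bigcup_{t}\adh(t_i,t)\subseteq\chi(t_i)$ has no size bound. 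Moreover $\Conn(t_i,t)\setminus\chi(t_i)$ is only guaranteed to be connected (Theorem~\ref{connectors}(\ref{C-2})); it may be a single induced path, so it cannot host more than one disjoint detour, and keeping ``one path per connector'' forces a discard count proportional to the (unbounded) number of visited connectors rather than to $m$. So the claim that the slack ``leaves at least $k+d$ survivors after discarding unavoidable collisions'' is not justified, and the argument does not close. The paper sidesteps this entirely by never rerouting the paths: it uses $s$-dependence to cap the number of relevant adhesions at $\deg_{\hat\chi(t_1)}(s)+\deg_{\hat\chi(t_2)}(s)$ before touching any path. If you want to salvage a banana-in-$\beta$ route, the natural repair is to invoke Menger in $\beta$ and then lift a hypothetical small $a$-$b$ separator of $\beta$ back to a small separator of $G$ via the mechanism of Theorem~\ref{smallseping}, but that is a different proof than the one you sketched.
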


  \begin{proof}
    Let $s \in S \cap \Hub(\beta)$.
    It follows from Theorem~\ref{connectors}\eqref{C-3} that
    $s \in \chi(t_1) \cup \chi(t_2)$.
    Since $s$ is $t_i$-cooperative for $i \in \{1,2\}$, we have that
    $| N_{\hat{\chi}(t_i)}(s)| < 2m(m-1)$, and the second assertion of the
    theorem holds.   
    Since $\chi(t_i)$ is a subgraph of $\hat{\chi}(t_i)$,
    the third assertion follows immediately from the second.

    We now prove the fourth assertion.
    Suppose there is no component $D$  of $\beta \setminus N[s]$ such that
    $a,b \in N[D]$.
        Write
    $$\hat \Delta(s)=
    N_{\chi(t_1) \cup \chi(t_2)}[s] \cup  
    \adh(t_1,t_1') \cup \adh(t_2,t_2') \cup \Delta(N_{\beta}[s]).$$
    Since $\hat \Delta(s) \subseteq N_{\hat{\chi}(t_1)}[s] \cup N_{\hat{\chi}(t_2)}[s] \cup \adh(t_1,t_1') \cup \adh(t_2,t_2')$,
    and $\adh(T,\chi) \leq m-1$, and by the second assertion of the theorem,
    we have that
    $|\hat \Delta(s)| < (4m+2)(m-1)$.

    \sta{$\hat \Delta(s) \cap Q^* \neq \emptyset$ for every path in $\beta$ with ends $a,b$. \label{Deltasep}}

    Suppose there is a path $Q$ in $\beta$ with ends $a,b$ such that
    $\hat \Delta(s) \cap Q^*=\emptyset$. Since there is no component
    $D$ of   $\beta \setminus N[s]$ such that
    $a,b \in N[D]$, it follows that $N_{\beta}(s) \cap Q^* \neq \emptyset$.
    Consequently, there is an $i \in \{1,2\}$ and $t \in N_T(t_i) \setminus \{t_i'\}$
    such that $N_\beta(s) \cap Q^* \cap \Conn(t_i,t) \neq \emptyset$.
    Since $Q$ is a path from $a$ to $b$ in $G$,
    it follows that $Q^* \cap (\chi(t_1) \cup \chi(t_2)) \neq \emptyset$.
    Since $Q^*$ is connected and $\adh(t_i,t)$
    separates $G_{t_i \rightarrow t} \setminus \chi(t_i)$ from
    $(\chi(t_1) \cup \chi(t_2)) \setminus \chi(t)$, we deduce that
    $Q^* \cap \adh(t_i,t) \neq \emptyset$.
    But since $s$ has a neighbor in $\Conn(t_i,t)$, it follows that 
    $\adh(t_i,t) \subseteq \hat \Delta(s)$,
    a contradiction. This proves \eqref{Deltasep}.

        \sta{$P^* \cap \hat \Delta(s) \neq \emptyset$ for every path $P$ of
      $G$    with ends $a,b$. \label{snonsep}}

        Let $P$ be a path in $G$ with ends $a,b$. We prove by induction
        on $|P \setminus \beta|$ that $\hat \Delta(s) \cap P^* \neq \emptyset$.
If $P \subseteq \beta$, the claim follows from \eqref{Deltasep}, and this is the base case.

Let $p \in P^* \setminus \beta$ and let $t_0 \in T$ such that $p \in \chi(t_0)$.
Suppose first that  $t_0$ belongs to the component $T'$ of $T \setminus \{t_1,t_2\}$  such that  $t_1' \in T'$. Then $t_2' \in T'$.  Since $P^*$ is connected and $P^* \cap (\chi(t_1) \cup \chi(t_2)) \neq \emptyset$, it follows that
$P^* \cap (\adh(t_1,t_1') \cup \adh(t_2,t_2')) \neq \emptyset$,
and so $P^* \cap \hat \Delta(s) \neq \emptyset$. 
Thus we may assume that $p \in G_{t_1 \rightarrow t} \setminus \chi(t_1)$ for some $t \in N_T(t_1) \setminus \{t_1'\}$.  Since $P^*$ is connected and $P^* \cap (\chi(t_1) \cup \chi(t_2)) \neq \emptyset$, it follows that
$P^* \cap \adh(t_1,t) \neq \emptyset$.  
Write $P=p_1 \dd \cdots \dd p_l$ where $p_1=a$ and $p_l=b$.
Let $i$ be minimum and $j$ maximum such that
$p_i \in \adh(t_1,t)$. Since $p \in G_{t_1 \rightarrow t} \setminus \chi(t_1)$,
it follows that $i<j-1$. By Theorem \ref{connectors}\eqref{C-2}  there is a
path $R$ form $p_i$ to $p_j$ with $R^* \subseteq \Conn(t_1,t)$.
Then $P'=p_1 \dd \cdots \dd p_i \dd R \dd p_j \dd  \dots \dd p_l$
is a path from $a$ to $b$ in $G$ with
$|P' \setminus \beta| < |P \setminus \beta|$.
Inductively, $\hat \Delta(s) \cap P'^* \neq \emptyset$.
But $\hat \Delta(s) \subseteq \chi(t_1) \cup \chi(t_2)$ and $P'^* \cap (\chi(t_1) \cup \chi(t_2)) \subseteq P^* \cap (\chi(t_1) \cup \chi(t_2))$,
and so $\hat \Delta(s) \cap P^* \neq \emptyset$.
This proves \eqref{snonsep}.
\\
\\
Since $ab$ is a $(4m+2)(m-1)$-banana in $G$, and since $\hat \Delta(s)<(4m+2)(m-1)$,
\eqref{snonsep} leads to a  contradiction. This completes the proof.

  \end{proof}
      
  Recall that a {\em separation} of  $\beta$ is a triple $(X,Y,Z)$ of pairwise disjoint
  subsets of $\beta$ with $X \cup Y \cup Z = \beta$ such that $X$ is anticomplete to
  $Z$. We are now ready to move on to star cutsets. We will work in the graph $\beta$ and take advantage of its special properties. At the end of the section
  we will explain how to connect our results back to the graph $G$ that
  we are interested in.
  
  As in other papers in the series, we associate a certain unique star separation to every  vertex of $S \cap \Hub(\beta)$. The separation here is
  chosen differently from the way it was done in the past, but the behavior
  we observe is  similar. The reason for the difference is
  that unlike in \cite{TWVIII} or \cite{TWI}, our here  goal is to disconnect two given
  vertices, rather than find a ``balanced separator'' in the graph
  (more on this in Section~\ref{sec:proof}).

  Let $v \in S \cap \Hub(\beta)$. By Theorem~\ref{smallC}, there is a component
  $D$ of $\beta \setminus N[v]$ such that $a,b \in N[D]$.
  Since $s \not \in S_{bad}$, it follows that
  $s$ is not complete to $\{a,b\}$; consequently $D \cap \{a,b\} \neq \emptyset$, and so  $D$ is unique.
  Let $B(v)$ be the   unique component of 
    $\beta \setminus N[v]$
  such that $a,b \in N[B(v)]$ (this choice of $B(v)$ is different from what
  we have done in earlier papers).
    Let $C(v)=N(B(v)) \cup \{v\}$, and  finally, let $A(v)=\beta \setminus (B(v) \cup C(v))$. Then $(A(v), C(v),B(v))$ is the 
    {\em canonical star separation of $\beta$ corresponding to $v$}.
    The next lemma is a key step in the proof of the fact that
    decomposing by canonical star separations
    makes the graph simpler.    We show:
\begin{lemma} \label{nohub}
  The vertex $v$ is not a hub of $\beta \setminus A(v)$.
    \end{lemma}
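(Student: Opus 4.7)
The plan is to derive a contradiction by applying Theorem~\ref{wheelstarcutset} to $\beta$, taking $B(v)$ as the forbidden component of $\beta\setminus N_\beta[v]$. Before anything else, I note that $\beta\in\mathcal{C}$: the class $\mathcal{C}$ is defined by excluded induced subgraphs ($C_4$, theta, prism, even wheel), hence is hereditary, and $\beta$ is by construction an induced subgraph of $G\in\mathcal{C}$. Thus Theorem~\ref{wheelstarcutset} is available inside $\beta$.

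Suppose for contradiction that $v$ is a hub of $\beta\setminus A(v)$, witnessed by a proper wheel $(H,v)$ in $\beta\setminus A(v)$. Since $\beta\setminus A(v)$ is an induced subgraph of $\beta$, the hole $H$ is still a hole in $\beta$ and the set of neighbors of $v$ on $H$ is unchanged; in particular $(H,v)$ is a proper wheel in $\beta$ as well. Because $v$ has at least three neighbors on $H$, we have $v\notin V(H)$, and so, using the definitions $A(v)=\beta\setminus(B(v)\cup C(v))$ and $C(v)=N_\beta(B(v))\cup\{v\}$, we obtain
\[
H \ \subseteq\ (\beta\setminus A(v))\setminus\{v\} \ =\ B(v)\cup\big(C(v)\setminus\{v\}\big) \ =\ B(v)\cup N_\beta(B(v)) \ =\ N_\beta[B(v)].
\]
Since $B(v)$ is, by definition, a component of $\beta\setminus N_\beta[v]$ satisfying $H\subseteq N_\beta[B(v)]$, this directly contradicts Theorem~\ref{wheelstarcutset} applied to the proper wheel $(H,v)$ in $\beta$.

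I do not expect any real obstacle here: the canonical star separation has been chosen precisely so that the ``$B(v)$-side'' of $\beta$ seen from $v$ coincides with $N_\beta[B(v)]$, which is exactly the configuration ruled out by Theorem~\ref{wheelstarcutset}. The only minor verifications are the set identity $\beta\setminus A(v)=B(v)\cup C(v)$, immediate from $(A(v),C(v),B(v))$ being a partition of $\beta$, and the fact that passing to an induced subgraph preserves both holes and the neighbor set of a fixed vertex on those holes.
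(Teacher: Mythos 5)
Your proof is correct and follows exactly the same route as the paper's: assume $(H,v)$ is a proper wheel in $\beta\setminus A(v)$, observe that $H\subseteq N_\beta[B(v)]$, and invoke Theorem~\ref{wheelstarcutset} applied to $\beta$ (which lies in $\mathcal{C}$ by heredity) with $B(v)$ as the offending component. The paper's proof is just a one-line compression of the same argument, leaving the set identity and the fact that $\beta\in\mathcal{C}$ implicit.
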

    
    \begin{proof}
      Suppose
      %first
      that $(H,v)$ is a proper wheel in $\beta \setminus A(v)$.
      Then $H \subseteq N[B(v)]$, contrary to Theorem \ref{wheelstarcutset}.
           This proves Lemma~\ref{nohub}.
    \end{proof}

 We need just a little more set up. 
Let $\mathcal{O}$ be a linear order on $S \cap \Hub(\beta)$.  Following \cite{wallpaper}, we say
     that two vertices of $S \cap \Hub(\beta)$ are {\em star twins} if $B(u)= B(v)$, $C(u) \setminus \{u\} = C(v) \setminus \{v\}$, and
    $A(u) \cup \{u\} = A(v) \cup \{v\}$. (Note that every two of these conditions imply the third.)

Let $\leq_A$ be a relation on $S \cap \Hub(\beta)$ defined as follows: 
\begin{equation*}
\hspace{2.3cm}
x \leq_A y \ \ \ \text{ if} \ \ \  
\begin{cases} x = y; \\ 
\text{$x$ and $y$ are star twins and $\mathcal{O}(x) < \mathcal{O}(y)$; or}\\ 
\text{$x$ and $y$ are not star twins and } y \in A(x).\\
\end{cases}
\end{equation*} 
Note that if $x \leq_A y$, then either $x = y$, or $y \in A(x).$

The conclusion of the next lemma is the same as of Lemma 6.2 from  \cite{TWIII}, but  the  assumptions are different.

  \begin{lemma} \label{shields}
If $y \in A(x)$, then  $A(y) \cup \{y\} \subseteq A(x) \cup \{x\}$.
\end{lemma}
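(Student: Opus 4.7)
The plan is to replace the conclusion $A(y) \cup \{y\} \subseteq A(x) \cup \{x\}$ with its equivalent complementary form in $\beta$, namely $B(x) \cup (C(x) \setminus \{x\}) \subseteq B(y) \cup (C(y) \setminus \{y\})$, and verify this in two steps: first $B(x) \subseteq B(y)$, and then $C(x) \setminus \{x\} \subseteq B(y) \cup (C(y) \setminus \{y\})$.

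For the first step, the key observation I would use is that $y \in A(x) = \beta \setminus (B(x) \cup C(x))$ forbids $y$ from lying in $B(x)$ and, since $N(B(x)) \subseteq C(x)$, also forbids $y$ from having a neighbor in $B(x)$; hence $B(x) \cap N[y] = \emptyset$, i.e. $B(x) \subseteq \beta \setminus N[y]$. Because $B(x)$ is a component of $\beta \setminus N[x]$ and so connected, it must lie inside a single component $D$ of $\beta \setminus N[y]$. Since $a,b \in N[B(x)] \subseteq N[D]$, the uniqueness of the distinguished component $B(y)$ (established just before the lemma using $y \notin S_{bad}$, so that $y$ is not adjacent to both $a$ and $b$, and any component whose closed neighborhood contains $a,b$ must contain the non-neighbor) forces $D = B(y)$; thus $B(x) \subseteq B(y)$.

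For the second step, let $z \in C(x) \setminus \{x\} = N(B(x))$ (where the equality holds because $B(x) \subseteq \beta \setminus N[x]$ rules out $x \in N(B(x))$). Then $z$ is adjacent to some vertex of $B(x) \subseteq B(y)$, so $z \in B(y) \cup N(B(y)) = B(y) \cup C(y)$. Moreover $z \neq y$, because $y \in A(x)$ is disjoint from $C(x)$ and hence from $N(B(x))$. Therefore $z \in B(y) \cup (C(y) \setminus \{y\})$, and combining with the first step gives the desired containment.

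I do not expect a real obstacle. The argument is essentially formal manipulation of the separations $(A(\cdot), C(\cdot), B(\cdot))$ together with one invocation of the uniqueness of $B(y)$; the only spot that needs genuine care is the identification $D = B(y)$ in the first step, where the hypothesis $y \notin S_{bad}$ must be used to prevent a second ``good'' component of $\beta \setminus N[y]$ from competing with $D$.
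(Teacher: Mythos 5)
Your proof is correct and follows essentially the same route as the paper: show $B(x) \subseteq \beta \setminus N[y]$, hence $B(x) \subseteq D$ for a component $D$ of $\beta \setminus N[y]$, identify $D = B(y)$, then show $C(x) \setminus \{x\} \subseteq B(y) \cup C(y)$ and pass to complements. The only cosmetic difference is which non-bad hypothesis you highlight at the identification step: you invoke $y \notin S_{bad}$ (for the uniqueness of $B(y)$), while the paper's write-up invokes $x \notin S_{bad}$ to get $\{a,b\} \cap B(x) \neq \emptyset$; both facts hold since $x,y \in S = S' \setminus S_{bad}$, and both suffice given $a,b \in N[B(x)] \subseteq N[D]$.
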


  \begin{proof}
    Since $C(y)\subseteq N[y]$ and $y$ is anticomplete to $B(x)$, we have $B(x) \subseteq G \setminus N[y]$. Since $B(x)$ is connected, there is a
    component $D$ of 
    $G \setminus N[y]$ such that $B(x) \subseteq D$. 
    Since $x \not \in S_{bad}$, it follows that 
    $\{a,b\} \cap  B(x) \neq \emptyset$, and so $D=B(y)$.
        Let $v \in C(x)\setminus \{x\}$. Then $v$ has a neighbor in $B(x)$ and thus in $B(y)$. If $v \in N[y]$, then $v \in C(y)$. If $v \not \in N[y]$, then $v \in B(y)$. It follows that $C(x) \setminus \{x\} \subseteq C(y) \cup B(y)$.  But now  $A(y) \setminus \{x\} \subseteq A(x)$, as required. This
  proves Lemma~\ref{shields}.
  \end{proof}

  The proofs of the next two lemmas are identical to the proofs  of
  Lemmas 6.3 and Lemma 6.4 in   \cite{TWIII} (using Lemma~\ref{shields} instead of Lemma 6.2 of \cite{TWIII}), and we omit them.
  
\begin{lemma} \label{Aorder}
      $\leq_A$ is a partial order on $S \cup \Hub(\beta)$. 
\end{lemma}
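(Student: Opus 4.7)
The plan is to verify reflexivity, antisymmetry, and transitivity directly from the definition, with Lemma~\ref{shields} as the main structural tool. Reflexivity is immediate from the first clause $x=y$. Before addressing the other two axioms, I would record that ``being star twins'' is an equivalence relation on $S \cap \Hub(\beta)$, since each of the three defining equalities $B(u)=B(v)$, $C(u)\setminus\{u\}=C(v)\setminus\{v\}$, and $A(u)\cup\{u\}=A(v)\cup\{v\}$ is manifestly transitive.

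For antisymmetry, suppose $x \leq_A y$ and $y \leq_A x$ with $x \neq y$. If $x$ and $y$ are star twins, the inequalities $\mathcal{O}(x) < \mathcal{O}(y) < \mathcal{O}(x)$ are absurd. Otherwise $y \in A(x)$ and $x \in A(y)$, and two applications of Lemma~\ref{shields} give $A(x)\cup\{x\} = A(y)\cup\{y\}$, equivalently $B(x) \cup (C(x)\setminus\{x\}) = B(y) \cup (C(y)\setminus\{y\})$. Since $C(v) = N(B(v))\cup\{v\}$ acts as a buffer, $A(x)$ is anticomplete to $B(x)$; in particular $y$ has no neighbor in $B(x)$, so no vertex of $B(x)$ can lie in $C(y)\setminus\{y\} \subseteq N(y)$, forcing $B(x) \subseteq B(y)$. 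By symmetry $B(x)=B(y)$, and cancellation then yields $C(x)\setminus\{x\} = C(y)\setminus\{y\}$. Hence $x$ and $y$ are star twins, a contradiction.

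For transitivity, consider $x \leq_A y$ and $y \leq_A z$ with $x, y, z$ pairwise distinct, and split into cases according to which of $\{x,y\}$ and $\{y,z\}$ are star-twin pairs. If both are, then so is $\{x,z\}$ by the equivalence remark, and $\mathcal{O}(x) < \mathcal{O}(y) < \mathcal{O}(z)$ delivers $x \leq_A z$. If exactly one is star-twin, say $\{x,y\}$, then $z \in A(y)$ combined with $A(y)\cup\{y\} = A(x)\cup\{x\}$ and $z \neq x$ gives $z \in A(x)$; the equivalence remark rules out $\{x,z\}$ from being star-twin (else $\{y,z\}$ would be one too). If neither pair is star-twin, then $y \in A(x)$ and $z \in A(y)$, and Lemma~\ref{shields} gives $z \in A(y)\cup\{y\} \subseteq A(x)\cup\{x\}$; since $z \neq x$, $z \in A(x)$.

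The subtlety I expect to be the main obstacle is the last case: one must still rule out that $\{x,z\}$ is a star-twin pair, since a priori $z \in A(x)$ does not prevent this (indeed, any two distinct star twins $u,v$ satisfy $v \in A(u)$), and we have no control on the relative $\mathcal{O}$-order of $x$ and $z$. However, if $\{x,z\}$ were a star-twin pair then $y \in A(x) = (A(z)\cup\{z\})\setminus\{x\}$ would force $y \in A(z)$; combined with $z \in A(y)$ and $y \neq z$, the antisymmetry argument above (the step that upgrades two mutual $A$-containments to a star-twin pair) applied to $y$ and $z$ would force $\{y,z\}$ to be a star-twin pair, contradicting the current case. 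Therefore $\{x,z\}$ is not a star-twin pair and $x \leq_A z$, completing the verification.
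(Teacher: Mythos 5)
Your proof is correct and follows the natural direct verification of the three partial-order axioms, driven by Lemma~\ref{shields}; this is the same approach as the cited Lemma 6.3 of [TWIII], which the paper defers to rather than reproducing. You correctly identified and resolved the one real subtlety, namely that in the transitivity step with $y\in A(x)$ and $z\in A(y)$ one must rule out that $x$ and $z$ are star twins, and your reduction of that to the antisymmetry argument (via $y\in A(z)$ and $z\in A(y)$ forcing $y,z$ to be star twins) is sound, as is the use of $C(v)\setminus\{v\}\subseteq N(v)$, which holds here because $B(v)$ is a component of $\beta\setminus N[v]$.
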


In view of Lemma~\ref{Aorder}, let  $\Core(S')$ be the set of all $\leq_A$-minimal elements of $S \cap \Hub(\beta)$. 

  %We start with a lemma.
  \begin{lemma}\label{looselylaminar}
    Let $u,v \in \Core(S')$. Then
    $A(u) \cap C(v)=C(u) \cap A(v)=\emptyset$.
  \end{lemma}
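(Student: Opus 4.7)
The plan is to use $\leq_A$-minimality of $u$ and $v$ together with the stability of $S'$ to pin each of $u$ and $v$ inside the $B$-set of the other, and then derive a direct contradiction from any putative element of $A(u) \cap C(v)$ by tracking its neighbors. The case $u = v$ is immediate since $A(u), B(u), C(u)$ partition $\beta$, so I will assume $u \neq v$.

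First I would prove that $u \in B(v)$ and $v \in B(u)$. Since $u, v \in \Core(S')$, neither is $\leq_A$-strictly below the other. If $u$ and $v$ were star twins, then one of $u <_A v$ or $v <_A u$ would hold according to the linear order $\mathcal{O}$, contradicting the minimality of the other; so $u$ and $v$ are not star twins, and the non-star-twin clause of the definition of $\leq_A$ gives $v \notin A(u)$ and $u \notin A(v)$. Since $S'$ is a stable set of hubs, $u$ and $v$ are non-adjacent. The inclusion $C(v) \subseteq N[v]$ (immediate from $C(v) = N(B(v)) \cup \{v\}$ and the fact that $B(v)$ is a component of $\beta \setminus N[v]$) then forces $u \notin C(v)$; combined with $u \notin A(v)$ this leaves $u \in B(v)$, and the argument for $v \in B(u)$ is symmetric.

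Second, I would suppose for contradiction that $x \in A(u) \cap C(v)$. If $x = v$, then $v \in A(u)$, directly contradicting the first step. Otherwise $x \in N(B(v)) \subseteq N[v] \setminus \{v\}$, so $x$ is adjacent to $v$; since $v \in B(u)$, this means $x$ has a neighbor in $B(u)$, and therefore $x \in B(u) \cup N(B(u)) \subseteq B(u) \cup C(u)$, a set disjoint from $A(u)$, contradiction. The identity $C(u) \cap A(v) = \emptyset$ then follows by exchanging the roles of $u$ and $v$.

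I do not anticipate a substantive obstacle: once $u$ and $v$ are located inside each other's $B$-sets, the remainder is a one-step neighbor chase using $C(v) \subseteq N[v]$. The only slightly fiddly ingredient is ruling out the star-twin clause in the definition of $\leq_A$, which is handled cleanly by the linear order $\mathcal{O}$.
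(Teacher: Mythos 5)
Your proof is correct and self-contained, and it does not appear to follow the route the paper implicitly prescribes. The paper omits the proof of Lemma~\ref{looselylaminar}, deferring to Lemma~6.4 of \cite{TWIII} and indicating that Lemma~\ref{shields} (the nesting property $y\in A(x)\Rightarrow A(y)\cup\{y\}\subseteq A(x)\cup\{x\}$) is the substitute ingredient. Your argument never invokes Lemma~\ref{shields}: you first use $\leq_A$-minimality (ruling out the star-twin clause via the linear order $\mathcal{O}$, then applying the non-star-twin clause) to get $u\notin A(v)$ and $v\notin A(u)$, and stability of $S'$ together with $C(v)\subseteq N[v]$ (which holds because $B(v)$ is a component of $\beta\setminus N[v]$) to get $u\notin C(v)$ and $v\notin C(u)$; since $(A,C,B)$ partitions $\beta$ this locates $u\in B(v)$ and $v\in B(u)$. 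The neighbor chase in the second step is then immediate: any $x\in A(u)\cap C(v)$ other than $v$ lies in $N(B(v))\subseteq N(v)$, hence has $v\in B(u)$ as a neighbor, forcing $x\in B(u)\cup N(B(u))\subseteq B(u)\cup C(u)$, contradicting $x\in A(u)$. This is a cleaner, more elementary argument than the one the paper points to: it buys directness and no dependence on the nesting lemma, at the small cost of re-deriving the fact $u\in B(v)$ (which in TWIII's framework would presumably fall out of the partial-order machinery). Both are valid; yours is arguably preferable here since it makes the section self-contained.
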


    We have finally reached our goal: we can define a subgraph of $G$
  that is simpler than $G$ itself, but carries all the 
  information we need.
 Define
 $$\beta^A(S')=\bigcap_{v \in \Core(S')} (B(v) \cup C(v)).$$
 The next theorem summarizes the important aspects of the  behavior of
 $\beta^A(S')$.

\begin{theorem} 
  \label{A_centralbag}
    The following hold:
    \begin{enumerate}
    \item For every $v \in \Core(S')$, we have $C(v) \subseteq \beta^{A}(S')$. \label{A-1}
\item  For every $v \in \Core(S')$, $|C(v) \cap (\chi(t_1) \cup \chi(t_2))| \leq 4m(m-1)$.
  \label{A-2}
\item  For every $v \in \Core(S')$,  $|\Delta(C(v))| \leq 4m(m-1)$.
    \label{A-2.5}
      \item For every component $D$ of $\beta  \setminus \beta^{A}(S')$, there exists $v \in \Core(S')$ such that $D \subseteq A(v)$. Further, if $D$ is a component of $\beta \setminus \beta^A(S')$ and $v \in \Core(S')$ such that $D \subseteq A(v)$, then $N_{\beta}(D) \subseteq C(v)$. \label{A-3}
      \item $S' \cap \Hub(\beta^A(S'))=\emptyset$. \label{A-4}
    \end{enumerate}
\end{theorem}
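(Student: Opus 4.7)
My plan is to dispatch the five parts in order, relying on the structural tools set up in this section. Part (\ref{A-1}) is immediate from Lemma~\ref{looselylaminar}: for any $u, v \in \Core(S')$, $C(v) \cap A(u) = \emptyset$, and since $\{A(u), B(u), C(u)\}$ partitions $\beta$, we get $C(v) \subseteq B(u) \cup C(u)$; intersecting over $u$ gives $C(v) \subseteq \beta^A(S')$. For (\ref{A-2}), since $C(v) \subseteq N[v]$ we have $C(v) \cap (\chi(t_1) \cup \chi(t_2)) \subseteq N_{\chi(t_1) \cup \chi(t_2)}[v]$, and the bound $|N_{\chi(t_1) \cup \chi(t_2)}(v)| < 4m(m-1)$ from Lemma~\ref{smallC}(\ref{smallC-3}), combined with $v \in \chi(t_1) \cup \chi(t_2)$ from Lemma~\ref{smallC}(\ref{smallC-1}), delivers the desired size.

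For (\ref{A-2.5}) I would prove the stronger inclusion $\Delta(C(v)) \subseteq N_{\hat{\chi}(t_1)}[v] \cup N_{\hat{\chi}(t_2)}[v]$, which combined with Lemma~\ref{smallC}(\ref{smallC-2}) gives the bound $4m(m-1)$. The key observation is the following: if $t \in \delta_i(C(v))$ and $x \in C(v) \cap (G_{t_i \rightarrow t} \setminus (\chi(t_1) \cup \chi(t_2)))$, then by the definition of $\beta$ the vertex $x$ lies in $\Conn(t_i, t) \setminus \chi(t_i)$, and $C(v) \subseteq N[v]$ together with $v \in \chi(t_1) \cup \chi(t_2)$ forces $x \neq v$ and $xv \in E(G)$. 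The tree-decomposition property then confines every $G$-neighbor of $x$ to $G_{t_i \rightarrow t}$, so $v$ must lie in $\chi(t_i) \cap G_{t_i \rightarrow t} = \adh(t_i, t)$; hence $\adh(t_i, t) \subseteq N_{\hat{\chi}(t_i)}[v]$, and taking the union over all such $t$ finishes the argument.

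For (\ref{A-3}), note that $\beta \setminus \beta^A(S') = \bigcup_{v \in \Core(S')} A(v)$, so any component $D$ of $\beta \setminus \beta^A(S')$ meets some $A(v)$. Since $C(v) \subseteq \beta^A(S')$ is disjoint from $D$ and $A(v)$ is anticomplete to $B(v)$, connectedness of $D$ forces $D \subseteq A(v)$; then $N_\beta(D) \cap B(v) = \emptyset$ by anticompletion, and $N_\beta(D) \cap A(v) = \emptyset$ by maximality of the component (using $A(v) \subseteq \beta \setminus \beta^A(S')$), so $N_\beta(D) \subseteq C(v)$. Finally for (\ref{A-4}), $S_{bad} \cap \beta = \emptyset$ by construction, so we only need to consider $v \in S$. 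If $v \notin \Hub(\beta)$, then because a hub of an induced subgraph is also a hub of the supergraph, $v \notin \Hub(\beta^A(S'))$. If $v \in S \cap \Hub(\beta) \cap \Core(S')$, then $v \in C(v) \subseteq \beta^A(S') \subseteq \beta \setminus A(v)$, and Lemma~\ref{nohub} yields $v \notin \Hub(\beta \setminus A(v))$, and hence not of $\beta^A(S')$. If $v \in S \cap \Hub(\beta) \setminus \Core(S')$, then by Lemma~\ref{Aorder} there is some $u \in \Core(S')$ with $u <_A v$; both clauses of the definition of $<_A$ (the star-twin case uses $A(u) \cup \{u\} = A(v) \cup \{v\}$) yield $v \in A(u)$, so $v \notin B(u) \cup C(u) \supseteq \beta^A(S')$. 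The most delicate step is (\ref{A-2.5}), where one has to match the tree-decomposition adhesion structure with the definition of $\Delta(C(v))$.
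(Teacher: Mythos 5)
Your proof is correct and follows essentially the same approach as the paper for all five parts: (\ref{A-1}) from Lemma~\ref{looselylaminar}, (\ref{A-2}) and (\ref{A-2.5}) from Lemma~\ref{smallC} together with the observation that any $t \in \delta_i(C(v))$ forces $v \in \adh(t_i,t)$, (\ref{A-3}) from the disjoint-union structure of $\beta \setminus \beta^A(S')$, and (\ref{A-4}) from Lemma~\ref{nohub} together with the $\leq_A$-minimality of $\Core(S')$. Your case split in (\ref{A-4}) is organized a bit differently from the paper's contradiction argument but rests on the same two facts, and your reliance on Lemma~\ref{smallC}(\ref{smallC-2}) in (\ref{A-2.5}) is actually the right citation (the paper points to (\ref{smallC-3}) there, which appears to be a typo).
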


\begin{proof}
    \eqref{A-1} is immediate
    from Lemma  \ref{looselylaminar}, and \eqref{A-2} follows 
    from Lemma \ref{smallC}.

    Next we prove \eqref{A-2.5}. By Lemma \ref{smallC}\eqref{smallC-1}, it follows that $v \in \chi(t_1) \cup \chi(t_2)$. Observe that if $t \in T \setminus \{t_1,t_2,t_1',t_2'\}$ and
    $(G_{t_i \rightarrow t} \setminus (\chi(t_1) \cup \chi(t_2))  \cap C(v) \neq \emptyset$ for some $i \in \{1, 2\}$,
    then $v \in \chi(t_1) \cap \chi(t)$ or $v \in \chi(t_2) \cap \chi(t)$.
    It follows that for $i \in \{1,2\}$, $\adh(t_i,t) \subseteq \Delta(C(v))$
    only if $v \in \adh(t_i,t)$. 
    Consequently, 
    $|\Delta(C(v))| \leq \deg_{\hat\chi(t_1)}(v)+\deg_{\hat\chi(t_2)}(v)$,
    and so \eqref{A-2.5}  follows 
    from Lemma \ref{smallC}\eqref{smallC-3}.    
    
Next we prove  \eqref{A-3}. Let $D$ be a component of
$\beta \setminus \beta^A(S')$. Since  $\beta \setminus \beta^A(S')=\bigcup_{v \in \Core(S')}A(v)$, there exists
    $v \in \Core(S')$ such that $D \cap A(v) \neq \emptyset$.
    If $D \setminus A(v) \neq \emptyset$, then, since $D$ is connected, it follows that $D \cap N(A(v)) \neq \emptyset$; but then $D \cap C(v) \neq \emptyset$, contrary to \eqref{A-1}. Since $N_{\beta}(D) \subseteq \beta^A(S')$ and $N_{\beta}(D) \subseteq A(v) \cup C(v)$, it follows that $N_{\beta} (D) \subseteq C(v)$. 
    This proves \eqref{A-3}.

    To prove \eqref{A-4}, let $u \in S' \cap \Hub(\beta^A(S'))$.
    Since $\beta^A(S') \subseteq \beta$, we deduce that $u \not \in S_{bad}$,
    and so   $u \in S \cap \Hub(\beta)$.
    By Theorem \ref{smallC}\eqref{smallC-1}, we have that $u \in \chi(t_1) \cup \chi(t_2)$.
       By  Lemma \ref{nohub}, it follows that
$\beta^A(S') \not \subseteq B(u) \cup C(u)$, and therefore $u \not \in \Core(S')$.
But then $u \in A(v)$ for some $v \in \Core(S')$, and so $u \not \in \beta^A(S')$,
a contradiction. This proves \eqref{A-4} and completes the proof of
 Theorem \ref{A_centralbag}.
  \end{proof}

We now explain how $\beta^A(S')$ is used.
In the course of the proof of Theorem \ref{banana}, we will inductively obtain
a small  cutset separating $a$ from $b$ in
$\beta^A(S')$. The next theorem allows us  to transform this cutset
into a cutset separating $a$ from $b$ in $\beta$.

\begin{theorem}
  \label{smallsepinbeta}
Let $(X,Y,Z)$ be a separation of $\beta^A(S')$ such that $a \in X$ and
$b \in Z$. Let $Y'=(Y \cup \bigcup_{s \in Y \cap \Core(S')} C(s)) \setminus \{a,b\}$.
Then
\begin{enumerate}
  \item $Y'$
    separates $a$ from $b$ in $\beta$.
    \item $|Y' \cap (\chi(t_1) \cup \chi(t_2))| \leq |Y|+4m(m-1)|Y \cap \Core(S')|$.
    \item $|\Delta(Y') \setminus \Delta(Y)| \leq  4m(m-1)|Y \cap \Core(S')|$.
\end{enumerate}
\end{theorem}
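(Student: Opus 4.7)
The plan is to treat (1) as the substantive claim and to derive (2), (3) by straightforward bookkeeping from Theorem~\ref{A_centralbag}.

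For (1) I would argue by contradiction: assume there is a path $P$ in $\beta$ from $a$ to $b$ avoiding $Y'$, and construct a walk (hence a path) in $\beta^A(S') \setminus Y$ from $a$ to $b$, contradicting that $(X,Y,Z)$ is a separation of $\beta^A(S')$ with $a \in X$ and $b \in Z$. First I would verify that $Y' \subseteq \beta^A(S')$ (using Theorem~\ref{A_centralbag}\eqref{A-1}) and that $a, b \in \beta^A(S')$ (since by the choice of $B(v)$ we have $a, b \in N[B(v)] \subseteq B(v) \cup C(v)$ for every $v \in \Core(S')$, noting also that by Lemma~\ref{abinbasket} they lie in $\chi(t_1) \cup \chi(t_2) \subseteq \beta$). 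Next I consider each maximal subpath $Q$ of $P$ whose interior $Q^*$ lies outside $\beta^A(S')$. By Theorem~\ref{A_centralbag}\eqref{A-3}, $Q^*$ is contained in a component $D$ of $\beta \setminus \beta^A(S')$, and there is $v \in \Core(S')$ with $D \subseteq A(v)$ and $N_\beta(D) \subseteq C(v)$; in particular, the endpoints $u_1, u_2$ of $Q$ lie in $C(v)$. Moreover $C(v) \subseteq N[v]$, because $B(v)$ is a component of $\beta \setminus N[v]$ and hence every vertex of $N(B(v))$ must lie in $N[v]$.

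The critical step is to show that $v \notin Y$. If instead $v \in Y \cap \Core(S')$, then $C(v) \setminus \{a,b\} \subseteq Y'$ by the definition of $Y'$. Now $v \in \Core(S') \subseteq S' \setminus S_{bad}$, and $S_{bad}$ contains every common neighbor of $a$ and $b$; combined with $v \neq a, b$ (from $S' \cap \{a,b\} = \emptyset$), this forces $v$ to be adjacent to at most one of $a, b$, so at most one of $a, b$ lies in $N[v] \supseteq C(v)$. Hence at least one of $u_1, u_2$ lies in $C(v) \setminus \{a,b\} \subseteq Y'$, contradicting $P \cap Y' = \emptyset$. Therefore $v \notin Y$, and since $u_1, u_2 \in P$ also avoid $Y \subseteq Y'$, I may replace $Q$ by the walk $u_1 \dd v \dd u_2$ (of length at most two, shortened if $u_i = v$), which lies entirely in $\beta^A(S') \setminus Y$ by Theorem~\ref{A_centralbag}\eqref{A-1}. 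Performing this substitution for every maximal off-$\beta^A(S')$ subpath of $P$ yields the desired walk.

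For (2), the inclusion $Y' \setminus Y \subseteq \bigcup_{s \in Y \cap \Core(S')} C(s)$ is immediate from the definition of $Y'$, so $|Y' \cap (\chi(t_1) \cup \chi(t_2))|$ is at most $|Y| + \sum_{s \in Y \cap \Core(S')} |C(s) \cap (\chi(t_1) \cup \chi(t_2))|$, and Theorem~\ref{A_centralbag}\eqref{A-2} bounds each summand by $4m(m-1)$. For (3), any $t \in \delta(Y') \setminus \delta(Y)$ must be witnessed by a vertex of $Y' \setminus Y$ lying in $G_{t_i \rightarrow t} \setminus (\chi(t_1) \cup \chi(t_2))$ for some $i \in \{1,2\}$, hence by a vertex of some $C(s)$ with $s \in Y \cap \Core(S')$; this gives $\Delta(Y') \setminus \Delta(Y) \subseteq \bigcup_s \Delta(C(s))$, and Theorem~\ref{A_centralbag}\eqref{A-2.5} yields the bound.

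The main obstacle I anticipate is in part (1): ruling out the pathological case in which a subpath $Q$ through some $A(v)$ has both endpoints in $\{a, b\}$ when $v \in Y \cap \Core(S')$. This is precisely where the exclusion of common neighbors of $a$ and $b$ from $\Core(S')$ via the definition of $S_{bad}$ becomes essential; once this is handled, the rerouting through $v$ is immediate from $C(v) \subseteq N[v]$, and parts (2) and (3) reduce to applying the counts in Theorem~\ref{A_centralbag}.
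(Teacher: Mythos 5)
Your proof is correct and follows essentially the same approach as the paper's: both rest on applying Theorem~\ref{A_centralbag}\eqref{A-3} to a subpath of $P$ leaving $\beta^A(S')$, using $C(v)\subseteq N[v]$ and the exclusion of common neighbors of $a,b$ via $S_{bad}$ for the key contradiction, and reading off parts (2) and (3) from Theorem~\ref{A_centralbag}\eqref{A-2} and \eqref{A-2.5}. The only difference is organizational: the paper locates a single critical subpath $Q$ whose ends lie in distinct components of $\beta^A(S')\setminus Y$, deduces that the witness $s$ must lie in $Y$, and concludes $s\in S_{bad}$, a contradiction; you instead show that every such witness $v$ avoids $Y$ (by the same $S_{bad}$ argument applied contrapositively) and reroute each off-$\beta^A(S')$ subpath through $v$ to build a walk in $\beta^A(S')\setminus Y$ violating the separation, which is a harmless rearrangement of the same argument.
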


\begin{figure}[t!]
    \centering
\includegraphics[scale=0.7]{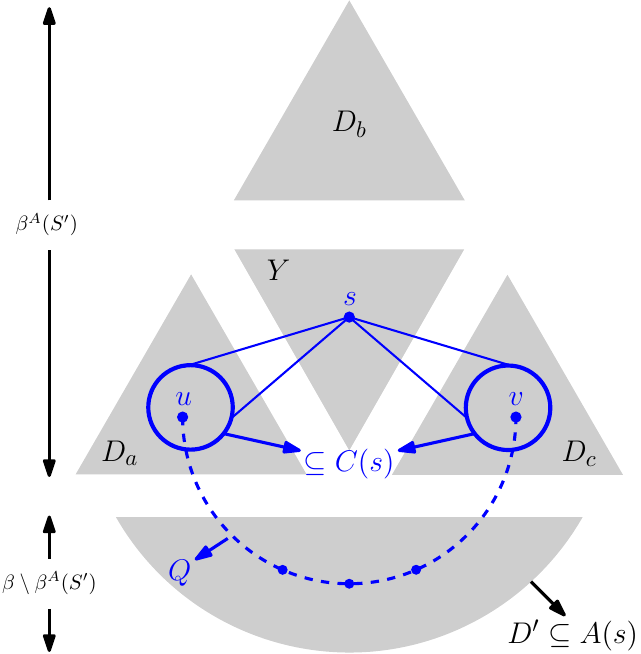}
    \caption{Proof of Theorem \ref{smallsepinbeta}.}
    \label{fig:411}
\end{figure}
  
\begin{proof}
  Suppose that $Y'$ does not separate $a$ from $b$ in $\beta$. Let $P$ be a path from $a$ to $b$ in $\beta \setminus Y'$. Let $D_a,D_b$ be the components of $\beta^A(S') \setminus Y$ such that $a \in D_a$ and $b \in D_b$. Since the first vertex of $P$ is in $D_a$ and the last vertex of $P$ is in $D_b$, it follows that there is a subpath $Q$ of $P$ such that $Q$ has ends $u, v$ and: 
  \begin{itemize}
      \item The vertex $u$ is in $D_a$; 
      \item $Q^* \cap \beta^A(S') = \emptyset$; 
      \item The vertex $v$ is in $\beta^A(S') \setminus D_a$. 
  \end{itemize}
  See Figure \ref{fig:411}.  Since $Y \subseteq Y'$, it follows that $v$ is in a component $D_c$ of $\beta^A(S') \setminus Y$ with $D_c \neq D_a$ (possibly $D_c = D_b$). This implies that $uv \not\in E(G)$, and so $Q^* \neq \emptyset$. 
  
    By Theorem~\ref{A_centralbag}\eqref{A-3}, there is an $s \in \Core(S') \subseteq \beta^A(S')$ such that $Q^* \subseteq A(s)$ and $u, v \in C(s)$.   Since $D_a,D_c$ are distinct components of $\beta^A(S') \setminus Y$,   and since $N_{\beta^A(S')}(s)$ meets both $D_a$ and $D_b$,
  it follows that $s \in Y$. Therefore $C(s) \setminus \{a,b\} \subseteq Y'$, and so $u, v \in \{a, b\}$. Since $u \neq v$, it follows that $u = a$ and $b = v$, and so $a,b \in N(s)$,
  contrary to the fact that $s \not \in S_{bad}$.
  This proves the first assertion of the theorem.

  The second assertion follows  from  Theorem~\ref{A_centralbag}\eqref{A-2}, and the third assertion follows from  Theorem~\ref{A_centralbag}\eqref{A-2.5}.
  \end{proof}

We finish this section with a theorem that allows us to transform
a cutset separating $a$ from $b$ in $\beta$ into
a cutset separating $a$ from $b$ in $G$.

\begin{theorem}
  \label{smallseping}
Assume that $G$ does not admit a clique cutset.
  Let $(X,Y,Z)$ be a separation of $\beta$ such that $a \in X$ and
  $b \in Z$.
  Let
$$Y'=(S_{bad} \cup \adh(t_1,t_1') \cup \adh(t_2,t_2') \cup (Y\cap (\chi(t_1) \cup \chi(t_2)))  \cup \Delta(Y)) \setminus \{a,b\}.$$
Then $Y'$ separates $a$ from $b$ in $G$.
\end{theorem}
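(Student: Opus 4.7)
The plan is to argue by contradiction and induct on $|P \setminus \beta|$: assume $P$ is an $a$-$b$ path in $G$ with $P^* \cap Y' = \emptyset$, following the template of the inductive rerouting in Lemma~\ref{smallC}\eqref{smallC-4}. In the base case $P \subseteq \beta$, the separation $(X,Y,Z)$ of $\beta$ produces some $y \in P^* \cap Y$. If $y \in \chi(t_1) \cup \chi(t_2)$ then $y \in Y \cap (\chi(t_1) \cup \chi(t_2)) \subseteq Y' \cup \{a,b\}$, and $y \neq a,b$ forces $y \in Y'$, a contradiction. Otherwise $y \in \Conn(t_i,t) \setminus \chi(t_i)$ for some $i \in \{1,2\}$ and $t \in N_T(t_i) \setminus \{t_i'\}$; by definition $t \in \delta_i(Y)$ and so $\adh(t_i,t) \subseteq \Delta(Y) \subseteq Y'$. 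Choose $q \in P^* \cap (\chi(t_1) \cup \chi(t_2))$ via Theorem~\ref{basket}; since $q \notin Y'$ we have $q \notin \adh(t_i,t)$, and then the bag-subtree property of tree decompositions forces $q \notin G_{t_i \rightarrow t}$. By Theorem~\ref{connectedbranches}, the subpath of $P$ from $q$ to $y$ must cross $\adh(t_i,t) \subseteq Y'$ at an interior vertex, contradiction.

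For the inductive step, pick $p \in P^* \setminus \beta$. If $p \in S_{bad}$ then $p \in Y'$, a contradiction. If $p$ lies in the middle region $M$ between $t_1$ and $t_2$, then the tree-decomposition structure gives $N(M) \subseteq M \cup \adh(t_1,t_1') \cup \adh(t_2,t_2')$, and combining this with Theorem~\ref{basket} produces an interior vertex of $P$ in $\adh(t_1,t_1') \cup \adh(t_2,t_2') \subseteq Y'$, again a contradiction. Otherwise $p \in G_{t_i \rightarrow t} \setminus \chi(t_i)$ for some $i \in \{1,2\}$ and $t \in N_T(t_i) \setminus \{t_i'\}$. Theorem~\ref{basket} together with Theorem~\ref{connectedbranches} yields indices $i_0+1 < j_0$ with $p_{i_0}, p_{j_0} \in \adh(t_i,t)$ straddling $p$; Theorem~\ref{connectors}\eqref{C-2} furnishes a path $R$ from $p_{i_0}$ to $p_{j_0}$ with $R^* \subseteq \Conn(t_i,t) \setminus \chi(t_i)$. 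Replacing the $p_{i_0}\cc p_{j_0}$-segment of $P$ by $R$ (and extracting a simple subpath if necessary) produces an $a$-$b$ path $P'$ in $G$ with $|P' \setminus \beta| < |P \setminus \beta|$, ready for the inductive hypothesis.

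The main obstacle is ensuring that $P'^* \cap Y' = \emptyset$ so the inductive hypothesis actually applies to $P'$. Because $R^* \subseteq \Conn(t_i,t) \setminus \chi(t_i)$ is disjoint from $\chi(t_1) \cup \chi(t_2)$ and $Y' \subseteq \chi(t_1) \cup \chi(t_2) \cup S_{bad}$, the only way $R^*$ can meet $Y'$ is through $S_{bad}$. Since $|S_{bad}| \le 3$, since $\Conn(t_i,t) \setminus \chi(t_i)$ is connected in $G$ by Theorem~\ref{connectors}\eqref{C-2}, and since any vertex of $S_{bad}$ lying in $\Conn(t_i,t) \setminus \chi(t_i)$ must be a common neighbor of both $a$ and $b$ (because non-$t_j$-cooperativity requires membership in $\chi(t_j)$, a set disjoint from $\Conn(t_i,t) \setminus \chi(t_i)$, so only clause (i) of the $S_{bad}$ definition can be responsible), the rerouting $R$ can be chosen so that $R^* \cap S_{bad} = \emptyset$. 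Then $R^* \cap Y' = \emptyset$, whence $P'^* \cap Y' \subseteq P^* \cap Y' = \emptyset$, and the induction closes to give the desired contradiction.
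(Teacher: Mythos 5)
Your argument is correct in outline but takes a genuinely different route from the paper's. The paper proves Theorem~\ref{smallseping} directly and non-inductively: from a purported $a$-$b$ path $P \subseteq G \setminus Y'$, it extracts a subpath $Q$ with $Q^* \cap \beta = \emptyset$ whose endpoints lie in different components of $\beta \setminus Y$, shows first that the far endpoint of $Q$ cannot lie in $Y$, then identifies the component $\tilde D$ of $G \setminus (\chi(t_1) \cup \chi(t_2))$ containing $Q^*$ and derives a contradiction from the location of $\tilde D$ and the second assertion of Theorem~\ref{basket}. You instead run an induction on $|P \setminus \beta|$, mirroring the rerouting technique the paper uses for the unrelated claim \eqref{snonsep} inside Lemma~\ref{smallC}: each step replaces a stretch of $P$ outside $\beta$ with a detour through some $\Conn(t_i,t)$, and the base case $P \subseteq \beta$ is handled directly from the separation $(X,Y,Z)$ of $\beta$. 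This is a legitimate alternative and arguably a bit more uniform: the case ``$v \in Y$'' that the paper must handle as a separate preliminary step is absorbed into your base case.

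One step, however, is stated loosely and should be tightened. You write that ``the rerouting $R$ can be chosen so that $R^* \cap S_{bad} = \emptyset$'' after observing that $|S_{bad}| \le 3$, that $\Conn(t_i,t) \setminus \chi(t_i)$ is connected, and that any vertex of $S_{bad}$ inside $\Conn(t_i,t) \setminus \chi(t_i)$ would have to be a common neighbor of $a$ and $b$. Mere connectedness does not let you route around up to three forbidden vertices, so as written the conclusion does not follow. But your own third observation already finishes the job with a slightly different punchline: $\Conn(t_i,t) \setminus \chi(t_i) \subseteq G_{t_i \rightarrow t} \setminus \chi(t_i)$, which (by Theorem~\ref{connectedbranches}) is a component $D$ of $G \setminus (\chi(t_1) \cup \chi(t_2))$; the second conclusion of Theorem~\ref{basket} says $|N(D) \cap \{a,b\}| \le 1$, so no vertex of $D$ can be a common neighbor of $a$ and $b$. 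Hence in fact $S_{bad} \cap (\Conn(t_i,t) \setminus \chi(t_i)) = \emptyset$, i.e.\ $S_{bad} \subseteq \chi(t_1) \cup \chi(t_2)$, and there is nothing to route around: every choice of $R^*$ lies in $\beta$ and is disjoint from $Y'$. After replacing ``can be chosen to avoid $S_{bad}$'' with this disjointness fact, your inductive proof is complete and correct.
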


\begin{proof}
  Suppose not. 
  Let $D_a,D_b$ be the components of $\beta \setminus Y$ such that $a \in D_a$
  and $b \in D_b$. Since $Y'$ does not separate $a$ from $b$ in $G$, and
  $Y' \cap \{a,b\}=\emptyset$, 
  there is a component $D$ of $G \setminus Y'$ such that
  $a,b \in D$. Let $P$ be a path from $a$ to $b$ with $P \subseteq D$.
  
  As before, it follows that there is a subpath $Q$ of $P$ such that $Q$ has ends $u, v$ and: 
  \begin{itemize}
      \item The vertex $u$ is in $D_a$; 
      \item $Q^* \cap \beta= \emptyset$; 
      \item The vertex $v$ is in $\beta \setminus D_a$. 
  \end{itemize}
    We first consider the case that $v \in Y$ (and so $v \in P^*$). Then $v \in \Conn(t_i, t) \setminus (\chi(t_1) \cup \chi(t_2))$ for some $i \in \{1, 2\}$ and $t \in N_T(t_i) \setminus \{t_i'\}$. It follows that $t \in \delta_i(Y)$, and so $\adh(t_i, t) \setminus \{a, b\} \subseteq \Delta(Y) \setminus \{a, b\} \subseteq Y'$.  It follows that $P^* \cap \adh(t_i, t) = \emptyset$, and from Theorem \ref{basket}, we know that $P^* \cap (\chi(t_1) \cup \chi(t_2)) \neq \emptyset$. Therefore, $P^* \cap (G_{t_i \rightarrow t} \setminus (\chi(t_1) \cup \chi(t_2))) = \emptyset$, contrary to the fact that $v \in \Conn(t_i, t) \setminus (\chi(t_1) \cup \chi(t_2))$. This is a contradiction, and proves that $v \not\in Y$.

  Therefore, there is a component $D_c \neq D_a$ (possibly $D_b = D_c$) of $\beta \setminus Y$ such that $v \in D_c$. Since there are no edges between $D_a$ and $D_c$, it follows that $Q^* \neq \emptyset$.

Consequently,  there is a component $\tilde{T}$ of $T \setminus \{t_1,t_2\}$
such that $Q^* \subseteq (\bigcup_{t \in \tilde{T}} \chi(t)) \setminus (\chi(t_1) \cup \chi(t_2))$. Let $\tilde{D}$ be the component of
$G \setminus (\chi(t_1) \cup \chi(t_2))$ such that $Q^* \subseteq \tilde{D}$.
By Theorem~\ref{basket}, it follows that $|N(\tilde{D}) \cap \{a,b\}| \leq 1$.

  Suppose first that $t_1' \in \tilde{T}$.
  Then $t_1' \neq t_2$, and $t_2' \neq t_1$, and $t_2' \in \tilde{T}$. 
  Since $N_G(\tilde{D}) \subseteq \adh(t_1,t_1') \cup \adh(t_2,t_2')$, it follows that $N_{G \setminus Y'}(\tilde{D}) \subseteq \{a, b\}$, and so $|N_{G \setminus Y'}(\tilde{D})| \leq 1$. Since $P$ is a path with ends $a, b \not\in \tilde{D}$ and with $P \subseteq G \setminus Y'$, it follows that $P \cap \tilde{D}= \emptyset$, and so $Q \cap \tilde{D} = \emptyset$. Similarly, $t_2' \not \in \tilde{T}$.

  We deduce that exactly one of $t_1,t_2$ has a neighbor in
  $\tilde{T}$ (unless $t_1 = t_2$), and this neighbor is unique; denote it by $t$.    By symmetry, we may assume that $tt_1 \in E(T)$, and so $\tilde{T} = T_{t_1 \rightarrow t}$. 
    By Theorem~\ref{connectedbranches}, we deduce that
    $\tilde{D}= G_{t_1 \rightarrow t} \setminus \chi(t_1)$.
    Since $u \in D_a \cap N(Q^*) \subseteq \beta \cap N[\tilde{D}]$, it follows that $u \in \Conn(t_1,t)$. Similarly, we have $v \in \Conn(t_1, t)$.     Since $\Conn(t_1,t) \setminus \chi(t_1)$ is connected 
and  $N(\Conn(t_1,t) \setminus \chi(t_1))=\chi(t_1) \cap \chi(t)$ by Theorem \ref{connectors}\eqref{C-2}, it follows that $\Conn(t_1, t)$ contains a path $Q'$ with ends $u,v$ and interior in $\Conn(t_1, t) \setminus \chi(t_1)$. Since $Q'$ is a path from $D_a$ to $D_c$ in $\beta$, it follows that $Y \cap Q'^* \neq \emptyset$, and so  $Y \cap (\Conn(t_1,t) \setminus \chi(t_1)) \neq \emptyset$. Therefore, $Y \cap (G_{t_1 \rightarrow t} \setminus \chi(t_1)) \neq \emptyset$, which implies that  $\adh(t_1,t) \setminus \{a,b\} \subseteq Y'$.  It follows that $P^* \cap \adh(t_1, t) = \emptyset$, and from Theorem \ref{basket}, we know that $P^* \cap (\chi(t_1) \cup \chi(t_2)) \neq \emptyset$. Therefore, $P^* \cap (G_{t_1 \rightarrow t} \setminus \chi(t_1) ) = \emptyset$, contrary to the fact that $\emptyset \neq Q^* \subseteq G_{t_1 \rightarrow t} \setminus \chi(t_1)$. This is a contradiction, and concludes the proof. 
  \end{proof}

\section{Connectifiers} \label{sec:connectifier}

We start this section by  describing  minimal connected subgraphs
containing the neighbors of a large number of vertices from a given
stable set. We then use this result to deduce what a pair
of two such subgraphs can look like (for the same stable set) assuming that
they are anticomplete
to each other and the graph in question is in $\mathcal{C}$.
This generalizes results from \cite{TWVIII} and \cite{TWX}.

What follows is mostly  terminology from  \cite{TWX}, but there
are also some new notions.
Let $G$ be a graph, let $P=p_1 \dd \cdots \dd p_n$ be a path in $G$ and let
$X=\{x_1, \dots, x_k\}  \subseteq G \setminus P$. We say that $(P,X)$ is an
{\em alignment} if 
$N_P(x_1)=\{p_1\}$, $N_P(x_k)=\{p_n\}$, 
every vertex of $X$  has a neighbor in $P$, and there exist
$1< j_2 < \dots< j_{k-1} < j_k =  n$ such that
$N_P(x_i) \subseteq p_{j_i} \dd P \dd p_{j_{i+1}-1}$ for $i \in \{2, \dots, k-1\}$.
We also say that $x_1, \dots, x_k$ is {\em the order on $X$ given by the
  alignment $(P,X)$}.
An alignment $(P,X)$ is {\em wide}  if 
each of $x_2, \dots, x_{k-1}$ has at least two non-adjacent neighbors in  $P$,
{\em spiky} if  each of $x_2, \dots, x_{k-1}$ has a unique neighbor in $P$ and
{\em triangular} if 
each of $x_2, \dots, x_{k-1}$ has exactly two neighbors in $P$  and they are
adjacent. An alignment is {\em consistent} if it is
wide, spiky or triangular.

By a {\em caterpillar} we mean a tree $C$ with
  maximum degree three such that  there exists  a path $P$ of $C$ such that all vertices of degree $3$ in $C$ belong to $P$.
We call a minimal such path $P$ the {\em spine} of $C$. 
   By a \textit{subdivided star} we mean a graph isomorphic to
  a subdivision of the complete bipartite graph $K_{1,\delta}$ for some $\delta\geq 3$.
  In other words, a subdivided star is a tree with exactly one vertex of degree at least three,
  which we call its \textit{root}. For a graph $H$, a vertex $v$ of $H$ is
  said to be \textit{simplicial} if $N_H(v)$ is a clique. We denote by
  $\mathcal{Z}(H)$ the set of all simplicial vertices of $H$. Note that for
  every tree $T$, $\mathcal{Z}(T)$ is the set of all leaves of $T$. An edge
  $e$ of a tree $T$ is said to be a \textit{leaf-edge} of $T$ if $e$ is
  incident with a leaf of $T$. It follows that if $H$ is the line graph of a
  tree $T$, then $\mathcal{Z}(H)$ is the set of all vertices in $H$
  corresponding to leaf-edges of $T$.

 Let $H$ be a graph that is either a path, or a caterpillar, or the line graph of a caterpillar,  or a subdivided star with root $r$,  or the line graph of
  a subdivided star with root $r$.
We define an induced subgraph of $H$, denoted by $P(H)$, which we will use
throughout the paper.
If $H$ is a path, we let $P(H)=H$.
If $H$ is a caterpillar, we let $P(H)$ be the spine of $H$.
  If $H$ is the line graph of a caterpillar $C$, let $P(H)$ be the path in $H$
  consisting of the vertices of $H$ that correspond to the edges of the spine of $C$. If $H$ is a subdivided star  with root $r$, let $P(H)=\{r\}$.
  It $H$ is the line graph of a subdivided star $S$  with root $r$, let
  $P(H)$ be the clique of $H$ consisting of the vertices of $H$ that
  correspond to the edges of $S$ incident with $r$.
  The {\em legs} of $H$ are the components of $H \setminus P(H)$.

  Next, let $H$ be a caterpillar or the line graph of a caterpillar 
and let $S$ be a set of vertices disjoint from $H$ such that
every vertex of $S$ has a unique neighbor in $H$ and  
$H\cap N(S)=\mathcal{Z}(H)$.
Let $X$ be the set of vertices of $H \setminus P(H)$ that have neighbors in
$P(H)$. Then the neighbors of elements of $X$ appear in $P(H)$ in order (there may be ties at the ends of $P(H)$, which we break arbitrarily); 
let $x_1, \dots, x_k$ be the corresponding order
on $X$. Now, order the vertices of $S$ as
$s_1, \dots, s_k$ where $s_i$ has a neighbor in the leg of $H$ containing
$x_i$ for $i \in \{1, \dots, k\}$.
We say that $s_1, \dots, s_k$ is {\em the order on $S$ given by $(H,S)$}.

Next, let  $H$ be   an induced subgraph of $G$ that is  a caterpillar, or the line graph of a caterpillar, or a subdivided star or the line graph of a subdivided star and let $X \subseteq G \setminus H$ be such that every vertex of $X$ has a unique neighbor in $H$ and  $H \cap N(X)=\mathcal{Z}(H)$. We say that $(H,X)$ is a {\em consistent connectifier} and it is  {\em spiky, triangular,  stellar, or clique} respectively. If $H$ is a single vertex and $X \subseteq N(H)$,
we also call $(H,X)$ a {\em stellar connectifer}.
If $H$ is a subdivided star, a singleton  or the line graph of a subdivided star, we say that $(H,X)$ is a {\em concentrated} connectifier. 

The following was  proved in \cite{TWX}:

\begin{theorem}[Abrishami, Alecu, Chudnovsky, Hajebi, Spirkl \cite{TWX}]
\label{connectifier2}
  For every integer $h\geq 1$, there exists an integer $\nu=\nu(h)\geq 1$ with the following property. Let $G$ be a connected graph with no clique of cardinality $h$. Let $S \subseteq G$ such that  $|S|\geq \nu$, $G \setminus S$ is connected  and every vertex of $S$ has a neighbor in $G \setminus S$.
  Then  there is a set $\tilde S \subseteq S$ with $|\tilde S|=h$ and an induced subgraph $H$ of $G \setminus S$ for which one of the following holds.
  \begin{itemize}
    \item $H$ is a path and every vertex of $\tilde S$ has a neighbor in $H$; or    \item $H$ is a caterpillar, or the line graph of a caterpillar, or a subdivided star. Moreover, every vertex of $\tilde S$ has a unique neighbor in $H$ and  
      $H\cap N(\tilde S)=\mathcal{Z}(H)$.
          \end{itemize}
\end{theorem}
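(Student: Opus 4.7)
The plan is a ``minimal connected witness plus Ramsey cleanup'' strategy. Set $\nu=\nu(h)$ to be an iterated Ramsey-type quantity depending on $h$, chosen large enough to absorb losses in several subsequent extraction steps. Since $G\setminus S$ is itself a connected induced subgraph of $G\setminus S$ whose neighborhood contains all of $S$, I would take an inclusion-minimal connected induced subgraph $H\subseteq G\setminus S$ together with a subset $S'\subseteq S$ of size at least $\nu_1$ (slightly smaller than $\nu$) such that every $s\in S'$ has a neighbor in $H$. The goal is then to show that the structure of $H$ and the way $S'$ attaches to it already nearly match one of the listed configurations.

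The minimality of $H$ is the central constraint, and the first main step is to exploit it. For every $v\in V(H)$, removing $v$ from $H$ must either disconnect $H$ (so $v$ is a cut vertex), or destroy the property that many vertices of $S'$ still have a neighbor in $H$ (so some $s\in S'$ has $v$ as its unique neighbor in $H$). Iterating this reasoning, together with Ramsey applied to the attachment patterns of $S'$ to $H$ and with the $K_h$-free hypothesis used to prevent private-neighbor sets from forming large cliques, I would reduce to a large subset of $S'$ whose attachments are combinatorially homogeneous: either every chosen $s$ has a single private neighbor in $H$, forcing that neighbor to be a simplicial vertex of $H$, or many of the $s$'s collectively attach along a long induced path of $H$.

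In the latter case that common induced path is already the subgraph required by the first alternative of the theorem. In the former case, the ``skeleton'' of $H$ obtained by suppressing its private-neighbor leaves is a tree, possibly realized as the line graph of a tree inside $G$. A further Ramsey extraction on $S'$ forces this skeleton to be either a single branch vertex (so $H$ is a subdivided star) or to have all branching concentrated on a single path (so $H$ is a caterpillar, or, dually, the line graph of a caterpillar). In each surviving subcase I extract $\tilde S\subseteq S'$ of size $h$ satisfying the ``unique neighbor at a simplicial vertex'' condition.

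The main obstacle I anticipate is the minimality step: minimality alone does not immediately rule out $H$ being, for instance, a tree whose branch vertices lie far from any common spine, which would be neither a caterpillar nor a subdivided star. The way out is to argue that any such pathology already contains either a long induced path on which the first alternative of the theorem applies, or a smaller caterpillar/star substructure into which enough of $S'$ still attaches; the quantifier $\nu(h)$ is then defined by running these reductions backwards and taking a Ramsey-tower bound in $h$ large enough that at least $h$ vertices of $S$ survive all passes.
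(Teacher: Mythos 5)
The theorem you are proving is cited in the present paper from \cite{TWX} and is not reproved here; the paper only establishes the strengthening Theorem~\ref{connectifier2general}, whose proof invokes Theorem~\ref{connectifier2} as a black box (after first disposing of the large-clique case). So there is no ``paper's own proof'' to compare against, and the proposal must stand on its own.

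The general strategy --- take an inclusion-minimal connected $H\subseteq G\setminus S$ hit by many vertices of $S$, read off from minimality that each $v\in H$ is either a cut vertex or the unique $H$-neighbor of some $s$, then Ramsey the attachment data --- is the right flavor and is the standard opening for ``connectifier'' lemmas. But the sketch has concrete gaps. First, minimality does not make $H$ (or the skeleton obtained by suppressing private-neighbor leaves) a tree: a minimal connected $H$ in which every vertex is a cut vertex or a private attachment can perfectly well contain long induced cycles, and $K_h$-freeness does not exclude them. You anticipate the ``tree but not caterpillar/star'' obstruction, but never address cycles at all; this is the case in which one has to fall back on the first outcome by extracting a long induced path, and that step needs an argument. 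Second, the line-graph outcome is unexplained. Writing that the skeleton is ``possibly realized as the line graph of a tree inside $G$'' gestures at the phenomenon, but the real content is that branchings of $H$ may come with small cliques rather than branch vertices, and $K_h$-freeness is precisely what bounds these cliques; one then has to show the resulting structure is the line graph of a caterpillar (not an arbitrary near-tree), which is where the hard case analysis lives. Third, the conclusion demands the \emph{equality} $H\cap N(\tilde S)=\mathcal{Z}(H)$: every simplicial vertex of $H$ must be an attachment point of $\tilde S$. Your argument would at most deliver one inclusion; you need an additional pruning step showing that $H$ can be shrunk so that unattached simplicial vertices disappear without destroying the caterpillar/star structure or the uniqueness of attachments. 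Finally, the Ramsey step is stated only as ``applied to the attachment patterns,'' without specifying the coloring or the homogeneous structures extracted; as written it is a placeholder, not a proof. In short: right opening move, but the core case analysis (cycles, cliques at branch points giving line graphs, and the pruning to get the equality) is the actual content of \cite{TWX}'s proof, and it is missing here.
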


We now  prove a version of Theorem~\ref{connectifier2} that does not
assume a bound on the clique number. This result may be of independent use in the future (See Figure~\ref{fig:connectifier} for a depiction of the outcomes).
\begin{figure}
    \centering
    \includegraphics[scale=0.6]{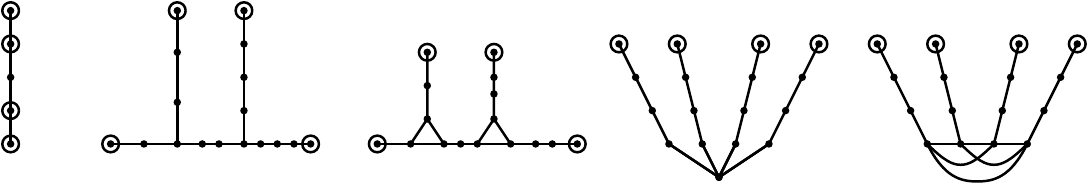}
    \caption{Outcomes of Theorem~\ref{connectifier2general}. Circled nodes are the vertices in $H\cap N(Y')$.}
    \label{fig:connectifier}
\end{figure}

\begin{theorem}
\label{connectifier2general}
For every integer $h \geq 1$, there exists an integer $\mu=\mu(h)\geq 1$ with the following property. Let $G$ be a connected graph.
Let $Y \subseteq G$ such that  $|Y|\geq \mu$, $G \setminus Y$ is connected  and every vertex of $Y$ has a neighbor in $G \setminus Y$.
  Then  there is a set $Y' \subseteq Y$ with $|Y'|=h$ and an induced subgraph $H$ of $G \setminus Y$ for which one of the following holds.
  \begin{itemize}
  \item $H$ is a path and every vertex of $Y'$ has a neighbor in $H$; or
  \item $H$ is a caterpillar, or the line graph of a caterpillar, or a subdivided star or the line graph of a subdivided star. Moreover, every vertex of
    $Y'$ has a unique neighbor in $H$ and  
      $H \cap N(Y')=\mathcal{Z}(H)$.
          \end{itemize}
\end{theorem}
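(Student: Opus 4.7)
My plan is to deduce Theorem~\ref{connectifier2general} from Theorem~\ref{connectifier2} by splitting on the clique number of $G$. I will take $\mu(h)$ sufficiently larger than $\nu(h)$ from Theorem~\ref{connectifier2} and than a Ramsey-type function of $h$ to be fixed during the argument.

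\textbf{The clique-free case.} If $G$ contains no clique of cardinality $h$, then the hypotheses of Theorem~\ref{connectifier2} hold with $S:=Y$, and its conclusion delivers a set $\tilde S \subseteq Y$ of size $h$ and an induced subgraph $H$ of $G \setminus Y$ that is a path, caterpillar, line graph of a caterpillar, or subdivided star, meeting the stated adjacency conditions. Each of these $H$ is among the five structural options allowed by Theorem~\ref{connectifier2general}, so setting $Y' := \tilde S$ closes this case.

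\textbf{The large-clique case.} Otherwise $G$ contains a clique $K$ of size $h$, and my goal is to realize the new option: a connectifier whose underlying graph is the line graph of a subdivided star. First, by enlarging $\mu$ and exploiting that every $v \in K \cap Y$ has a neighbor in $G \setminus Y$, a pigeonhole over common neighborhoods in $G \setminus Y$ lets me replace any $Y$-vertex of $K$ and produce a clique $K^\star \subseteq G \setminus Y$ of size $h$. Now, since $G \setminus Y$ is connected and every $y \in Y$ has a neighbor in $G \setminus Y$, for each $y \in Y$ I pick an induced path $P_y$ from $y$ to $K^\star$ whose interior lies in $G \setminus Y$ and which meets $K^\star$ exactly in its terminal vertex $c_y$. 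A Ramsey/pigeonhole argument on the family $\{P_y\}_{y \in Y}$, sorting by $c_y$, length, and crossings between interiors, produces a subset $Y' \subseteq Y$ of size $h$ such that the paths $P_y$ for $y \in Y'$ are pairwise internally vertex-disjoint, have pairwise anticomplete interiors, and terminate at $h$ distinct vertices of $K^\star$. The induced subgraph $H := \bigcup_{y \in Y'} (P_y \setminus \{y\})$ of $G \setminus Y$ is then isomorphic to the line graph of a subdivided star: the $h$ chosen vertices of $K^\star$ form the central clique (corresponding to edges incident to the root $r$ in the underlying subdivided star $S$), and each $P_y \setminus \{y\}$ contributes a leg of $L(S)$. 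The unique neighbor of $y$ in $H$ is the penultimate vertex of $P_y$, and a direct check shows this vertex is simplicial in $H$ (its neighborhood is either a singleton or sits inside $K^\star$), giving $H \cap N(Y') = \mathcal Z(H)$ as required.

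\textbf{The main obstacle.} The crux of the argument is the Ramsey-style extraction, which must simultaneously arrange that the paths $P_y$ are interior-disjoint, pairwise anticomplete in their interiors, and land at \emph{distinct} vertices of $K^\star$. A priori many $P_y$'s may hit the same vertex of $K^\star$, or may chord one another through the rich adjacency created by the clique. The plan is to iteratively prune conflicts by shortcutting through $K^\star$ (swapping endpoints within the clique to spread out landings) and by choosing each $P_y$ of minimum length, retaining a definite fraction of $Y$ at every step so that the final subfamily still has size $h$. A secondary, more routine, technicality is producing $K^\star$ inside $G \setminus Y$ when $K$ meets $Y$ heavily; this is handled by a separate pigeonhole on neighborhoods in $G \setminus Y$.
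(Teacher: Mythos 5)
Your overall strategy — split according to whether a clique of size $h$ appears, use Theorem~\ref{connectifier2} in the clique-free case, and build the new "line graph of a subdivided star" outcome in the clique case — is the right shape, and the clique-free case is handled correctly. But the clique case has a genuine gap, and it is not a routine one.

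The paper does not start from an arbitrary clique of size $h$ in $G$. It first fixes a \emph{minimal} connected set $F \subseteq G \setminus Y$ with the property that every $y \in Y$ has a neighbor in $F$, and then asks whether \emph{$F$} contains a clique $K$ of size $h$. That minimality is doing all the work: for each $k \in K$, minimality of $F$ forces either some $y(k) \in Y$ whose only neighbor in $F$ is $k$, or a component $C(k)$ of $F \setminus k$, disjoint from $K$, that is the unique access route to some $y(k)$. Because $K$ is a clique, the branches $C(k)$ for different $k$ live in different components of $F \setminus k$ and are therefore pairwise disjoint and anticomplete, and the $y(k)$ are distinct. This instantly yields the clique connectifier. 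If $F$ is $K_h$-free, then (since $Y$ is a stable set in the intended setting) $F \cup Y$ is $K_{h+1}$-free and Theorem~\ref{connectifier2} applies to it.

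Your version replaces this by: find \emph{some} clique of size $h$ in $G$, pigeonhole to push it into $G \setminus Y$, route a shortest path $P_y$ from each $y$ to that clique, and Ramsey your way to pairwise disjoint, anticomplete, distinctly-landing paths. Several of these steps fail. First, a clique $K$ in $G$ may use vertices of $Y$, and there is no reason a vertex of $K \cap Y$ can be swapped for a vertex of $G \setminus Y$ adjacent to all of $K \setminus \{k\}$; nothing in the hypotheses supplies such common neighbors. Second, even if you have a clique $K^\star \subseteq G \setminus Y$, the clique may be irrelevant to how $Y$ is dominated: for instance, every $y \in Y$ could be attached to the same single vertex $v$ of $K^\star$, in which case every $P_y$ ends at $v$ and no Ramsey extraction can produce $h$ distinct landing points. (In that example the correct outcome is that $H=\{v\}$ is a degenerate path, not a clique connectifier — so the dichotomy "large clique in $G$ $\Rightarrow$ clique connectifier" is simply false.) Third, "shortcutting through $K^\star$ to spread out landings" changes the paths into ones that are no longer induced or no longer internally in $G \setminus Y$, and there is no control over how much of $Y$ survives each pruning step. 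The missing idea is precisely the extremal choice of $F$; without it, the Ramsey extraction has no structure to exploit. (A minor additional point: you will also need $\mu(h)$ on the order of $\nu(h+1)$ rather than $\nu(h)$, to account for passing from a $K_h$-free $F$ to the $K_{h+1}$-free graph $F \cup Y$.)
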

  
\begin{proof}
  Let $\mu(h)=\nu(h+1)$ where $\nu$ is as in Theorem~\ref{connectifier2}.
  Let $F$ be a minimal connected subset of $G \setminus Y$ such that every $y \in Y$ has a neighbor in $F$.
  
\sta{If $F$ contains a clique of size $h$, then
  there exists $H \subseteq F$
    and $Y' \subseteq Y$  with $|Y'|=h$ such that $(H,Y')$ is a clique connectifier.
    \label{boundclique}}

  Suppose  that there is a clique $K$ of size $h$ in $F$. It follows from the minimality of $F$ that for every $k \in K$ one of the following holds:
  \begin{itemize}
  \item There is $y(k) \in Y$ such that $y(k)$ is anticomplete to $F \setminus k$; in
    this case set $C(k,y(k))=\emptyset$.
  \item $F \setminus \{k\}$ is not connected, and for every component
    $C$ of $F  \setminus k$ there is $y \in Y$
    such that 
    $y$ is anticomplete to $F  \setminus (C \cup \{k\})$.
    Since $K$ is a clique, there is a component $C$ of  $F \setminus k$
    such that $K \cap C=\emptyset$. Let $y(k)$ be a vertex of $Y$
    that is anticomplete to $F \setminus (C \cup \{k\})$;
    write $C(k,y(k))=C$.
  \end{itemize}
  Let $Y'$ be the set of all vertices $y(k)$ as above.
  For every $k \in K$, let  $H_k$ be a path from $k$ to $y(k)$ with $H_k^* \subseteq C(k,y(k))$. Write $H=\bigcup_{k \in K} H_k$. We will show that $|Y'|=h$ and
  $(H,Y')$ is a clique connectifier. It follows from the definition of $H_k$
  that every vertex of $Y'$ has a neighbor in $H$.

  Next we claim that if  $k \neq k'$, then $H_{k}$ is disjoint from and anticomplete to $H_{k'}$. 
    Recall that $H_k \subseteq C(k,y(k)) \cup \{k\}$,
  and $y(k)$ is anticomplete to $F  \setminus (C(k,y(k)) \cup \{k\})$.
  It follows that $H_k \cap K = \{k\}$, and so $k \not \in H_{k'}$ and $k' \not \in H_k$.
  Let $D$ be the component of $F  \setminus k$ such that $k' \in D$.
  By the definition of $C(k,y(k))$, we have that $D \neq C(k,y(k))$.
  Since $H_{k'}$ is connected and $k \not \in H_{k'}$, it follows that
  $H_{k'} \subseteq D$. Consequently, $H_k$ and $H_{k'}$ are disjoint and anticomplete to each other. Since $y(k)$ has no neighbor in $D$, we deduce that
  $y(k)$ is anticomplete to $H_{k'}$, and in particular $y(k) \neq y(k')$.
  Similarly, $y(k')$ is anticomplete to $H_k$. This  proves the claim.

  Now it follows from the claim that if $k \neq k'$, then $y(k) \neq y(k')$.
  Consequently, $|Y'|=|K|=h$, and $(H,Y')$ is a clique connectifier.
  This proves \eqref{boundclique}.
  \\
  \\
  By \eqref{boundclique}, we may assume that
$F$ is $K_{h}$-free. Since $S$ is a stable set,
it follows that $F \cup S$ is $K_{h+1}$-free.
Now the result follows from  Theorem~\ref{connectifier2}
applied to $F \cup S$.
\end{proof}

Now we move to two anticomplete connected subgraphs and prove the following:

\begin{theorem} 
\label{twosidesgeneral}
  For every integer $x\geq 1$, there exists an integer $\phi=\phi(x) \geq 1$ with the following property.
    Let $G \in \mathcal{C}$ and assume that $V(G)=D_1 \cup D_2 \cup Y$
  where
  \begin{itemize}
  \item         $Y$ is a stable set with $|Y| = \phi$,
  \item $D_1$ and $D_2$ are components of $G \setminus Y$, and 
  \item $N(D_1)=N(D_2)=Y$.
  \end{itemize}
  Then
     there exist $X \subseteq Y$ with $|X|=x$,
$H_1 \subseteq D_1$ and $H_2 \subseteq D_2$
   (possibly with the roles of $D_1$ and $D_2$ reversed) such that either: 
     \begin{enumerate}
       \item Not both $(H_1,X)$ and $(H_2,X)$ are alignments, and
     \begin{itemize}
    \item   $(H_1,X)$ is  a triangular connectifier or a clique connectifier or a triangular alignment; and 
  \item   $(H_2,X)$ is a stellar connectifier, or a spiky connectifier, or a
    spiky alignment or a wide alignment.
  \end{itemize}
or
   \item  Both  $(H_1,X)$ and $(H_2,X)$ are alignments, and
 at least one of  $(H_1,X)$ and $(H_2,X)$  is not a spiky alignment.
\end{enumerate}
Moreover, if neither of $(H_1,X),(H_2,X)$ is a concentrated connectifier, then
  the orders given on $X$ by $(H_1,X)$ and by $(H_2,X)$ are the same.
  \end{theorem}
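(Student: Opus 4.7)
The plan is to apply Theorem~\ref{connectifier2general} iteratively to the two sides of $Y$, refine by pigeonhole to a consistent alignment on any path-type side, and finally invoke the exclusions defining $\mathcal{C}$ to eliminate all combinations of structural types on $(H_1,X)$ and $(H_2,X)$ not appearing in the conclusion. This generalizes arguments from \cite{TWVIII} and \cite{TWX} to the unbounded-clique setting; the new feature is that a clique connectifier (the line graph of a subdivided star) may now arise as $H_1$ or $H_2$.

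First, I would choose $\phi$ sufficiently large in terms of $x$ to absorb the pigeonhole losses below. Applying Theorem~\ref{connectifier2general} to $G[D_1 \cup Y]$ with $Y$ as the input stable set produces a large $Y_1 \subseteq Y$ and an induced subgraph $H_1 \subseteq D_1$ of one of the listed structural forms. Re-applying Theorem~\ref{connectifier2general} in $G[D_2 \cup Y_1]$ yields $X_0 \subseteq Y_1$ and $H_2 \subseteq D_2$. When $H_i$ is a path, I would refine $X_0$ by pigeonhole so that every $x$ in the refined subset has a uniform neighborhood type on $H_i$ --- two non-adjacent neighbors (wide), a single neighbor (spiky), or two adjacent neighbors (triangular) --- and then shrink further so that the positions of neighbors along $H_i$ are monotone; this makes $(H_i,X)$ a consistent alignment. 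When $H_i$ is a caterpillar, subdivided star, or their line graphs, the type of connectifier is already determined by the shape of $H_i$. When neither $H_i$ is a concentrated connectifier, each side induces a linear order on $X$, and one additional pigeonhole (losing a factor of two, possibly after reversing one side) forces the two orders to coincide; this gives the ``moreover'' clause.

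The main obstacle is the subsequent case analysis, ruling out those combinations of types on $(H_1,X)$ and $(H_2,X)$ that do not belong to case (1) or case (2) of the statement. The key point is that each such forbidden combination permits the extraction of a $C_4$, theta, prism, or even wheel from the hole structure induced by pairs and triples of consecutive members of $X$ together with their neighborhoods on $H_1$ and $H_2$. For example, two wide alignments produce a $C_4$ on two nonadjacent common neighbors of a pair of consecutive $x \in X$; two triangular structures (alignments or connectifiers) produce a prism from a pair of triangles joined by $X$-paths; two spiky alignments yield a theta or an even wheel from a hole built on two consecutive $x \in X$ together with a third $x \in X$ contributing two extra disjoint internal paths to the hole; and a mismatched pair of connectifiers (e.g.\ stellar/stellar, spiky/spiky, clique/clique, or a spiky connectifier paired with a wide alignment on the ``wrong'' side) yields analogous obstructions. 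Since $G \in \mathcal{C}$, each such extraction is impossible, so the surviving combinations are exactly those of (1) and (2). The hardest part of this case analysis is the spiky-alignment/spiky-alignment case, where the obstruction is a theta whose three paths are obtained by carefully exploiting the monotonicity of the alignment together with the uniqueness of the $H_i$-neighbor of each $x \in X$.
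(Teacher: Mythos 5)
Your high-level plan matches the paper's: apply Theorem~\ref{connectifier2general} twice to obtain $H_1 \subseteq D_1$ and $H_2 \subseteq D_2$, refine by pigeonhole (and an analogue of Lemma~\ref{pathnotinterleave}) to make each path-type side into a consistent alignment, apply Erd\H{o}s--Szekeres to synchronize the two orders, and finish with a case analysis that uses the excluded induced subgraphs of $\mathcal{C}$. The paper packages this final step as a separate lemma (Lemma~\ref{twosides_lemma}).

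However, the case analysis you sketch contains a genuine error. You assert that two wide alignments can be ruled out by extracting a $C_4$ ``on two nonadjacent common neighbors of a pair of consecutive $x\in X$.'' This is false on two counts. First, in a wide alignment the neighborhoods on $H_i$ of distinct $x \in X$ are pairwise disjoint by definition, so two elements of $X$ have no common neighbor in $H_i$ and no such $C_4$ exists. Second, and more to the point, the combination of two wide alignments is not forbidden by the theorem: it falls squarely under conclusion~(2) (``both are alignments, and at least one is not spiky''), so any proof attempting to exclude it would be proving a false statement. In the paper's Lemma~\ref{twosides_lemma}, the only alignment pairs that are actually eliminated by forbidden induced subgraphs are two spiky alignments and two triangular alignments (each via a theta or an even wheel built around $x_i$); wide+wide, wide+spiky, and wide+triangular all survive into conclusion~(2). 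So the surviving combinations are not ``exactly'' what you list. You should redo the case analysis with the actual target set of permitted combinations in mind -- in particular, distinguishing carefully which pairs of alignment/connectifier types genuinely force a theta, prism, $C_4$, or even wheel, and which are simply allowed by conclusions~(1) and~(2).
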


In this paper we do not need the full generality of
Theorem~\ref{twosidesgeneral}; we are only interested in two special cases:
when $D_1$ is a path and when the clique number of $G$ is bounded.
However, it is easier to prove the more general result first using the symmetry between $D_1$ and $D_2$, and then use it to
handle the special cases. We also believe that Theorem~\ref{twosidesgeneral} will be useful in the future.

 We start by recalling a well known theorem of Erd\H{o}s and Szekeres \cite{ES}.
  \begin{theorem}[Erd\H{o}s and Szekeres \cite{ES}]
    \label{ESz}
Let $x_1, \dots, x_{n^2+1}$ be a sequence of distinct reals. Then
there exists either an increasing or a decreasing $(n + 1)$-sub-sequence.
\end{theorem}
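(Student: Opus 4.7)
The plan is to prove this by the standard pigeonhole argument, which is in fact the classical proof of Erd\H{o}s and Szekeres. For each index $i\in\{1,\dots,n^2+1\}$, I would associate the pair $(a_i,b_i)$ where $a_i$ is the length of the longest strictly increasing subsequence of $x_1,\dots,x_{n^2+1}$ ending at $x_i$, and $b_i$ is the length of the longest strictly decreasing subsequence ending at $x_i$. By definition $a_i,b_i\geq 1$ for every $i$.

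Suppose toward a contradiction that there is neither an increasing nor a decreasing subsequence of length $n+1$. Then for every index $i$ we have $a_i\in\{1,\dots,n\}$ and $b_i\in\{1,\dots,n\}$, so the map $i\mapsto(a_i,b_i)$ sends $\{1,\dots,n^2+1\}$ into a set of at most $n^2$ pairs. By the pigeonhole principle, there exist indices $i<j$ with $(a_i,b_i)=(a_j,b_j)$.

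Since the $x_i$ are distinct, either $x_i<x_j$ or $x_i>x_j$. In the first case, appending $x_j$ to a longest increasing subsequence ending at $x_i$ produces a strictly increasing subsequence of length $a_i+1$ ending at $x_j$, contradicting $a_j=a_i$. In the second case, the symmetric argument contradicts $b_j=b_i$. Either way we reach a contradiction, which completes the proof.

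There is no real obstacle here; the proof is a direct pigeonhole count, and the only subtlety to track is that strictness of the subsequences is what requires (and is guaranteed by) the distinctness assumption on the $x_i$. I would present it essentially as above in the paper.
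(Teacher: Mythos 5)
Your argument is correct and is the standard Dilworth-style pigeonhole proof of Erd\H{o}s--Szekeres. Note, however, that the paper does not prove this statement at all: it is stated as a cited classical theorem and used as a black box, so there is no proof in the paper to compare against. Your write-up is a valid and complete proof of the cited result.
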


We start with  two lemmas.

\begin{lemma}
  \label{twosides_lemma}
    Let $G \in \mathcal{C}$ and assume that $V(G)=H_1 \cup H_2 \cup X$
    where $X$ is a stable set with $|X| \geq 3$ and $H_1$ is anticomplete to
    $H_2$.
        Suppose that  for $i \in \{1,2\}$, the pair $(H_i,X)$ is a consistent alignment, or a consistent connectifier.
    Assume also that  if neither of $(H_1,X),(H_2,X)$ is concentrated,
    then
  the orders given on $X$ by $(H_1,X)$ and by $(H_2,X)$ are the same.
  Then (possibly switching the roles of $H_1$ and $H_2$), we have that either:
    \begin{enumerate}
       \item Not both $(H_1,X)$ and $(H_2,X)$ are alignments, and
     \begin{itemize}
    \item   $(H_1,X)$ is  a triangular connectifier or a clique connectifier or a triangular alignment; and 
  \item   $(H_2,X)$ is a stellar connectifier, or a spiky connectifier, or a
    spiky alignment or a wide alignment.
  \end{itemize}
or
   \item  Both  $(H_1,X)$ and $(H_2,X)$ are alignments, and
 at least one of  $(H_1,X)$ and $(H_2,X)$  is not a spiky alignment, and at least one of $(H_1,X)$ and $(H_2,X)$ is not a triangular alignment.
    \end{enumerate}
        \end{lemma}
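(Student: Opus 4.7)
The plan is to proceed by case analysis on the types of $(H_1, X)$ and $(H_2, X)$. Each is one of seven types: three alignment types (wide, spiky, triangular) and four connectifier types (stellar, spiky, triangular, clique). Group these into \emph{triangular-type} $= \{\text{triangular connectifier, clique connectifier, triangular alignment}\}$ and \emph{spiky-type} $= \{\text{stellar connectifier, spiky connectifier, spiky alignment, wide alignment}\}$. A direct tally shows conclusion (1) or (2) already holds for every unordered pair except the following ``bad'' combinations, which must be ruled out by exhibiting a forbidden induced subgraph: (a) both sides triangular-type (all subcases, including mixed triangular connectifier with triangular alignment); (b) both sides spiky-type, excluding only (wide alignment, wide alignment) and (spiky alignment, wide alignment), which satisfy (2). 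The hypothesis that the common order on $X$ is prescribed when neither side is concentrated will be used in both families to coherently align the constructions across $H_1$ and $H_2$.

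For the ``both triangular-type'' bad cases I would exhibit an even wheel in $G$. Pick $x_1, x_2, x_3 \in X$, in the common order when applicable. For each $i \in \{1,2\}$ choose a path $H_i'$ through $H_i$ from the attachment of $x_1$ to that of $x_3$, passing through the attachment structure of $x_2$: in a triangular alignment this is the subpath; in a triangular connectifier (line graph of a caterpillar) this is the appropriate spine path detoured through the three relevant leaf-edge vertices; in a clique connectifier (line graph of a subdivided star) this routes through the clique at the root. Then $C = x_1 \dd H_1' \dd x_3 \dd (H_2')^{\mathrm{rev}} \dd x_1$ is a hole, since $H_1$ is anticomplete to $H_2$, $X$ is stable, and the attachments of $x_1, x_3$ are fixed. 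In each triangular-type side the structure forces $x_2$ to have exactly two adjacent neighbors on $H_i'$, hence four neighbors on $C$, yielding an even wheel $(C, x_2)$ and contradicting $G \in \mathcal{C}$.

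For the ``both spiky-type'' bad cases I would exhibit a theta in $G$. Let $p \in H_1$ and $q \in H_2$ be attachments of $x_2$ (single vertices in the spiky and stellar subcases; chosen analogously in the wide alignment case). I produce three internally vertex-disjoint, pairwise anticomplete paths from $p$ to $q$:
\begin{enumerate}
\item[(a)] $p \dd x_2 \dd q$;
\item[(b)] $p$ through $H_1$ to the attachment of $x_1$, then $x_1$, then through $H_2$ back to $q$;
\item[(c)] the analogous path with $x_3$ replacing $x_1$.
\end{enumerate}
Spiky-type forces each $x_i$ to have only a single branch into each $H_j$; combined with $H_1$ anticomplete $H_2$ and stability of $X$, the three paths have pairwise anticomplete interiors and each has length at least two. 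This is a theta in $G$, contradicting $G \in \mathcal{C}$. The excluded cases (wide, wide) and (spiky alignment, wide alignment) avoid this contradiction precisely because in those combinations we can cover them via conclusion (2).

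The main obstacle is the careful handling of the connectifier variants: when $H_i$ is a line graph of a caterpillar or subdivided star, the ``path through $H_i$'' must stay on the spine/root-clique and detour through the correct legs without picking up extra chords from leaves at a common spine vertex (mutually adjacent in the line graph). This requires a bookkeeping step invoking the simplicial-vertex description preceding the lemma; the key technical fact to verify in each subcase is that the chosen $H_i'$, together with $x_1, x_2, x_3$, forms an induced subgraph with exactly the intended adjacencies. The order-on-$X$ hypothesis in the non-concentrated case is used exactly here: it lets us synchronize the subpaths $H_1'$ and $H_2'$ so that the hole $C$ (or the three paths from $p$ to $q$) is induced, rather than being short-circuited to a $C_4$ or acquiring unintended chords from crossed attachment orders.
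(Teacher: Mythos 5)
Your case decomposition is the same as the paper's, and your tally of which unordered pairs fail both conclusions is correct. However, the concrete constructions you propose do not work in the concentrated/connectifier subcases, and you yourself flag this as the main obstacle without resolving it. The paper's proof avoids the obstacle entirely by never routing a path through all three attachment points: it fixes $H$ to be the unique hole in $H_1 \cup H_2 \cup \{x_1, x_k\}$ (using only the \emph{end} vertices $x_1, x_k$), and then appends the legs $D_i^1, D_i^2$ (the legs of $H_1, H_2$ containing the unique neighbors of a middle vertex $x_i$) together with $x_i$. The forbidden structure lives in $H \cup D_i^1 \cup D_i^2 \cup \{x_i\}$.

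Concretely, your triangular-type construction fails because a path $H_1'$ cannot ``detour through'' a leaf-edge vertex of the line graph of a caterpillar and return to the spine: the leg vertex $z_2^1$ is joined to two \emph{adjacent} spine vertices, so inserting it into the path either produces a chord or requires visiting a spine vertex twice. The paper instead leaves $z_2^1$ off the hole and attaches it as a pendant, and the structure obtained for, e.g., (triangular connectifier, triangular connectifier) or (triangular alignment, triangular connectifier) is a \emph{prism}, not the even wheel your writeup predicts. Your spiky-type construction fails in the stellar connectifier subcase for a similar reason: with $H_1$ a subdivided star with root $r_1$, both your paths (b) and (c) from $p = z_2^1$ must pass through $r_1$ (and likewise through $r_2$), so they are far from internally disjoint. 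The paper's theta instead has ends $r_1, r_2$: two of its paths are the two arcs of $H$, and the third is $D_i^1 \dd x_i \dd D_i^2$.

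The order-on-$X$ hypothesis is indeed used as you guess --- to ensure the two arcs of $H$ through $H_1$ and $H_2$ between $x_1$ and $x_k$ are chordless --- but it cannot rescue your construction, since the problem is not crossed attachment orders but the geometry of legs in line graphs of trees. To fix the proof you would need to replace your $H_1', H_2'$ and your endpoint choices with the paper's $H$-plus-legs configuration and reanalyze each subcase for the correct forbidden subgraph (prism, even wheel, or theta depending on the pair of types).
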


\begin{proof}
  If at least one $(H_i,X) \subseteq \{(H_1,X), (H_2,X)\}$
  is not a concentrated connectifier, we
  let $x_1, \dots, x_k$ be the order given on $X$ by $(H_i,X)$.
  If both of $(H_i,X)$ are  concentrated  connectifiers, we let $x_1, \dots, x_k$ be an arbitrary order on $X$.
  Let $H$ be the unique hole contained in $H_1 \cup H_2 \cup \{x_1,x_k\}$.
  For $j \in \{1,2\}$ and $i \in \{1, \dots, k\}$, if
  $H_j$ is a connectifier, let
  $D_i^j$ be the leg of $H_j$ containing a neighbor of $x_i$; and
  if $H_j$ is an alignment let $D_i^j=\emptyset$.

  Suppose first that  $(H_1,X)$ is a triangular alignment, a clique connectifier  or a triangular connectifier.
  If $(H_2,X)$ is a triangular alignment, a clique connectifier  or a triangular connectifier, then 
   for every $i \in \{2, \dots, k-1\}$, the graph 
  $H \cup D_i^1 \cup D_i^2 \cup \{x_i\}$ is either a prism or an even wheel with center $x_i$, a contradiction.
  This proves that $(H_2,X)$ is a stellar connectifier, or  a spiky connectifier, or 
  a spiky alignment, or a wide alignment, as required.

  Thus we may assume that for $i \in \{1,2\}$, the pair 
  $(H_i, X)$ is  a stellar connectifier, or  a spiky connectifier,  or 
  a spiky alignment, or a wide alignment.
  If  $(H_1,X)$ is a stellar connectifier or a spiky connectifier,
  then for every
  $x_i \in X \setminus \{x_1,x_k\}$, the graph
$H \cup D_i^1 \cup D_i^2 \cup \{x_i\}$ contains a theta, a contradiction.

It follows that for $i \in \{1,2\}$, the pair 
$(H_i, X)$ is an alignment.
We may assume that $(H_1,X)$ and $(H_2,X)$ are either both spiky alignments or both triangular alignments,
for otherwise the second outcome of the theorem holds.
But now for every
  $x_i \in X \setminus \{x_1,x_k\}$, the graph
$H \cup \{x_i\}$ is either a theta or an even wheel,
a contradiction.
\end{proof}

\begin{figure}[t!]
    \centering
\includegraphics[scale=0.7]{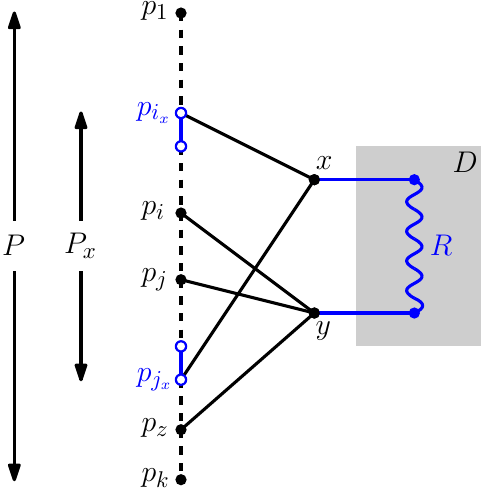}
    \caption{Proof of Lemma \ref{pathnotinterleave}.}
    \label{fig:56}
\end{figure}

\begin{lemma}
  \label{pathnotinterleave}
  Let $G$ be a theta-free graph and let
  $P=p_1 \dd \cdots \dd p_k$ be a path in $G$. Let $D$ be a connected subset of $G$ such that
  $D$ is anticomplete to $P$. Let $x,y \in N(P) \cap N(D)$
  such that $xy$ is not an edge. Assume that each of $x$ and $y$ has at least two non-adjacent neighbors in $P$. 
  Let $i_x$ be minimum and $j_x$ be maximum such that $x$ is adjacent to $p_{i_x}$ and $p_{j_x}$ and write $P_x=p_{i_x} \dd P \dd p_{j_x}$.
  %Similarly,
%  let $i_y$ be minimum and $j_y$ be maximum such that $y$ is adjacent to $p_{i_y}$ and $p_{j_y}$, and let $P_y=p_{i_y} \dd P \dots \dd P_{j_y}$.
  %Suppose that all that $P_{i_x} \cap P_{i_y} \neq \emptyst$.
Suppose that $y$ has a neighbor in $P_x$.
  Then
  $y$ has a neighbor in $N_{P_x}[p_{i_x}] \cup N_{P_x}[p_{j_x}]$.
\end{lemma}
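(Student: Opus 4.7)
My plan is to proceed by contradiction: assume that $y$ has no neighbor in $\{p_{i_x}, p_{i_x+1}, p_{j_x-1}, p_{j_x}\}$ (which is exactly $N_{P_x}[p_{i_x}] \cup N_{P_x}[p_{j_x}]$), and exhibit a theta in $G$ with ends $x$ and $y$, contradicting the theta-freeness of $G$. The third path of the theta will of course come from $D$: since $x,y \in N(D)$ and $D$ is connected and anticomplete to $P$, there is an induced $x$--$y$ path $P_D$ with interior in $D$, and this interior is automatically anticomplete to anything lying in $P$. So the whole task is to construct two internally disjoint, pairwise-anticomplete induced $x$--$y$ paths whose interiors lie in $P$, each of length at least $2$.

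Let $N \subseteq \{1,\dots,k\}$ be the indices of neighbors of $y$ on $P$. Under the contradiction hypothesis, $N \subseteq \{1,\dots,i_x-1\}\cup\{i_x+2,\dots,j_x-2\}\cup\{j_x+1,\dots,k\}$, and since $y$ has two non-adjacent neighbors in $P$, $N$ contains two indices differing by at least $2$. I plan to choose one neighbor $p_a$ of $y$ to be the endpoint of the left path $L$ and another $p_b$ of $y$ (with $b \geq a+2$) to be the endpoint of the right path $R$. The "standard trick'' is: on the left, use $\alpha = \max\{j \leq a : p_j \in N(x)\}$ (or $p_{i_x}$ walked leftward if $a < i_x$) and build $L$ as $x$--$p_\alpha$--$\cdots$--$p_a$--$y$; symmetrically on the right. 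Choosing $a,b$ as extremal neighbors of $y$ (respectively the largest in $N$ with index $\leq i_x$ if such exists, and the smallest in $N$ with index $\geq j_x$ if such exists; otherwise the leftmost/rightmost element of $N \cap \{i_x+2,\dots,j_x-2\}$) guarantees that $y$ has no other neighbors in the interior of either path, while using extremal neighbors $p_{i_x}, p_{j_x}$ of $x$ (or ones pushed further inward by the definitions of $\alpha,\beta$) ensures $x$ has no chord.

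I would split into three cases depending on where $y$'s neighbors lie: (I) $y$ has a neighbor with index $<i_x$ and a neighbor with index $>j_x$ (use $p_{i_x}$ walked left, and $p_{j_x}$ walked right); (II) $y$ has a neighbor on one side of $P_x$ and one inside $\{p_{i_x+2},\dots,p_{j_x-2}\}$ (use $p_{i_x}$ walked left for one path; for the other, enter $P_x$ from $p_{j_x}$ or from the closest $N(x)$-vertex to the interior neighbor); (III) both of $y$'s non-adjacent neighbors lie in $\{p_{i_x+2},\dots,p_{j_x-2}\}$ (approach the leftmost one from $p_{i_x}$ going right, and the rightmost one from $p_{j_x}$ going left, each time jumping to the appropriate extremal neighbor of $x$). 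In every case, the interiors of $L$ and $R$ sit in disjoint sub-intervals of $P$ whose index-gap is at least $2$, which gives disjointness and the required non-adjacency on $P$.

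The main obstacle is the bookkeeping in case (II)/(III), where the path entering $P_x$ toward an interior neighbor $p_m$ of $y$ might be forced to use $p_{i_x}$ as its $N(x)$-endpoint, creating a potential collision with the other path that also uses $p_{i_x}$. The way to avoid this is exactly the $2$-gap guaranteed by the contradiction hypothesis (neighbors of $y$ in $P_x$ are at least distance $2$ from $p_{i_x}$ and $p_{j_x}$) together with the replacement of $p_{i_x}$ by the closest element of $N(x)$ from the correct side of $p_m$; this is what makes the "$+2$/$-2$" in the statement tight and is precisely where the assumption that $y$ has \emph{no} neighbor in $\{p_{i_x+1}, p_{j_x-1}\}$ is exploited. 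Once $L$, $R$, and $P_D$ are in hand, verifying that $L \cup R \cup P_D$ is a theta (length $\geq 2$ each, pairwise internally disjoint, pairwise anticomplete) is routine, and the contradiction completes the proof.
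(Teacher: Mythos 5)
Your proof is correct and follows essentially the same strategy as the paper: contradict theta-freeness by building a theta with ends $x,y$, one path through $D$ and two carefully chosen induced paths living on $P$. The paper's version is somewhat tighter, handling everything in two cases by letting $i,j$ be the minimum and maximum indices of neighbors of $y$ inside $P_x$ --- if $j>i+1$ both $P$-paths stay inside $P_x$, and otherwise $y$'s neighbors in $P_x$ are pairwise adjacent, which forces a neighbor of $y$ outside $P_x$ and yields the second path --- thereby subsuming your cases (I)--(III).
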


\begin{proof}
  Suppose that $y$ is anticomplete to $N_P[p_{i_x}] \cup N_P[p_{j_x}]$. 
Let $R$ be a path from $x$ to $y$ with $R^* \subseteq D$. (See Figure \ref{fig:56}.)
  Let $i \in \{{i_x}, \dots ,{j_x}\}$ be minimum and
  $j \in \{{i_x}, \dots, {j_x}\}$ be maximum such that $y$ is adjacent
  to $p_i, p_j$. Then there there is a path $P_i$ from $x$ to $y$
    with interior in $p_{i_x} \dd P \dd p_i$, and a path $P_j$
  from $x$ to $y$ with interior in $p_j \dd P \dd p_{j_x}$.
  If $j>i+1$, then $P_i \cup P_j \cup R$ is a theta with ends $x,y$,
  a contradiction; therefore $j \leq i+1$. Since $y$ has at least two non-adjacent neighbors in $P$, 
  it follows that $y$ has a neighbor $p_z$ in $P \setminus P_x$. We may assume
  that $z>j_{x}$, and therefore there is a path $Q$ from $x$ to $y$
  with $Q^* \subseteq p_{j_x} \dd P \dd p_z$.
  Since  $y$ is anticomplete to $N_{P_x}[p_{i_x}] \cup N_{P_x}[p_{j_x}]$, we have that
  $j<j_x -1$, and so $P_i^*$ is anticomplete to $Q^*$. But now
  $P_i \cup Q \cup R$ is a theta with ends $x,y$, a contradiction.
  \end{proof}

  We can now  prove Theorem~\ref{twosidesgeneral}. The proof follows along the lines of \cite{TWX}, but the assumptions are different.

  \begin{proof}
     Let $z=12^236^2 x^6$ and let $\phi(x)=\mu(\mu(z))$, where $\mu$ is as in Theorem \ref{connectifier2general}.
  Applying Theorem \ref{connectifier2general} twice, we obtain a set $Z \subseteq Y$ with $|Z|=z$
  and $H_i \subseteq D_i$ such that either 
   \begin{itemize}
    \item $H_i$ is a path and every vertex of $Z$ has a neighbor in $H_i$; or
    \item $(H_i,X)$ is  a consistent connectifier
        \end{itemize}
        for every $i \in \{1, 2\}$.

           \sta{\label{align} Let $i \in \{1, 2\}$ and $y \in \mathbb N$. If $H_i$ is a path and every vertex of $Z$ has a neighbor in $H_i$, then either some vertex of $H_i$ has $y$ neighbors in $Z$, or
     there exists $Z' \subseteq Z$ with $|Z'| \geq \frac{|Z|} {12y}$ and a  subpath $H_i'$ of $H_i$ such that $(H_i',Z')$ is a consistent alignment.}

Let $H_i=h_1 \dd \cdots \dd h_k$. We may assume that $H_i$ is chosen  minimal satisfying Theorem \ref{connectifier2general}, and so there exist
   $z_1,z_k \in Z$ such that $N_{H_i}(z_j)=\{h_j\}$ for $j \in \{1,k\}$. 

We may assume that $|N_{Z}(h)| <y$ for every $h \in H_i$. Let $Z_1$ be the set of vertices in $Z$ with exactly one neighbor in $H_i$. Suppose that
$|Z_1| \geq \frac {|Z|} {3}$. It  follows that $Z_1$ contains a set $Z'$ with $|Z'| \geq \frac{|Z_1|} {y}  \geq \frac {|Z|} {3y}$ such that no two vertices in $Z'$ have a common neighbor in $H_i$. We may assume that $z_1, z_k \in Z'$. Therefore, $(H_i, Z')$ is a spiky alignment, as required. Thus we may assume that $|Z_1|<\frac{|Z|}{3}$.

Next, let $Z_2$ be the set of vertices in $z \in Z$ such that either $z \in \{z_1, z_k\}$ or  $z$ has exactly two neighbors in $H_i$, and they are adjacent.
Suppose that  $|Z_2| \geq \frac {|Z|} {3}$.  By choosing $Z'$ greedily, it follows that $Z_2$ contains a subset $Z'$ with the following specifications:
   \begin{itemize}
       \item $z_1, z_k \in Z'$; 
       \item $|Z'| \geq \frac {|Z_2|} {2y} \geq \frac {|Z|} {6y}$; and
       \item no two vertices in $Z'$ have a common neighbor in $H_i$. 
   \end{itemize}
   But then $(H_i, Z')$ is a triangular alignment, as required. 
   Thus we may assume that $|Z_2| < \frac{|Z|}{3}$.

    Finally, let $Z_3 = \{z_1, z_k\} \cup (Z \setminus (Z_1 \cup Z_2))$. It follows that  $|Z_3| \geq \frac{|Z|}{3}$.   Let $R$ be a path from $z_1$ to $z_k$ with $R^* \subseteq H_{3-i}$,
    and let $H$ be the hole $z_1 \dd H_i \dd z_k \dd R \dd z_1$.
    Let  $z \in Z_3 \setminus \{z_1,z_k\}$. Define $H_i(z)$ to be the  minimal subpath of $H_i$
    containing $N_{H_i}(z)$. Let the ends of $H_i(z)$ be $a_{z}$ and $b_{z}$.
    Next let  $Bad(z)=N_{H_i(z)}[a_{z},b_{z}]$. Since
    $H_i$ is a path, it follows that    $|Bad(z)| \leq 4$ for every $z$.  
Since  $N_{Z}(h) <y$ for every $h \in H_i$,  we can greedily choose
   $Z' \subseteq Z_3$ with     $|Z'| \geq \frac{|Z_3|}{4y} \geq \frac{|Z|}{12y}$,  where $z_1,z_k \in Z'$ and  such that
if $z, z' \in Z'$, then $z'$ is anticomplete to $Bad(z)$. It follows from
Lemma~\ref{pathnotinterleave} that $H_i(z) \cap H_i(z')=\emptyset$ for every
$z,z' \in Z'$, and so  $(H_i,Z')$ is a wide alignment.
      This proves \eqref{align}.

\sta{\label{consistent}
There exist    $Z' \subseteq Z$ with $|Z'|\geq x^2$,  
  and  $H_i' \subseteq H_i$ for $i=1,2$ such that
  $(H_i', Z')$ is a consistent alignment or a consistent
  connectifier.}

If both $(H_1,Z)$ and $(H_2,Z)$ are consistent connectifiers, \eqref{consistent} holds. Thus we may
assume that $H_1$ is a path and every vertex of $Z$ has a neighbor in $H_1$.
Suppose first that some  $h \in H_1$ has at least $36x^2$ neighbors in $Z$.
Let $H_1'=\{h\}$, and let $Z''\subseteq Z \cap N(h)$
with $|Z''|=36x^2$.  If $(H_2,Z)$ is a connectifier,  
we can clearly choose $H_2' \subseteq H_2$ such that  \eqref{consistent} holds. So we may assume that $H_2$ is a path and every vertex of $Z$ has a neighbor in $H_2$.
Moreover, no vertex $f$ of $H_2$ has three or more neighbors in $Z'$, for otherwise $\{f, h\} \cup (N(f) \cap Z')$ contains a theta with ends $h,f$. Now by \eqref{align} applied with $y = 3$, there is a set $Z'' \subseteq Z'$ with $|Z''| \geq x^2$
such that $(H_2,Z'')$ is a consistent alignment,   and \eqref{consistent} holds. Similarly, we may assume that every $h \in H_2$ has fewer than $36x^2$ neighbors in $Z$. 

Therefore, we may assume that every  vertex $h \in H_1$ has strictly fewer than $36x^2$ neighbors in $Z$.
      Applying  \eqref{align} with $y = 36x^2$, we conclude that there exists $Z_1\subseteq Z$ with      $|Z_1|\geq 36 \times 12x^4$ and a path $H_1' \subseteq H_1$ such that
      $(H_1', Z_1)$ is a consistent alignment.
      If $(H_2,Z)$ is a consistent connectifier, then \eqref{consistent} holds,
      so we may assume that $H_2$ is a path and every vertex of $Z$ has a neighbor in $H_2$. Since every vertex $h \in H_2$ has fewer than $36x^2$ neighbors in $Z$, we apply  \eqref{align} with $y = 36x^2$ again, and we conclude that there exists $Z' \subseteq Z_1$ with      $|Z'|\geq x^2$ and a path $H_2' \subseteq H_2$ such that
      $(H_2', Z')$ is a consistent alignment. This proves~\eqref{consistent}.

      \sta{There exist    $\hat{Z} \subseteq Z$ with $|\hat{Z}|\geq x$,  
  and  $\hat{H_i} \subseteq H_i$ for $i=1,2$ such that
  $(\hat{H}_i, \hat{Z})$ is a consistent alignment or a consistent
  connectifier.
        Moreover, if neither of $(\hat{H_1},\hat{Z)}, (\hat{H_2},\hat{Z})$ is a concentrated  connectifier, then 
  the order given on
$\hat{Z}$ by $(\hat{H_1},\hat{Z})$ and $(\hat{H_2}, \hat{Z})$
   is the same. \label{order}}

      Let $Z',H_1',H_2'$ be as in \eqref{consistent}.
      We may assume that neither of $(H_1',Z')$ and $(H_2',Z')$ is  a concentrated  connectifier. For $i \in \{1, 2\}$, let $\pi_i$ be the order given on $Z'$ by $(H_i',Z')$. By Theorem \ref{ESz}
   there exists $\hat{Z} \subseteq Z'$ such that (possibly reversing $H_i'$)
   the orders $\pi_i$ restricted to $\hat{Z}$ are the same, as required.
   This proves \eqref{order}.
   \\
   \\
   Now Theorem \ref{twosidesgeneral} follows from Lemma \ref{twosides_lemma}.
\end{proof}

  We now refine Theorem~\ref{twosidesgeneral}  in the two special cases
  we need here. The first one is when $D_1$ is a path. It is useful to state this result in terms of the following definition.

  Given a graph class $\mathcal{G}$, we say $\mathcal{G}$ is \textit{amiable} if, for every integer $x\geq 2$, there exists an integer $\sigma=\sigma(x) \geq 1$ such that the following holds for every graph $G \in \mathcal{G}$. Assume that $V(G)=D_1 \cup D_2 \cup Y$
  where
  \begin{itemize}
  \item         $Y$ is a stable set with $|Y| = \sigma$,
  \item $D_1$ and $D_2$ are components of $G \setminus Y$,
  \item $N(D_1)=N(D_2)=Y$,  
  \item $D_1=d_1 \dd \cdots \dd d_k$ is a path, and
  \item for every $y \in Y$ there exists $i(y) \in \{1, \dots, k\}$ such
    that $N(d_{i(y)}) \cap Y=\{y\}$.
  \end{itemize}
    Then there exist $X \subseteq Y$ with $|X|=x+2$,
$H_1 \subseteq D_1$ and $H_2 \subseteq D_2$ such that
\begin{enumerate}
\item One of the following holds:
  \begin{itemize}
  \item   $(H_1,X)$ is  a triangular alignment  and $(H_2,X)$ is a
    stellar connectifier or a spiky connectifier;
   \item 
     $(H_1,X)$ is a spiky alignment, and $(H_2,X)$ is a triangular connectifier
     or a clique connectifier;
  \item $(H_1,X)$ is a wide alignment and $(H_2,X)$ is a triangular
    connectifier or a clique connectifier; or
\item
  Both  $(H_1,X)$ and $(H_2,X)$ are alignments, and
 at least one of  $(H_1,X)$ and $(H_2,X)$  is not a spiky alignment, and at least one of $(H_1,X)$ and $(H_2,X)$ is not a triangular alignment.
   \end{itemize}   
  \item 
  If $(H_2,X)$ is not a  concentrated  connectifier, then
  the orders given on $X$ by $(H_1,X)$ and by $(H_2,X)$ are the same.
\item For every $x \in X$ except at most two, we have that $N_{D_1}(x)=N_{H_1}(x)$.
\end{enumerate}

\begin{theorem} 
\label{twosides_path}
  The class $\mathcal{C}$ is amiable.
\end{theorem}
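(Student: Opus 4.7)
The plan is to apply Theorem~\ref{twosidesgeneral} to obtain a rough configuration in $\mathcal{C}$ and then refine it by repeated Erd\H{o}s--Szekeres extractions (Theorem~\ref{ESz}) on positions of $D_1$-neighborhoods, using the private-witness hypothesis to secure condition~(3) of the amiable definition. Set $\sigma(x)=\phi(N)$ where $N$ is sufficiently large so that the iterated extractions starting from a sequence of length $N$ still leave a subsequence of length at least $x+2$.

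I would first apply Theorem~\ref{twosidesgeneral} to $(G,D_1,D_2,Y)$ with parameter $N$, producing $X_0\subseteq Y$ of size $N$ and induced subgraphs $J_1\subseteq D_1$, $J_2\subseteq D_2$. Since $D_1$ is a path and $|X_0|\geq 3$, a consistent connectifier on $D_1$ is impossible (a path has at most two simplicial vertices), so $(J_1,X_0)$ must be a consistent alignment. Applying Lemma~\ref{twosides_lemma}---which, by its proof, actually rules out both the ``both triangular alignments'' and the ``both spiky alignments'' configurations in $\mathcal{C}$---we fall into one of the four outcomes listed as condition~(1) of amiable: either exactly one of $(J_1,X_0),(J_2,X_0)$ is an alignment, in which case it is the $D_1$-side one and $(J_2,X_0)$ is the prescribed connectifier; or both are alignments with at least one not spiky and at least one not triangular. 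The order-matching of condition~(2) is inherited from Theorem~\ref{twosidesgeneral}.

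For condition~(3), for each $z\in X_0$ let $L(z),R(z)$ be, respectively, the minimum and maximum indices $j\in\{1,\dots,k\}$ with $d_j\sim z$; by the witness hypothesis $L(z)\leq i(z)\leq R(z)$. Listing $X_0=\{z_1,\dots,z_N\}$ in the alignment order given by $(J_1,X_0)$, I would apply Theorem~\ref{ESz} twice---once to extract a subsequence on which $L$ is monotone, then again to extract a sub-subsequence $X_0'=\{z_1',\dots,z_M'\}$ on which $R$ is also monotone, with $M\geq N^{1/4}$. After possibly reversing the alignment we may assume $L$ is non-decreasing. If $R$ is also non-decreasing, set $H_1$ to be the subpath of $D_1$ from $d_{L(z_1')}$ to $d_{R(z_M')}$; if $R$ is non-increasing, set $H_1=d_{L(z_1')}\dd\cdots\dd d_{R(z_1')}$. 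In either case the interval $[L(z),R(z)]$ of every $z\in X_0'$ lies within the index range of $H_1$, so $N_{D_1}(z)=N_{H_1}(z)$ for every $z\in X_0'$.

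The delicate part, which I expect to be the main obstacle, is to show that for some $X\subseteq X_0'$ of size $x+2$ the enlarged pair $(H_1,X)$ is still a consistent alignment, of a type matching $(H_2,X)$ for some $H_2\subseteq J_2$ in one of the four amiable outcomes. Enlarging $J_1$ to $H_1$ may produce additional $H_1$-neighbors for middle $z$'s, potentially turning a spiky alignment into a wide alignment or altering a triangular structure. To handle this, one invokes Lemma~\ref{pathnotinterleave} (valid because $G$ is theta-free) to restrict interactions between external $D_1$-neighbors of distinct $z$'s, and then performs one final Erd\H{o}s--Szekeres extraction---on, for example, the sizes and relative positions of $N_{H_1}(z)$---to pin down $X$ of size $x+2$ on which $(H_1,X)$ is of a single consistent alignment type. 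A verification against the case split from Step~2, possibly reapplying Lemma~\ref{twosides_lemma} to $(H_1,X),(H_2,X)$, then confirms that one of the four amiable outcomes holds, with the two extreme elements of $X$ serving as the exceptional vertices allowed by condition~(3).
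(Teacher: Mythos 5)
Your overall plan diverges from the paper's proof in two important ways, and both introduce genuine gaps.

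First, the claim that $(J_1,X_0)$ must be an alignment ``because a path has at most two simplicial vertices'' is incomplete. The path structure of $D_1$ does rule out a caterpillar with three or more legs, a subdivided star, and their line graphs (these need a degree-$3$ vertex or a triangle, which a path cannot supply). However, it does not rule out a \emph{stellar connectifier} in which $H_1$ is a single vertex of $D_1$ adjacent to all of $X_0$. To exclude that, one needs to know that no vertex of $D_1$ has many $Y$-neighbors. The paper establishes exactly this in a preliminary claim: using the private-witness hypothesis, any vertex of $D_1$ with five $Y$-neighbors yields a theta (two paths through $D_1$ and $D_2$ plus a length-two path through $d_i$). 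Your proposal does not prove this bound, and the parenthetical remark you give does not substitute for it.

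Second, and more seriously, your core strategy---enlarging $J_1$ to a much larger subpath $H_1$ of $D_1$ so that \emph{every} selected $z$ satisfies $N_{D_1}(z)=N_{H_1}(z)$---creates problems you acknowledge but do not resolve. After enlargement, $(H_1,X)$ need not be a consistent alignment, and even if one extracts a subfamily on which it is, the alignment \emph{type} may change (e.g.\ a triangular alignment on $J_1$ can become wide or inconsistent on $H_1$). The amiable outcomes pair alignment types with specific connectifier types; for instance, a triangular alignment goes with a stellar or spiky connectifier, and a wide alignment goes with a triangular or clique connectifier. If $(J_1,X_0)$ was triangular with $(H_2,X_0)$ a spiky connectifier, and enlargement turns $H_1$ wide, the resulting pair (wide alignment, spiky connectifier) is not one of the four permitted outcomes. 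Re-applying Lemma~\ref{twosides_lemma} does not rescue this: the lemma needs the orders to match and requires both pairs to already be consistent alignments or connectifiers, which is precisely what fails after enlargement. There is also a frame-vertex issue: the first and last elements of $X_0'$ in alignment order may have several neighbors in the enlarged $H_1$, so they cannot serve as the alignment's boundary vertices with unique endpoint-neighbor.

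The paper avoids all of this by \emph{not} enlarging $H_1$. It proves, by a second theta argument, that among $z_2,\dots,z_{x+3}$ at most two vertices have a $D_1$-neighbor outside the $H_1$ returned by Theorem~\ref{twosidesgeneral}: three escapees would force two of them to escape on the same side of $H_1$, producing a theta with one path through that side of $D_1$, one through $H_1$, and one through $D_2$. The resulting subset of size $x$ (together with the two original frame vertices) then satisfies all three amiable conditions verbatim, and the ``except at most two'' clause of condition~(3) is there precisely to accommodate the two frame vertices. Your approach aims at the stronger $N_{D_1}(z)=N_{H_1}(z)$ for every $z$, which forces the enlargement and the attendant loss of control over alignment type; the paper's weaker but sufficient goal sidesteps the problem entirely.
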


\begin{proof}
  Let $x \geq 2$, and let $G \in \mathcal{C}, D_1 =d_1 \dd \cdots \dd d_k, D_2, Y$ be as in the definition of an amiable  class.   Let $\sigma(x)=\phi(x+4)$ where $\phi$ is as in Theorem~\ref{twosidesgeneral}.
  We start with the following.

  \sta{For every $d \in D_1$, we have that $|N_Y(d)| \leq 4$.
  \label{4nbrs}}

  Suppose there exists $i \in \{1, \dots, k\}$ and $y_1,y_2,y_3,y_4,y_5 \in Y \cap N(d_i)$.
  We may assume that $i(y_1)< \dots <i(y_5)$.
  By reversing $D_1$ if necessary, we may assume that $i(y_3) < i$, and so $i(y_2)<i-1$.
  Let $P$ be a path with ends $y_1,y_2$ and with interior in
  $d_{i(y_1)} \dd D_1 \dd d_{i(y_2)}$. Let $R$ be a path from $y_1$
  to $y_2$ with interior in $D_2$. Now there is a theta in $G$
  with ends $y_1,y_2$ and paths $P$,$R$ and $y_1 \dd d_i \dd y_2$,
  a contradiction. This proves~\eqref{4nbrs}.
  \\
  \\
  Now we apply Theorem~\ref{twosidesgeneral} and obtain a set $X'$ with
  $|X'|=x+4$ satisfying one of the outcomes of Theorem~\ref{twosidesgeneral}.
  Let $(H_1,X')$ and
  $(H_2,X')$ be as in Theorem~\ref{twosidesgeneral} where
  $H_1 \subseteq D_1$ (and so Theorem \ref{twosidesgeneral} may hold with the roles of $H_1$ and $H_2$ reversed). 
It follows from
  \eqref{4nbrs} that every vertex in $D_1$ has degree at most 6 in $G$ and degree 2 in $G[D_1]$, and so $(H_1,X')$ is an alignment.
  Let $z_1, \dots, z_{x+4}$
  be the order on $X'$ given by $(H_1,X')$.

  \sta {There exist at most two values of  $i \in \{2, \dots, x+3\}$
    for which  $N(z_i) \cap D_1 \neq N(z_i) \cap H_1$.
  \label{outside}}

  Suppose there exist three such values of $i$. Since $H_1$ is a path,
  there exist $i,j \in \{1, \dots, k\}$ such that $H_1=d_i \dd D_1 \dd d_j$,
  and we may assume (by reversing $D_1$ if necessary) that for two values
  $p,q \in \{2, \dots, x+3\}$, both $z_p$ and $z_q$ have neighbors
  in $d_1 \dd D_1 \dd d_{i-1}$. Let $P$ be a path from $z_p$ to $z_q$ with $P^* \subseteq d_1 \dd D_1 \dd d_{i-1}$.
  Since $p,q >1$, there is a path $Q$ from $z_p$ to $z_q$  with
  $Q^* \subseteq H_1 \setminus d_i$. But now we get a theta with ends
  $z_p,z_q$ and path $P$, $Q$, and a path from $z_p$ to $z_q$ with interior in
  $D_2$, a contradiction. This proves~\eqref{outside}.
  \\
  \\
  By \eqref{outside}, there exists $X \subseteq \{z_2, \dots, z_{x+3}\}$
  with $|X| = x$ and such that $N_{D_1}(z)=N_{H_1}(z)$ for every
  $z \in Z$. We obtain that  $(H_1,X \cup \{z_1, z_{x+4}\})$ and $(H_2,X \cup \{z_1, z_{x+4}\})$
  satisfy the first   statement
  of Theorem~\ref{twosides_path}. The second statement of
  Theorem~\ref{twosides_path} follows immediately from Theorem~\ref{twosidesgeneral}, and the third statement holds by the choice of $X$.
  \end{proof}

The second special case is when the clique number of $G$  is bounded, and $Y \cap \Hub(G) =\emptyset$; it is a result from \cite{TWX}.
\begin{theorem} [Abrishami, Alecu, Chudnovsky, Hajebi, Spirkl \cite{TWX}]
\label{twosides}
  For every pair of integers $t,x\geq 1$, there exists an integer $\tau=\tau(t,x) \geq 1$ with the following property.
    Let $G \in \mathcal{C}_t$ and assume that $V(G)=D_1 \cup D_2 \cup Y$
  where
  \begin{itemize}
  \item         $Y$ is a stable set with $|Y| = \tau$,
  \item $D_1$ and $D_2$ are components of $G \setminus Y$, and 
  \item $N(D_1)=N(D_2)=Y$.
  \end{itemize}
  Assume that $Y \cap \Hub(G)=\emptyset$.
  Then there exist $X \subseteq Y$ with $|X|=x$,
$H_1 \subseteq D_1$ and $H_2 \subseteq D_2$
   (possibly with the roles of $D_1$ and $D_2$ reversed) such that
  \begin{itemize}
    \item   $(H_1,X)$ is  a triangular connectifier or a triangular alignment;
  
  \item   $(H_2,X)$ is a stellar connectifier, or a spiky connectifier, or a
    spiky alignment or a wide alignment; and
    \item  if $(H_1,X)$ is a triangular alignment, then $(H_2,X)$ is not a
  wide alignment.
      \end{itemize}
  Moreover, if neither of $(H_1,X),(H_2,X)$ is a stellar connectifier, then
  the orders given on $X$ by $(H_1,X)$ and by $(H_2,X)$ are the same.
  \end{theorem}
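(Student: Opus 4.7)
The plan is to apply Theorem \ref{twosidesgeneral} with a slightly inflated parameter and then use the two new hypotheses---the absence of a $K_t$ and the absence of hubs in $Y$---to discard all outcomes that are not among those permitted by Theorem \ref{twosides}. Concretely, set $\tau(t,x) = \phi(\max(x,t))$ with $\phi$ as in Theorem \ref{twosidesgeneral}, apply that theorem, and obtain $X' \subseteq Y$ with $|X'|=\max(x,t)$ together with $H_1, H_2$ satisfying one of its two outcomes (possibly after swapping $D_1, D_2$).

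First I would rule out the clique connectifier: if $(H_i, X')$ is a clique connectifier, then $P(H_i)$ is a clique of size $|X'| \geq t$ in $G$, contradicting $G \in \mathcal{C}_t$. Inspecting the possibilities left by Theorem \ref{twosidesgeneral}, the only configurations that do not already fit the form demanded by Theorem \ref{twosides} are those in which one of $(H_1,X'), (H_2,X')$ is a wide alignment and the other is a spiky, triangular, or wide alignment; these cover the bad branches of both outcomes.

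For each such bad configuration I would pick a middle vertex $x_i \in X' \setminus \{x_1, x_{|X'|}\}$, form a hole $H$ by stitching together the full paths $H_1$ and $H_2$ via $x_1$ and $x_{|X'|}$, and verify that $(H, x_i)$ is a proper wheel---contradicting $Y \cap \Hub(G) = \emptyset$. In the triangular+wide and wide+wide cases, $x_i$ has at least four neighbors on $H$ (two in each side for wide+wide, or two adjacent on the triangular side together with at least two non-adjacent ones on the wide side), so $(H,x_i)$ is automatically a proper wheel. The spiky+wide case is the delicate one: $x_i$ has exactly one neighbor in $H_1$ and at least two non-adjacent neighbors in $H_2$. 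Since $H_1$ and $H_2$ lie in distinct components of $G \setminus Y$ they are anticomplete, so the $H_1$-neighbor is non-adjacent to both $H_2$-neighbors; combined with the wide property, no two of these three neighbors of $x_i$ are adjacent, so $(H,x_i)$ has at least three pairwise non-adjacent neighbors on $H$ and is therefore neither a twin wheel nor a short pyramid.

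Once all bad configurations are eliminated, the remaining possibilities match the conclusion of Theorem \ref{twosides} exactly---after at most one further swap of $H_1$ and $H_2$ to route the triangular side to $H_1$ (needed in the spiky+triangular subcase of outcome 2 of Theorem \ref{twosidesgeneral}). Finally, we thin $X'$ down to any $X \subseteq X'$ of size $x$ containing $x_1$ and $x_{|X'|}$, and trim $H_1, H_2$ to the parts actually attached to $X$; this preserves the alignment/connectifier type. The ``moreover'' clause on the common order is inherited directly from Theorem \ref{twosidesgeneral}, since the only concentrated connectifier besides stellar---the clique connectifier---has already been ruled out. The main obstacle is the careful wheel bookkeeping in the spiky+wide case, where only three neighbors of $x_i$ on $H$ are guaranteed and one must simultaneously rule out both the twin wheel and short pyramid configurations.
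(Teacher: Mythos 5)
This statement is imported verbatim from \cite{TWX} and never proved in the present paper, so there is no internal proof to compare against; your plan of re-deriving it from Theorem~\ref{twosidesgeneral} (which the paper \emph{does} prove) is therefore a genuinely independent route. Most of your case analysis is sound: the three-pairwise-non-adjacent-neighbors argument in the spiky+wide case correctly excludes both twin wheels and short pyramids, and the ``at least four neighbors on $H$'' argument disposes of triangular+wide and wide+wide. The translation of the ``moreover'' clause (concentrated reduces to stellar once clique connectifiers are excluded) is also correct.

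There are, however, two gaps. The more serious one is the exclusion of the clique connectifier: you assert that if $(H_i,X')$ is a clique connectifier then $P(H_i)$ is a clique of size $|X'|\geq t$, but this does not follow from the definition as stated. A clique connectifier only requires that every vertex of $X'$ has a \emph{unique} neighbor in $H_i$ and that $H_i\cap N(X')=\mathcal{Z}(H_i)$; nothing forbids several vertices of $X'$ from sharing the same simplicial neighbor, in which case $|P(H_i)|=|\mathcal{Z}(H_i)|$ could be as small as $3$ no matter how large $|X'|$ is. (The construction inside the proof of Theorem~\ref{twosidesgeneral} does produce a bijection $X'\to\mathcal{Z}(H_i)$, but the \emph{statement} you are invoking as a black box does not promise it, and the needed injectivity does not seem to be forced by the $\mathcal{C}$-axioms alone.) You would need an additional argument, or a strengthened statement of Theorem~\ref{twosidesgeneral}, to close this. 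The second gap is minor: your enumeration of ``bad branches'' of outcome~2 (``one side is a wide alignment'') misses the triangular+triangular configuration, which the stated outcome~2 (only ``at least one is not spiky'') does permit. The same four-neighbors proper-wheel argument handles it, so the fix is trivial, but as written the enumeration is incomplete.
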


\section{Bounding the number of non-hubs in a minimal separator}
\label{boundnonhubs}

The goal of this section is to prove Theorem~\ref{bound}.
This is the second main ingredient of the proof of Theorem~\ref{banana}
(in addition to
the machinery of Section~\ref{sec:centralbag_banana}). Theorem~\ref{bound}
is what allows us to iterate the construction of Section~\ref{sec:centralbag_banana} for $\mathcal{O}(\log|V(G)|)$  rounds (rather than just constantly many), which is what
we  need in the proof. We need the following definition and result. Given a graph $H$, a \emph{$(\leq p)$-subdivision} of $H$ is a graph that arises from $H$ by replacing each edge $uv$ of $H$ by a path from $u$ to $v$ of length at least 1 and at most $p$; the interiors of these paths are pairwise disjoint and anticomplete. 
\begin{theorem}[Lozin and Razgon \cite{lozin}]\label{ramsey2}
For all positive integers $p$ and $r$, there exists a positive integer $m = m(p,r)$ such that every
graph $G$ containing a ($\leq p$)-subdivision of $K_m$ as a subgraph contains either $K_{p,p}$ as a subgraph or a
proper ($\leq  p$)-subdivision of $K_{r,r}$ as an induced subgraph.
\end{theorem}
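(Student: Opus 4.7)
The plan is to start with a $(\leq p)$-subdivision of $K_m$ inside $G$, with branch vertices $v_1,\dots,v_m$ and internally-disjoint $v_i$-$v_j$-paths $P_{ij}$ of length at most $p$, and to refine the branch-vertex set by iterated Ramsey-type arguments until the combinatorial type of the sub-configuration is determined. Without loss of generality each $P_{ij}$ is induced in $G$: if a chord exists, replace $P_{ij}$ by the shorter induced subpath it creates, which keeps the length in $\{1,\dots,p\}$ and the endpoints unchanged.

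I would apply three Ramsey refinements in sequence. First, colour each edge $\{i,j\}$ of the host $K_m$ by the length of $P_{ij}$, which lies in $\{1,\dots,p\}$, and pass to a subset $I_1$ of size $m_1$ on which every $P_{ij}$ has a common length $\ell$. Second, for each $3$-subset $\{i,j,k\} \subseteq I_1$ colour by the pattern indicating which of the three pairs among $P_{ij},P_{ik},P_{jk}$ have edges between their interiors beyond the shared branch vertex; by $3$-uniform Ramsey pass to $I_2 \subseteq I_1$ of size $m_2$. Third, for each $4$-subset $\{i,j,k,l\} \subseteq I_2$ colour by the three bits recording, for each disjoint pairing $\{ij,kl\},\{ik,jl\},\{il,jk\}$, whether there is a crossing edge between the interiors of the two corresponding (vertex-disjoint) paths; by $4$-uniform Ramsey pass to $I_3 \subseteq I_2$, still arbitrarily large in terms of $m_2$.

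Three cases now arise. If $\ell = 1$, then $I_3$ induces a clique in $G$ and taking any $2p$ of its vertices yields $K_{p,p}$. If $\ell \geq 2$ and both monochromatic colours are \textit{no crossing edges}, then the paths with endpoints in $I_3$ assemble into an induced proper $(\leq p)$-subdivision of $K_{|I_3|}$ (no chords within each $P_{ij}$ by our reduction, no interior-interior edges between disjoint paths by the $4$-colour, and no edges between the interiors of incident paths or from branch vertices to foreign path interiors by the $3$- and $4$-colours), so restricting to any balanced bipartition on $2r$ branch vertices exhibits the required induced proper $(\leq p)$-subdivision of $K_{r,r}$. Otherwise some class witnesses crossings: one then has $\binom{|I_3|}{2}$ path-pairs (either all disjoint pairs, or all incident pairs through a fixed branch vertex) each contributing at least one crossing edge between disjoint sets of internal vertices. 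Pigeonholing each crossing on its pair of positions in $\{0,\dots,\ell\}^2 \subseteq \{0,\dots,p\}^2$ yields a dense bipartite incidence pattern to which the K\H{o}v\'ari--S\'os--Tur\'an theorem applies, delivering $K_{p,p}$ as a subgraph of $G$ once $|I_3|$ is polynomial in $p$.

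The main obstacle is the control of sizes across this chain of reductions: each Ramsey step shrinks $|I|$ by a tower of exponentials, and the final K\H{o}v\'ari--S\'os--Tur\'an step still requires $|I_3|$ to be polynomial in $p$, while Case A requires $|I_3| \geq 2r$ and Case B requires $|I_3| \geq 2p$. These constraints force $m(p,r)$ to be an iterated Ramsey number in $p$ and $r$, but each individual step is a routine pigeonhole-plus-Ramsey argument, and the three cases above exhaust the possibilities and give the dichotomy in the conclusion.
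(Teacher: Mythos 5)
The paper only cites Theorem~\ref{ramsey2} to Lozin and Razgon and does not prove it, so there is no in-paper argument to compare your proposal against; I am therefore evaluating it on its own. Your outline is a sound route to the theorem and is in the same spirit as the known proof: normalize the subdivision via iterated Ramsey refinements (on lengths and then on crossing patterns of ordered triples and quadruples), then split on whether the refined configuration is a clique, an induced proper subdivision, or has crossings forcing $K_{p,p}$ via K\H{o}v\'ari--S\'os--Tur\'an.

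A few points need tightening before this is a complete proof. The $3$-uniform colouring as you define it records only interior--interior adjacencies between incident paths; to conclude in the no-crossing case that the $K_{|I_3|}$-configuration is induced you also need to forbid edges from a branch vertex $v_a$ into the interior of a path $P_{ij}$ with $a\notin\{i,j\}$, so the colour on a triple should carry that information too (you invoke it in the case analysis, but it is not part of the colouring you actually described). The claim of $\binom{|I_3|}{2}$ crossing pairs is also imprecise: since these colourings are read off the increasing order of the indices, a stable $4$-colour asserting a crossing only in the pairing $\{ij,kl\}$ with $i<j<k<l$ does not make every disjoint pair of paths cross. This is repairable---split $I_3$ into a left block and a right block and use only the cross pairs, which still yields $\Omega(|I_3|^4)$ crossings on $O(|I_3|^2 p)$ vertices per side, ample for K\H{o}v\'ari--S\'os--Tur\'an---but it should be said. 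Finally, the threshold that K\H{o}v\'ari--S\'os--Tur\'an imposes is of order $(Cp^2)^p$, not polynomial in $p$; this is still a function of $p$ alone, which is all the theorem requires, but the phrase ``polynomial in $p$'' is inaccurate. (You also swap the roles of $2p$ and $2r$ when listing the size requirements for Cases A and B.) With these repairs the argument is correct.
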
 

Since graphs in $\mathcal{C}_t$ contain neither $K_{t+1}$ nor $K_{2,2}$ as an induced subgraph, we conclude that graphs in $\mathcal{C}_t$ do not contain $K_{t+1, t+1}$ as a subgraph. Likewise, graphs in $\mathcal{C}_t$ do not contain subdivisions of $K_{2,3}$ (in other words, thetas) as an induced subgraph. Therefore, letting $m = m(t+1, 3)$, we conclude: 
\begin{lemma} \label{lem:lozinrazgon}
    Let $t \in \mathbb{N}$. Then there exists an $m = m(t) \in \mathbb{N}$ such that the following holds:     Let $G$ in $\mathcal{C}_t$. Then $G$ does not contain a $(\leq 2)$-subdivision of $K_m$ as a subgraph. 
\end{lemma}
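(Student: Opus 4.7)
The lemma is essentially a corollary of Theorem~\ref{ramsey2} (Lozin--Razgon), and the paper's setup immediately before the statement already outlines the application. The plan is to invoke Theorem~\ref{ramsey2} with parameters $p=t+1$ and $r=3$, and rule out each of its two outcomes using the forbidden induced subgraphs defining $\mathcal{C}_t$.

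Concretely, I would set $m=m(t+1,3)$, suppose for contradiction that some $G\in\mathcal{C}_t$ contains a $(\leq 2)$-subdivision of $K_m$ as a subgraph, and note that since $t+1\geq 2$ this is also a $(\leq t+1)$-subdivision of $K_m$. Applying Theorem~\ref{ramsey2}, either $G$ contains $K_{t+1,t+1}$ as a subgraph, or $G$ contains a proper $(\leq t+1)$-subdivision of $K_{3,3}$ as an induced subgraph. My job is to derive a contradiction in each case.

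For the first case, let $A,B\subseteq V(G)$ be disjoint of size $t+1$ with all edges between $A$ and $B$ present. If $G[A]$ has two non-adjacent vertices $a_1,a_2$ and $G[B]$ has two non-adjacent vertices $b_1,b_2$, then $G[\{a_1,a_2,b_1,b_2\}]$ is an induced $C_4=K_{2,2}$, contradicting $G\in\mathcal{C}$. Hence one of $G[A]$, $G[B]$ is a complete graph on $t+1$ vertices, giving an induced $K_{t+1}$ and hence a clique of size $t$ in $G$, contradicting $G\in\mathcal{C}_t$. For the second case, take the induced subgraph on two vertices on one side of the $K_{3,3}$, all three on the other side, and the (internally disjoint, pairwise anticomplete) subdivision vertices on the six edges between them; this is an induced proper subdivision of $K_{2,3}$, and since each edge is subdivided at least once all three internal paths have length at least~$2$, so this induced subgraph is a theta, contradicting the theta-freeness of $G\in\mathcal{C}$.

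There is no real obstacle here; the only subtlety is making sure the outcomes of Theorem~\ref{ramsey2} are read correctly and that the ``contained as an induced subgraph'' relation is properly tracked (so that the induced sub-configurations extracted in each case really are induced in~$G$, not merely subgraphs). Both of those points follow from the basic observation that an induced subgraph of an induced subgraph of $G$ is an induced subgraph of $G$.
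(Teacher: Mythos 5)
Your proof is correct and follows the same approach as the paper: apply Theorem~\ref{ramsey2} with $p=t+1$ and $r=3$ after noting that a $(\leq 2)$-subdivision is also a $(\leq t+1)$-subdivision, rule out the $K_{t+1,t+1}$ outcome via $K_{2,2}$-freeness and the clique bound, and rule out the induced proper subdivision of $K_{3,3}$ by extracting an induced theta. Your write-up is slightly more explicit than the paper's (the paper only notes that $\mathcal{C}_t$ excludes induced subdivisions of $K_{2,3}$ and leaves the reduction from $K_{3,3}$ to $K_{2,3}$ implicit), but the reasoning is the same.
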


Recall that the Ramsey number $R(t,s)$ is the minimum integer such that every graph on
at least $R(t,s)$ vertices contains either a clique of size $t$ or a stable set of size $s$.

\begin{theorem}
  \label{bound}
  For every integer $t\geq 1$ there exists $\Gamma=\Gamma(t)$ with the following property. 
  Let $G \in \mathcal{C}_t$ and assume that $V(G)=D_1 \cup D_2 \cup Y$
  where
  \begin{itemize}
  \item $D_1$ and $D_2$ are components of $G \setminus Y$.
\item $Y$ is a stable set.
  \item $N(D_1)=N(D_2)=Y$.
    \item $Y \cap \Hub(G)=\emptyset$.
     \item There exist $a_1 \in D_1$ and $a_2 \in D_2$ such that
       $a_1a_2$ is a $|Y|$-banana in $G$.
  \end{itemize}
  Then $|Y| \leq \Gamma$.
  \end{theorem}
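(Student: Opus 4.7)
\medskip

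\noindent\textbf{Proof plan.} Suppose for contradiction that $|Y|$ is arbitrarily large and choose $x = x(t)$ (to be fixed later as a function of $t$ via Ramsey numbers and $m(t)$ from Lemma~\ref{lem:lozinrazgon}); assume $|Y| \geq \tau(t,x)$, where $\tau$ is the function from Theorem~\ref{twosides}. Since $Y$ is an $(a_1,a_2)$-separator of size $|Y|$ in $G$ and $a_1 a_2$ is a $|Y|$-banana, Menger's theorem (Theorem~\ref{Menger_vertex}) in fact gives exactly $|Y|$ internally vertex-disjoint $a_1$-$a_2$ paths, and, since each must meet $Y$ and any two meet $Y$ disjointly, they are in bijection with $Y$: there exist $|Y|$ internally disjoint paths $\{P_y\}_{y\in Y}$ with $y\in P_y$. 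Writing $u_y \in D_1$ and $v_y \in D_2$ for the two neighbors of $y$ on $P_y$, this equips us with an internally disjoint ``fan'' $\{Q_y\}_{y\in Y}$ from $a_1$ in $D_1$ to the vertices $u_y$, and a symmetric fan from $a_2$ in $D_2$.

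The next step is to apply Theorem~\ref{twosides} (which is available because $Y \cap \Hub(G) = \emptyset$) to obtain $X \subseteq Y$ with $|X| = x$ and induced subgraphs $H_1 \subseteq D_1$, $H_2 \subseteq D_2$ (possibly with the roles of $D_1, D_2$ reversed; we handle both by symmetry thanks to the fan structure above) realizing one of the listed alignment/connectifier configurations, with a common order on $X$ when neither of $(H_1,X), (H_2,X)$ is a stellar connectifier. The plan is then to splice this structural skeleton on the ``$Y$-side'' together with the banana fans on the ``$a_1, a_2$-side'' to force a $C_4$, theta, prism, or even wheel in $G$, contradicting $G \in \mathcal{C}$. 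The role of the banana is precisely to supply a third, vertex-disjoint ``return route'' connecting any three chosen elements of $X$ back to a common vertex ($a_1$ or $a_2$) through the interiors of $D_1, D_2$, which is what the two connectifiers alone cannot directly guarantee.

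The argument then becomes a case analysis driven by Theorem~\ref{twosides}. In the canonical cases, pick three consecutive elements $x', x, x'' \in X$ in the common order; use $H_1$ to build a short path between (neighborhoods of) $x'$ and $x''$, use $H_2$ to build another short path between the same pair, and close the configuration with the banana sub-paths $Q_{x'}, Q_{x''}$ through $a_1$ (and $Q'_{x'}, Q'_{x''}$ through $a_2$). The middle vertex $x$ then attaches to the resulting hole with a controlled set of neighbors: in the triangular-alignment versus spiky/stellar case this produces an even wheel at $x$ or a theta with ends $x', x''$; in the triangular-connectifier versus spiky/wide-alignment case the legs of $H_1$ at $x', x''$ play the role of the triangle base, and combining with the banana fan yields a prism or a theta; in the stellar-versus-triangular-connectifier case, the three legs from the root of $H_2$ to $x', x, x''$ combined with $H_1$ force a theta with ends in $\{a_2, r\}$.

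The main obstacle is to ensure that the three chosen paths have pairwise \emph{anticomplete} interiors (as required for a theta or for a proper even wheel), not merely internally disjoint, and that no unwanted chord creates a $C_4$ instead of the target configuration. To handle cross-edges between legs of $H_1$ or $H_2$, and between the banana paths and $H_1 \cup H_2$, we pigeonhole further within $X$: first thin $X$ by a Ramsey argument so that the pattern of adjacency between the $u_x$'s (and $v_x$'s) and $H_1 \cup H_2$ is monochromatic; then, if in some residual case a $(\leq 2)$-subdivision of a large complete graph is created on the thinned $X$ together with the short ``witnesses'' in $H_1, H_2$, invoke Lemma~\ref{lem:lozinrazgon} to rule it out directly. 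This produces the contradiction in every case, and the bound $\Gamma(t)$ is obtained as the resulting iterated application of $\tau(t,\cdot)$, $m(t)$, and Ramsey numbers $R(\cdot,\cdot)$.
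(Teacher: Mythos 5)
Your setup is right — apply Theorem~\ref{twosides} (available because $Y \cap \Hub(G) = \emptyset$), thin $X$ by Ramsey plus Lemma~\ref{lem:lozinrazgon} to control short detours in $D_1$, and use the banana to build a fan of internally disjoint paths from $a_1$ (resp.\ $a_2$) to the $X$-vertices. But the final contradiction mechanism you describe does not work, and it is precisely where the paper's proof has to do something cleverer.

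The cases that Theorem~\ref{twosides} outputs are, by design, \emph{exactly} the configurations that can coexist with $Y \cap \Hub(G) = \emptyset$ inside a graph of $\mathcal{C}_t$; every pair that would directly yield a theta, prism, $C_4$, or even wheel around a middle $x_i$ has already been filtered out inside the proof of Lemma~\ref{twosides_lemma}. Concretely, in the (triangular alignment, spiky alignment) case the middle vertex $x$ has exactly three neighbors on the hole $H$, two of which are adjacent — that is a short-pyramid apex, which is permitted and consistent with $x$ not being a hub; no even wheel or theta at $x$ appears. Closing a cycle through $a_1$ or $a_2$ via the banana fan leaves the local attachment pattern of $x$ unchanged, so your claim that this "produces an even wheel at $x$ or a theta with ends $x', x''$" is false in this case, and similarly the triangular-connectifier cases give triangle-bases that attach as short pyramids, not prisms or thetas. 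The direct splice simply reproduces a graph that looks exactly like a legitimate member of $\mathcal{C}_t$.

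What your proposal is missing is the paper's key maneuver and its key input, Theorem~\ref{bananatohub}. The paper selects $S \subseteq X$ with $|S| = k(t)$ (the constant from Theorem~\ref{bananatohub}) such that no two members of $S$ are joined by a path of length two through $D_1$ (this is where $(\leq 2)$-subdivisions and Lemma~\ref{lem:lozinrazgon} enter), then forms the auxiliary graph $G'$ obtained from $D_1 \cup S$ by adding a new vertex $v$ complete to $S$. Using the connectifier structure of $H_1, H_2$, it shows that $G' \in \mathcal{C}_t$ and that $S \cap \Hub(G') = \emptyset$ — these are the genuinely delicate structural claims and are where the detailed case analysis actually goes. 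Finally, the banana restricted to $D_1 \cup S$ plus the new vertex $v$ makes $a_1 v$ a $k$-banana in $G'$, so Theorem~\ref{bananatohub} forces $N_{G'}(v) \cap \Hub(G') \neq \emptyset$; but $N_{G'}(v) = S$ and $S \cap \Hub(G') = \emptyset$. That indirection — converting the banana into a hub-forcing statement via an auxiliary apex $v$ — is the missing idea, and without it there is no contradiction to be had by direct configuration-chasing.
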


\begin{proof}
  Let $k=k(t)$ be as in Theorem~\ref{bananatohub}. We may assume that $k \geq 3$. Let $m = m(t)$ as in Lemma \ref{lem:lozinrazgon}. Let $\lambda = R(m, k).$
    Let $\tau=\tau(t, \lambda)$
  be as in Theorem \ref{twosides}. Set $\Gamma=\tau$.
    Applying  Theorem~\ref{twosides}, we deduce that there exist
  $H_1 \subseteq D_1$, $H_2 \subseteq D_2$ and $X \subseteq Y$
   with $|X|=\lambda$  such that:
  \begin{itemize}
    \item   $(H_1,X)$ is  a triangular connectifier or a triangular alignment;
  
  \item   $(H_2,X)$ is a stellar connectifier, or a spiky connectifier, or a
    spiky alignment, or a wide alignment; and
    \item  if $(H_1,X)$ is a triangular alignment, then $(H_2,X)$ is not a
  wide alignment.
      \end{itemize}

  \sta{Suppose $(H_2,X)$ is a wide alignment, and let $x_1, \dots, x_\lambda$
    be the order on $X$ given by $H_2$. Then for every $i \in \{2, \dots, \lambda-1\}$,
    $x_i$ has exactly 
    three neighbors in $H_2$, and two of them are adjacent.
  \label{wide}}

  Let $i \in \{2, \dots, \lambda-1\}$. Let $R$ be a path from $x_1$ to
  $x_\lambda$ with interior in $H_1$. Then $H=x_1 \dd H_2 \dd x_\lambda \dd R \dd x_1$
  is a hole. Since $(H_2,X)$ is a wide alignment, $x_i$ has two non-adjacent
  neighbors in $H$. Since $(H,x_i)$ is not a wheel in $G$, and $x_i$ is non-adjacent to $x_1$ and $x_\lambda$,  \eqref{wide}
  follows.
  \\
  \\
  Next we show:  

    \sta{Every vertex in $D_1$ has at most two neighbors in $X$.
  \label{threenbrs}}
  Suppose that there is a vertex $d \in D_1$ such that $d$ has three distinct neighbors $x_i, x_j, x_k$ in $X$; without loss of generality, we may assume that $i < j < k$. We consider two cases. First, if $H_2$ is a wide alignment, then $G$ contains a wheel $(C, x_j)$ where $C$ is a cycle consisting of $x_i, d, x_k$ and the path from $x_k$ to $x_i$ with interior in $H_2$. By \eqref{wide}, $x_j$ has four neighbors in $C$: $d$, as well as three neighbors in $H_2$. But $x_j$ is not a hub in $G$, a contradiction.
  
  It follows that $H_2$ is not a wide alignment. Now, for $r, s \in \{i, j, k\}$, let $P_{rs}$ be a path from $x_r$ to $x_s$ with interior in $H_2$. Then $\{d\} \cup P_{ij} \cup P_{jk} \cup P_{ik}$ is a theta in $G$ (see Figure \ref{fig:61}). This proves \eqref{threenbrs}.

  \begin{figure}[t!]
      \centering
\includegraphics[scale=0.6]{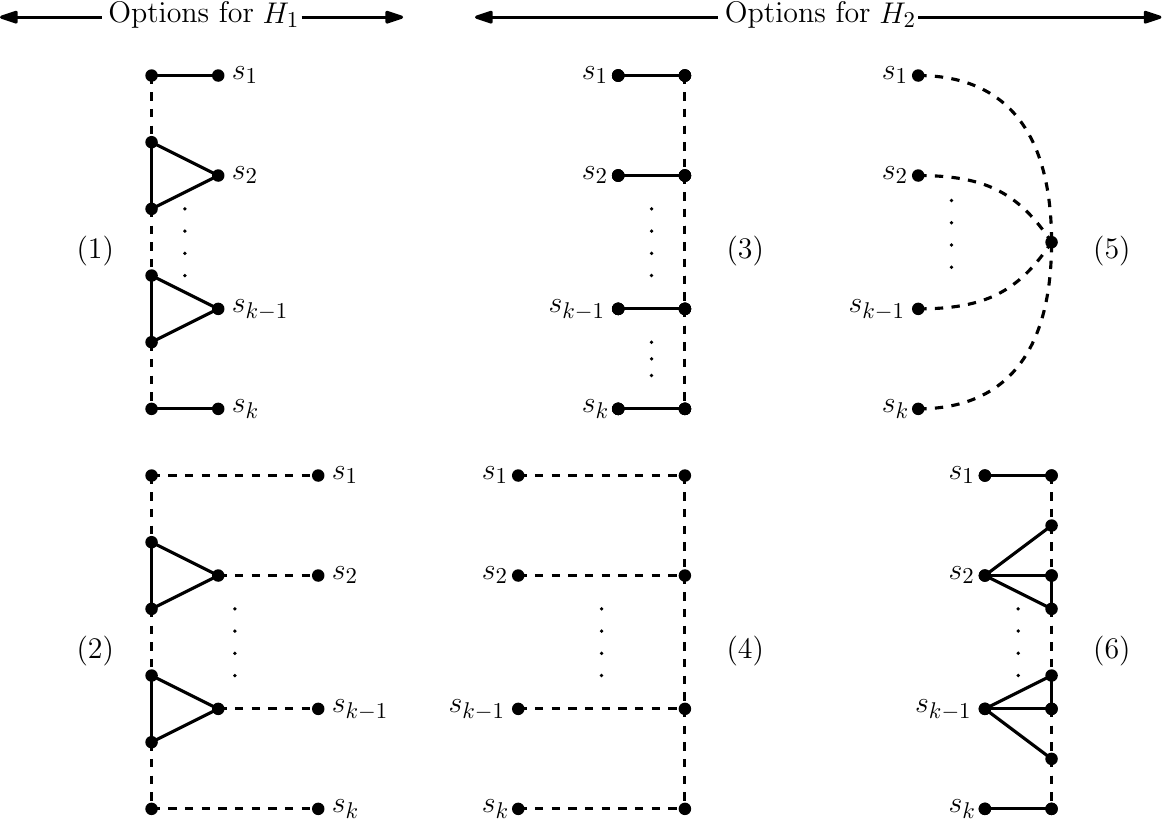}
      \caption{Options in the proof of Theorem \ref{bound} (except (1) and (6) cannot be the case together).}
      \label{fig:61}
  \end{figure}
  
  \sta{There exists $S \subseteq X$ with $|S|=k$, such that for every
    $s,s' \in S$ and for every path $P$ from $s$ to $s'$ with interior in
    $D_1$ we have that $|E(P)|>2$.
  \label{safeset}}

  Define a graph $H$ with vertex set  $X$ such that $xx' \in E(H)$ if and only if
  there is a path of length two with ends $x,x'$ and interior in $D_1$.
  Since $|X|=\lambda = R(m,k)$, there is $S \subseteq X$ with $|S|=k$ such that
  $S$ is either a clique or a stable set in $H$.

  If $S$ is a stable set, then
  \eqref{safeset} holds, so we may assume that $S$ is a clique of size $m$ in $H$. Write $S = \{s_1, \dots, s_m\}$. For $i, j \in \{1, \dots, m\}$ with $i \neq j$, we let $d_{ij}$ be a common neighbor of $s_i$ and $s_j$ in $D_1$ (which exists, since $S$ is a clique in $H$). By \eqref{threenbrs}, no vertex has three neighbors in $S$, and so the vertices $d_{ij}$ are pairwise distinct. But now $S \cup \{d_{ij} : i, j \in \{1, \dots, m\}, i \neq j\}$ contains a $(\leq 2)$-subdivision of $K_{m}$ as a subgraph, contrary to Lemma \ref{lem:lozinrazgon}. This is proves \eqref{safeset}.
  \\
  \\
  From now on let $S \subseteq X$ be as in \eqref{safeset}. Let
  $G'$ be the graph obtained from $D_1 \cup S$ by adding a new vertex $v$
  with $N(v)=S$. We need the following two facts about $G'$:

  \sta{$S \cap \Hub(G')=\emptyset$.
  \label{Snothub}}

  Suppose not, let $s \in S$ and let $(H,s)$ be a wheel in $G'$.
  Since $S \cap \Hub (G)=\emptyset$, it follows that $v \in H$.
  Let $N_H(v)=\{s',s''\}$; then $H \setminus v$ is a path $R$ from
  $s'$ to $s''$ with $R^* \subseteq D_1$. Since $(H,s)$ is a wheel,
  it follows that $s$ has two non-adjacent neighbors in $R$.
  Consequently, there is a path $P'$ from $s$ to $s'$ 
  and a path $P''^*$ form $s$ to $s''$, both with interior in $R$, such that
  $P'^*$ is disjoint from and anticomplete to $P''^*$.
  
   Let $T$ be a path from $s'$ to $s''$ with interior in $H_2$.
  Then $H'=s'  \dd R \dd s'' \dd T \dd s'$ is a hole.
  Since $X \cap \Hub(G)=\emptyset$, it follows that $(H',s)$ is not a
  wheel in $G$, and so $s$ is anticomplete to $T$.

  Suppose first that $(H_2,X)$ is a wide alignment. Since $s$ is anticomplete to $T$,
  it follows that (reversing the order on $X$ if necessary, and  exchanging the
  roles of $s',s''$ if necessary) $s,s',s''$ appear in
  this order in the order given by $H_2$ on $X$. 
  Now we get a theta with ends $s,s'$ and paths $s \dd P' \dd s'$,
  $s \dd P'' \dd s'' \dd H_2 \dd s'$, and
  $s \dd H_2 \dd s'$, a contradiction. This proves that $(H_2,X)$ is not a wide alignment.

  Since $(H_2,X)$ is not a wide alignment, there is a vertex $a \in H_2$,
  and three paths $Q,Q',Q''$ all with interior in $H_2$, where
  $Q$ is from $a$ to $s$, $Q'$ is from $a$ to $s'$, and $Q''$ is from
  $a$ to $s''$, and the sets $Q \setminus a$, $Q' \setminus a$ and
  $Q'' \setminus a$ are pairwise disjoint and anticomplete to each other. Note that
  $T=s' \dd Q' \dd a \dd Q'' \dd s''$. Since $s$ is anticomplete to $T$,
  it follows that $s$ is non-adjacent to $a$. But now we get a theta with ends
  $a,s$ and path $s \dd Q \dd a$, $s \dd P' \dd s' \dd Q' \dd a$,
  and $s \dd P'' \dd s'' \dd Q'' \dd a$, a contradiction. This proves
  \eqref{Snothub}.

\begin{figure}[t!]
    \centering
\includegraphics[scale=0.8]{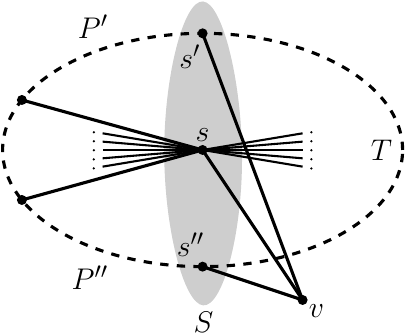}
\caption{Proof of \eqref{Snothub}.}
    \label{fig:63}
\end{figure}

  \sta{$G' \subseteq \mathcal{C}_t$.
  \label{G'}}

  Suppose not. Let $Z \subseteq G'$ be such that $Z$ is a $K_t$, a $C_4$, a theta, a prism or an even  wheel. Then $v \in Z$. Since $N_Z(v)$ is a stable set,
  it follows that $Z$ is not a $K_t$.
  Suppose first that
  $|N_Z(v)|=2$; then $N_Z(v)=\{s,s'\} \subseteq S$.
  Suppose that $Z$ is a $C_4$. Then there is $d \in D_1$ such that
  $Z=v \dd s \dd d \dd s' \dd v$,
  contrary to the fact that $S$ was chosen as
  in \eqref{safeset}.
  It follows that $Z$ is a theta, a prism or an even wheel.
  Let $Q$ be a path from $s$ to $s'$ with interior in $D_2$.
  Now $Z'=(Z \setminus v) \cup Q$ is an induced subgraph of $G$.
  Moreover, if $Z$ is a theta then $Z'$ is a theta, and if $Z$ is a prism
  then $Z'$ is a prism. Since $G \in \mathcal{C}$, it follows that
  $Z$ is an even wheel; denote it by $(H,w)$. Then $v \in H$.
  By \eqref{Snothub},
  $w \not \in S$. It follows that $N_Z(w)=N_{Z'}(w')$, and so
$Z'$ is an even
  wheel, contrary to the fact that $G \in \mathcal{C}$.

  This proves that $|N_Z(v)|>2$. Since $N_Z(v)$ is a stable set,
  it follows that $Z$ is not a prism.
  Suppose that $Z$ is a theta. Then the ends of $Z$ are $v$ and $d \in D_1$;
  let the paths of $Z$ be $Z_1,Z_2,Z_3$ where $N_{Z_i}(v)=s_i$. By renumbering
  $Z_1,Z_2,Z_3$, we may assume that $s_1,s_2,s_3$ appear in this order
  in the order on $X$ given by $H_1$.
  Then for every $i,j \in \{1, \dots, 3\}$, there is a unique path
  $F^1_{ij}$ from $s_i$ to $s_j$ with interior in $H_1$.
Similarly, for all $i,j \in \{1, \dots, 3\}$ there is a unique path
  $F^2_{ij}$ from $s_i$ to $s_j$ with interior in $H_2$.
  Observe that
  $F^1_{12} \cup F^1_{13} \cup F^1_{23} \cup F^2_{12} \cup F^2_{13} \cup F^2_{23}$
  contains a unique pyramid $\Sigma$ with the following specifications:
  \begin{itemize}
      \item  The paths of $\Sigma$ are $P_1,P_2,P_3$, where $s_i \in P_i$.
      \item  Denote the apex of $\Sigma$ by $a$, and let the base of $\Sigma$ be
     $b_1b_2b_3$ where $b_i$ is an end of $P_i$. Then either $a \in H_2$,
  or $(H_2,X)$ is a wide alignment and $a=s_2$. 
    \item $b_1, b_3 \in H_1$, and $b_2 \in H_1 \cup \{s_2\}$. 
  \end{itemize}

  Let $\Sigma_2=\Sigma \setminus D_1$.
  If $d$ is non-adjacent to $a$, then $(Z \setminus v) \cup \Sigma_2$
  is a theta with ends $a,d$ and paths $d \dd  Z_i \dd s_i \dd P_i \dd a$,
  contrary to the fact that $G \in \mathcal{C}_t$. This proves that
  $d$ is adjacent to $a$. Consequently, $a=s_2$ and $(H_2,X)$
  is a wide alignment. But now $H=s_1 \dd Z_1 \dd d \dd Z_3 \dd s_3 \dd F^2_{13}\dd s_1$
  is a hole, and, since $s_2$ is adjacent to $d$,  $(H,s_2)$ is a wheel, contrary to the fact that $S \cap \Hub(G)=\emptyset$.
  This proves that $Z$ is not a theta.

  Consequently, $Z$ is an even wheel $(H,w)$. Since $|N_Z(v)| >2$,
  it follows that either $v=w$, or $v$ is adjacent to $w$. If
  $v$ is adjacent to $w$, then $w \in S$, contrary to \eqref{Snothub}.
  It now follows that $w=v$, and so $H$ is a hole in
  $D_1 \cup S$, where $|S \cap H| = l\geq 3$. Let $S \cap H=\{s_1, \dots, s_l\}$
  where $s_1, \dots s_l$ appear in this order in the order  given on $S$ by $(H_2,X)$. 
    If $(H_2,X)$ is a spiky alignment or a wide alignment,
  or a spiky connectifier, then we get a theta with ends
  $s_1,s_2$ two of whose paths are contained in $H$, and the
  third is the path  from $s_1$ to $s_2$ with interior in $H_2$.
  It follows that $(H_2,X)$ is a
  stellar connectifier. Therefore there exist 
  paths 
    $Q_1, \dots, Q_l$, all with a common end $a \in H_2$,  such that
  $Q_i$ is from $a$ to $s_i$, $Q_i^* \subseteq H_2$, and the sets $Q_1 \setminus a, \dots, Q_k \setminus a$ are pairwise disjoint and anticomplete to each other.
  Since $(H,a)$ is a not an even wheel in $G$, we deduce that $a$ 
  is not complete to $\{s_1, \dots, s_l\}$; let $s_i \in \{s_1, \dots, s_l\}$
  be non-adjacent to $a$. Since $l \geq 3$, there exist distinct
  $p,q \in \{1, \dots, l\} \setminus \{i\}$  and paths $R_q$ and $R_p$ of $H$ such that
  \begin{itemize}
  \item $R_p$ is from $s_i$ to $s_p$;
  \item $R_q$ is from $s_i$ to $s_q$;
  \item $R_p^* \cap R_q^* = \emptyset$; and
  \item $R_p^* \cap \{s_1, \dots, s_l\}=R_q^* \cap \{s_1 \dots, s_l\}=\emptyset$.    \end{itemize}
  Now 
  we get a theta with ends $s_i,a$ and path $s_i \dd Q_i \dd a$,
  $s_i \dd R_p \dd s_p \dd Q_p \dd a$ and $s_i \dd R_q  \dd s_q \dd Q_q \dd a$.
    This proves \eqref{G'}.
  \\
  \\
  Observe that $a_1v$ is a $k$-banana in $G'$. But by \eqref{G'},
  $G' \in \mathcal{C}_t$, and by \eqref{Snothub}, $N_{G'}(v) \cap \Hub(G')=\emptyset$, contrary to Theorem~\ref{bananatohub}.
    \end{proof}

\section{The proof of Theorem~\ref{banana}}
\label{sec:bananaproof}

We can now prove our first main result. The ``big picture''  of the proof
is similar to \cite{TWX} and  \cite{TWIII}, but the context here is different.
For the remainder of the paper, all logarithms are taken in base 2.
We start with the following theorem from \cite{TWIII}:

\begin{theorem}[Abrishami, Chudnovsky, Hajebi, Spirkl \cite{TWIII}]
  \label{logncollections}
  Let $t \in \mathbb{N}$, and let $G$ be (theta, $K_t$)-free with $|V(G)|=n$.
  There exist an integer $d=d(t)$  
  and   a partition
  $(S_1, \dots, S_k)$   of $V(G)$ with the following properties:
  \begin{enumerate}
  \item $k \leq  \frac{d}{4} \log n$.
  \item $S_i$ is a stable set for every $i \in \{1, \dots, k\}$.
  \item For every $i \in \{1, \dots, k\}$ and $v \in S_i$ we have
    $\deg_{G \setminus \bigcup_{j <i}S_j}(v)  \leq d$. \label{hubsequence-3}
  \end{enumerate}
  \end{theorem}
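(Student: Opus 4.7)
The plan is to construct the partition greedily: at step $i$, having already chosen stable sets $S_1,\dots,S_{i-1}$, I set $G_i := G \setminus \bigcup_{j<i} S_j$ and choose a stable set $S_i \subseteq V(G_i)$ whose vertices all have bounded degree in $G_i$ and whose size is a constant (depending on $t$) fraction of $|V(G_i)|$. Property~(3) is then automatic, and the logarithmic bound on $k$ follows from the geometric decrease $|V(G_{i+1})| \leq (1-\alpha) |V(G_i)|$ for some $\alpha = \alpha(t) > 0$.

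The main preliminary step is the following key structural lemma, which I would prove first: there exist constants $d_0 = d_0(t)$ and $\alpha = \alpha(t) > 0$ such that every (theta, $K_t$)-free graph $H$ on $n'$ vertices contains a stable set $S$ with $|S| \geq \alpha n'$ and $\deg_H(v) \leq d_0$ for every $v \in S$. The natural route is via degeneracy: first show that (theta, $K_t$)-free graphs are $d_0'(t)$-degenerate, so that $|E(H)| \leq d_0' n'$; then deduce by a Markov-type estimate that at least $n'/2$ vertices have degree at most $4d_0'$ in $H$. The induced subgraph on these low-degree vertices is $K_t$-free with maximum degree at most $4d_0'$, hence $(4d_0'+1)$-colorable; its largest color class is a stable set of size at least $n'/(2(4d_0'+1))$ consisting entirely of vertices of $H$-degree at most $4d_0'$, yielding the lemma with $d_0 = 4d_0'$ and $\alpha = 1/(8d_0'+2)$.

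Given the lemma, applying it to $G_i$ (which inherits the (theta, $K_t$)-free property from $G$) produces $S_i$, and iterating until $V(G_i) = \emptyset$ terminates in at most $k \leq \lceil \log n / \log(1/(1-\alpha)) \rceil = O_t(\log n)$ rounds. Choosing $d$ sufficiently large as a function of $d_0$ and $\alpha$ then ensures $k \leq \frac{d}{4}\log n$, establishing property~(1), while property~(2) holds by construction and property~(3) holds with this same $d$. The main obstacle is clearly the degeneracy claim for (theta, $K_t$)-free graphs. The expected route to this structural fact is: assuming a hypothetical vertex $v$ of very large degree, a Ramsey-type argument applied inside the $K_t$-free neighborhood $N(v)$, together with an analysis of components of $G \setminus N[v]$ and the paths they supply, should yield two non-adjacent vertices joined by three internally-disjoint paths with pairwise anticomplete interiors, i.e., an induced theta in $G$, contradicting the theta-freeness. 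This is the structural heart of the argument, and the rest of the proof is then essentially a greedy pigeonhole.
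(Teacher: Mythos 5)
Your overall strategy---greedily extract, from the current graph $G_i$, a stable set $S_i$ of size at least a constant fraction of $|V(G_i)|$ consisting entirely of vertices of bounded degree in $G_i$, then iterate---is the natural proof of this kind of statement, and it almost certainly matches the proof in \cite{TWIII}. The Markov-style step (at least $n'/2$ vertices have degree at most $4d_0'$ once you know $|E| \leq d_0' n'$) and the greedy-coloring step (a graph of maximum degree $\Delta$ has a stable set of size at least $n'/(\Delta+1)$) are both correct, and the geometric decay of $|V(G_i)|$ gives $k = O_t(\log n)$ rounds, with a free choice of $d$ to absorb constants. So the reduction to ``(theta, $K_t$)-free graphs have bounded degeneracy'' is sound.

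The gap is in your sketch of that degeneracy claim. You propose to take a single vertex $v$ of very large degree, Ramsey inside $N(v)$ to get a large stable set, and then use components of $G \setminus N[v]$ to supply the paths of a theta. This does not work: a star, or any tree of large maximum degree, has a vertex $v$ of arbitrarily large degree with a stable neighborhood, yet the components of $G \setminus N[v]$ provide no paths between distinct neighbors of $v$, so no theta arises. Bounded degeneracy is not a statement about one high-degree vertex; it is the assertion that every induced subgraph has a vertex of bounded degree, equivalently that the class has bounded average degree (the class is hereditary, so this is enough). The correct route is global: first use Ramsey to show that a (theta, $K_t$)-free graph contains no $K_{s,s}$ subgraph for $s = R(t,3)$ (two stable triples in the two sides of a $K_{s,s}$ subgraph would induce a $K_{3,3}$, and a stable triple versus a stable pair already induces a $K_{2,3}$, which is a theta), and then invoke the theorem of K\"uhn and Osthus \cite{KuhnOsthus} (or the Lozin--Razgon theorem \cite{lozin}, which the present paper also uses): a graph with no $K_{s,s}$ subgraph and sufficiently large average degree contains an induced subdivision of $K_{2,3}$, i.e., a theta. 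This is the actual ``structural heart'' of the argument, and it is not a local argument around one vertex.
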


Let $G \in \mathcal{C}_{t}$ be a graph and let $a,b \in V(G)$. A
{\em hub-partition with respect to $ab$} of $G$ is a partition
$S_1, \dots, S_k$ of $\Hub(G) \setminus \{a,b\}$ as in
Theorem \ref{logncollections};
we call $k$ the {\em order} of the partition.
We call the {\em hub-dimension} of $(G,ab)$ (denoting it by
$\hdim(G,ab)$) the smallest $k$ such that $G$ has a hub-partition of order $k$
with respect to $ab$.

For the remainder of this section, let us fix $t \in \mathbb{N}$. Let $d = d(t)$ be as in Theorem \ref{logncollections}.
Let $m=k+2d$ where $k = k(t)$ is as in Theorem \ref{bananatohub}.
Let $C_t=(4m+2)(m-1)$.
Let $\Gamma=\max\{k, \Gamma(t)\}$ where $\Gamma(t)$ is as in
Theorem~\ref{bound}.

Since in view of Theorem~\ref{logncollections}, we have
$\hdim(G,ab) \leq  \frac{d}{4} \log n$ for all $a,b \in V(G)$,
Theorem~\ref{banana} follows immediately from the next result:

\begin{theorem}
  \label{diminduction_banana}
  Let $G \in \mathcal{C}_{t}$ and with $|V(G)|=n$ and let $a,b \in V(G)$.
  Then $ab$ is not a  $C_t+8m^2\Gamma \hdim(G,ab)$-banana in $G$.
\end{theorem}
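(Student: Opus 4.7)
The plan is to induct on $p = \hdim(G, ab)$. For the base case $p = 0$, we have $\Hub(G) \setminus \{a, b\} = \emptyset$, so by Theorem~\ref{banana_emptyhub}, $ab$ is not a $k$-banana; since $k \leq C_t$, we are done. For the inductive step, fix an optimal hub-partition $S_1, \ldots, S_p$ and suppose for contradiction that $ab$ is a $(C_t + 8m^2\Gamma p)$-banana in $G$. We may assume $G$ has no clique cutset by a standard reduction (a clique cutset separating $a$ from $b$ has size less than $t$, and otherwise we can pass to a subgraph containing $a$ and $b$ with no clique cutset and hub-dimension at most $p$). Since $C_t + 8m^2\Gamma p \geq C_t = (4m+2)(m-1)$, the setup of Section~\ref{sec:centralbag_banana} applies with $S' = S_1$: take an $m$-atomic (hence $m$-lean and tight, by Theorems~\ref{atomictolean} and \ref{atomictotight}) tree decomposition $(T, \chi)$, find $t_1, t_2$ via Theorem~\ref{basket}, and construct $\beta = \beta(S_1)$ and $\beta^A(S_1)$.

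Next I observe that $\hdim(\beta^A(S_1), ab) \leq p - 1$. Indeed, induced subgraphs do not create new hubs, and Theorem~\ref{A_centralbag}\eqref{A-4} gives $S_1 \cap \Hub(\beta^A(S_1)) = \emptyset$; thus the restriction of $S_2, \ldots, S_p$ to $\Hub(\beta^A(S_1)) \setminus \{a, b\}$ is a valid hub-partition of $\beta^A(S_1)$ with respect to $ab$ (the degree condition only improves when passing to induced subgraphs). By the inductive hypothesis and Theorem~\ref{Menger_vertex}, there is a minimum $ab$-separator $Y$ in $\beta^A(S_1)$ with $|Y| \leq C_t + 8m^2\Gamma(p-1) - 1$; let $D_a, D_b$ be the components of $\beta^A(S_1) \setminus Y$ containing $a$ and $b$.

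The heart of the argument is two applications of Theorem~\ref{bound} that yield bounds independent of $p$. First, let $W_1 = Y \cap \Core(S_1)$ and consider the induced subgraph $H_1 = D_a \cup D_b \cup W_1$ of $G$. Then $W_1 \subseteq S_1$ is stable, $W_1 \cap \Hub(H_1) = \emptyset$ (by Theorem~\ref{A_centralbag}\eqref{A-4} together with induced-subgraph monotonicity of $\Hub$), $D_a, D_b$ remain components of $H_1 \setminus W_1$, and by the minimality of $Y$ combined with Theorem~\ref{Menger_vertex} applied in $\beta^A(S_1)$, we obtain $|W_1|$ pairwise internally-disjoint $ab$-paths confined to $H_1$ with $N_{H_1}(D_a) = N_{H_1}(D_b) = W_1$. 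Theorem~\ref{bound} gives $|W_1| \leq \Gamma$. Second, for each $t \in \delta(Y)$ pick $y_t \in Y \cap (G_{t_i \rightarrow t} \setminus (\chi(t_1) \cup \chi(t_2)))$; by the definition of $\beta$ this forces $y_t \in \Conn(t_i, t) \setminus \chi(t_i)$, so by Theorem~\ref{connectors}\eqref{C-3} we have $y_t \notin \Hub(\beta)$, and distinct $y_t$'s lie in distinct $\Conn$-sets and hence are pairwise anticomplete by the standard property of tree decompositions. So $W_2 = \{y_t : t \in \delta(Y)\}$ is a stable set of non-hubs, and the same Menger-based argument applied to $H_2 = D_a \cup D_b \cup W_2$ invokes Theorem~\ref{bound} to give $|\delta(Y)| = |W_2| \leq \Gamma$.

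With these two constant bounds, invoke Theorem~\ref{smallsepinbeta} to lift $Y$ to an $ab$-separator $Y'$ in $\beta$ satisfying $|Y' \cap (\chi(t_1) \cup \chi(t_2))| \leq |Y| + 4m(m-1)\Gamma$ and $|\Delta(Y')| \leq |\Delta(Y)| + 4m(m-1)\Gamma \leq (m-1)\Gamma + 4m(m-1)\Gamma$. Then invoke Theorem~\ref{smallseping} to lift $Y'$ to an $ab$-separator $Y''$ in $G$; using $|S_{bad}| \leq 3$, $|\adh(t_i, t_i')| \leq m - 1$, and the bounds above, a routine arithmetic check (using $\Gamma \geq 1$ and $m \geq 1$) yields $|Y''| < C_t + 8m^2\Gamma p$. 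By Theorem~\ref{Menger_vertex} this contradicts the assumption that $ab$ is a $(C_t + 8m^2\Gamma p)$-banana in $G$, completing the induction. The main obstacle is securing the two $p$-independent bounds coming from Theorem~\ref{bound}: without them, each inductive step would contribute a quantity that grows with $p$, producing a quadratic overall bound and missing the claimed linear dependence on $\hdim(G, ab)$. Theorem~\ref{bound} is tailored precisely to this kind of ``stable set of non-hubs inside a minimum separator'' situation, and applying it twice — once inside $\beta^A(S_1)$ to control $\Core(S_1)$-vertices in $Y$, and once inside $\beta$ to control the number of $\Conn$-sets met by $Y$ — is exactly what makes the induction work.
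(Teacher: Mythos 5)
The proposal is correct and follows essentially the same route as the paper's proof: induction on hub-dimension (with a secondary reduction eliminating clique cutsets), construction of $\beta$ and $\beta^A(S_1)$ via the $m$-atomic tree decomposition and Theorem~\ref{basket}, obtaining a minimum $ab$-separator $Y$ in $\beta^A(S_1)$ by the inductive hypothesis and Menger, two applications of Theorem~\ref{bound} to derive bounds independent of $\hdim$ on the number of $\Core(S_1)$-vertices in $Y$ and on $|\delta(Y)|$, and then lifting via Theorems~\ref{smallsepinbeta} and \ref{smallseping}. The minor deviations — applying Theorem~\ref{bound} directly to $Y\cap\Core(S_1)$ rather than $Y\cap S_1$, not citing Lemma~\ref{abinbasket} to place $a,b$ inside $\beta^A(S_1)$, and leaving the closing arithmetic as a ``routine check'' — are cosmetic and do not affect correctness.
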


\begin{proof}
  Suppose that $ab$ is a  $C_t+8m^2\Gamma \hdim(G,ab)$-banana  in $G$.
  We will get a contradiction by induction on $\hdim(G,ab)$, and for fixed $\hdim$ by induction on $n$.
  Suppose that  $\hdim(G,ab)=0$. Then $\Hub(G) \subseteq \{a,b\}$ and  by Theorem \ref{banana_emptyhub}, we have that
  there is no $k$-banana in $G$. Now the statement holds since $k<C_t$.
    Thus we may assume $\hdim(G,ab)>0$.

    \sta{We may assume that $G$ admits no clique cutset.
    \label{cliquecutset}}

    Suppose $C$ is a clique cutset in $G$. Since $G$ is $K_t$-free, it
    follows that there is a component $D$ of $G \setminus C$ such that
    $ab$ is  $C_t+8m^2\Gamma \hdim(G,ab)$-banana in $G'=D \cup C$.
    But $|V(G')|<n$ and $\hdim(G',ab) \leq \hdim(G,ab)$, consequently we get
    contradiction
    (inductively on $n$). This proves \eqref{cliquecutset}.
    \\
    \\
    Let $S_1, \dots, S_q$ be a hub-partition of $G$ with respect to
    $ab$ and with $q=\hdim(G,ab)$.
  We now use notation and terminology from Section \ref{sec:centralbag_banana} (note that the definitions of $k$ and $m$ agree; we use $d$, $a$, $b$, as defined above).
It follows from the definition of $S_1$ that every vertex in $S_1$
is $d$-safe.
Let $(T,\chi)$ be an $m$-atomic
tree decomposition of $G$. Let $t_1,t_2$  be  as in Theorem~\ref{basket}.
and let $\beta=\beta(S_1)$ and $\beta^A(S_1)$ be as in Section \ref{sec:centralbag_banana}.  Then by Lemma~\ref {abinbasket}, we have $a,b \in \beta^A(S_1)$.
By Theorem \ref{A_centralbag}\eqref{A-4}, we have that   $S_1 \cap \Hub(\beta^A(S_1))=\emptyset$ and
   $S_2 \cap \Hub(\beta^A(S_1)), \dots, S_q \cap \Hub(\beta^A(S_1))$ is a hub-partition of
   $\beta^A(S_1)$ with respect to $ab$. 
It follows that $\hdim(\beta^A(S_1),ab) \leq q-1$.
Inductively (on $\hdim(\cdot, ab)$), we have that
$ab$ is not a $C_t+8m^2\Gamma (q-1)$-banana in $\beta^A(S_1)$.

Our first goal is to  prove:

\sta{There is a set $Y' \subseteq \beta$ such that
\begin{enumerate}
\item  $Y'$ separates $a$ from $b$
  in $\beta$;
\item  $|Y' \cap (\chi(t_1) \cup \chi(t_2))| \leq C_t+8m^2\Gamma(q-1)+4m^2\Gamma$;
  and
  \item  $|\Delta(Y')| \leq (4m+1)(m-1)\Gamma$.
  \end{enumerate}
  \label{beta}} 
Since $ab$ is not a $(C_t+8m^2\Gamma (q-1))$-banana in $\beta^A(S_1)$, 
Theorem~\ref{Menger_vertex} implies that there is a separation
$(X,Y,Z)$ of $\beta^A(S_1)$ such that $a \in X$ and
$b \in Z$ and  $|Y| \leq C_t+8m^2\Gamma (q-1)$. 
We may assume that $(X,Y,Z)$ is chosen with $|Y|$ as small as possible.
Let $D_1$ be the component of $X$ such that $a \in D_1$,
and and let $D_2$ be the component of $Z$ such that $b \in D_2$.
It follows from the minimality of $|Y|$ that $N(D_1)=N(D_2)=Y$
and $ab$ is a $|Y|$-banana in $D_1 \cup D_2 \cup Y$.

We claim that $|Y \cap S_1| \leq \Gamma$.
Let $G'=D_1 \cup D_2 \cup (Y \cap S_1)$. Again by the minimality of
$Y$, it follows that $ab$ is a $|Y \cap S_1|$-banana in $G'$.
If $ab$ is not a $k$-banana in $G'$, then $|Y \cap S_1| <k$ and
the claim follows immediately since
$k \leq \Gamma$; thus we may assume that $ab$ is a $k$-banana in $G'$.
Now since $S_1$ is a stable set and since no vertex of $S_1$ is a hub in
$\beta^A(S_1)$, applying Theorem~\ref{bound} to $G'$
we deduce that $|Y \cap S_1| \leq \Gamma$, as required. This proves the claim.

 Next we claim that $|\delta_1(Y) \cup \delta_2(Y)| \leq \Gamma$, and
$|\Delta(Y)| \leq (m-1)\Gamma$. Write $\beta_0=\chi(t_1) \cup \chi(t_2)$.
Observe that for distinct $t,t' \in \delta_1(Y) \cup \delta_2(Y)$, say with $t \in \delta_i(Y)$ and $t' \in \delta_{i'}(Y)$ for $i, i' \in \{1, 2\}$, the sets
$G_{t_i \rightarrow t} \setminus \beta_0$ and $G_{t_{i'} \rightarrow t'} \setminus \beta_0$ are 
disjoint and anticomplete to each other.
Let $W$ be a subset of $Y$ such that for all $i \in \{1, 2\}$ and $t \in \delta_i(Y)$, we have
$|W \cap (G_{t_i \rightarrow t} \setminus \beta_0))|=1$.  Then $W$ is  stable set and $|W|=|\delta_1(Y) \cup \delta_2(Y)|$. 
Let $G'=D_1 \cup D_2 \cup W$. 
If follows from the minimality of $Y$ that $ab$ is a $|W|$-banana in $G'$.
If $ab$ is not a $k$-banana in $G'$, then $|W| <k$ and
the claim follows immediately since
$k \leq \Gamma$; thus we may assume that $ab$ is a $k$-banana in $G'$.
By Theorem \ref{connectors}\eqref{C-3}, $W \cap \Hub(\beta)=\emptyset$. 
Now since $W$ is a stable set, applying Theorem~\ref{bound} to $G'$
we deduce that $|W| \leq \Gamma$, as required.
Since $\adh(T,\chi) \leq m-1$, it follows that $|\Delta(Y)| \leq (m-1)\Gamma$.
This proves the claim.

Now let $Y'$ be as in  Theorem~\ref{smallsepinbeta}.
Then $Y'$ separates $a$ from $b$ in $\beta$. Moreover, by
Theorem~\ref{smallsepinbeta}, 
$$|Y' \cap (\chi(t_1) \cup \chi(t_2))| \leq |Y|+4m(m-1)|Y \cap \Core(S_1)|.$$
Since $|Y \cap \Core(S_1)| \leq |Y \cap S_1| \leq \Gamma$, it follows that
$$|Y' \cap (\chi(t_1) \cup \chi(t_2))| \leq C_t+8m^2\Gamma(q-1)+4m^2\Gamma.$$
Finally, by Theorem~\ref{smallsepinbeta},
$|\Delta(Y') \setminus \Delta(Y)| \leq  4m(m-1)|Y \cap \Core(S_1)|$. Since $|\Delta(Y)| \leq (m-1)\Gamma$ and $|Y \cap \Core(S_1)| \leq \Gamma$,
it follows that $|\Delta(Y')| \leq (4m+1)(m-1)\Gamma$.
This proves~\eqref{beta}.
\\
\\
Let $Y'$ be as in \eqref{beta}, and let
$(X',Y',Z')$ be a separation of $\beta$ such that $a \in X'$
and $b \in Z'$. 
Let $Y''$ be obtained from $Y'$ as in Theorem~\ref{smallseping}.
Then $Y''$ separates $a$ from $b$ in $G$. 
Recall that  by \eqref{beta}, we have  $|\Delta(Y')| \leq (4m+1)(m-1)\Gamma$.
Now since $\adh(T, \chi) < m$ and since $|{S_1}_{bad}| \leq 3$, 
it follows from Theorem~\ref{smallseping} that
\begin{align*}
    |Y''| &\leq |Y' \cap (\chi(t_1) \cup \chi(t_2))|+(4m+1)(m-1)\Gamma +2(m-1)+3\\
    &\leq |Y' \cap (\chi(t_1) \cup \chi(t_2))|+4m^2\Gamma.
\end{align*}
Since $|Y' \cap (\chi(t_1) \cup \chi(t_2))| \leq C_t+8m^2\Gamma (q-1)+4m^2\Gamma$ by \eqref{beta}, we
deduce that $|Y''| \leq C_t+8m^2\Gamma q$ as required.
\end{proof}

\section{Dominated balanced separators} \label{sec:domsep}

Several more steps are needed to complete the proof of
Theorem~\ref{main}. We take the first one in this section.
The goal of this section is to prove Theorem~\ref{balancedsep}, which we
restate:

\begin{theorem}
  \label{balancedsep_2}
There is an integer  $d$ with the following property.
Let $G \in \mathcal {C}$ and let $w$ be a normal weight function on $G$.
Then there exists $Y \subseteq V(G)$ such that
\begin{itemize}
\item $|Y| \leq d$, and
\item $N[Y]$ is a $w$-balanced separator in $G$.
\end{itemize}
\end{theorem}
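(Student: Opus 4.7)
The plan is to follow the sliding window scheme sketched in the introduction, using two different decomposition tools: one coming from proper wheels (i.e., star cutsets centered at hubs, via Theorem~\ref{wheelstarcutset}) and one coming from pyramids. Our starting point will be a Gyárfás-style path: by Lemma~\ref{lem:gyarfas} there is a path $P=p_1\dd\cdots\dd p_k$ in $G$ such that $w(D)\leq \tfrac12$ for every component $D$ of $G\setminus N[P]$. So if we could find a small set $Y$ with $N[Y]\supseteq N[P]$ we would be done; the difficulty is precisely that $N[P]$ itself may be complicated, so we must replace long stretches of $P$ by ``local'' dominators built from hubs/apexes.

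The main preparation (Lemmas~\ref{lemma:proper_wheel_forcer} and \ref{apexnbrs}, to be established first) will say, roughly: for every subpath $W$ of $P$ that is ``long enough'' (relative to some absolute constant) and that has all its vertices of $P$-degree $\leq 2$ shielded from the outside, one can find a bounded-size set $X(W)\subseteq V(G)$ such that $N[X(W)]$ contains $N[W]$ and, more importantly, such that $N[X(W)]$ separates $p_1\dd P \dd W^{-}$ from $W^{+}\dd P\dd p_k$ in $G$, where $W^-,W^+$ are the vertices of $P$ just before and just after $W$, while $P$ outside $W$ is disjoint from $N[X(W)]$. The proof of these lemmas is the main content: one applies the hub-based decomposition of Section~\ref{cutsets} together with Theorem~\ref{wheelstarcutset} to handle proper wheels around $W$, and an analogous pyramid-based decomposition to absorb the remaining obstructions; in both cases the decomposition peels off at most a constant number of ``bad'' vertices whose closed neighborhoods together cover everything needed.

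Given the window lemmas, the sliding-window argument goes as follows. Slide a window $W_i$ of appropriate fixed length along $P$, producing for each position $i$ a set $X_i:=X(W_i)$ of bounded size with $|X_i|\leq d_0$ for some absolute constant $d_0$. If $N[X_i]$ is already a $w$-balanced separator we stop. Otherwise, there is a unique component $D_i$ of $G\setminus N[X_i]$ with $w(D_i)>\tfrac12$, and the separating property of $N[X_i]$ forces $D_i$ to avoid at least one of the two pieces of $P$ lying outside $W_i$; call the position $i$ a \emph{left} position if $D_i$ misses the left tail $p_1\dd P\dd W_i^{-}$, and a \emph{right} position otherwise. At the extreme positions (window at the very beginning, resp.\ very end of $P$) the type is forced, and the types at the two ends disagree, so as $W_i$ slides by one step at a time there must be an index $i^*$ where adjacent windows $W_{i^*}$ and $W_{i^*+1}$ have opposite types.

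At that transition index, set
\[
Y := X_{i^*} \cup X_{i^*+1} \cup Y',
\]
where $Y'$ is a bounded-size set dominating $N[W_{i^*}\cup W_{i^*+1}]\cap V(P)$ (obtained from a third, very short application of the window lemma or directly from the fact that $W_{i^*}\cup W_{i^*+1}$ is a short subpath of bounded length). Then $|Y|$ is bounded by an absolute constant, and $N[Y]$ is a $w$-balanced separator: a large component of $G\setminus N[Y]$ would have to live in some component $D$ of $G\setminus N[P]$ (handled by the Gyárfás path) or in a component of $G\setminus N[X_{i^*}]$ (resp.\ $G\setminus N[X_{i^*+1}]$) avoiding the appropriate tail of $P$, and the opposite types at $i^*,i^*+1$ combined with the dominating set $Y'$ rule out all such possibilities, giving $w$-weight at most $\tfrac12$ as required. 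The one step I expect to be genuinely delicate is the construction of $X(W)$ in the window lemma, since that is where all the structural work (hub separations plus pyramid separations, plus verifying both the coverage of $N[W]$ and the separation of $P$ outside $W$) is concentrated; once that lemma is in hand the sliding-window finish is short.
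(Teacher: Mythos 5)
Your high-level plan matches the paper quite closely: start from a Gy\'arf\'as path $P$ (Lemma~\ref{lem:gyarfas}), develop a ``window'' separation tool based on the proper-wheel decomposition (Theorem~\ref{lemma:proper_wheel_forcer}) and a new pyramid decomposition (Theorem~\ref{apexnbrs}), slide the window, classify positions as left/right, and glue separators at the transition index. The transition-index argument at the end is essentially the same as in the paper.

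The gap is in the window lemma as you have formulated it, and it is a real one. You ask for a window $W$ of \emph{fixed length} and a bounded-size set $X(W)$ with $N[X(W)] \supseteq N[W]$, and at the end you again want a bounded set $Y'$ dominating $N[W_{i^*} \cup W_{i^*+1}]$. Neither is attainable in general: even for a single vertex $p \in P$, the set $N[p]$ can have unbounded domination number, so a fixed-length window has no a priori bound on $\gamma(N[W])$. The paper avoids this in two ways. First, it never tries to dominate all of $N[W]$; it exploits the minimality of $P$ to extract a single component $B$ of $G \setminus N[p_1\dd P\dd p_{k-1}]$ with $w(B) > \tfrac12$, sets $T = N(P\setminus p_k)\cap N(B)$, and proves (statement~\eqref{meetT}) that any connected set disjoint from $T\cup N[p_k]$ already has weight $\le \tfrac12$. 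Thus only $T$ needs covering, not all of $N[P]$. Second, the windows have \emph{variable} length: $\soff(i)$ is defined as the least $j>i$ for which $\gamma(T\cap N[\{p_i,\dots,p_j\}]) = 2K$, so each window is exactly long enough that its $T$-neighbors need $2K$ dominators; only then does Theorem~\ref{twosides_path}/\ref{separatepath} (the amiable/amicable machinery) yield the stable-set input needed to build the bounded separator $Y_i$ from a pyramid or wheel. Without these two adjustments --- restricting to $T$ and letting the window length float with the domination number --- the ``bounded $X(W)$ dominating $N[W]$'' step does not go through, and the final gluing fails for the same reason.
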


We start with two decomposition theorems that will become the engine for obtaining separators.  The first is a more explicit version of
Theorem~\ref{wheelstarcutset}; it shows that the presence of a wheel in the graph forces a decomposition that is an extension of what can be locally observed
in the wheel (see \cite{wallpaper} for detailed treatment of this concept).

\begin{theorem}[Addario-Berry, Chudnovsky, Havet, Reed, Seymour \cite{Addario-Berry2008BisimplicialGraphs}, da Silva, Vu\v{s}kovi\'c \cite{daSilva2013Decomposition2-joins}]
  Let $G$ be a $C_4$-free odd-signable graph that contains a proper wheel
  $(H, x)$ that is not a universal wheel. Let $x_1$ and $x_2$ be the endpoints of a long sector $Q$ of $(H, x)$. Let $W$ be the set of all vertices $h$ in $H \cap N(x)$
  such that the subpath of $H \setminus \{x_1\}$ from $x_2$ to $h$ contains an even number of neighbors of $x$, and let $Z = H \setminus (Q \cup N(x))$. Let $N' = N(x)\setminus W$. Then, $N' \cup \{x\}$ is a cutset of $G$ that separates $Q^*$ from $W \cup Z$.
\label{lemma:proper_wheel_forcer}
\end{theorem}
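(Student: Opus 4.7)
The plan is to assume for contradiction that there is a path $P$ in $G \setminus (N' \cup \{x\})$ from some $q \in Q^*$ to some $w \in W \cup Z$, choose $P$ of minimum length, and then exhibit a forbidden induced subgraph of $G$ (an even wheel or a theta). First I would extract routine consequences of minimality: $P^* \cap (Q^* \cup W \cup Z) = \emptyset$, $P^*$ is anticomplete to $Q^* \cup W \cup Z$, and (since $N(x) = N' \cup W$ and $P$ avoids $N' \cup \{x\}$) $P^* \cap N(x) = P^* \cap H = \emptyset$. Since $G$ is even-wheel-free, $|H \cap N(x)|$ is odd, so writing the neighbours of $x$ cyclically along $H$ starting at $x_2$ and not through $Q$ as $y_0 = x_2, y_1, \dots, y_k = x_1$, the integer $k$ is even and $W = \{y_i : 1 \leq i \leq k-1,\ i \text{ odd}\}$.

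The main construction is a candidate hole $H^\ast = Q' \cup T \cup P$, where $Q'$ is the subpath of $Q$ from $q$ to one of $x_1, x_2$, and $T$ is the path in $H$ from the corresponding endpoint of $Q'$ to $w$ on the side of $H$ avoiding the other endpoint of $Q$. The definition of $W$, together with $k$ being even, ensures that for an appropriate choice of side, $T$ contains an even number of neighbours of $x$: this is $i+1$ when $w = y_i \in W$, and one of $j+1$ or $k-j$ when $w \in P_j^* \subseteq Z$ (since these two sum to the odd $k+1$). If this count is at least $3$ and if $H^\ast$ is an induced cycle, then $(H^\ast, x)$ is an even wheel of $G$, contradicting $G \in \mathcal{C}$.

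The non-trivial step is to verify inducedness of $H^\ast$. Because $H$ and $P$ are induced, $P^* \cap H = \emptyset$, and $P^*$ is anticomplete to $Q^* \cup W \cup Z$, the only possible chords of $H^\ast$ are edges from some $p \in P^*$ to a vertex $y_j \in N' \cap T$. If such a chord exists I would reroute $H^\ast$ through it, shortening $P$, and iterate. Eventually either the iteration produces an induced even wheel as above, or it terminates at a chord of the form $p \dd x_i$ with $i \in \{1,2\}$; in the latter case the paths $Q_1$, $Q_2$ and a suitable combination of that chord with a truncation of $P$ form three internally vertex-disjoint paths with pairwise anticomplete interiors from $q$ to $x$ (the last one extended by $p \dd x_i \dd x$), yielding a theta in $G$, again a contradiction.

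What remains is the degenerate case in which the candidate count of neighbours of $x$ on $H^\ast$ is less than $3$: this forces $|H \cap N(x)| = 3$, and the hypothesis that $(H,x)$ is proper (neither a twin wheel nor a short pyramid) then forces all three sectors $Q, P_0, P_1$ to have length at least $2$. Here the three internally disjoint routes from $q$ to $w$ (through $Q_2$ and the short arc of $H$, through $Q_1$ and the long arc of $H$, and along $P$) already give a theta with ends $q$ and $w$ provided $P^*$ is anticomplete to $\{x_1, x_2\}$; if instead $P^*$ meets $N(x_1) \cup N(x_2)$, a local argument using $C_4$-freeness and theta-freeness produces the theta or an even wheel after a final rerouting. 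The main obstacle throughout is this chord and rerouting bookkeeping: ensuring that what ultimately survives is a genuine induced forbidden configuration of $G$, rather than merely a parity witness on a cycle that still carries chords.
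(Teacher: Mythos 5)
The paper does not actually prove Theorem~\ref{lemma:proper_wheel_forcer}; it imports it from the cited works of Addario-Berry, Chudnovsky, Havet, Reed, Seymour and of da Silva, Vu\v{s}kovi\'c, so there is no in-paper proof to compare against. Judged on its own merits, your high-level plan --- take a minimum-length path $P$ from $Q^*$ to $W\cup Z$ in $G\setminus(N'\cup\{x\})$, observe that $P^*$ can only attach to $H$ at $N'\cap H$, and then exhibit an even wheel centered at $x$ or a theta --- is the standard and correct skeleton for this kind of star-cutset lemma. The deductions $P^*\cap H=\emptyset$, $P^*\cap N[x]=\emptyset$, and that the only possible chords of $H^*$ go from $P^*$ to $N'\cap T$ are all correct.

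However, there is a concrete error in your degenerate-case analysis. You assert that ``the candidate count of neighbours of $x$ on $H^*$ is less than $3$'' forces $|H\cap N(x)|=3$, and then use properness of $(H,x)$ to force all three sectors to be long. This is only valid when $w\in W$. When $w\in Z$, say $w$ lies in the interior of the sector $P_j$, the two arc counts are $j+1$ and $k-j$, which sum to the \emph{odd} number $k+1$; exactly one is even, and it equals $2$ whenever $j\in\{1,\,k-2\}$, independently of $k$. For instance with $k=4$ and $w\in P_1^*$ the even count is $2$ while $|H\cap N(x)|=5$, so your reduction to the $|H\cap N(x)|=3$ case simply does not apply, and the three-sector-lengths argument you invoke is unavailable. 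In that situation $H^*$ has exactly two spokes $x_2,y_1$; if $x_2y_1\in E(H)$ these are consecutive on $H^*$ and $H^*+x$ contains no forbidden configuration at all, so some other hole (or a theta $H\cup P$, with its own chord bookkeeping to $N'\cap H$) must be produced, and your proposal does not say how.

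A secondary concern is the rerouting step. Rerouting $H^*$ through a chord $p\,y_j$ changes the spoke count in a parity-sensitive way (the new count along the arc to $y_j$ has the opposite parity you wanted), and it also destroys the ``endpoint in $W\cup Z$'' property that made $P$ minimal. So it is not clear that ``iterate'' converges to an induced even wheel or to a chord incident to $x_1,x_2$, and the claim that the terminal case then yields a theta with ends $q$ and $x$ is not internally disjoint as written, since the vertex $x_i$ would be shared by two of the three paths. Both of these issues would need to be resolved explicitly before the argument is complete.
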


The second is a similar kind of theorem, but we start with a pyramid rather than a wheel.

\begin{theorem}
    \label{apexnbrs}
    Let $G \in \mathcal{C}$.
    Let $\Sigma$ be a pyramid with paths $P_1, P_2, P_3$, apex $a$, and base $b_1b_2b_3$ in $G$ and let $i,j  \in \{1,2,3\}$ be distinct.
     For $i \in \{1, \dots, 3\}$, let $a_i$ be the neighbor of $a$ in $P_i$.
     Let $P$ be a path from a vertex of $P_i \setminus \{a,a_i,b_i\}$ to a
     vertex of 
     $P_j \setminus \{a,a_j,b_j\}$.
Then at least one one of $a,b_1,b_2,b_3$ has a neighbor in $P^*$.
     \end{theorem}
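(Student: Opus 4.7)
The plan is to argue by contradiction together with a minimality reduction, eventually producing a forbidden induced theta in $G$. Suppose $P^*$ has no neighbor in $\{a, b_1, b_2, b_3\}$, and among all such ``bad'' configurations $(P', p', q', i', j')$ with $i' \neq j'$ in $\{1, 2, 3\}$, $p' \in P_{i'} \setminus \{a, a_{i'}, b_{i'}\}$ and $q' \in P_{j'} \setminus \{a, a_{j'}, b_{j'}\}$, I would choose one minimizing $|V(P')|$ and relabel so that $i = 1, j = 2$. First I would establish that $P^* \cap \Sigma = \emptyset$: any $x \in P^* \cap \Sigma$ must lie in $P_r \setminus \{a, a_r, b_r, b_r^-\}$ for some $r$ (since $P^*$ is disjoint from $N[\{a, b_1, b_2, b_3\}]$), and then the sub-path of $P$ from $p$ or $q$ (whichever is in a leg different from $P_r$) to $x$ yields a strictly shorter bad configuration. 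A similar shortening shows that any neighbor $z \in \Sigma \setminus \{a, b_1, b_2, b_3\}$ of $P^*$ with $z \notin \{a_1, a_2, a_3\}$ can only be attached to the $P$-neighbor of $p$ or of $q$, which tightly restricts $N_\Sigma(P^*)$.

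The heart of the proof is then a case analysis on $S := N(P^*) \cap \{a_1, a_2, a_3\}$, in each case exhibiting two vertices $u, v \in \Sigma$ and three internally-disjoint paths from $u$ to $v$ with pairwise anticomplete interiors, producing an induced theta in $G$. When $S \subseteq \{a_3\}$, $P^*$ is anticomplete to $H_{12} := P_1 \cup P_2 \cup \{b_1b_2\}$, so the two arcs of $H_{12}$ from $p$ to $q$ (through the apex and through $b_1b_2$) together with $P$ form a theta with ends $p, q$. When $|S| \geq 2$, pick distinct $a_{i'}, a_{j'} \in S$; the $C_4$-freeness of $G$ forces any two $P^*$-vertices adjacent to $a_{i'}$ and $a_{j'}$ to be distinct (else the $4$-cycle $a_{i'} \dd a \dd a_{j'} \dd y \dd a_{i'}$ appears), and choosing such a pair consecutive along $P$ yields a theta with ends $a_{i'}, a_{j'}$ whose three paths go through the apex, through the base triangle (possibly bridging with $b_3$), and through $P$.

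The remaining case $S = \{a_1\}$ (symmetrically $\{a_2\}$) is the main obstacle. The natural candidate is a theta with ends $a_1, q$ via $R_a = a_1 \dd a \dd P_2[a_2, q]$, $R_b = a_1 \dd P_1[a_1, b_1] \dd b_1 \dd b_2 \dd P_2[b_2, q]$, and $R_P = a_1 \dd y \dd P[y, q]$, where $y$ is the last vertex of $P^*$ adjacent to $a_1$. This succeeds unless the direct construction is obstructed by the fact that $p \in R_b^*$ is adjacent to the $P$-neighbor $p_1 \in R_P^*$ of $p$; in that sub-case I would switch to a theta with ends $a_1$ and $p$ via $R_1 = a_1 \dd P_1[a_1, p]$, $R_2 = a_1 \dd p_1 \dd p$, and $R_3 = a_1 \dd a \dd a_3 \dd P_3 \dd b_3 \dd b_1 \dd P_1[b_1, p]$, the third path routing through the third leg and the base triangle of $\Sigma$ precisely to avoid the $P_1[a_1, p]$-segment occupied by the obstruction. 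Verifying in every case that the three paths are induced with pairwise disjoint and pairwise anticomplete interiors uses $P^* \cap \Sigma = \emptyset$, the structural restrictions on $N_\Sigma(P^*)$ from the minimality step, $C_4$-freeness, and even-wheel-freeness (which rules out extra chords that would otherwise create an even wheel on $H_{12}$ centered at $p_1$). The resulting theta in each case contradicts $G \in \mathcal{C}$.
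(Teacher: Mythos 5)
Your high-level strategy---minimize over ``bad'' configurations, use shortenings to force $P^* \cap \Sigma = \emptyset$ and restrict $N_\Sigma(P^*)$, and then produce a forbidden theta---mirrors the paper's approach. The key divergence is that the paper does not branch on $S = N(P^*) \cap \{a_1,a_2,a_3\}$; instead it establishes a tight sequence of claims ((\ref{antiP2}) through (\ref{nonadj})) controlling exactly how $p_1$ and $p_k$ attach to $P_1$ and $P_3$, culminating in a theta with ends $p_1$ and $a$. Your case split is reasonable in spirit but the execution has a real gap.

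The problem is in how you handle the attachments of the $P$-neighbors $p_1, q_1$ of $p, q$ to the legs. Your ``tight restriction'' correctly observes that only $p_1$ or $q_1$ can be adjacent to vertices of $\Sigma \setminus (\{a,b_1,b_2,b_3\}\cup\{a_1,a_2,a_3\})$, but this does \emph{not} say $p_1$'s only neighbor in $P_1$ is $p$ (nor $q_1$'s only neighbor in $P_2$ is $q$). If $p_1$ has a second neighbor $z \in P_1\setminus\{a,a_1,b_1,p\}$, the path $z \dd p_1 \dd \cdots \dd q_1 \dd q$ is another bad configuration of the \emph{same length} as $P$, so your global minimality does not rule it out. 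This directly falsifies the claim in your first case that $P^*$ is anticomplete to $H_{12}$ when $S \subseteq \{a_3\}$: the paths through $H_{12}$ could then share interior adjacencies with $P^*$, so the proposed theta need not exist. Likewise, in your $S=\{a_1\}$ case, the paths $R_a, R_b$ pass through $P_2$, and $q_1 \in R_P^*$ may have extra neighbors in $P_2[a_2,q)$ or $P_2(b_2,q)$, an obstruction distinct from the $p\sim p_1$ one you consider; your fallback theta with $R_3$ through $P_3$ does not cure this. The paper's proof is largely about precisely this difficulty: claims (\ref{notunique}) and (\ref{nonadj}) show that $p_1$ \emph{must} have at least two non-adjacent neighbors in $P_1$ (ruling out the two-adjacent-neighbors case via a prism), and the final theta is built around that structure rather than around ends in $\{p,q,a_1\}$. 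To repair your argument, you would need to control $N(p_1)\cap P_1$ and $N(q_1)\cap P_2$ with arguments at roughly the level of detail of (\ref{a1P'notpk})--(\ref{nonadj}); at that point you are effectively reproducing the paper's proof.
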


  \begin{proof}
    It follows from the defintion of $P$ that there exist distinct $i,j \in \{1,2,3\}$ and a subpath
    $P'=p_1 \dd \cdots p_k$ of $P$ such that $p_1$ has a neighbor in $P_i\setminus \{a,a_i, b_i\}$, and $p_k$ has a neighbor in  $P_j\setminus \{a,a_j,b_j\}$.
    Let $P'$ be chosen minimal with this property.
    It follows that $P' \subseteq P^*$, and so we  may assume that $\{a,b_1,b_2,b_3\}$ is anticomplete to $P'$. We may also assume $i=1$ and $j=3$. See Figure \ref{fig:81}. It follows that
   and  $a_1 \neq b_1$, $a_3 \neq b_3$.

\begin{figure}
    \centering
    \includegraphics[scale=0.8]{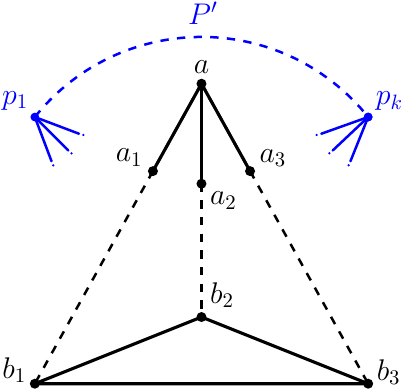}
    \caption{Proof of Theorem \ref{apexnbrs}.}
    \label{fig:81}
\end{figure}
   \sta{\label{antiP2} $P'$ is anticomplete to $P_2 \setminus a_2$.}

    Suppose not; then some vertex in $P'$ has a neighbor in $P_2 \setminus \{a, a_2, b_2\}$. It follows from the minimality of $P'$ that
    $k=1$. Then $p_1$ has a neighbor in each of the paths
    $P_1 \setminus b_1,  P_2 \setminus b_2$ and $P_3 \setminus b_3$.
Now we get a theta with ends
    $p_1,a$ whose paths are subpaths of $P_1,P_2,P_3$, a contradiction.
This proves \eqref{antiP2}.

\sta{\label{a1P'notpk} If $k>1$ and $a_1$ is adjacent to $p_k$, then $a_1$ has a neighbor in
      $P' \setminus p_k$.}

    Suppose that $a_1$ is adjacent to $p_k$ and anticomplete to
    $P'\setminus p_k$.
Since $a_2 \dd a \dd a_1 \dd p_k \dd a_2$ is not a $C_4$ in $G$, and by \eqref{antiP2}, it follows that 
        $p_k$ is anticomplete to $P_2$. If $a_2$ has a neighbor in $P' \setminus p_1$, then we get a theta with
    ends $a_1,a_2$,
    two of whose paths are contained in the hole 
    $b_1 \dd P_1 \dd a \dd P_2 \dd b_2 \dd b_1$, and third one has interior
    on $P' \setminus p_1$, a contradiction. Consequently, $a_2$ is anticomplete to $P' \setminus p_1$. Next suppose that $a_3$ has a neighbor in $P'$.
    Since $a_1 \dd a \dd a_3 \dd p_k \dd a_1$ is not a $C_4$ in $G$,
    it follows that $a_3$ is not adjacent to $p_k$. Now we get a theta with ends $a_3,p_k$ and path $p_k \dd a_1 \dd a \dd a_3$, a path with interior in
    $P'$ and a path with interior in $P_3$. This proves that $a_3$ is anticomplete to $P'$.

    Let $x$ be the neighbor of $a_1$ in $P_1 \setminus a$.
    Suppose that  $p_1$ has a neighbor in
    $P_1 \setminus \{a,a_1,x\}$. Then there is a path $T$ from $p_1$ to $a$ with $T^* \subseteq (P_1 \cup P_2) \setminus \{a,a_1,x\}$ and  
    we get  a theta with ends  $p_k,a$ and paths $p_k \dd a_1 \dd a$ and $p_k \dd P' \dd p_1 \dd T \dd a $, and a path with interior in $P_3^*$,  a contradiction. It follows that $p_1$ is anticomplete to $P_1 \setminus \{a_1,a,x\}$
    and therefore 
    $p_1$ is adjacent to $x$. Since $a_1$ is anticomplete to
    $P' \setminus p_k$ we deduce that $N_{P_1}(p_1)=\{x\}$.
    Now we get a theta with ends $x,p_k$ and paths $x \dd a_1 \dd p_k$, $x \dd  P' \dd p_k$ and $x \dd P_1 \dd b_1 \dd b_3 \dd P_3 \dd p_k$, a contradiction. This proves \eqref{a1P'notpk}.

    \sta{\label{notbotha1a3} Either $N_{P'}(a_1) \subseteq \{p_1\}$ or
      $N_{P'}(a_3) \subseteq \{p_k\}$.}

    Suppose not.  Then $k>1$. If both $a_1$ and $a_3$ have a neighbor in $P'^*$, then there is a theta in $G$ with ends $a_1,a_3$, and paths
     $a_1 \dd a \dd a_3, a_1 \dd P_1 \dd b_1 \dd b_3 \dd P_3 \dd a_3$ and $a_1 \dd P' \dd a_3$, a contradiction. By symmetry we may assume that $a_1$ is anticomplete to
     $P'^*$; now by \eqref{a1P'notpk}, it follows that $a_1$ is adjacent to both $p_1$ and $p_k$.
     Since $a_1 \dd p_1 \dd a_3 \dd a \dd a_1$ is not a $C_4$, we deduce that
     $a_3$ is non-adjacent to $p_1$. Similarly $a_3$ is non-adjacent to  $p_k$.
     It follows that $a_3$ has a neighbor in $P'^*$. Let $S$ be a path from $a_3$ to $p_k$ with interior in $P'^*$. Now we get a theta with ends
     $a_3,p_k$ and path $a_3 \dd S \dd p_k$, $a_3 \dd a \dd a_1 \dd p_k$
     and a path with interior in $P_3$, a contradiction.
     This proves \eqref{notbotha1a3}.

     \sta{\label{notbotha1a2} If $a_1$ has a neighbor in $P' \setminus p_1$,
       then     $a_2$ is anticomplete  to $P'$.}

       Suppose that $a_1$ has a neighbor in $P' \setminus p_1$ and $a_2$ has a
       neighbor in $P'$. Then $k>1$. If both $a_1$ and $a_2$ have a neighbor in $P'\setminus p_1$, then there is a theta in $G$ with ends $a_1,a_2$, and paths
       $a_1 \dd a \dd a_2, a_1 \dd P_1 \dd b_1 \dd b_2 \dd P_2 \dd a_2$ and $a_1 \dd P' \dd a_2$, a contradiction. This proves that $N_{P'}(a_2)=\{p_1\}$.

 Let $S_1$ be a path from $a_1$ to $p_1$ with $S_1^* \subseteq P'$.
     Since $a_1 \dd p_1 \dd a_2 \dd a \dd a_1$ is not a $C_4$, it follows that $a_1$ is non-adjacent to $p_1$. But now we get a theta with ends $a_1,p_1$ and paths $a_1 \dd a \dd a_2 \dd p_1$, $a_1 \dd S_1 \dd p_1$ and a path
     from $a_1$ to $p_1$ with interior in $P_1$,
     a contradiction. This proves \eqref{notbotha1a2}.

     \sta{\label{nochords} $N_{P'}(a_1) \subseteq \{p_1\}$ and
      $N_{P'}(a_3) \subseteq \{p_k\}$.} 

    Suppose $a_1$ has a neighbor in $P' \setminus p_1$. Then $k>1$.
    By \eqref{notbotha1a3}, 
    $a_3$ is anticomplete to $P' \setminus p_k$, and by \eqref{notbotha1a2}, 
    $a_2$ is anticomplete to $P'$.
    Let $H_1$ be the hole
    $b_1 \dd P_1 \dd p_1 \dd P' \dd p_k \dd P_3 \dd b_3 \dd b_1$,
    and let $H_2$ be the hole $p_1 \dd P_1 \dd b_1 \dd b_2 \dd P_2 \dd a \dd P_3 \dd p_k \dd P' \dd p_1$ ($H_2$ is a hole since $p_k$ has a neighbor in $P_3 \setminus b_3$). Then $N_{H_2}(a_1)=N_{H_1}(a_1) \cup \{a\}$. Let $p_0$ be the neighbor of
    $p_1 \in H_1 \setminus P'$ and let $Q$ be the path $p_0 \dd p_1 \dd P' \dd p_k$.
    Observe that $Q \subseteq H_2$, and $N_{H_1}(a_1) \subseteq Q$. Since neither of
    $(H_1,a_1)$ and $(H_2,a_1)$ is an even wheel, it follows that $a_1$ has exactly two neighbors in $Q$ and they are adjacent. Let $s \in \{0, \dots, k-1\}$ be such that $N_Q(a_1)=\{p_s,p_{s+1}\}$. Since $a_1$ has a neighbor in $P_1 \setminus p_1$, it follows that $s >0$. Now we get a prism with triangles
    $p_sa_1p_{s+1}$ and $b_1b_2b_3$ and paths $a_1 \dd a \dd P_2 \dd b_2$,
    $p_s \dd P' \dd p_1 \dd p_0 \dd P_1 \dd b_1$ and $p_{s+1} \dd P' \dd p_k \dd P_3 \dd b_3$, a contradiction. This proves \eqref{nochords}.

    \sta{\label{P2anti} $P_2$  is anticomplete to $P'$.}
Suppose not. By \eqref{antiP2}, $a_2$ has a neighbor in $P'$. 
Then $a_2 \neq b_2$.
    Let $H_1$ be the hole
    $p_1 \dd P' \dd p_k \dd P_3 \dd b_3 \dd b_1 \dd P_1 \dd p_1$ and
    let $H_2$ be the hole $p_1 \dd P' \dd p_k \dd P_3 \dd a_3 \dd a \dd a_1 \dd P_1 \dd p_1$ ($H_2$ is a hole since $\{b_1,b_3\}$ is anticomplete to
    $P'$). Since  $a_2 \neq b_2$, we have that
    $N_{H_2}(a_2)=N_{H_1}(a_2) \cup \{a\}$. Since
    since neither of
    $(H_1,a_2)$, $(H_2,a_2)$ is an even wheel, it follows that
            there exists $s \in \{1, \dots, k-1\}$ such that
$N_{P'}(a_2)=\{p_s,p_{s+1}\}$.
Now we get a prism with triangles
$b_1b_2b_3$ and $p_sa_2p_{s+1}$ and paths $p_s \dd P' \dd p_1 \dd P_1 \dd b_1$,
$a_2 \dd P_2 \dd b_2$ and $p_{s+1} \dd P' \dd p_k \dd P_3 \dd b_3$,
a contradiction. This proves~\eqref{P2anti}.

\sta{\label{notunique} $p_1$ has at least two neighbors in $P_1$, and
    $p_3$ has at least two neighbors in $P_3$.}

    By symmetry it is enough to prove the first assertion of \eqref{notunique}.
    Suppose that $p_1$ has a unique neighbor $x$ in $P_1$. It follows that $x \neq a_1$. 
   By \eqref{P2anti}, $P_2$ is anticomplete to $P'$.
    Now we get a theta with   ends $x,a$ and paths $x \dd P_1 \dd a$, $x \dd P_1 \dd b_1 \dd b_2 \dd P_2 \dd a$ and $x \dd p_1 \dd P' \dd p_k \dd P_3 \dd a$, a contradiction. This proves \eqref{notunique}.

\sta{\label{nonadj} $p_1$ has  two non-adjacent neighbors in $P_1$, and
  $p_3$ has two non-adjacent  two neighbors in $P_3$.}

By symmetry it is enough to prove the first statement.
By \eqref{notunique}, we may assume that $p_1$ has exactly two neighbors $x,y$
in $P_1$, and $x$ is adjacent to $y$. We may assume that $P_1$ traverses
$b_1,x,y,a_1$ in this order.
By \eqref{P2anti}, $P_2$ is anticomplete to $P'$.
Let $S$ be the path
from $p_k$ to $b_3$ with interior in $P_3$. It follows from the defintion of
$P'$ that $a$ is anticomplete to $S$. Now we get a prism with triangles
$xyp_1$ and $b_1b_2b_3$ and paths $x \dd P_1 \dd b_1$, $y \dd P_1 \dd a \dd P_2 \dd b_2$
and $p_1 \dd P' \dd p_k \dd S \dd b_3$, a contradiction. This proves \eqref{nonadj}.
\\
\\
By \eqref{nonadj}, there exist paths $P_1'$ from $p_1$ to $a$ and $P_1''$
from $p_1$ to $b_1$, both with interior in $P_1^*$,
and such that $P_1' \setminus p_1$ is anticomplete to
$P_1'' \setminus p_1$. Let  $P_3'$ be a path from $p_k$ to $a$
with interior in $P_3^*$.
By \eqref{P2anti}, $P_2$ is anticomplete to
$P'$. Now we get a theta with ends
$p_1,a$ and paths $p_1 \dd P_1' \dd a$, $p_1 \dd P_1'' \dd b_1 \dd b_2 \dd P_2 \dd a$
and $p_1 \dd P' \dd p_k \dd P_3' \dd a$, a contradiction.
  \end{proof}

We need a technical definition. Given an integer $m>0$ and a graph class $\mathcal{G}$, we say $\mathcal{G}$ is \textit{$m$-amicable} if $\mathcal{G}$ is amiable, and the following holds for every graph $G\in \mathcal{G}$.  Let $\sigma$ be as in the definition of an amiable class for $\mathcal{G}$ and let $V(G)=D_1\cup D_2\cup Y$ such that $D_1=d_1 \dd \cdots \dd d_k, D_2$ and $Y$ satisfy the first five bullets from the definition of an amiable class with $|Y| = \sigma(7)$. Let $X\subseteq Y$, $H_1\subseteq D_1$ and $H_2\subseteq D_2$ be as in the definition of an amiable class with $|X|=9$, and let $\{x_1, \dots, x_7\} \subseteq X$ such that: 
\begin{itemize}
    \item $x_1, \dots, x_7$ is the order given on $\{x_1, \dots, x_7\}$ by $(H_1,X)$; and 
    \item For every $x \in \{x_1, \dots, x_7\}$, we have $N_{D_1}(x) = N_{H_1}(x)$.
\end{itemize} Let $i$ be maximum such that $x_1$ is adjacent to $d_i$, and let $j$ be minimum such that $x_7$ is adjacent to $d_j$. Then there exists a subset $Z \subseteq D_2 \cup \{d_{i+2}, \dots, d_{j-2}\} \cup  \{x_4\}$
    with $|Z| \leq m$  such that $N[Z]$ separates  $d_i$ from $d_j$. It follows that $N[Z]$ separates $d_1 \dd D_1 \dd d_i$ from
    $d_j \dd D_1 \dd d_k$.

We deduce:
  \begin{theorem}
    \label{separatepath}
   The class $\mathcal{G}$ is $4$-amicable.
    \end{theorem}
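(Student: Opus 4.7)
The plan is to invoke the amiability of $\mathcal{C}$, which is the content of Theorem~\ref{twosides_path}, applied with $x=7$. This unpacks the ambient hypotheses into a concrete structural picture: a $9$-vertex subset $X\subseteq Y$, paths $H_1\subseteq D_1$ and $H_2\subseteq D_2$, and a distinguished subsequence $x_1,\dots,x_7$ of $X$ in the order induced by $(H_1,X)$ whose $D_1$-attachments lie in pairwise disjoint intervals of $H_1$. By the definitions of $i$ and $j$, the intervals for $x_2,\dots,x_6$ are contained in $\{d_{i+1},\dots,d_{j-1}\}$, and $x_4$'s attachment interval lies strictly in the middle.

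\textbf{Choice of $Z$.} I would then construct the separating set as $Z=\{x_4\}\cup Z_1\cup Z_2$ where $Z_1\subseteq\{d_{i+2},\dots,d_{j-2}\}$, $Z_2\subseteq D_2$, and $|Z_1|+|Z_2|\le 3$. The vertex $x_4$ already dominates a non-empty interval in the middle of $H_1$, so $N[x_4]$ intersects the direct subpath $d_i\dd d_{i+1}\cc d_j$ and also cuts $H_2$ at $x_4$'s attachment. The set $Z_1$ blocks ``jumps'' in $D_1$ mediated by $Y$-vertices whose attachment pattern evades $N[x_4]$, while $Z_2$ blocks detours through $D_2$. The exact choice of $Z_2$ will depend on which outcome of Theorem~\ref{twosides_path} holds: the root of the subdivided star in the stellar case, a vertex of $P(H_2)$ (the spine or the clique) in the spiky/triangular/clique connectifier cases, and a single well-chosen vertex of $H_2$ in the alignment cases.

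\textbf{Verification and main obstacle.} The verification proceeds by contradiction: given a path $P$ from $d_i$ to $d_j$ avoiding $N[Z]$, I want to extract from $P$, together with short subpaths inside $H_1$, $H_2$, and the vertices $x_1,x_4,x_7$, one of the induced subgraphs forbidden in $\mathcal{C}$ --- a theta, a prism, an even wheel, or a $C_4$. The tools I would draw on are Lemma~\ref{pathnotinterleave} (to control how any detouring $Y$-vertex can attach to $H_1$), Theorem~\ref{lemma:proper_wheel_forcer} and Theorem~\ref{apexnbrs} (to analyze proper wheels and pyramids produced by $x_1,x_4,x_7$ together with $H_1$ and $H_2$), and the $C_4$-freeness of $\mathcal{C}$ (which forces every pair of $D_1$-vertices to share at most one common $Y$-neighbor). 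The main obstacle is the wide-alignment case of $(H_1,X)$, where a detouring $Y$-vertex can have widely separated neighbors in $H_1$ and thus evade any single guard vertex in $Z_1$. To handle this I expect to use Lemma~\ref{pathnotinterleave} to force such a detour vertex either into $N[x_4]$ or into $N[d_l]$ for one carefully chosen $d_l$, keeping $|Z_1|$ to at most one and the total size of $Z$ to at most~$4$.
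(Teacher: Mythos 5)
Your plan correctly names Theorem~\ref{twosides_path}, Theorem~\ref{lemma:proper_wheel_forcer} and Theorem~\ref{apexnbrs} as the main engines, and the overall shape (a case analysis on the types of $(H_1,X)$ and $(H_2,X)$) matches the paper. But there is a genuine gap in the envisioned execution. The paper's proof is not a ``suppose a $d_i$--$d_j$ path avoids $N[Z]$ and extract a forbidden induced subgraph'' argument. Instead, in each case it \emph{directly constructs} a single proper wheel $(H,x_4)$ or a single pyramid inside $H\cup L$, where $H=x_1\dd H_1\dd x_7\dd R\dd x_1$ is the hole through both sides and $L=l_1\dd\cdots\dd l_q$ is the leg of $H_2\cup\{x_4\}$ rooted at $x_4$; it then verifies that $d_i$ and $d_j$ land in the interiors of two distinct paths of that wheel or pyramid, non-adjacent to the apex, and simply reads off the separator from Theorem~\ref{lemma:proper_wheel_forcer} or Theorem~\ref{apexnbrs}. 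The forbidden-structure extraction you plan to do is already packaged inside those two theorems; re-deriving it from a hypothetical detouring path amounts to re-proving them, and Lemma~\ref{pathnotinterleave} plays no role here. The genuine difficulty, entirely absent from your sketch, is the structural bookkeeping of \emph{which} vertex serves as the apex (and which three as the base) in each of the four cases, and why $d_i,d_j$ lie where Theorem~\ref{apexnbrs} needs them.

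Two concrete misfires. First, your insistence that $Z$ always contains $x_4$ is wrong: when $(H_1,X)$ is a spiky alignment, $H\cup L$ is a pyramid with apex $d_l$ (the unique $H_1$-neighbor of $x_4$) and base $l_qhh'$, and the paper's separator is $\{d_l,h,h',l_q\}$ --- four vertices with $x_4$ omitted; adding $x_4$ would blow the budget. Second, your anticipated ``main obstacle'' --- $(H_1,X)$ a wide alignment --- is in fact one of the easiest cases: if $(H_2,X)$ is also an alignment then $(H,x_4)$ is a proper wheel and $Z=\{x_4\}$ suffices by Theorem~\ref{lemma:proper_wheel_forcer}, while if $(H_2,X)$ is a triangular or clique connectifier then $H\cup L$ contains a pyramid with apex $x_4$ and $Z\subseteq\{x_4\}\cup D_2$ works via Theorem~\ref{apexnbrs}. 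There are no detours to ``block'' by hand; the decomposition theorems do all of that once the right wheel or pyramid has been exhibited.
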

    
  \begin{proof}
From Theorem~\ref{twosides_path}, we know that $\mathcal{G}$ is amiable. With the notation as in the definition of an amicable class, we need to show that there exists a subset
    $Y \subseteq D_2 \cup \{d_{i+2}, \dots, d_{j-2}\} \cup  \{x_4\}$
    with $|Z| \leq 4$  such that $N[Z]$ separates  $d_i$ from $d_j$. 
Let $R$ be the path from $x_1$ to $x_7$ with interior in $H_2$.

    Let $H$ be the hole
    $x_1 \dd H_1 \dd x_7 \dd R \dd x_1$. Let $L=l_1 \dd \cdots \dd  l_q$ be the path in
    $H_2 \cup \{x_4\}$ such that $l_1=x_4$ and $l_q$ has a neighbor in $P(H_2)$.
    Thus if $(H_2,X)$ is an alignment
    then $L=x_4$, and if $(H_2,X)$ is
    a connectifier then $L \setminus x_4$ is the leg of $H_2$ containing a
    neighbor of $x_4$. See Figure \ref{fig:84}. (Note that $L \setminus x_4$ may be empty if $(H_2,X)$ is a clique connectifier or a stellar connectifier.)
    We consider different possibilities for the behavior of $(H_1,X)$ and
    $(H_2,X)$. 

    \begin{figure}
        \centering
        \includegraphics[scale=0.7]{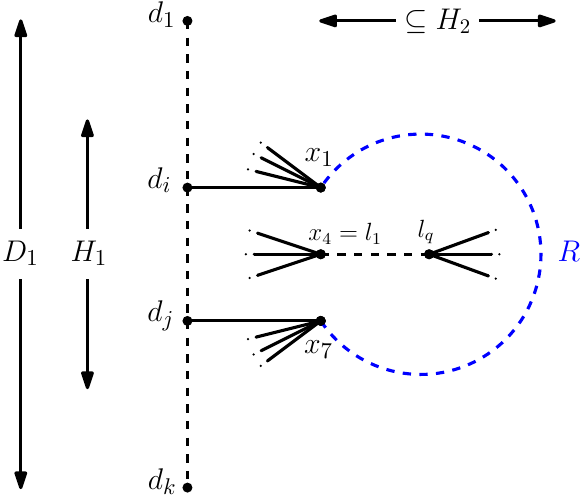}
        \caption{Proof of Theorem \ref{separatepath}.}
        \label{fig:84}
    \end{figure}

    \sta{If $(H_1,X)$ is a wide alignment, then the theorem holds.
    \label{H1wide}}

    Suppose first that $(H_2,X)$ is an alignment.  Then $(H,x_4)$ is a proper wheel, and we are done by Theorem~\ref{lemma:proper_wheel_forcer} and setting $Z=\{x_4\}$. 
    Thus we may assume that $(H_2,X)$ is a connectifier.
    It now follows from Theorem~\ref{twosides_path}  that $(H_2,X)$ is a
    triangular connectifier or a clique connectifier. Let $N_H(l_q)=\{h,h'\}$.
    Then $H \cup L$ contains a pyramid $\Sigma$ with apex $x_4$
    and base $l_qhh'$. 
    Since $\{d_i,d_j\}$ is anticomplete to
    $\{x_4,h,h',l_q\}$, it follows that $d_i$ belongs to the interior of
the path of $\Sigma$ with ends $x_4,h$, and $d_j$  belongs to the interior of
the path of $\Sigma$ with ends $x_4,h'$ (switching the roles of $h,h'$ if necessary), and that $d_i, d_j$ are not adjacent to the apex $x_4$ of $\Sigma$.
    Since $\{d_i,d_j\} \cap N(x_4)=\emptyset$,  Theorem~\ref{apexnbrs} implies
    that $N[x_4,h,h',l_1]$ separates $d_i$ from $d_j$. Since
    $h,h',l_q \in D_2$, we are done. This proves~\eqref{H1wide}.
    
    \sta{If $(H_2,X)$ is a wide alignment, then the theorem holds.
      \label{H2wide}}
Since $(H_1,X)$ is an alignment, we deduce that 
      $(H,x_4)$ is a proper wheel. Now by Theorem~\ref{lemma:proper_wheel_forcer} and setting $Z=\{x_4\}$, we are done.
This proves~\eqref{H2wide}.

\sta{If $(H_1,X)$ is a triangular alignment, then the theorem holds.
\label{H1triangular}}

 Let $l \in \{i, \dots, j\}$ be such that
 $N(x_4) \cap D_1=\{d_l,d_{l+1}\}$.
 Since $x_2$ has a neighbor in the path $d_i \dd H_1 \dd d_l$, it
 follows that $l>i+1$. Since  $x_6$ has a neighbor in the path $d_{l+1} \dd H_1 \dd d_j$, it follows that $l<j-2$.
By Theorem~\ref{twosides_path} and in view of \eqref{H2wide},
we may assume that $(H_2,X)$ is a spiky alignment, a stellar connectifier
or a spiky connectifier. 
In all cases, $l_q$ has a unique neighbor in $P(H_2)$; denote it by $s$ (where $P(H_2)$ is as defined in Section \ref{sec:connectifier}).
Now $H \cup L$ is a pyramid $\Sigma$ with apex $s$ and base $x_4d_ld_{l+1}$.
Since $s \in D_2$ and since $x_4$ is non-adjacent to $d_i,d_j$, it
follows that $d_i$ belongs to the interior of
the path of $\Sigma$ with ends $d_l,s$, and $d_j$  belongs to the interior of
the path of $\Sigma$ with ends $d_{l+1},s$, and $d_i, d_j$ are not adjacent to the apex $s$ of $\Sigma$.
    Since $\{d_i,d_j\} \cap N(s) = \emptyset$, Theorem~\ref{apexnbrs} implies
    that $N[x_4,s,d_l,d_{l+1}]$ separates $d_i$ from $d_j$. Since
    $s \in D_2$ and $i+1 <  l < j-2$, we are done.  This proves~\eqref{H1triangular}.
    \\
    \\
    By Theorem~\ref{twosides_path} and in view of \eqref{H1wide} and \eqref{H1triangular}, we deduce that $(H_1,X)$ is a spiky alignment. Let
    $d_l$ be the unique neighbor of $x_4$ in $H_1$.
    By Theorem~\ref{twosides_path} and in view of \eqref{H2wide}, it follows that $(H_2,X)$ is either
    a triangular alignment or a triangular connectifier or a clique connectifier. Let $h,h'$ be the
    neighbors of $l_q$ in $H$. Now  $H \cup L$ is a pyramid $\Sigma$ with apex $d_l$ and base $l_qhh'$.
    Since $x_2$ has a neighbor in the path $d_i \dd H_1 \dd d_l$,
    it follows from the definition of a spiky alignment that $i<l-1$.
Similarly, $j>l+1$.
Since $h,h' \in D_2$, we deduce that
$d_i$ belongs to the interior of
the path of $\Sigma$ with ends $d_l,h$, and $d_j$  belongs to the interior of
the path of $\Sigma$ with ends $d_l,h'$ (possibly switching the roles of $h,h'$), and $d_i, d_j$ are non-adjacent to the apex $d_l$ of $\Sigma$ since   $i+1<l<j-1$. Therefore, Theorem~\ref{apexnbrs} implies
    that $N[d_l,h,h',l_q]$ separates $d_i$ from $d_j$. Since
    $h,h' \in D_2$ and $i+1<l<j-1$, we conclude that $\mathcal{C}$ is 4-amicable.

    We now prove that $N[Z]$ separates $d_1 \dd D_1 \dd d_i$ from
    $d_j \dd D_1 \dd d_k$. Let $D$ be the component of $G \setminus N[Z]$ such that $d_i \in D$, and let $D'$ be the component of
    $G \setminus N[Z]$    such that $d_j \in D'$. Then $D \neq D'$.
     Since
    $Y \subseteq D_2 \cup \{d_{i+2}, \dots, d_{j-2}\} \cup  \{x_4\}$,
    it follows that $N[Z]$ is disjoint from $d_1 \dd D_1 \dd d_j$
    and $d_j \dd D_1 \dd d_k$. Therefore, $d_1 \dd D_1 \dd d_j \in D$
    and $d_j \dd D_1 \dd d_k \in D'$ as required.
  \end{proof}

We need the following lemma: 
\begin{lemma}[Chudnovsky, Pilipczuk, Pilipczuk, Thomass\'e \cite{QPTAS}, Lemma 5.3] \label{lem:gyarfas}
    Let $G$ be a connected graph with a weight function $w$. Then there is an induced path $P = p_1 \dd \dots \dd p_k$ in $G$ such that $N[P]$ is a $w$-balanced separator. 
\end{lemma}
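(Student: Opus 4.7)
The plan is a Gyárfás-style greedy construction. Assume without loss of generality that $w(V(G))=1$. I will build an induced path $P_i = p_1 \dd \cdots \dd p_i$ iteratively, together with a distinguished component $C_i$ of $G \setminus N[P_i]$, maintaining the invariant that $p_i$ has a neighbor in $C_{i-1}$ (setting $C_0 = V(G)$ for bookkeeping). Start with $p_1$ any vertex of $G$. At step $i$, let $C_i$ be a component of $G \setminus N[P_i]$ of maximum weight; if $w(C_i) \leq \tfrac{1}{2}$, stop and output $P_i$, because then every component of $G \setminus N[P_i]$ has weight at most $\tfrac{1}{2}$. Otherwise $C_i$ is the unique component of weight exceeding $\tfrac{1}{2}$, and the plan is to extend the path into $C_i$.

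For the extension, take a shortest path from $p_i$ to $C_i$ in $G[\{p_i\} \cup C_{i-1}]$, which exists by the invariant together with connectedness of $C_{i-1}$, and let $p_{i+1}$ be its second vertex. The key observation is $N(C_i) \cap C_{i-1} \subseteq N(p_i)$: any vertex $v$ of $C_{i-1}$ adjacent to $C_i$ lies in $N[P_i]$ since $N(C_i) \subseteq N[P_i]$, is disjoint from $N[P_{i-1}]$ since $C_{i-1} \cap N[P_{i-1}] = \emptyset$, and is not $p_i$, so $v \in N(p_i)$. Hence $p_{i+1} \in N(p_i) \cap N(C_i) \cap C_{i-1}$, which establishes the next instance of the invariant (namely that $p_{i+1}$ has a neighbor in $C_i$).

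The induced-path property of $P_{i+1}$ follows immediately: since $p_{i+1} \in C_{i-1}$ and $C_{i-1}$ is disjoint from $N[P_{i-1}]$, the vertex $p_{i+1}$ is distinct from $p_1, \ldots, p_{i-1}$ and non-adjacent to each of them, so the only edge from $p_{i+1}$ into $P_i$ is $p_{i+1} p_i$. For termination, $p_{i+1}$ has a neighbor in $C_i$, so $C_{i+1}$, which is a component of $G \setminus N[P_{i+1}]$ contained in $C_i$, satisfies $C_{i+1} \subseteq C_i \setminus N[p_{i+1}] \subsetneq C_i$. Hence $|C_i|$ strictly decreases, and the process halts after at most $|V(G)|$ iterations with $N[P]$ a $w$-balanced separator.

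The main subtlety, rather than a true obstacle, is maintaining the invariant $p_i \in C_{i-1}$ throughout the iteration: it is what simultaneously forces the shortest-path extension to land in $N(p_i) \setminus N[P_{i-1}]$ (so the path remains induced) and guarantees the heavy component strictly shrinks at each step. Once that invariant is identified and verified, all the remaining pieces—existence of the extension, the induced-path condition, and termination—drop out essentially for free.
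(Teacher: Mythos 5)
Your proof is correct and is exactly the Gy\'arf\'as-path argument used in \cite{QPTAS} for Lemma~5.3, which the present paper cites without reproving. The one step worth spelling out is why $p_{i+1}\in N(C_i)$: since $C_i$ is a component of $G\setminus N[P_i]$ the vertex $p_i$ is anticomplete to $C_i$, so the shortest path from $p_i$ to $C_i$ inside $G[\{p_i\}\cup C_{i-1}]$ has length at least two, and your key observation forces its penultimate vertex to lie in $N(p_i)$, hence the path has length exactly two and $p_{i+1}$ is that penultimate vertex.
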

  
  For a graph $G$ and a set $X \subseteq G$ we denote by $\gamma(X)$
  the minimum size of a set $Y$ such that $X \subseteq N[Y]$.
  We are now ready to prove Theorem~\ref{balancedsep_2}. We prove the following strengthening, which along with Theorem~\ref{separatepath} yields Theorem~\ref{balancedsep_2} at once. This will be used in a future paper.

\begin{theorem}\label{balancedsep_3}
For every integer $m>0$ and every $m$-amicable graph class $\mathcal{G}$, there is an integer $d>0$ with the following property. Let $G \in \mathcal{G}$ and let $w$ be a normal weight function on $G$.
Then there exists $Y \subseteq V(G)$ such that
\begin{itemize}
\item $|Y| \leq d$, and
\item $N[Y]$ is a $w$-balanced separator in $G$.
\end{itemize}
\end{theorem}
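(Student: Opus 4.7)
The plan is to execute the sliding window strategy outlined in the introduction. First I would apply Lemma~\ref{lem:gyarfas} to obtain an induced path $P = p_1 \dd \cdots \dd p_k$ in $G$ such that $N[P]$ is a $w$-balanced separator; the goal is then to replace $P$ by a bounded-size set $Y$ whose closed neighborhood is still $w$-balanced.

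Partition $P$ greedily into consecutive subpaths (``windows'') $W_1, \dots, W_r$, where each $W_s$ is the shortest initial segment of the portion of $P$ not yet covered satisfying $\gamma(W_s) \geq \sigma(7)$ (with $\sigma$ as in the definition of an amiable class), or is the entire remaining portion if no such segment exists. Minimality forces $\gamma(W_s) \leq \sigma(7)+1$, while the lower bound ensures that $N(W_s)$ is spread out enough to feed the amicable hypotheses. If $r$ is bounded by an absolute constant then $\gamma(P) \leq \sum_s \gamma(W_s)$ is bounded and a dominating set of $P$ finishes the proof at once, so I would assume $r$ is large.

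For each window $W_s$ I would then produce, via a greedy/Ramsey extraction, a connected component $D_s$ of $G \setminus N[W_s]$ together with a stable set $Y_s \subseteq N(W_s) \cap N(D_s)$ of size $\sigma(7)$ whose elements have pairwise distinct private neighbors in $W_s$; this is possible because $\gamma(W_s) \geq \sigma(7)$. By construction the induced subgraph $H_s := G[W_s \cup D_s \cup Y_s]$ satisfies all the hypotheses of the amicable definition, so $m$-amicability of $\mathcal{G}$ supplies a set $Z_s \subseteq V(H_s)$ with $|Z_s| \leq m$ such that $N_{H_s}[Z_s]$ separates a prefix of $W_s$ from a suffix of $W_s$. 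Since $Z_s \subseteq D_s \cup W_s \cup \{x_4\}$ is disjoint from $P \setminus W_s$, in the ambient graph $G$ we have $N[Z_s] \cap P \subseteq W_s$, so $N[Z_s]$ separates the portion of $P$ strictly before $W_s$ from the portion strictly after $W_s$.

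If some $N[Z_s]$ is already $w$-balanced we are done; otherwise let $B_s$ denote the unique component of $G \setminus N[Z_s]$ of weight greater than $\tfrac{1}{2}$, and classify $W_s$ as ``left-heavy'', ``right-heavy'' or ``neither'' according to which side of $P \setminus W_s$ (if any) lies in $B_s$. A short weight argument using that $N[P]$ is already $w$-balanced shows that $W_1$ cannot be ``left-heavy'' and $W_r$ cannot be ``right-heavy'', and moreover the labelling must actually flip at some adjacent pair of indices $s^{\ast}, s^{\ast}+1$. Combining $Z_{s^{\ast}} \cup Z_{s^{\ast}+1}$ with a dominating set of size at most $\sigma(7)+1$ for each of $W_{s^{\ast}}$ and $W_{s^{\ast}+1}$ then yields a set $Y$ of size $O(m + \sigma(7))$ whose closed neighborhood separates the prefix of $P$ ending before $W_{s^{\ast}}$ from the suffix of $P$ starting after $W_{s^{\ast}+1}$, whence $N[Y]$ is $w$-balanced. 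The main obstacle I expect is the extraction step producing $(Y_s, D_s)$ so that $H_s$ satisfies the rigid partition hypothesis of the amicable definition---each $y \in Y_s$ must have a \emph{private} neighbor in $W_s$, and $D_s$ must be a single connected component with $N(D_s) \supseteq Y_s$ and (up to thinning $Y_s$) $N(D_s) = Y_s$; this will require combining a Ramsey-type thinning of $N(W_s)$ with the domination bound, and possibly a clique-cutset-free reduction.
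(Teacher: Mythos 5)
Your high-level strategy --- apply Lemma~\ref{lem:gyarfas}, cut $P$ into windows, apply the amicable property inside each window, locate a flip of ``heavy side'', and combine --- is indeed the paper's strategy. But the obstacle you flag at the end is the real content of the proof, and your route around it does not work. The missing idea is the paper's setup immediately after choosing $P$: take $P$ of minimum length, let $B$ be the component of $G \setminus N[p_1 \dd P \dd p_{k-1}]$ with $w(B) > \tfrac12$ (minimality of $P$ supplies it), and set $T = N(P \setminus p_k) \cap N(B)$. This yields the key claim that any connected set disjoint from $T \cup N[p_k]$ has weight at most $\tfrac12$, which is what turns ``$N[Y]$ is not a balanced separator'' into ``the heavy component meets $T$'' --- precisely the ``short weight argument'' you invoke without justification. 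It also fixes one coordinate, $D_2 := B$, in every application of amicability, instead of hunting for a fresh component $D_s$ per window.

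Concretely, two steps in your proposal fail without this machinery. First, $\gamma(W_s) \geq \sigma(7)$ is a statement about the subpath $W_s$ alone: it neither produces a stable set of size $\sigma(7)$ in $N(W_s) \cap N(D_s)$ with private neighbors in $W_s$ (a long subpath can have very few external neighbors), nor does a dominating set of $W_s$ dominate the external vertices that the weight argument needs to kill. The paper instead sets $\soff(i)$ by requiring $\gamma\bigl(T \cap N[\{p_i, \dots, p_{\soff(i)}\}]\bigr) = 2K$, i.e.\ it measures the domination number of the $T$-frontier over the window, and the sets $Z_i$ it adds to the final separator dominate that frontier, not the path; the technical step you defer (the greedy construction of $Q_i, N_i$) is exactly what extracts from $T$ a stable set of size $K=\sigma(7)$ with private path-neighbors, feeding the amicable hypothesis with $D_1 = P$, $D_2 = B$. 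Second, your flip argument admits a ``neither'' label, under which a sequence such as $L, \ldots, L, \text{neither}, R, \ldots, R$ has no adjacent $L/R$ flip; the paper avoids this by showing that ``not of type $R$'' forces ``of type $L$'' (the two types are not mutually exclusive), which again rests on the claim about $T$ and on a tail dominator $Y_0$ chosen via the minimal $r$ with $\gamma\bigl(T \cap N(p_{r+1}, \dots, p_{k-1})\bigr) \leq 2K$, not on a generic remark that $N[P]$ is balanced.
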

\begin{proof}
It suffices to consider the unique component of $G$ with weight greater than $1/2$; if there is no such component, we set $Y = \emptyset$. Therefore, we may assume that $G$ is connected. 

Since $\mathcal{G}$ is amicable, it is also amiable. Let $K=\sigma(7)$ where $\sigma$ is as in the definition of an amiable class for $\mathcal{C}$.
Let $d=6K+2m+1$. We claim that $d$ satisfies the conclusion of the theorem.

By Lemma \ref{lem:gyarfas}, there is a path $P$ in $G$ such that 
    $w(D) \leq \frac 1 2$ for every component $D$ of $G \setminus N[P]$.
    Let $P=p_1 \dd \cdots \dd p_k$ be such a path chosen with $k$ as small as possible. It follows from the minimality of $k$ that
    there is a component $B$ of $G \setminus N[p_1 \dd P \dd p_{k-1}]$ with
      $w(B) > \frac{1}{2}$. Let $T=N(P \setminus p_k) \cap N(B)$.

\sta{Let $D$ be a connected subset of  $G$ such that $D \cap (T \cup N[p_k])=\emptyset$.
        Then $w(D) \leq \frac{1}{2}$.
              \label{meetT}}

      Since $D \cap T = \emptyset$ and $D$ is connected,
      it follows that either $D \subseteq B$
      or $D \cap B =\emptyset$. If $D \cap B =\emptyset$, then
      $w(D) \leq 1 - w(B) < \frac{1}{2}$. Thus  we may assume that
      $D \subseteq B$. Since $D \cap N[p_k] = \emptyset$, we deduce that
      $D$ is  contained in a component of $G \setminus N[P]$, and
      so $w(D) \leq \frac{1}{2}$. This proves~\eqref{meetT}.
      \\
      \\
      Let $r$ be minimum such that $\gamma[T \cap N(p_{r+1}, \dots, p_{k-1})] \leq 2K$.

\sta{We may assume that $r>0$. \label{r>1}}
Suppose $r=0$. Then 
$\gamma(T) < 2K$. Let  $Y' \subseteq V(G)$ be such that 
$T \subseteq N[Y']$ and with $|Y'| \leq 2K$.
Let $Y=Y' \cup \{p_k\}$.
Then every component of $G \setminus N[Y]$ is disjoint from
$T \cup N[p_k]$ and \eqref{meetT} implies that
$Y$ satisfies the conclusion of the theorem. This proves~\eqref{r>1}.
\\
\\
Let $Y_0' \subseteq V(G)$ be such that 
$T \cap N(p_r, \dots,p_{k-1})  \subseteq N[Y_0']$ and with $|Y_0'| \leq 2K$.
Let $Y_0=Y_0' \cup \{p_k\}$.
For every $i \in \{1, \dots, r\}$, let $\soff(i)$ be the minimum $j>i$
  such that $\gamma(T \cap N[\{p_i, \dots, p_{\soff(i)}\}]) = 2K$.
  Let $Z_i$ be a set of size at most $2K$ such that
 $T \cap N[\{p_i, \dots, p_{\soff(i)}\}] \subseteq N[Z_i]$.

  Our next goal is, for every $i \in \{1, \dots, r\}$, to define two
  sets $Q_i$ and $N_i$.  To that end, we fix  $i \in \{1, \dots, r\}$.
Let $T_1'=T \cap N(\{p_i, \dots, p_{\soff(i)}\})$.
  Let $S_1' \subseteq \{p_i, \dots, p_{\soff(i)}\}$ be such that
       $T_1' \subseteq N(S_1')$, and assume that $S_1'$ is chosen with $|S_1'|$ as small as possible. Order the vertices of $S_1'$ as $q_1, \dots, q_l$ in the order that they appear in $P$ when $P$ is traversed starting from $p_1$. It follows from the minimality of $|S_1'|$ that there is $n_1 \in T_1'$ such that
       $N_{S_1'}(n_1)=\{q_1\}$.
              Let $t(1)=1$ and let $S_1=\{q_{t(1)}\}$ and $T_1=\{n_1\}$.
       So far we have defined $S_1',S_1, T_1', T_1$ and $t(1)$.
       Suppose that we have defined  sequences of subsets $S_1', \dots, S_s'$, $S_1, \dots, S_s$, $T_1', \dots, T_s'$, and
       $T_1, \dots, T_{s}$  and a sequence of integers
       $t(1), \dots, t(s)$ such that $T_s' \subseteq N(S_s')$ and
       with the following properties (for every
       $l \in \{1, \dots, s\}$):
       \begin{itemize}
       \item $T_l$ is a stable set.
       \item If $t<t(l)$ and $q_t \in S_{l}'$, then $q_t$ is anticomplete to
         $T_{l}'$.
           \item $t(j)<t(l)$ for every $j < l$.
           \item If $j<l$, then $n_j$ is anticomplete to $S_l'$,
             and  $N_{S_l'}(n_l)=q_{t(l)}$.
          \item For every $j \in \{1, \dots, l\}$,
            $N_{S_l}(n_j)=q_{t(j)}$.
                      \end{itemize}
                \begin{figure}
                          \centering
                          \includegraphics[scale=0.8]{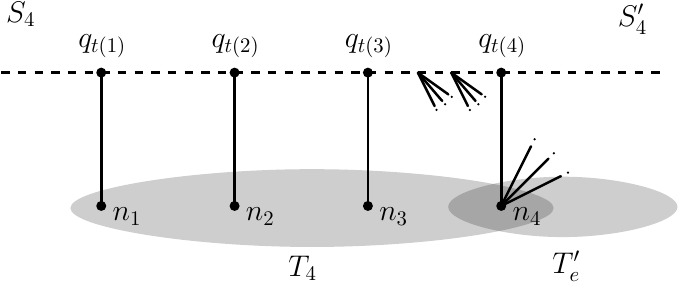}
                          \caption{Proof of Theorem \ref{balancedsep_3}.}
                          \label{fig:85}
                      \end{figure}
       See Figure \ref{fig:85}. Now we either terminate the construction or construct the sets $T_{s+1}'$, $T_{s+1}$, $S_{s+1}'$, $S_{s+1}$ and
       define $t(s+1)$,
       and verify that the properties above continue to hold.
       Let $T_{s+1}'=T_s' \setminus N[S_s \cup T_s]$.
       If $T_{s+1}'=\emptyset$, let $Q_i=S_s$ and $N_i=T_s$;
       the construction terminates here.
       Now suppose that $T_{s+1}' \neq \emptyset$.
              Let $S_{s+1}'$ be a minimal subset of $S_s'$ such that
       $T_{s+1}' \subseteq N(S_{s+1}')$. It
       follows from the second bullet and the minimality of
       $t(s)$ that if $q_t \in S_s'$ and $t<t(s)$, then $q_t$
       is anticomplete to $T_s'$. Since $S_{s+1}' \subseteq S_s'$
       and $T_{s+1}' \subseteq T_s'$, we deduce that
if $q_t \in S_{s+1}'$, then $t>t(s)$.
Let $t$ be minimum such that $q_t \in S_{s+1}'$ and set $t(s+1)=t$.
Then   $t(s+1)>t(s)$, and   the third bullet continues to hold.
By the minimality of $t$, the second bullet continues to hold.
It follows from
       the minimality of $S_{s+1}'$ that there exists 
       $n_{s+1} \in  T_{s+1}'$ such that
              $N_{S_{s+1}'}(n_{s+1})=\{q_{t(s+1)}\}$.
       Set $S_{s+1}=S_s \cup \{q_{t(s+1)}\}$ and $T_{s+1}=T_{s} \cup \{n_{s+1}\}$.
       Since $T_s$ is anticomplete to $T_{s+1}'$, the first bullet
       continues to hold.
       Since $N_{S_{s+1}'}(n_{s+1})=q_{t(s+1)}$, and
       since $S_{s+1}' \subseteq S_s' \setminus \{q_{t(s)}\}$, it follows that
the fourth bullet continues to hold, and consequently 
the        fifth bullet continues to hold.
%              Since $n_{s+1}$ is anticomplete to $T_s$, we have that $T_{s+1}$
 %             is a stable set, and the first bullet continues to hold.
                           Thus our construction maintains the required properties.
       When we finish, write $N_i=\{n_1, \dots, n_m\}$ and 
       $Q_i=\{q_{t(1)}, \dots, q_{t(m)}\}$, and we have:

       \sta{The following statements hold.
         \begin{itemize}
                    \item $N_i$ is a stable set.
          \item  For every $j \in \{1, \dots, m\}$,
           $N_{Q_i}(n_j)=q_{t(j)}$.
         \item If $j < j'$, then $t(j)<t(j')$.
           \item $T \cap N(\{p_i, \dots, p_{\soff(i)}\}) \subseteq N[Q_i \cup N_i]$.
           \end{itemize}
           \label{QN}}

     To simplify notation, we renumber the elements of $Q_i$, replacing each index
     $t(j)$ by $j$. In the new notation we have
     $Q_i=\{q_1, \dots, q_m\}$, and for every $j \in \{1, \dots, m\}$,
     $N_Q(n_j)=\{q_{j}\}$. Observe that  $q_1, \dots, q_m$ appear in this order
     in $P$ when $P$ is traversed from $p_1$ to $p_k$.
Let  $L_i=p_1 \dd P \dd q_{1}$, $M_i=q_1 \dd P \dd q_{m}$ and $R_i=q_{m+1} \dd P \dd p_k$.

    \sta{For every $i$, there exists $r_i \in N_i$ and
      $Y_i \subseteq  (G \setminus N[P]) \cup M_i \cup \{r_i\}$ with $|Y_i| \leq m$ such that $N[Y_i]$ separates $L_i$ from $R_i$.
    \label{window}}

    By \eqref{QN}, $N_i$ is a stable set.
    Since  $T \cap N(\{p_i, \dots, p_{\soff(i)}\}) \subseteq N[Q_i \cup N_i]$,
    and $|Q_i|=|N_i|$, it follows from \eqref{QN} that $|N_i|  \geq K$.
    Let $N_i' \subseteq N_i$ with $|N_i'|=K$. Recall that $K=\sigma(7)$.
          Now \eqref{window}
      follows from the definition of an $m$-amicable class with
      $D_1=P$, $D_2=B$ and $Y=N_i'$. This proves~\eqref{window}.
      \\
      \\
    We may assume that for every $i$ there is a component $D_i$
      of $G \setminus N[Z_i \cup Y_i]$ with $w(D_i)>\frac{1}{2}$, for
      otherwise the conclusion of Theorem~\ref{balancedsep_2} holds
      setting $Y=Z_i \cup Y_i$.  By \eqref{window}, either $D_i$ is anticomplete
      to $L_i$, or $D_i$ is anticomplete to $R_i$.
      Let us say that $Q_i$ is of {\em type $L$} if $D_i$ is anticomplete to
      $L_i$, and that $Q_i$ is of {\em type $R$} if $D_i$ is anticomplete to
      $R_i$. By \eqref{window}, every $Q_i$ is of at least one type.

      \sta{$Q_1$ is of type $L$. \label{Q1}}

      By \eqref{meetT},  $D_1 \cap (T \cup N[p_k]) \neq \emptyset$.
      It follows that $D_1$ contains a vertex $t$ such that  $t \in N[p_k]$
      or 
       $t \in T \setminus N[Z_1]$.
      Since $T \cap N(\{p_1, \dots, p_{\soff(1)}\}) \subseteq N[Z_1]$,
      and since $L_1 \cup M_1$ is a subpath of $p_1 \dd P \dd p_{\soff(1)}$, 
      it follows that $D_1$ contains a vertex  of $N[R_1]$.
Consequently, 
            $Q_1$ is not of type $R$, and so $Q_1$ is of type $L$.
            This proves~\eqref{Q1}.

     \sta{We may assume that $Q_{r}$ is of type $R$.\label{Q_last}}

     Suppose that $Q_r$ is of type $L$. Then
     $D_r$ is anticomplete to $L_r$. We claim that the set 
     $Y=Z_r \cup Y_r \cup Y_0$ satisfies the conclusion of the theorem.
     Let $D$ be a component of $G \setminus N[Y]$, and suppose
     that $w(D) > \frac{1}{2}$. 
     By \eqref{meetT}  there exists $t \in D  \cap (T \cup N[p_k])$.
Since $Z_r \cup Y_r \subseteq Y$,
     we have that $D \subseteq D_r$. It follows that $D$ is anticomplete
     to $L_r$, and so $t \not \in N(L_r)$. Since
     $T \cap N(\{p_r, \dots, p_{\soff(r)}\}) \subseteq N[Z_r]$,
     it follows that $t \not \in \{p_r, \dots, p_{\soff(r)}\}$.
       We deduce that $t \in N(\{p_{\soff(r)+1}, \dots, p_k\})$.
     But then $t \in N[Y_0]$, contrary to the fact that $Y_0 \subseteq Y$.
     This proves the claim, and     \eqref{Q_last} follows.
     \\
     \\    
       From now on we make the assumption of \eqref{Q_last}.
     By \eqref{Q_last} we can choose $j$ minimum such that $Q_j$ is
     of type $R$. By \eqref{Q1}, $j>1$. Let
     $$Y=Z_{j-1}  \cup Y_{j-1} \cup Z_j  \cup Y_j \cup Y_0.$$
Then $|Y| \leq 6k+2m+1=d$.
     We claim that $w(D) \leq \frac{1}{2}$ for every component $D$ of
     $G \setminus N[Y]$.  
     Suppose not, and let $D$ be a component of $G \setminus N[Y]$ with
     $w(D)>\frac{1}{2}$.
     Since $Z_{j-1}  \cup Y_{j-1} \subseteq Y$, we have that
     $D \subseteq D_{j-1}$. Since $Q_{j-1}$ is of type $L$,
     it follows that $D$ is anticomplete to $L_{j-1}$.
          Since
     $Z_j \cup Y_{j} \subseteq Y$, we have that
          $D \subseteq D_{j}$. Since $Q_j$ is of type $R$, it follows that
$D$ is anticomplete to $R_j$.
          Since
          $T \cap N(\{p_{j-1}, \dots, p_{\soff(j)}\}) \subseteq N[Z_{j-1} \cup Z_j]$ and since $p_k \in Y_0$, we deduce that $D \cap (T \cup N[p_k])=\emptyset$.
                    But then
     $w(D) < \frac{1}{2}$ by \eqref{meetT}, a contradiction.
         \end{proof}
    
\section{From balanced to small}
\label{sec:smallsep}
The second step in the proof of Theorem~\ref{main} is to transform
balanced separators with small domination number into balanced separators
of small size. We show:

\begin{theorem}
  \label{smallsep}
  Let $t$ be an integer, let $d$ be as in Theorem~\ref{balancedsep_2} and let $c_t$ be as in
  Theorem~\ref{banana}.
Let $G \in \mathcal {C}_t$ and let $w$ be a weight function on $G$. Then there exists $Y \subseteq V(G)$ such that
\begin{itemize}
\item $|Y| \leq 3c_td \log n+t$, and
\item $Y$ is a $w$-balanced separator in $G$.
\end{itemize}
\end{theorem}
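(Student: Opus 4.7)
The plan is to bootstrap Theorem~\ref{balancedsep_2} (a $w$-balanced separator dominated by $\leq d$ vertices) using Theorem~\ref{banana} (no $c_t\log n$-banana in $G$) and the promised Theorem~\ref{bettersep} (converting a dominated balanced separator into a small one in any banana-free graph). The arithmetic works out exactly to give the claimed bound $3c_t d\log n + t$, with the residual ``$+t$'' term absorbing a clique that cannot be shrunk.

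First, I would apply Theorem~\ref{balancedsep_2} to $G$ and $w$, producing a set $X\subseteq V(G)$ with $|X|\leq d$ such that $N[X]$ is a $w$-balanced separator in $G$. Second, since $G\in\mathcal{C}_t$ and $|V(G)|=n$, Theorem~\ref{banana} guarantees that $G$ contains no $c_t\log n$-banana. Setting $L=c_t\log n$, the hypothesis of Theorem~\ref{bettersep} is satisfied: $G$ is $L$-banana-free, and $N[X]$ is a $w$-balanced separator with $|X|\leq d$. Applying Theorem~\ref{bettersep} therefore yields a $w$-balanced separator $Y\subseteq V(G)$ together with a clique $T\subseteq X$ such that
\[
|Y\setminus T| \;\leq\; 3|X|\,L \;\leq\; 3d\,c_t\log n.
\]

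Finally, since $G\in\mathcal{C}_t$ is $K_t$-free, every clique in $G$ has fewer than $t$ vertices; in particular $|T|\leq t-1 < t$. Consequently
\[
|Y| \;=\; |Y\setminus T| + |T| \;\leq\; 3c_t d\log n + t,
\]
and $Y$ is a $w$-balanced separator, as required.

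There is essentially no obstacle here beyond verifying the hypotheses of the cited theorems: Theorem~\ref{balancedsep_2} supplies the dominated separator, Theorem~\ref{banana} supplies the banana bound, and Theorem~\ref{bettersep} does all the heavy lifting by trading a dominated separator (of domination number at most $d$) for an actual vertex separator whose size is linear in $d$ and in the largest banana size. The role of the clique number $t$ enters only at the last step, through the trivial bound $|T|<t$ on the size of a clique in a $K_t$-free graph, which is why the theorem statement carries an additive $t$.
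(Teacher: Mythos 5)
Your proposal has the right ingredients and the final arithmetic is correct, but it misreads what Theorem~\ref{bettersep} actually provides, and this creates a genuine gap in the order of operations.

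Theorem~\ref{bettersep} does not produce a clique $T\subseteq X$. It asserts the existence of a \emph{fixed} clique $K$ in $G$ (determined before $X$ is chosen; in the paper's proof of Theorem~\ref{bettersep}, $K$ is the set of vertices that are not ``$t_0$-friendly'' in a $3L$-lean tree decomposition), and the guarantee $|Y\setminus K|\leq 3L|X|$ holds only for sets $X\subseteq V(G)\setminus K$ with the property that every component of $G\setminus(K\cup N[X])$ has weight at most $1/2$. Your step one applies Theorem~\ref{balancedsep_2} directly to $G$, producing an $X$ that may well intersect $K$; in that case the hypothesis of Theorem~\ref{bettersep} simply does not hold and you cannot invoke it. Moreover your ``since $T\subseteq X$'' reasoning does not apply: the clique whose size you eventually bound by $t$ is $K$, not a subset of $X$.

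The fix is the one the paper uses: first fix $L=c_t\log n$ (justified by Theorem~\ref{banana}) and obtain the clique $K$ from Theorem~\ref{bettersep}; then apply Theorem~\ref{balancedsep_2} to $G\setminus K$ with the renormalized weight function $w'(v)=w(v)/(1-w(K))$, which guarantees $X\subseteq V(G)\setminus K$; then observe that since $N[X]$ is a $w'$-balanced separator of $G\setminus K$, every component of $G\setminus(K\cup N[X])$ has $w$-weight at most $\tfrac{1}{2}$; and only then invoke the defining property of $K$ from Theorem~\ref{bettersep}. After this, $|Y\setminus K|\leq 3c_td\log n$ and $|K|<t$ give the claimed bound. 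So the structure of your argument is right, but the two applications of Theorems~\ref{balancedsep_2} and~\ref{bettersep} must be interleaved in the other order, and the relevant clique is $K$, not a clique inside $X$.
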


The proof uses  Theorems~\ref{banana} and \ref{balancedsep_2}.
In order to prove Theorem~\ref{smallsep}, we first prove a more general
statement (below). We will then explain how to deduce Theorem~\ref{smallsep}
from it.

\begin{theorem}
\label{bettersep}
Let $L>0$ be an integer, 
  let $G$ be a graph and let $w$ be a weight function on $G$.   Assume that
  there is no $L$-banana
  in $G$. There exists a clique $K$ in $G$ with the following property.
  Let $X \subseteq V(G) \setminus K$ be such that $w(D) \leq \frac{1}{2}$ for
  every component
  $D$ of $G \setminus (K \cup N[X])$. 
  Then there exists a balanced separator $Y$ in $G$ such that
  $|Y \setminus K| \leq 3L|X|$.
  \end{theorem}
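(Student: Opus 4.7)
The plan is to use a lean tree decomposition to isolate a clique $K$ of ``highly connected'' vertices and then build $Y$ from small Menger separators. I would begin by taking $(T,\chi)$ to be a $3L$-atomic tree decomposition of $G$, which by Theorems~\ref{atomictolean} and~\ref{atomictotight} is $3L$-lean, tight, and has adhesion strictly less than $3L$. Let $t_0 \in V(T)$ be a center of $(T,\chi)$ with respect to $w$, as furnished by Theorem~\ref{center}. I would then define the clique $K$ intrinsically from $t_0$ by
$$K := \{\,v \in V(G) : N[v] \text{ is not separated in } G \text{ from } \chi(t_0) \setminus \{v\} \text{ by any set of size less than } 3L\,\}.$$
Applying Theorem~\ref{smallsepnonsep} with $k = L$, any two distinct non-adjacent vertices of $K$ would form an $L$-banana in $G$, contradicting the hypothesis; hence $K$ is a clique.

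Next, for any $X \subseteq V(G) \setminus K$ satisfying the balanced-separator hypothesis, the definition of $K$ together with Theorem~\ref{Menger_vertex} supplies, for each $x \in X$, a set $S_x \subseteq V(G)$ of size at most $3L - 1$, disjoint from both $N[x]$ and $\chi(t_0) \setminus \{x\}$, that separates $N[x]$ from $\chi(t_0) \setminus \{x\}$ in $G$. I would set
$$Y := K \cup X \cup \bigcup_{x \in X} S_x,$$
which (since $X \cap K = \emptyset$) immediately gives $|Y \setminus K| \leq |X| + (3L - 1)|X| = 3L|X|$. The key design choice here is to include $X$ itself in $Y$, so that any component $D$ of $G \setminus Y$ is forced to avoid every vertex of $X$.

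The remaining step, which I regard as the heart of the argument, is to verify that $Y$ is $w$-balanced. Suppose for contradiction some component $D$ of $G \setminus Y$ has $w(D) > 1/2$. The center property forces $D$ to meet $\chi(t_0)$; picking $u \in D \cap \chi(t_0)$ and noting $u \notin X$, one has $u \in \chi(t_0) \setminus \{x\}$ for every $x \in X$. For each such $x$, the separator $S_x \subseteq Y$ prevents $D$ from containing any vertex of $N[x]$, since otherwise a path in $G \setminus Y \subseteq G \setminus S_x$ would connect $u$ to $N[x]$ across $S_x$. Consequently $D \cap N[X] = \emptyset$, and combined with $D \cap K = \emptyset$ this places $D$ inside $V(G) \setminus (K \cup N[X])$; connectedness of $D$ then confines it to a single component of $G \setminus (K \cup N[X])$, whose weight is at most $1/2$ by the hypothesis on $X$, yielding the required contradiction. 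The main obstacle in executing this plan is arriving at the right definition of $K$ (so that Theorem~\ref{smallsepnonsep} yields a clique) and then realizing that $X$ must itself be included in $Y$; once these two design choices are in place, the rest of the argument proceeds fairly mechanically from Menger's theorem and the center property.
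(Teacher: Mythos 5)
Your proof is correct and follows essentially the same route as the paper: take a $3L$-atomic (hence tight, $3L$-lean) tree decomposition, pick a center $t_0$, define $K$ to be the ``not-friendly'' vertices (those $v$ for which $N[v]$ cannot be cut off from $\chi(t_0)\setminus\{v\}$ by a set of fewer than $3L$ vertices), and argue via Theorem~\ref{smallsepnonsep} that $K$ is a clique, then build $Y$ from per-vertex Menger separators and verify balancedness against the center property. Your formulation of $K$ actually matches the definition the paper \emph{should} have (the paper's stated condition ``$v$ is not separated'' for ``$t_0$-friendly'' is inconsistent with the way the notion is then used, and your ``$N[v]$ is not separated'' version is the one needed for Theorem~\ref{smallsepnonsep} to apply). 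Your verification that $Y$ is balanced is a touch more streamlined than the paper's: you directly pick $u\in D\cap \chi(t_0)$ (which must exist by the center property, since $w(D)>1/2$) and observe that, for each $x\in X$, both $u$ and any putative vertex of $D\cap N[x]$ would lie in one component of $G\setminus S_x$, contradicting the choice of $S_x$; the paper instead uses separators of the form $X_v\cup\{v\}$ with a slightly more delicate two-claim structure. Two small informal points: the parenthetical that $S_x$ is ``disjoint from both $N[x]$ and $\chi(t_0)\setminus\{x\}$'' is not right in general --- with the natural set-separation semantics, $N[x]\cap(\chi(t_0)\setminus\{x\})$ must be \emph{contained} in $S_x$ --- but your argument never relies on that disjointness; and the existence of $S_x$ with $|S_x|\leq 3L-1$ follows directly from $x\notin K$ by the definition of $K$, rather than needing a fresh invocation of Menger. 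Neither affects the correctness of the proof.
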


\begin{proof}
  
 Let $(T,\chi)$ be a  $3L$-atomic
tree decomposition of $G$.
By Theorems \ref{atomictolean} and \ref{atomictotight}, we have that $(T,\chi)$ is  tight and $3L$-lean.
By Theorem \ref{center}, there exists $t_0 \in T$ such that $t_0$ is a center for $T$.
A vertex $v \in V(G)$ is {\em $t_0$-friendly}
if $v$ is not separated from $\chi(t_0) \setminus v$ by a set of size $<3L$.
We show that  $t_0$-friendly vertices have the
following important property.

\sta {If $u,v \in \chi(t_0)$ are not $t_0$-friendly, then $u$ is adjacent to $v$.
\label{bananacoop}}

Suppose that $u$ and $v$ are not $t_0$-friendly  and that
$u$ is non-adjacent to $v$. Since there is no $L$-banana in $G$,
Theorem~\ref{Menger_vertex} implies that  there exists a
set $X$ with $|X| <L$ such $X$ separates $u$ from $v$. But this contradicts
Theorem~\ref{smallsepnonsep}. This proves~\eqref{bananacoop}.
      \\
      \\
      Let $K$ be the set of all the vertices of $G$ that are not $t_0$-friendly.
      By \eqref{bananacoop}, $K$ is a clique.
      We show that $K$ satisfies the conclusion of Theorem~\ref{bettersep}.
      Let $X \subseteq G \setminus K$ be such that
 $w(D) \leq  \frac{1}{2}$ for
  every component
  $D$ of $G \setminus (K \cup N[X])$.

      For every $v \in G \setminus K$, define the set $\Delta(v)$ as follows.
      Let $X_v \subseteq V(G)$ be  such that $X_v$ separates $v$ from
      $\chi(t_0) \setminus v$, chosen with $|X_v|$ minimum. Let
      $\Delta(v)=X_v \cup \{v\}$. Then $|\Delta(v)| \leq 3L$
      and $N_{\chi(t_0)}(v) \subseteq \Delta(v)$.
            Let
            $$Y=\bigcup_{v \in X} \Delta(v).$$
            It follows that $|Y| \leq 3L|X|$.
            
            To complete the proof it is enough to show that
        $Y \cup K$ is a $w$-balanced separator of $G$.
        Suppose not, and let $D$ be a component of $G \setminus (Y \cup K)$
        with $w(D)> \frac {1} {2}$.

        \sta {If  $D \cap (G_{t_0 \rightarrow t} \setminus \chi(t_0)) \neq \emptyset$ for some $t \in N_T(t_0)$, then $X$ is     anticomplete to $D$.
                  \label{sides}}

                  Suppose that $v \in X$ has a neighbor $d$ in $D$.
                   Let $(A, X_v,B)$ be a separation such that
                  $\chi(t_0) \setminus X_v \subseteq B$ and $v \in A$.
                  Since $d \not \in X_v$ 
                  and $d$ is adjacent to $v$,
                  it follows that $d \in A$. But then, since $D$ is
                  a component of $G \setminus (K \cup Y)$ and $X_v \subseteq Y$, it follows that $D \subseteq A$. Consequently, 
                   $D \cap \chi(t_0)=\emptyset$.
        It follows that
        $D \subseteq G_{t_0 \rightarrow t} \setminus \chi(t_0)$,
        and so $w(D) \leq w( G_{t_0 \rightarrow t} \setminus \chi(t_0)) \leq \frac{1}{2}$, a contradiction. This proves~\eqref{sides}.
                
          \sta {$D \cap N[X] = \emptyset$.
          \label{nbrs}}

          Suppose there is $v \in X$ such that $v$ has a neighbor $d \in D$.
          By \eqref{sides}, $d \in \chi(t_0)$. But then $d \in \Delta(v) \subseteq Y$, a contradiction. This proves \eqref{nbrs}.
          \\
          \\
          By \eqref{nbrs}, $D$ is a component of $G \setminus (K \cup N[X])$,
    and therefore      
        $w(D) \leq \frac{1}{2}$, a contradiction.
          \end{proof}

We now prove Theorem~\ref{smallsep}.
\begin{proof}
  Let $K$ be as in Theorem~\ref{bettersep}. Let $c_t$ be as in Theorem~\ref{banana} and let $L=c_t \log n$. Let $d$ be as Theorem~\ref{balancedsep_2}.
  Define $w': V(G) \setminus K \rightarrow [0,1]$ as
  $w'(v)=\frac{w(v)}{1-w(K)}$. Then $w'$ is  a normal weight function on $G \setminus K$.
  By Theorem~\ref{balancedsep_2}, there exists $X \subseteq V(G) \setminus K$ such that
\begin{itemize}
\item $|X| \leq d$, and
\item $N[X]$ is a $w'$-balanced separator in $G \setminus K$.
\end{itemize}
It follows that $w(D) \leq \frac{1}{2}$ for every component
$D$ of $G \setminus (N[X] \cup K)$.
Now Theorem~\ref{bettersep} applied with $L = c_t \log_n$ implies that 
there exists a $w$-balanced separator $Y$ in $G$ such that
  $|Y \setminus K| \leq 3L|X|=3c_t (\log n) |X| \leq 3c_td \log n$.
Since $G \in \mathcal{C}_t$, it follows that $|Y| \leq 3c_td \log n +t$.
\end{proof}

\section{The proof of Theorem~\ref{main}} \label{sec:proof}

We are now ready to complete the proof of Theorem~\ref{main}.
The following result was originally proved by Robertson and Seymour in \cite{RS-GMII},
  and tightened by Harvey and Wood in \cite{params-tied-to-tw}.
  It was then  restated and proved in the language of $(w, c)$-balanced separators in an earlier paper of this series \cite{wallpaper}. 

\begin{theorem}[Robertson, Seymour \cite{RS-GMII}, see also \cite{wallpaper, params-tied-to-tw}]\label{lemma:bs-to-tw}
  Let $G$ be a graph, let $c \in [\frac{1}{2}, 1)$, and let $d$ be a positive integer. If $G$ has a $(w, c)$-balanced separator of size at most $d$ for every
    normal weight function $w: V(G) \to [0, 1]$, then $\tw(G) \leq \frac{1}{1-c}k$. 
\end{theorem}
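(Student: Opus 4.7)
The plan is to construct a tree decomposition of $G$ recursively, invoking the balanced-separator hypothesis at each step. Reading the statement's ``$k$'' as $d$ (almost certainly a typo, since no $k$ was previously introduced), set $k := \lceil d/(1-c)\rceil$; the target is a tree decomposition of $G$ whose every bag has size at most $k$, so that $\tw(G) \leq k - 1 \leq d/(1-c)$.

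The induction will be on $|V(G)|$, driven by the following strengthened claim: for every induced subgraph $H$ of $G$ and every $Z\subseteq V(H)$ with $|Z|\leq k$, there is a tree decomposition of $H$ with all bags of size at most $k$ and some bag containing $Z$. Taking $H=G$ and $Z=\emptyset$ recovers the theorem. The strengthening is closed under induced subgraphs because the balanced-separator hypothesis passes down: given a weight $w_H$ on $V(H)$, extend by zero to $V(G)$, invoke the hypothesis, and intersect the resulting separator with $V(H)$; components of the restricted separator sit inside components of the $G$-separator, so the balance condition is preserved.

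For the inductive step with $|V(H)|>k$, I would apply the hypothesis to the weight function concentrated on $Z$, putting mass $1/|Z|$ on each vertex of $Z$ and $0$ elsewhere (falling back to uniform weight when $Z=\emptyset$). The resulting $(w,c)$-balanced separator $S$ of size at most $d$ satisfies $|C\cap Z|\leq c|Z|$ for every component $C$ of $H\setminus S$. Take $Z\cup S$ as a root bag and, for each component $C$ of $H\setminus(Z\cup S)$, recursively construct a tree decomposition of $H[C\cup N_H(C)]$ with target $Z':=N_H(C)\subseteq Z\cup S$, then attach it to the root by the subtree bag that contains $Z'$. The tree-decomposition axioms --- connectivity of the subtree of appearances of each vertex, edge coverage, and vertex coverage --- all follow from the inductive guarantee that each child subtree has a bag containing its $Z'$, together with the fact that $Z\cup S$ lies in the root bag.

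The principal obstacles are bag-size control $|Z\cup S|\leq k$ and strict decrease of $|V(H)|$ in every recursive call, and these two requirements interact. If $H\setminus S$ turns out to be connected, the balance condition forces $|Z\cap S|\geq (1-c)|Z|$, hence $|Z\cup S|\leq c|Z|+d\leq ck+d=k$, so the bag fits --- but there is no strict decrease. Conversely, if $H\setminus S$ has at least two components, the strict decrease is immediate, but $|Z\cap S|$ may be small and the naive root bag may exceed size $k$. The remedy, which is where the $1/(1-c)$ factor genuinely enters, is a two-case analysis: in the ``connected'' regime one artificially enlarges $S$ by a vertex of $V(H)\setminus Z$ (or re-invokes the hypothesis with a supplementary uniform weight) to force a genuine split; in the ``disconnected'' regime one routes part of $Z$ into the recursive subproblems rather than the root, placing in the root only the vertices of $Z$ whose neighborhoods hit several components. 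The recursion $|Z_{\text{new}}|\leq c|Z|+d$ then unwinds to the geometric bound $\sum_{i\geq 0}c^i d=d/(1-c)=k$, which is precisely the value that makes both requirements consistent. Once this bookkeeping is in place, the theorem follows by taking $Z=\emptyset$ in the strengthened claim.
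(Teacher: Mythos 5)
The paper does not actually contain a proof of this lemma; it is cited from Robertson--Seymour (with the constant attributed to Harvey--Wood), and the earlier paper \cite{wallpaper} in this series is cited for a proof in the $(w,c)$-balanced language. So there is no ``paper's own proof'' to compare against, and your proposal has to be judged on its own.

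Your skeleton is the standard recursion: maintain a set $Z$ of bounded size, apply the separator hypothesis to the weight concentrated on $Z$, take $Z \cup S$ as a root bag, and recurse on $H[C \cup N_H(C)]$ with $Z' = (Z\cap C) \cup N_H(C)$. That scaffolding and the fixed-point calculation $c\cdot q + d = q$ at $q = d/(1-c)$ are correct. But the two remedies you propose in the last paragraph --- the place where you yourself locate the heart of the argument --- don't go through as described, and the gap is real.

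In the disconnected regime, ``routing part of $Z$ into the recursive subproblems rather than the root'' contradicts the inductive invariant you set up, which demands that the child tree decomposition have \emph{all of} $Z'$ in one bag so that the parent can attach it. If some $z \in Z$ is dropped from the root bag, the parent's root (which contains $z$) and the child subtree containing $z$ are no longer joined through a bag containing $z$, and the subtree condition fails; alternatively, if you weaken the invariant so that $Z$ need not lie in a single bag, you must rebuild the gluing argument from scratch and you don't say how. In the connected regime, enlarging $S$ by an extra vertex makes $|Z\cup S|\leq c|Z|+d+1$, which already exceeds $k=\lceil d/(1-c)\rceil$ for many values of $c,d$ (e.g. $c=1/2$, $d=5$, $k=10$ gives a root bag of size up to $11$); and ``re-invoking the hypothesis with a supplementary uniform weight'' is not a defined operation. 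Separately, one should also guarantee that the recursive call on $H[C\cup N_H(C)]$ is strictly smaller than $H$ --- this fails when $N_H(C)=S$ and none of your fixes clearly restores it.

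There is also a quantitative warning sign: the root bag $Z\cup S$ can have size up to $|Z|+|S|\le \lceil d/(1-c)\rceil + d$, so the straightforward version of this recursion yields $\tw(G)\le \lceil d/(1-c)\rceil + d - 1$ (roughly $3d$ at $c=1/2$), not the $\lceil d/(1-c)\rceil - 1$ you are aiming for. That is already enough for everything the paper does with the lemma (Theorem~\ref{mainmain} would simply carry a slightly larger constant factor), but to prove the bound exactly as stated you would need a genuinely sharper mechanism for controlling the root bag, not just the standard recursion with a patch. At present the proposal is a correct outline with the central difficulty acknowledged but unresolved.
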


We prove the following, which immediately implies  Theorem~\ref{main}.

\begin{theorem}
  \label{mainmain}
  Let $t$ be an integer. Let  let $c_t$ be as in Theorem~\ref{banana}
  and let $d$ be as in Theorem~\ref{balancedsep}.
    Then  every
 $G \in \mathcal{C}_{t}$ satisfies
    $\tw(G)\leq 6c_td \log n+2t$.
    
\end{theorem}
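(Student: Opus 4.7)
The plan is to combine the two main results from the preceding sections: Theorem~\ref{smallsep} which provides a small balanced separator for graphs in $\mathcal{C}_t$ under any normal weight function, and the classical Theorem~\ref{lemma:bs-to-tw} which converts the existence of such separators into a treewidth bound. At this point all the structural and inductive work has been done, so this final theorem is really just an assembly step.

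First, I would fix $G \in \mathcal{C}_t$ with $|V(G)| = n$ and let $w$ be an arbitrary normal weight function on $G$. By Theorem~\ref{smallsep}, there exists a set $Y \subseteq V(G)$ with $|Y| \leq 3c_t d \log n + t$ such that $Y$ is a $w$-balanced separator in $G$, meaning every component $D$ of $G \setminus Y$ satisfies $w(D) \leq \tfrac{1}{2}$. In the language of Theorem~\ref{lemma:bs-to-tw}, $Y$ is a $(w, \tfrac{1}{2})$-balanced separator of size at most $3c_t d \log n + t$.

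Since the weight function $w$ was arbitrary, Theorem~\ref{lemma:bs-to-tw} applies with $c = \tfrac{1}{2}$ (so that $\frac{1}{1-c} = 2$) and $k = 3c_t d \log n + t$, yielding
\[
    \tw(G) \;\leq\; 2\bigl(3c_t d \log n + t\bigr) \;=\; 6c_t d \log n + 2t,
\]
as required.

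The only subtlety worth flagging is bookkeeping: one must check that the constants $c_t$ and $d$ invoked here are the same as those appearing in Theorems~\ref{banana} and \ref{balancedsep_2}, respectively, since Theorem~\ref{smallsep} is itself proved by feeding Theorem~\ref{balancedsep_2} into Theorem~\ref{bettersep} with $L = c_t \log n$. There is no genuine obstacle at this stage — the hard work lies entirely upstream, namely in proving Theorem~\ref{banana} (which bounds the size of an $ab$-banana via the hub-partition and central-bag machinery of Sections~\ref{sec:centralbag_banana}--\ref{sec:bananaproof}) and Theorem~\ref{balancedsep_2} (which is established via the sliding-window argument of Section~\ref{sec:domsep}). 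Once these are in hand together with the generic Theorem~\ref{lemma:bs-to-tw}, the bound $\tw(G) \leq 6c_t d \log n + 2t$ follows in a single line.
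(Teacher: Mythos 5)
Your proof is correct and follows exactly the same two-step assembly as the paper: apply Theorem~\ref{smallsep} to obtain, for every normal weight function, a $w$-balanced separator of size at most $3c_t d \log n + t$, then invoke Theorem~\ref{lemma:bs-to-tw} with $c = \tfrac{1}{2}$ to conclude $\tw(G) \leq 6c_t d \log n + 2t$. The bookkeeping point you flag about the constants matching is accurate and harmless.
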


\begin{proof}
  By Theorem~\ref{smallsep}, for every normal weight function $w$ of $G$ there
  exists $w$-balanced separator of size at most 
  $3c_td \log n+t$. Now the result follows immediately from
  Lemma~\ref{lemma:bs-to-tw}.
  \end{proof}

\section{Algorithmic consequences}\label{algsec}
We now summarize the algorithmic consequences of our structural results, specifically of Theorems~\ref{balancedsep} and \ref{main}.

The consequences for graphs in $\mathcal{C}_{t}$ are the most immediate. 
In particular, using the factor $2$ approximation algorithm of Korhonen~\cite{Korhonen23} (or the simpler factor $4$ approximation algorithm of Robertson and Seymour~\cite{RS-GMXIII}) for treewidth, we can compute a tree decomposition of width at most $O(c_t\log n)$ in time $2^{O(c_t\log n)}n^{O(1)} = n^{O(c_t)}$. 
A number of well-studied graph problems, including \textsc{Stable Set},
\textsc{Vertex Cover}, 
\textsc{Feedback Vertex Set}
\textsc{Dominating Set} and \textsc{$r$-Coloring} (for fixed $r$)
admit algorithms with running time $2^{O(k)}n$ when a tree decomposition of $G$ of width $k$ is provided as input (See, for example, \cite{CyganFKLMPPS15} chapters 7 and 11, as well as~\cite{BodlaenderCKN15}). 
Since $2^{O(c_t\log n)} = n^{O(c_t)}$ this immediately leads to polynomial time algorithms for these problems in $\mathcal{C}_t$.
\begin{theorem}\label{algfinal}
Let $t\geq 1$ be fixed and {\rm\texttt{P}} be a problem which admits an algorithm running in time $\mathcal{O}(2^{\mathcal{O}(k)}|V(G)|)$ on graphs $G$ with a given tree decomposition of width at most $k$. Then {\rm\texttt{P}} is solvable in time $n^{\mathcal{O}(t)}$ in $\mathcal{C}_{t}$. In particular, \textsc{Stable Set}, \textsc{Vertex Cover}, \textsc{Feedback Vertex Set}, \textsc{Dominating Set} and \textsc{$r$-Coloring} (with fixed $r$) are all polynomial-time solvable in $\mathcal{C}_{t}$.
\end{theorem}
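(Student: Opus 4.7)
The plan is to combine Theorem~\ref{main} (equivalently, the restatement Theorem~\ref{mainmain}) with a known polynomial-time approximation algorithm for treewidth, and then apply the hypothesized algorithm for \texttt{P} to the resulting tree decomposition. Specifically, by Theorem~\ref{main}, every $n$-vertex graph $G \in \mathcal{C}_t$ satisfies $\tw(G) \leq c_t \log n$, where $c_t$ is a constant that depends only on $t$. Using either Korhonen's $2$-approximation algorithm~\cite{Korhonen23} for treewidth, or the simpler factor-$4$ approximation of Robertson and Seymour~\cite{RS-GMXIII}, we compute in polynomial time a tree decomposition $(T,\chi)$ of $G$ of width at most $k = \mathcal{O}(c_t \log n)$.

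Next, feeding $(T,\chi)$ into the assumed algorithm for \texttt{P} yields a total running time of $\mathcal{O}(2^{\mathcal{O}(k)} \cdot n) = 2^{\mathcal{O}(c_t \log n)} \cdot n = n^{\mathcal{O}(c_t)}$, where the constant hidden by $\mathcal{O}(\cdot)$ depends only on $t$. Since $c_t$ is itself a function of $t$ alone, for each fixed $t$ this is polynomial in $n$, and so the first part of the theorem follows. For the concrete consequences, each of \textsc{Stable Set}, \textsc{Vertex Cover}, \textsc{Feedback Vertex Set}, \textsc{Dominating Set}, and \textsc{$r$-Coloring} (with $r$ fixed) admits a standard dynamic-programming algorithm of running time $\mathcal{O}(2^{\mathcal{O}(k)} n)$ when a tree decomposition of width $k$ is provided as input; see~\cite{CyganFKLMPPS15} and~\cite{BodlaenderCKN15} for explicit write-ups. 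Hence each of them falls under the first assertion and becomes polynomial-time solvable on $\mathcal{C}_t$.

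There is no substantive obstacle here: the theorem is a direct corollary of Theorem~\ref{main} and the standard ``compute an approximate tree decomposition, then run a single-exponential tree-decomposition dynamic program'' paradigm. The only bookkeeping step is to observe that the logarithm in the treewidth bound converts the single-exponential dependence on $k$ in the downstream algorithm into a polynomial dependence on $n$, with the constant $c_t$ migrating into the exponent of $n$; since this constant depends only on $t$, the exponent is a function of $t$ alone, matching the stated bound $n^{\mathcal{O}(t)}$.
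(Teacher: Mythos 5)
Your proposal is correct and follows essentially the same approach as the paper: apply Theorem~\ref{main} to get $\tw(G)=\mathcal{O}(c_t\log n)$, compute a tree decomposition of width $\mathcal{O}(c_t\log n)$ via the Korhonen or Robertson--Seymour approximation (running in time $2^{\mathcal{O}(c_t\log n)}n^{\mathcal{O}(1)}=n^{\mathcal{O}(c_t)}$ because the width is logarithmic), and then run the single-exponential tree-decomposition dynamic program for \texttt{P}. The list of concrete problems is handled by the same citations (\cite{CyganFKLMPPS15}, \cite{BodlaenderCKN15}) as in the paper.
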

This list of problems is far from exhaustive, it is worth mentioning the work of Pilipczuk~\cite{Pilipczuk11} who provides a relatively easy-to-check sufficient condition (expressibility in Existential Counting Modal Logic) for a problem to admit such an algorithm. 

Theorem~\ref{algfinal}   implies a polynomial time algorithm for \textsc{$r$-Coloring} (with fixed $r$) in $\mathcal{C}$ (without any assumptions on max clique size) because every graph that contains a clique of size $r+1$ can not be $r$-colored. Thus, to solve
\textsc{$r$-Coloring} we can first check for the existence of an $(r+1)$-clique in time $n^{r+\mathcal{O}(1)}$. If an $(r+1)$-clique is present, report that no $r$-coloring exists, otherwise invoke the algorithm of Theorem~\ref{algfinal} with $t=r+1$. This yields the following result.

\begin{theorem}\label{algcolor}
For every positive integer $r$, \textsc{$r$-Coloring} is polynomial-time solvable in $\mathcal{C}$.
\end{theorem}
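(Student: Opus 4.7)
The plan is to reduce $r$-\textsc{Coloring} on $\mathcal{C}$ to $r$-\textsc{Coloring} on $\mathcal{C}_{r+1}$, where Theorem~\ref{algfinal} already applies. The key observation is that $r$-colorability imposes a hard structural constraint: any graph containing a clique of size $r+1$ cannot be $r$-colored, since a proper coloring must assign pairwise distinct colors to the vertices of any clique. Conversely, if $G \in \mathcal{C}$ contains no clique of size $r+1$, then by the definition of $\mathcal{C}_{r+1}$, we have $G \in \mathcal{C}_{r+1}$.

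The algorithm proceeds in two steps. First, given $G \in \mathcal{C}$, check whether $G$ contains a clique of size $r+1$ by brute force enumeration of all $(r+1)$-subsets of $V(G)$; this takes time $\mathcal{O}(n^{r+1})$, which is polynomial in $n$ for fixed $r$. If such a clique is found, output ``no $r$-coloring exists'' and halt. Otherwise, we are guaranteed $G \in \mathcal{C}_{r+1}$, and we invoke the algorithm provided by Theorem~\ref{algfinal} with $t = r+1$ to decide $r$-\textsc{Coloring} in time $n^{\mathcal{O}(r+1)} = n^{\mathcal{O}(1)}$ (for fixed $r$).

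Correctness is immediate from the two cases above: the first branch is correct because of the elementary lower bound $\chi(G) \geq \omega(G)$, and the second branch is correct because Theorem~\ref{algfinal} gives an exact algorithm for $r$-\textsc{Coloring} on $\mathcal{C}_t$ for every fixed $t$. The total running time is $\mathcal{O}(n^{r+1}) + n^{\mathcal{O}(1)}$, which is polynomial in $n$ for each fixed $r$, completing the proof.

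There is no genuine obstacle here; the only subtlety is ensuring that the reduction preserves membership in $\mathcal{C}$ (which it does trivially, since we are simply dispatching on a property of the input $G$ itself rather than modifying it) and that the constant hidden in the $\mathcal{O}(\cdot)$ of Theorem~\ref{algfinal}'s exponent is uniform in $t$ when $t$ is fixed, which is precisely the content of that theorem.
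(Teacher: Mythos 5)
Your proof is correct and takes exactly the same approach as the paper: brute-force search for an $(r+1)$-clique in time $n^{O(r)}$, report failure if one is found, and otherwise note that $G \in \mathcal{C}_{r+1}$ and invoke Theorem~\ref{algfinal} with $t=r+1$. Nothing to add.
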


Let us now discuss another important problem, and that is  \textsc{Coloring}.
It is well-known (and also follows immediately from
 Theorem \ref{logncollections}), that for every $t$ there exists a number
$d(t)$ such that  all graphs in $\mathcal{C}_t$ have chromatic number at most $d(t)$. Also, for each fixed $r$, by Theorem  \ref{algfinal}, $r$-\textsc{Coloring} is polynomial-time solvable in $\mathcal{C}_{t}$. Now by solving  $r$-\textsc{Coloring} for every $r \in \{1, \dots, d(t)\}$, we also obtain a polynomial-time algorithm for \textsc{Coloring} in $\mathcal{C}_{t}$.

Theorem~\ref{algfinal} quite directly leads to a polynomial-time approximation scheme (PTAS) for several problems on graphs in $\mathcal{C}$. We illustrate this using {\sc Vertex Cover} as an example. 

\begin{theorem}\label{vcPTAS}
There exists an algorithm that takes as input a graph $G \in \mathcal{C}$ 
and $0 < \epsilon \leq 1$, runs in time $n^{O(c_{2/\epsilon})}$ and outputs a vertex cover of size at most $(1+\epsilon)\textsf{vc}(G)$, where $\textsf{vc}(G)$ is the size of the minimum vertex cover of $G$.
\end{theorem}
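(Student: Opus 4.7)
The plan is to combine the exact polynomial-time algorithm for \textsc{Vertex Cover} on $\mathcal{C}_t$ given by Theorem~\ref{algfinal} with the elementary observation that any clique of size $t$ forces at least $t-1$ vertices into every vertex cover. I would set $t = \lceil 2/\epsilon \rceil$, which, using $\epsilon \leq 1$, ensures $\frac{1}{t-1} \leq \epsilon$.

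The algorithm I have in mind first greedily computes a maximal family $C_1, \ldots, C_k$ of pairwise vertex-disjoint cliques of size $t$ in $G$, by brute-force enumeration of $t$-subsets: repeatedly find some $t$-subset inducing a clique (testing every $t$-subset takes $n^{O(t)}$ time), remove its vertex set, and iterate. Let $V_C = C_1 \cup \cdots \cup C_k$ and $G' = G \setminus V_C$. By maximality, $G'$ contains no $K_t$; and since $\mathcal{C}$ is closed under induced subgraphs, $G' \in \mathcal{C}_t$. Theorem~\ref{algfinal} then yields an optimum vertex cover $S'$ of $G'$ in time $n^{O(c_t)}$. The algorithm returns $S = S' \cup V_C$; this is a vertex cover of $G$, since edges contained in $G'$ are covered by $S'$ while every other edge has an endpoint in $V_C$.

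For the approximation ratio, let $U$ be any vertex cover of $G$. Then $U \cap V(G')$ is a vertex cover of $G'$, so $|U \cap V(G')| \geq |S'|$; and $U$ must contain at least $t-1$ vertices from each $C_i$, so $|U \cap V_C| \geq k(t-1)$. Hence $\textsf{vc}(G) \geq |S'| + k(t-1)$, while $|S| \leq |S'| + kt$, giving
$$\frac{|S|}{\textsf{vc}(G)} \;\leq\; \frac{|S'|+kt}{|S'|+k(t-1)} \;\leq\; 1 + \frac{k}{k(t-1)} \;=\; 1 + \frac{1}{t-1} \;\leq\; 1 + \epsilon.$$
The total running time is $n^{O(t)} + n^{O(c_t)} = n^{O(c_{2/\epsilon})}$, as required.

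I do not expect any real obstacle here: the only things to verify are the arithmetic $\frac{1}{t-1} \leq \epsilon$ for $t = \lceil 2/\epsilon \rceil$ with $\epsilon \leq 1$, and the trivial $n^{O(t)}$ bound for maximal disjoint-clique packing of fixed size $t$. Conceptually, the argument is a template: whenever a problem on $\mathcal{C}$ admits a ``clique penalty'' lower bound on the optimum that grows linearly with the clique size, the same peel-off-then-invoke-Theorem~\ref{algfinal} strategy turns the exact $\mathcal{C}_t$-algorithm into a PTAS on $\mathcal{C}$.
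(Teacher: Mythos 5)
Your proof is correct and is essentially the paper's proof: both peel off cliques of size $\Theta(1/\epsilon)$ until the residual graph lies in $\mathcal{C}_t$, solve exactly there via Theorem~\ref{algfinal}, and add the peeled vertices back. The only difference is presentational—you compute a maximal disjoint clique packing up front and bound the ratio with a single global estimate, whereas the paper peels one clique per recursive call and carries the bound through an induction; the underlying arithmetic, in particular the fact that a $t$-clique forces $t-1$ vertices into any cover so that the additive loss per clique is $1 \leq \epsilon(t-1)$, is the same.
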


\begin{proof}
Check in time $n^{O(1/\epsilon)}$ whether $G$ contains as input a clique $C$ of size at least $2/\epsilon$. If $G$ does not have a clique of size at least $2/\epsilon$ then compute an optimal vertex cover of $G$ in time $n^{O(c_{2/\epsilon})}$ using the algorithm of Theorem~\ref{algfinal}.
If $G$ contains such a clique $C$ then run the algorithm recursively on $G-C$ to obtain a vertex cover $S$ of $G-C$ of size at most $(1+\epsilon)\textsf{vc}(G-C)$. Return $S \cup C$; clearly $S \cup C$ is a vertex cover of $G$. Furthermore, every vertex cover of $G$ (and in particular a minimum one) contains at least $|C|-1$ vertices of $C$. It follows that $\textsf{vc}(G-C) \leq \textsf{vc}(G) - |C| + 1$ and that therefore
\begin{align*}
|S \cup C| & = |S| + |C| \\
& \leq (1+\epsilon)\textsf{vc}(G - C) + |C| \\
& \leq  (1+\epsilon)(\textsf{vc}(G) - |C| + 1 ) + |C| \\
%& \leq (1+\epsilon)(\textsf{vc}(G)) + |C| - (1 + \epsilon)(|C| - 1) \\
& \leq (1+\epsilon)(\textsf{vc}(G))
\end{align*}
Here the last inequality follows because for $C \geq 2/\epsilon$ it holds that $(1+\epsilon)(|C|-1) \geq |C|$.
\end{proof}

The PTAS of Theorem~\ref{vcPTAS} generalizes without modification (except for the constant $2$ in the $2/\epsilon$) to \textsc{Feedback Vertex Set}, and more generally, to deletion to any graph class which is closed under vertex deletion, excludes some clique, and admits an algorithm (for the deletion problem) with running time $2^{O(k)}n^{O(1)}$ on graphs of treewidth $k$.
Despite the tight connection between {\sc Vertex Cover} and {\sc Stable Set}, Theorem~\ref{vcPTAS} does not directly lead to a PTAS for {\sc Stable Set} on graphs in $\mathcal{C}$. On the other hand, a QPTAS for {\sc Stable Set} in $\mathcal{C}$ follows from Theorem~\ref{balancedsep} together with an argument of Chudnovsky, Pilipczuk, Pilipczuk and Thomass\'e~\cite{QPTAS}, who gave a QPTAS for {\sc Stable Set} in $P_k$-free graphs (they refer to the problem as {\sc Independent Set}). For ease of reference, we repeat their argument here.

\begin{theorem}\label{ssQPTAS}
There exists an algorithm that takes as input a graph $G \in \mathcal{C}$ 
and $0 < \epsilon \leq 1$, runs in time $n^{O(\log^2 n/\epsilon)}$ and outputs a stable set $S$ in $G$ of size at least $(1-\epsilon)\alpha(G)$, where $\alpha(G)$ is the size of the maximum size stable set in $G$.
\end{theorem}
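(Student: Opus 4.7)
Plan of proof. We follow the QPTAS argument of Chudnovsky, Pilipczuk, Pilipczuk, and Thomass\'e~\cite{QPTAS} for \textsc{Stable Set} on $P_k$-free graphs, substituting our Theorem~\ref{balancedsep} for the Gy\'arf\'as-style path separator on which their analysis rests. The only property of $P_k$-free graphs that their argument really uses is that every induced subgraph admits a balanced separator of the form $N[Y]$ for some set $Y$ of bounded size; Theorem~\ref{balancedsep} supplies such a $Y$ of size at most $d$ for every $G \in \mathcal{C}$, so the same divide-and-conquer scheme applies.

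The algorithm is recursive. On input $(G,\epsilon)$ with $n=|V(G)|$, if $n$ is below a fixed constant we return an optimal stable set by brute force. Otherwise we apply Theorem~\ref{balancedsep} with the uniform weight function $w(v)=1/n$ to obtain $Y\subseteq V(G)$ with $|Y|\leq d$ such that $N[Y]$ is a balanced separator. Setting $M:=\lceil c\log n/\epsilon\rceil$ for an appropriate absolute constant $c$, we enumerate every stable set $T\subseteq N[Y]$ with $|T|\leq M$. For each such $T$, we recursively solve on every connected component $C$ of $G\setminus(N[Y]\cup N(T))$, obtaining a stable set $S_C$; the candidate solution for this choice of $T$ is $T\cup\bigcup_C S_C$. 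We finally return the largest candidate over all guesses $T$.

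The running time is $n^{O(\log^2 n/\epsilon)}$: at each level we try $n^{O(\log n/\epsilon)}$ candidate sets $T$, and the recursion tree has depth $O(\log n)$ because a balanced separator halves the vertex weight (and hence roughly the vertex count) of each subproblem. For the approximation ratio, the CPPT analysis shows that for any maximum stable set $S^*$ in $G$, some enumerated choice of $T$ captures all but a $1-\epsilon/\log n$ multiplicative fraction of the contribution of $S^*\cap N[Y]$; composed over the $O(\log n)$ recursion levels, this yields a stable set of size at least $(1-\epsilon)\alpha(G)$.

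The main obstacle is justifying that enumerating only size-$O(\log n/\epsilon)$ stable subsets of $N[Y]$ suffices for the per-level guarantee, even though $|S^*\cap N[Y]|$ may itself be close to $\alpha(G)$. In CPPT this is handled by a weight-distribution/thresholding argument showing that most of the mass of $S^*\cap N[Y]$ is carried by a bounded-size ``heavy core'' $T\subseteq N[Y]$; the argument relies only on the availability of balanced separators of the form $N[Y]$ with $|Y|$ bounded, and so transfers verbatim once Theorem~\ref{balancedsep} is invoked in place of the path-based separator used in~\cite{QPTAS}.
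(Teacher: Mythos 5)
Your proposal correctly identifies that Theorem~\ref{balancedsep} is the structural ingredient to plug into the CPPT divide-and-conquer scheme, but it gets the shape of that scheme wrong in a way that makes the per-level loss unboundable. You find the balanced-separator pair $Y$ \emph{first} and then enumerate bounded-size stable sets $T \subseteq N[Y]$, deleting $N[Y]\cup N(T)$ before recursing. The problem you flag yourself --- that $|S^*\cap N[Y]|$ can be close to $\alpha(G)$ --- is fatal to this ordering: the loss at a single level is at least $|S^*\cap N[Y]| - |T|$, and no ``weight-distribution/thresholding'' argument can recover this, because $S^*$ is a stable set in which every vertex carries equal weight; a bounded-size subset of $S^*\cap N[Y]$ simply cannot capture most of its cardinality when that cardinality is large. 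The claim that CPPT shows $S^*\cap N[Y]$ has a bounded ``heavy core'' is a misremembering of their argument.

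What the paper actually does (following CPPT faithfully) reverses the order of the two operations and lifts the guessing from $N[Y]$ to all of $V(G)$: one enumerates a set $S\subseteq V(G)$ of size $O((d\log N \log n)/\epsilon)$ \emph{and} a set $Y$ of size at most $d$, subject to the condition that deleting $N(S)\cup N[Y]$ leaves only small components, and then recurses on the remainder. The correctness analysis instantiates $S$ as a random subset of the optimum $I$; a coupon-collector bound guarantees that after deleting $N(S)$, no surviving vertex has more than an $\epsilon/(d\log N)$ fraction of $I$ in its neighborhood. Only \emph{then} does one apply Theorem~\ref{balancedsep} --- to $G\setminus N[S]$, not to $G$ --- so that the resulting $Y$ consists of light vertices, and $N[Y]$ removes at most $|I|\,\epsilon/\log N$ elements of $I$ per level. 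This ``guess heavy-killer $S$, then find separator in the cleaned graph'' order is exactly what your proposal omits, and without it the per-level error bound does not hold. The algorithm and time analysis you sketch would need to be revised to enumerate $S$ over $V(G)$ rather than $T$ over $N[Y]$, and the correctness argument replaced by the coupon-collector argument applied before the balanced-separator step.
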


\begin{proof}
The algorithm is recursive, taking as input the graph $G$ and also an integer $N$. Initially, the algorithm is called with $N = |V(G)|$, in all subsequent recursive calls the value of $N$ remains the same. If $G$ is a single vertex the algorithm returns $V(G)$. If $G$ is disconnected, the algorithm runs itself recursively on each of the connected components and returns the union of the stable sets returned for each of them. 
%If the number of vertices of $G$ is upper bounded by $O(\log^3 N)$, the algorithm tries all subsets of $V(G)$ and returns the largest stable set in $G$. 

Let $d$ be the constant of Theorem~\ref{balancedsep}. If $G$ is a connected graph on at least two vertices the algorithm iterates over all subsets $S \subseteq V(G)$ of size at most $\frac{2 d \log n \log N}{\epsilon}$ and all subsets $Y$ of size at most $d$ such that each connected component of $G - (N(S) \cup N[Y])$ has at most $|V(G)|/2$ vertices. For each such pair $(S, Y)$ the algorithm calls itself recursively on $G - (N(S) \cup N[Y])$ and returns the maximum size stable set found over all such recursive calls. 
%obtains a stable set $I_{S,Y}$. It returns the maximum size set $I_{S,Y}$ over all choices of $S$ and $Y$.
%Note that $S \subseteq I_{S,Y}$ (since every vertex in $S$ has no neighbors in $G - (N(S) \cup N[Y])$ 

Clearly, the set returned by the algorithm is a stable set. For each pair $(S, Y)$ which results in a recursive call of the algorithm, each connected component of the graph $G - (N(S) \cup N[Y])$ that the algorithm is called on has at most $|V(G)|/2$ vertices. Therefore the running time of the algorithm is governed by the recurrence $T(n) \leq n^{\frac{2 d \log n \log N}{\epsilon} + d} \cdot n \cdot T(n/2)$ which solves to $T(n) \leq n^{O(\frac{d \log^2 n \log N}{\epsilon})}$. Since the algorithm is initially called with $N=|V(G)|$ the running time of the algorithm is upper bounded by $|V(G)|^{O(\frac{d \log^3 |V(G)|}{\epsilon})}$.

It remains to argue that the size of the stable set output by the algorithm is at least $(1-\epsilon)\alpha(G)$. To that end, we show that the size of the independent set output by a recursive call on $(G, N)$ is at least $\alpha(G)(1 - \epsilon\frac{\log |V(G)|}{\log N})$.
Let $I$ be a maximum size stable set of $G$. By a standard coupon-collector argument (see e.g. Lemma~4.1 in \cite{QPTAS}) there exists a choice of $S \subseteq I$ of size at most
$\frac{d \log N}{\epsilon} 2\log n$ such that no vertex of $G - N(S)$ has at least $|I|\frac{\epsilon}{d \log N}$ neighbors in $I$.
Let $Y$ now be the set (of size at most $d$) given by Lemma~\ref{balancedsep} applied to $G-N[S]$.
Since no vertex in $Y$ has at least $\frac{\epsilon}{d \log N}$ neighbors in $I$ it follows that $|I - (N(S) \cup N[Y])| \geq |I|(1-\frac{\epsilon}{\log N})$.
Furthermore, each connected component of $G - (N(S) \cup N[Y])$ has at most $|V(G)|/2$ vertices. By the inductive hypothesis the recursive call on $(G - (N(S) \cup N[Y]), N)$ outputs a stable set of size at least 
$$|I|\left(1-\frac{\epsilon}{\log N}\right)\left(1- \frac{\epsilon (\log |V(G)| - 1)}{\log N}\right) \geq |I|\left(1 - \epsilon\frac{\log |V(G)|}{\log N}\right)\mbox{.}$$
Since $\alpha(G) = |I|$ the recursive call on $(G, N)$ outputs a stable set of size at least $\alpha(G)(1 - \epsilon\frac{\log |V(G)|}{\log N})$, as claimed. 
\end{proof}

\section{Acknowledgments}
We are grateful to Tara Abrishami and Bogdan Alecu for their  involvement
in early stages of this work. We also thank 
Kristina Vu\v{s}kovi\'c for many years of  conversations about the  structure of
even-hole-free graphs.

\end{document}